\newtheorem{theorem}{Theorem}[section]
\newtheorem{lemma}[theorem]{Lemma}
\newtheorem{corollary}[theorem]{Corollary}
\newtheorem{proposition}[theorem]{Proposition}
\theoremstyle{definition}
\newtheorem{remark}[theorem]{Remark}
\newtheorem{definition}[theorem]{Definition}
\numberwithin{equation}{section}
\begin{document}

\title{\bf\Large
Maximal Function and Atomic Characterizations of
Matrix-Weighted Hardy Spaces with Their
Applications to Boundedness of Calder\'on--Zygmund Operators
\footnotetext{\hspace{-0.35cm} 2020 {\it Mathematics Subject Classification}.
Primary 42B30; Secondary 42B25, 42B20, 42B35, 46E40, 47A56.\endgraf
{\it Key words and phrases.} Hardy space, matrix weight, reducing operator, maximal function,
atom, Calder\'on--Zygmund operator.\endgraf
This project is partially supported by the National
Key Research and Development Program of China
(Grant No. 2020YFA0712900),
the National Natural Science Foundation of China
(Grant Nos. 12431006 and 12371093),
and the Fundamental Research Funds
for the Central Universities (Grant No. 2233300008).}}
\date{}
\author{Fan Bu, Yiqun Chen, Dachun Yang\footnote{Corresponding author,
E-mail: \texttt{dcyang@bnu.edu.cn}/{\color{red} January 30, 2025}/Final Version.}\
\ and Wen Yuan}

\maketitle

\vspace{-0.8cm}

\begin{center}
\begin{minipage}{13.8cm}
{\small {\bf Abstract}\quad
Let $p\in(0,1]$ and $W$ be an $A_p$-matrix weight,
which in scalar case is exactly a
Muckenhoupt $A_1$ weight.
In this article, we introduce matrix-weighted Hardy spaces $H^p_W$
via the matrix-weighted grand non-tangential maximal function
and characterize them, respectively,
in terms of various other maximal functions
and atoms, both of which are closely related to matrix weights under consideration
and their corresponding reducing operators. As applications,
we first establish the finite atomic characterization of $H^p_W$,
then using it we give a criterion on the boundedness of sublinear operators
from $H^p_W$ to any $\gamma$-quasi-Banach space, and finally
applying this criterion we further
obtain the boundedness of Calder\'on--Zygmund
operators on $H^p_W$.
The main novelty of these results lies in
that the aforementioned maximal functions related to reducing operators
are new even in the scalar weight case
and we characterize these matrix-weighted Hardy spaces
by a fresh and natural variant of classical weighted atoms
via first establishing a Calder\'on--Zygmund decomposition
which is also new even in the scalar weight case.
}
\end{minipage}
\end{center}

\vspace{0.2cm}

\tableofcontents

\vspace{0.2cm}

\section{Introduction}

To develop the prediction theory of multivariate stochastic processes,
Wiener and Masani \cite[\S 4]{wm58} introduced the matrix-weighted Lebesgue
space $L^2_W$ on $\mathbb{R}^n$, where $W$ is a matrix weight.
In the 1990s, Nazarov, Treil, and Volberg
\cite{nt96,tv971,tv97,v97} originally studied the ingenious generalization of
scalar Muckenhoupt weights to matrix weights.
Actually, to solve the problem about the angle between past
and future of the multivariate random stationary process
and the problem about the boundedness of the inverse of Toeplitz operators,
Treil and Volberg \cite{tv97} found the proper analogue
of the Muckenhoupt $A_2$ weight condition
on $\mathbb R$ in this matrix-valued context such that
the Hilbert transform is bounded on $L^2_W$ if and only if $W$ is such
an $A_2$ matrix weight. Extensions of this to the matrix-weighted Lebesgue space
$L^p_W$ on $\mathbb R$
with $W\in A_p$ for general $p\in(1,\infty)$
were later given by Nazarov and Treil \cite{nt96}
and with a different approach by Volberg \cite{v97}.
Then Bownik \cite{b01} studied the self-improvement
property of $A_p$ matrix weights
and Christ and Goldberg \cite{cg01,Gold} obtained
the boundedness of both certain maximal operator and certain
convolutional Calder\'on--Zygmund operator appropriate for
$A_p$ matrix weights.
Recently, due to its significant role in the study
about systems of partial differential equations \cite{cmr,IM19}
and multivariate stationary random processes \cite{tv97,tv99},
matrix weights have attracted increasing attention.
Particularly, Nazarov et al. \cite{nptv17}
obtained the boundedness of Calder\'on--Zygmund operators
on the matrix-weighted $L^2_W$ over ${\mathbb R}^n$ with the operator norm
$C[W]^{\frac 32}_{A_2}$.
More surprisingly, Domelevo et al. \cite{dptv24}
further proved that this exponent $\frac{3}{2}$ is indeed sharp.
In addition, Bownik and Cruz-Uribe \cite{bc22}
extended the Jones factorization theorem and the Rubio de Francia
extrapolation theorem on scalar Muckenhoupt weights to matrix weights.
For more studies on the boundedness of classical operators on the
matrix-weighted Lebesgue space $L^p_W$, we refer to \cite{dhl20, llor23, llor24, v24}.
Moreover, matrix-weighted Besov--Triebel--Lizorkin spaces
were originally studied by Frazier and Roudenko
\cite{fr04,fr21,rou03,rou04}.
Furthermore, Bu et al. \cite{bhyy1,bhyy2,bhyy3,byy23}
investigated matrix-weighted Besov--Triebel--Lizorkin-type spaces.
For more studies on other matrix-weighted function spaces, we refer
to \cite{bx24-1,bx24-2,bx24-3,byy23,lyy24-1,lyy24-2,n25,wgx24,wyy23}.
However, so far a real-variable theory of matrix-weighted Hardy spaces
is still missing. Then a natural \emph{question} is whether
or not such a real-variable theory truly exists.

In this article, we give an affirmative answer to this question.
Recall that the classical Hardy space $H^p$ with $p\in(0,1]$
was originally introduced by Stein and Weiss \cite{SW60} and
further great significantly developed by
Fefferman and Stein in their seminal article \cite{FS72}.
Since then, the real-variable theory of Hardy-type spaces on ${\mathbb{R}^n}$
proves to play an important and irreplaceable role in both harmonic
analysis (see, for instance, \cite{clms,CW77,bdt,gnns19,gnns21,lyz,ns2012,shyy,yyz,zyyw})
and partial differential equations
(see, for instance, \cite{clms,bdl18,bdl20,m94,shyy}).
We refer also to the well-known monographs \cite{Duo01,gr85,g14m,Lu95,S93,t86} for
more detailed studies on classical Hardy spaces.
Particularly, it is worth mentioning that classical weighted Hardy spaces
have been detailedly studied by Garcia-Cuerva \cite{G79}, Bui \cite{B81}, and
Str\"omberg and Torchinsky \cite{st89}, and some recent variants of
classical weighted Hardy spaces and their applications
are given by Bonami et al. \cite{bcklyy,bgk12},
Bui \cite{B14}, Ky \cite{ky14}, Yang et al. \cite{ylk}, Ho \cite{h17,h19},
Cruz-Uribe and Wang \cite{cw14},
Nakai and Sawano \cite{ns2012}, Sawano et al. \cite{shyy},
and Izuki et al. \cite{INNS}.

In this article, let $p\in(0,1]$ and $W$ be an $A_p$-matrix
weight (or, more generally,
$A_{p,\infty}$-matrix weight), which in scalar cases is exactly the
Muckenhoupt $A_1$ weight (or, more generally, $A_\infty$ weight).
We first introduce matrix-weighted Hardy spaces
$H^p_W$ via the matrix-weighted grand non-tangential maximal function and
then characterize them, respectively, in terms of various other maximal functions
($W\in A_{p,\infty}$)
and atoms ($W\in A_p$), both of which are closely related to matrix weights under consideration
and their corresponding reducing operators. As applications, for any given $W\in A_p$,
we first establish the finite atomic characterization of $H^p_W$,
then using it we give a criterion on the boundedness of sublinear operators
from $H^p_W$ to any $\gamma$-quasi-Banach space, and finally
applying this criterion we further
obtain the boundedness of Calder\'on--Zygmund
operators both from $H^p_W$ to $L^p_W$ and from $H^p_W$ to itself.

The main novelty of these results lies in
that the aforementioned maximal functions related to reducing operators
are new even in the scalar weight case
and we characterize these matrix-weighted Hardy spaces
by a fresh and natural variant of classical weighted atoms
via first establishing a Calder\'on--Zygmund decomposition
which is also new even in the scalar weight case.

To establish the equivalence of various matrix-weighted maximal functions,
we propose several $\mathbb A$-matrix-weighted maximal functions,
which consists of a new and practical approach that connects
and unifies different types of matrix-weighted maximal functions.
Motivated by the observation \cite[p.\,454,\ Remark]{v97} of Volberg that
the set where the $A_{p,\infty}$ matrix weight
is much larger than its average is large,
we introduce an $\mathbb A$-matrix-weighted non-tangential
infimum maximal function, which provides a new infimum
characterization. This function allows us to use the ``large''
subsets to control the overall behavior and establish a
strong link between matrix weights and reducing operators.
The reducing operator, regarded as an average of matrix weights,
provides some kind of continuity that improves the properties
of the combined entity of matrix weights and functions,
making the analysis more manageable despite
the intrinsic inseparability of these two research objects.
Additionally, we present a new matrix-weighted
Calder\'on--Zygmund decomposition, which naturally overcomes the
fundamental difficulty of finding an appropriate definition
for matrix-weighted atoms.
This approach facilitates the atomic characterization of $H^p_W$.
Unlike the scalar case where estimates directly target the
function under consideration, in the matrix weight setting estimates focus on the
combined entity of the matrix weight and the vector function under consideration.
As a result, some straightforward conclusions,
such as pointwise estimates of ``good'' functions in the
Calder\'on--Zygmund decomposition, no longer hold.
To overcome this essential difficulty, we utilize the integrals of ``good''
functions (actually locally polynomials)
over cubes (some kind of averages) to control their coefficients, which
allows us to directly focus on the vector-valued function itself, independent of matrix weights.

The organization of the remainder of this article is as follows.

In Section \ref{equ-M},
let $p\in(0,\infty)$ and $W$ be an
$A_{p,\infty}$-matrix weight, which in scalar cases is exactly an
Muckenhoupt $A_\infty$ weight. Let $\mathbb A:=\{A_Q\}_{Q\in\mathscr{Q}}$
be a family of reducing operators of order $p$ for $W$.
We first give a brief review of matrix weights
and introduce the matrix-weighted Hardy space via the
matrix-weighted grand non-tangential maximal function;
see Definition \ref{de-HW}.
Subsequently, we introduce various types of $\mathbb A$-matrix-weighted maximal
functions via reducing operators (see Definition \ref{reducing hardy})
and establish their equivalences in $L^p$; see Theorem \ref{if and only if A}.
Furthermore, we prove the equivalence between
$\mathbb A$-matrix-weighted maximal functions and
matrix-weighted maximal functions (see Theorem \ref{weight and reducing}) and
use this to derive various equivalent characterizations of
matrix-weighted maximal functions;
see Theorem \ref{if and only if W}.
Finally, we obtain several fundamental properties of
matrix-weighted Hardy spaces,
such as embedding into the space of tempered distributions
and their completeness;
see Propositions \ref{embedding} and \ref{jwdj}.
It is worth noting that all the $\mathbb A$-matrix-weighted maximal
functions (see Definition \ref{reducing hardy}),
which can be regarded as the discrete variant of the
corresponding matrix-weighted maximal function in some sense
(see Remark \ref{2.29}), are new even in the scalar case,
where the matrix-weighted non-tangential infimum
maximal function [see Definition \ref{reducing hardy}(iii)] represents the
most significant novelty via providing a fresh infimum
characterization of maximal functions.

In Section \ref{ad}, let $p\in(0,1]$.
We first introduce a novel and natural variant of classical atoms
in the matrix-weighted setting (see Definition \ref{F-atom})
and then establish the atomic characterization
of the matrix-weighted Hardy space ($W\in A_p$); see Theorem \ref{W-F-atom}.
To achieve this goal, we establish a new variant of the Calder\'on--Zygmund decomposition
in the matrix-weighted setting ($W\in A_{p,\infty}$),
which is of independent interest; see Theorem \ref{l-CZ}.
Additionally, we show that the space of locally integrable functions
is dense in $H^p_W$ when $p\in(0,1]$ and $W\in A_p$; see Proposition \ref{dense}.
Finally, we also prove
that, when $p\in(1,\infty)$ and $W\in A_p$, the matrix-weighted Hardy space
coincides with the matrix-weighted Lebesgue space
(see Theorem \ref{L-H}), which further implies the reasonability
of this kind of the matrix-weighted Hardy space.
It is worth noticing that the Calder\'on--Zygmund decomposition
associated with matrix weights is also new even in the scalar case.

In Section \ref{fatom}, let $p\in(0,1]$ and $W\in A_p$,
we first introduce the finite atomic matrix-weighted Hardy space (see Definition
\ref{de-fin}) and
establish the finite atomic characterization of $H^p_W$;
see Theorem \ref{finatom}.
Then, using this, we give a criterion on the
boundedness of sublinear operators
from $H^p_W$ to any $\gamma$-quasi-Banach space;
see Theorem \ref{subl}.

In Section \ref{cz}, applying Theorem \ref{subl}, we show that, when $p\in (0,1]$
and $W\in A_p$, Calder\'on--Zygmund operators are
bounded from the matrix-weighted Hardy space $H^p_W$ to
the matrix-weighted Lebesgue space $L^p_W$ and from $H^p_W$ to itself;
see Theorem \ref{CZ}.

To limit the length of this article,
we leave the Littlewood--Paley characterizations of $H^p_W$
and its dual space in a forthcoming article.

At the end of this section, we make some conventions on notation.
In the whole article, we work on ${\mathbb R}^n$ and,
for simplicity of presentation, we will not indicate this
underlying space in related symbols.
Let $\mathbb{N}:=\{1,2,\dots\}$, $\mathbb{Z}_+:=\mathbb{N}\cup\{0\}$,
and $\mathbb{Z}_+^n:=(\mathbb{Z}_+)^n$ for any $n\in\mathbb{N}$.
Let $\mathscr M$ be the set of
all measurable functions.
For any set $K$, let $\#K$ denote its \emph{cardinality}.
The cubes $Q\subset\mathbb{R}^n$ we refer
in this article always have edges
parallel to the axes. For any cube $Q\subset\mathbb{R}^n$,
let $c_Q$ be its \emph{center} and $l(Q)$
denote its \emph{edge length}.
For any $r\in(0,\infty)$,
let $rQ$ denote the cube with the same
center as $Q$ and the edge length
$rl(Q)$.
We use $\mathbf{0}$ to
denote the \emph{origin} of ${\mathbb{R}^n}$.
For any measurable set $E$ in
$\mathbb{R}^n$ with $|E|\in(0,\infty)$ and for any
$f\in L^1_{\mathrm{loc}}$
(the set of all locally integrable functions on $\mathbb{R}^n$), let
$$
\fint_E f(x)\,dx:=\frac{1}{|E|}\int_{E}f(x)\,dx
$$
and we denote by $\mathbf{1}_E$ the
\emph{characteristic function} of $E$ and by $E^\complement$
its \emph{complementary set}.
For any $x\in\mathbb{R}^n$ and $r\in(0,\infty)$, let
$B(x,r):=\left\{y\in\mathbb{R}^n:\ |x-y|<r\right\}$
be the ball with center $x$ and radius $r$.
We use $t\to 0^+$
to denote that there exists some $c\in (0,\infty)$
such that $t\in(0,c)$ and $t\to 0$.
For any $p\in(0, \infty]$, let
$p':=\frac{p}{p-1}$ if $p\in(1, \infty]$
and $p':=\infty$ if $p\in(0, 1]$ be the \emph{conjugate index}.
We always denote by $C$ a \emph{positive constant}
which is independent of the main parameters involved,
but it may vary from line to line.
The symbol $A\lesssim B$ means that $A\le CB$ for
some positive constant $C$, while $A\sim B$ means $A\lesssim B\lesssim A$.
Also, for any $\alpha\in\mathbb{R}$, we use $\lfloor\alpha\rfloor$
(resp. $\lceil\alpha\rceil$) to denote the largest (resp. smallest) integer
not greater (resp. less) than $\alpha$.
Finally, when we prove a theorem
(and the like), in its proof we always use the same
symbols as those used in the statement itself of that theorem (and the like).

\section{Maximal Function Characterizations of $H_W^p$\label{equ-M}}

In this section, we introduce matrix-weighted Hardy spaces
and establish various maximal function characterizations of these spaces.
Let us begin with some concepts of matrices.

For any $m,n\in\mathbb{N}$, let $M_{m,n}(\mathbb{C})$ be
the set of all $m\times n$ complex-valued matrices
and we denote $M_{m,m}(\mathbb{C})$ simply by $M_{m}(\mathbb{C})$.
The zero matrix in $M_{m,n}(\mathbb{C})$ is denoted by $O_{m,n}$
and $O_{m,m}$ is simply denoted by $O_m$.
For any $A\in M_m(\mathbb{C})$, let
$\|A\|:=\sup_{\vec z\in\mathbb{C}^m,\,|\vec z|=1}|A\vec z|$.
Let $A^*$ denote the \emph{conjugate transpose} of $A$.
A matrix $A\in M_m(\mathbb{C})$ is said to be \emph{positive definite}
if, for any $\vec z\in\mathbb{C}^m\setminus\{\vec{0}\}$, $\vec z^*A\vec z>0$,
and $A$ is said to be \emph{nonnegative definite} if,
for any $\vec z\in\mathbb{C}^m$, $\vec z^*A\vec z\geq0$.
Here, and thereafter, $\vec 0$ denotes the zero vector of $\mathbb C^m$.
The matrix $A\in M_m(\mathbb C)$ is called a \emph{unitary matrix} if $A^*A=I_m$,
where $I_m$ is the identity matrix.

In the remainder of this article, we \emph{always fix} $m\in\mathbb{N}$.
Let $A\in M_m(\mathbb{C})$ be a positive definite matrix
and have eigenvalues $\{\lambda_i\}_{i=1}^m$.
Using \cite[Theorem 2.5.6(c)]{hj13},
we find that there exists a unitary matrix $U\in M_m(\mathbb{C})$ such that
\begin{equation}\label{500}
A=U\operatorname{diag}\,(\lambda_1,\ldots,\lambda_m)U^*.
\end{equation}
The following definition
can be found in \cite[(6.2.1)]{hj94}
(see also \cite[Definition 1.2]{h08}).

\begin{definition}
Let $A\in M_m(\mathbb{C})$ be a positive definite matrix
and have eigenvalues $\{\lambda_i\}_{i=1}^m$.
For any $\alpha\in\mathbb{R}$, define
$A^\alpha:=U\operatorname{diag}(\lambda_1^\alpha,\ldots,\lambda_m^\alpha)U^*,
$
where $U$ is the same as in \eqref{500}.
\end{definition}

\begin{remark}
From \cite[p.\,408]{hj94}, we infer that $A^\alpha$
is independent of the choices of the order of $\{\lambda_i\}_{i=1}^m$ and $U$,
and hence $A^\alpha$ is well defined.
\end{remark}

Now, we recall the concept of matrix weights
(see, for instance, \cite{nt96,tv97,v97}).

\begin{definition}\label{MatrixWeight}
A matrix-valued function $W:\ \mathbb{R}^n\to M_m(\mathbb{C})$ is called
a \emph{matrix weight} if $W$ is a nonnegative matrix function
with locally integrable entries such that $W(x)$ is invertible
for almost every $x\in{\mathbb{R}^n}$.
\end{definition}

It is well known that, for any given $p\in(0,\infty)$ and any given matrix weight $W$,
the matrix-weighted Lebesgue space $L^p_W$ is defined to be the
set of all $\vec f\in(\mathscr{M})^m$ such that
\begin{equation*}
\left\|\vec f\right\|_{L^p_W}:=
\left[\int_{{\mathbb{R}^n}}\left|W^{\frac1p}(x)\vec f(x)\right|^p\,dx
\right]^{\frac1p}<\infty,
\end{equation*}
which when $m=1$ and $W\equiv 1$ is exactly the Lebesgue space $L^p$.
In addition, for the Lebesgue space $L^p$ with $p=\infty$, we make the usual modification.

Next, we introduce matrix-weighted Hardy spaces.
We first give several concepts.
We denote by $\mathcal{S}$ the set of
all Schwartz functions and by
$\mathcal{S}'$ the set of
all tempered distributions.
Here, and thereafter, for any
$\alpha=(\alpha_1,\ldots,\alpha_n)\in\mathbb Z_+^n$,
let $|\alpha|:=\alpha_1+\cdots+\alpha_n.$
For any $N\in\mathbb Z_+$, let
\begin{align*}
\mathcal{S}_N:=\left\{\phi\in\mathcal{S}:\
\left\|\phi\right\|_{\mathcal{S}_N}\le1\right\},
\end{align*}
where
$\|\phi\|_{\mathcal{S}_N}:=
\sup_{\alpha\in\mathbb Z_+^n,\,|\alpha|\le N+1}
\sup_{x\in\mathbb R^n}(1+|x|)^{N+n+1}
|\partial^\alpha\phi(x)|$.
For any function $\phi$ on $\mathbb R^n$ and $t\in(0,\infty)$, define
$\phi_t(\cdot):=\frac{1}{t^n}\phi(\frac{\cdot}{t})$.
Let $a\in(0,\infty)$ and $N\in\mathbb Z_+$.
Then, for any $\vec f\in(\mathcal S')^m$,
the \emph{matrix-weighted grand non-tangential maximal function}
$(M_{a,N}^*)^p_{W}(\vec f)$ of $\vec f$ is defined by setting, for any $x\in\mathbb R^n$,
\begin{align*}
\left(M_{a,N}^*\right)^p_{W}\left(\vec f\right)(x):=
\sup_{\phi\in\mathcal{S}_N}\sup_{t\in(0,\infty)}\sup_{y\in B(x,at)}
\left|W^{\frac{1}{p}}(x)\phi_t*\vec f(y)\right|.
\end{align*}
By \cite[Theorem 2.3.20]{g14c}, we conclude that
$\phi_t*\vec f\in(C^\infty)^m$, where $C^\infty$
is the set of all infinitely differentiable functions on $\mathbb R^n$.

\begin{definition}\label{de-HW}
Let $p\in(0,\infty)$, $N\in\mathbb Z_+$, and $W$ be a matrix weight.
The \emph{matrix-weighted Hardy space} $H_{W,N}^p$
is defined to be the set of all $\vec f\in(\mathcal{S}')^m$ such that
$(M_{1,N}^*)^p_{W}(\vec f)\in L^p$,
equipped with the quasi-norm
\begin{align*}
\left\|\vec f\right\|_{H_{W,N}^p}
:=\left\|\left(M_{1,N}^*\right)^p_{W}\left(\vec f\right)\right\|_{L^p}.
\end{align*}
\end{definition}

Now, we introduce various matrix-weighted maximal functions.

\begin{definition}\label{HW}
Let $p\in(0,\infty)$ and $W$ be a matrix weight.
Let $\psi\in\mathcal{S}$, $N\in\mathbb Z_+$,
$a,l\in(0,\infty)$, and $\vec f\in(\mathcal{S}')^m$.
Then the \emph{matrix-weighted radial maximal function} $M^p_{W}(\vec f,\psi)$,
the \emph{matrix-weighted grand radial maximal function} $(M_N)^p_{W}(\vec f)$,
the \emph{matrix-weighted non-tangential maximal function} $(M^*_a)^p_{W}(\vec f,\psi)$,
the \emph{matrix-weighted maximal function
$(M^{**}_l)^p_{W}(\vec f,\psi)$ of Peetre type},
and the \emph{matrix-weighted grand maximal function
$(M^{**}_{l,N})_{W}^p(\vec f)$ of Peetre type} of $\vec f$
are defined, respectively, by setting, for any $x\in\mathbb R^n$,
\begin{align*}
M^p_{W}\left(\vec f,\psi\right)(x):=\sup_{t\in(0,\infty)}
\left|W^{\frac{1}{p}}(x)\psi_t*\vec f(x)\right|,\
\left(M_N\right)^p_{W}\left(\vec f\right)(x):=
\sup_{\phi\in\mathcal{S}_N}\sup_{t\in(0,\infty)}
\left|W^{\frac{1}{p}}(x)\phi_t*\vec f(x)\right|,
\end{align*}
\begin{align*}
\left(M^*_a\right)^p_{W}\left(\vec f,\psi\right)(x):=
\sup_{t\in(0,\infty)}\sup_{y\in B(x,at)}
\left|W^{\frac{1}{p}}(x)\psi_t*\vec f(y)\right|,
\end{align*}
\begin{align*}
\left(M^{**}_l\right)^p_{W}\left(\vec f,\psi\right)(x):=
\sup_{t\in(0,\infty)}\sup_{y\in\mathbb R^n}
\left|W^{\frac{1}{p}}(x)\psi_t*\vec f(x-y)\right|
\left(1+\frac{|y|}{t}\right)^{-l},
\end{align*}
and
\begin{align*}
\left(M^{**}_{l,N}\right)_{W}^p\left(\vec f\right)(x)
:=\sup_{\phi\in\mathcal{S}_N}\sup_{t\in(0,\infty)}\sup_{y\in\mathbb R^n}
\left|W^{\frac{1}{p}}(x)\phi_t*\vec f(x-y)\right|
\left(1+\frac{|y|}{t}\right)^{-l}.
\end{align*}
\end{definition}

To establish the equivalences of various matrix-weighted maximal functions,
we need some extra assumptions on matrix weights.
Corresponding to the scalar Muckenhoupt $A_p({\mathbb{R}^n})$ class,
Nazarov and Treil \cite{nt96} and Volberg \cite{v97} originally
independently introduced $A_p({\mathbb{R}^n},\mathbb{C}^m)$-matrix
weights with $p\in (1,\infty)$. The following version of
$A_p({\mathbb{R}^n},\mathbb{C}^m)$-matrix weights was originally introduced
by Frazier and Roudenko \cite{fr04} with $p\in (0,1]$ and
Roudenko \cite{rou03} with $p\in (1,\infty)$ (see also \cite[p.\,490]{fr21}).

\begin{definition}\label{def ap}
Let $p\in(0,\infty)$. A matrix weight $W$ on $\mathbb{R}^n$
is called an $A_p({\mathbb{R}^n},\mathbb{C}^m)$-\emph{matrix weight}
if $W$ satisfies that, when $p\in(0,1]$,
$$
[W]_{A_p({\mathbb{R}^n},\mathbb{C}^m)}
:=\sup_{\mathrm{cube}\,Q}\mathop{\mathrm{\,ess\,sup\,}}_{y\in Q}
\fint_Q\left\|W^{\frac{1}{p}}(x)W^{-\frac{1}{p}}(y)\right\|^p\,dx
<\infty
$$
or that, when $p\in(1,\infty)$,
$$
[W]_{A_p({\mathbb{R}^n},\mathbb{C}^m)}
:=\sup_{\mathrm{cube}\,Q}
\fint_Q\left[\fint_Q\left\|W^{\frac{1}{p}}(x)W^{-\frac{1}{p}}(y)\right\|^{p'}
\,dy\right]^{\frac{p}{p'}}\,dx
<\infty.
$$
\end{definition}

When $m=1$, $A_{p}({\mathbb{R}^n},\mathbb{C}^m)$-matrix weights
in this case reduce back to classical scalar Muckenhoupt
$A_{\max\{1,p\}}({\mathbb{R}^n})$ weights.
In what follows, if there exists no confusion, we denote
$A_p({\mathbb{R}^n},\mathbb{C}^m)$ simply by $A_p$.

When establishing various maximal function characterizations of
matrix-weighted Hardy spaces, we only need to impose a weaker
assumption on the matrix weight under consideration,
namely the matrix Muckenhoupt $A_{p,\infty}({\mathbb{R}^n},\mathbb{C}^m)$ class.
This concept was originally introduced by Volberg \cite[(2.2)]{v97}, and recently
Bu et al. \cite{bhyyNew} provided the following equivalent definition
(see \cite[Definition 3.1]{bhyyNew}) and some other equivalent characterizations.

\begin{definition}\label{def ap,infty}
Let $p\in(0,\infty)$. A matrix weight $W$ on $\mathbb{R}^n$
is called an $A_{p,\infty}({\mathbb{R}^n},\mathbb{C}^m)$-\emph{matrix weight}
if $W$ satisfies that, for any cube $Q\subset\mathbb{R}^n$,
$$\max\left\{0,\log\left(\fint_Q\left\|
W^{\frac{1}{p}}(x)W^{-\frac{1}{p}}(\cdot)\right\|^p
\,dx\right)\right\}\in L^1(Q)$$ and
\begin{align*}
[W]_{A_{p,\infty}({\mathbb{R}^n},\mathbb{C}^m)}
:=\sup_{\mathrm{cube}\,Q}\exp\left(\fint_Q\log\left(\fint_Q
\left\|W^{\frac{1}{p}}(x)W^{-\frac{1}{p}}(y)\right\|^p\,dx\right)\,dy\right)
<\infty.
\end{align*}
\end{definition}

When $m=1$, $A_{p,\infty}({\mathbb{R}^n},\mathbb{C}^m)$-matrix weights
in this case reduce back to the classical scalar
Muckenhoupt $A_\infty({\mathbb{R}^n})$ weights.
In what follows, if there exists no confusion, we denote
$A_{p,\infty}({\mathbb{R}^n},\mathbb{C}^m)$ simply by $A_{p,\infty}$.

Next, we recall the concept of $A_{p,\infty}$-dimensions of a matrix weight
originally from \cite[Definition 6.2]{bhyyNew}.

\begin{definition}\label{AinftyDim}
Let $p\in(0,\infty)$ and $d\in\mathbb{R}$.
A matrix weight $W$ is said to have \emph{$A_{p,\infty}$-lower dimension $d$}
if there exists a positive constant $C$ such that,
for any $\lambda\in[1,\infty)$ and any cube $Q\subset\mathbb{R}^n$,
\begin{align*}
\exp\left(\fint_{\lambda Q}\log\left(
\fint_Q\left\|W^{\frac{1}{p}}(x)W^{-\frac{1}{p}}(y)\right\|^p\,dx\right)\,dy\right)
\le C\lambda^d.
\end{align*}
A matrix weight $W$ is said to have \emph{$A_{p,\infty}$-upper dimension $d$}
if there exists a positive constant $C$ such that,
for any $\lambda\in[1,\infty)$ and any cube $Q\subset\mathbb{R}^n$,
\begin{align*}
\exp\left(\fint_Q\log\left(
\fint_{\lambda Q}\left\|W^{\frac{1}{p}}(x)W^{-\frac{1}{p}}(y)\right\|^p\,dx\right)\,dy\right)
\le C\lambda^d.
\end{align*}
\end{definition}

\begin{remark}\label{p012}
Let $p\in(0,1]$ and $W\in A_p$.
By Definitions \ref{def ap} and \ref{AinftyDim}, we conclude that
$W\in A_{p,\infty}$ and $W$ has $A_{p,\infty}$-upper dimension $0$.
\end{remark}

From \cite[Propositions 6.3(ii) and 6.4(ii)]{bhyyNew}, we deduce that,
for any matrix weight $W\in A_{p,\infty}$,
there exist $d_1\in[0,n)$ and $d_2\in[0,\infty)$ such that $W$
has $A_{p,\infty}$-lower dimension $d_1$
and $A_{p,\infty}$-upper dimension $d_2$.
For any matrix weight $W\in A_{p,\infty}$, we define
\begin{align*}
d_{p,\infty}^{\mathrm{lower}}(W) &:=\inf\{d\in[0,n):\ W\text{ has }A_{p,\infty}\text{-lower dimension }d\}, \\
d_{p,\infty}^{\mathrm{upper}}(W) &:=\inf\{d\in[0,\infty):\ W\text{ has }A_{p,\infty}\text{-upper dimension }d\},\notag \\
[\![d_{p,\infty}^{\mathrm{lower}}(W),n) &:=\begin{cases}
[d_{p,\infty}^{\mathrm{lower}}(W),n) &\text{if }W\text{ has }A_{p,\infty}\text{-lower dimension }d_{p,\infty}^{\mathrm{lower}}(W),\\
(d_{p,\infty}^{\mathrm{lower}}(W),n) &\text{otherwise}, \end{cases}\notag
\end{align*}
and
\begin{align*}
[\![d_{p,\infty}^{\mathrm{upper}}(W),\infty) &:=\begin{cases}
[d_{p,\infty}^{\mathrm{upper}}(W),\infty) &\text{if }W\text{ has }A_{p,\infty}\text{-upper dimension }d_{p,\infty}^{\mathrm{upper}}(W),\\
(d_{p,\infty}^{\mathrm{upper}}(W),\infty)&\text{otherwise}.\notag \end{cases}
\end{align*}

The following maximal function characterizations of matrix-weighted Hardy spaces
are the main result of this section.

\begin{theorem}\label{if and only if W}
Let $p\in(0,\infty)$ and $W\in A_{p,\infty}$.
Assume that $\psi\in\mathcal{S}$ satisfies $\int_{\mathbb R^n}\psi(x)\,dx\neq0$.
Let $a\in(0,\infty)$,
$l\in(n/p+[d_{p,\infty}^{\mathrm{lower}}(W)+d_{p,\infty}^{\mathrm{upper}}(W)]/p,\infty)$,
and $N\in\mathbb N$ satisfy $N> l+ d_{p,\infty}^{\mathrm{upper}}(W)/p$.
Then, for any $\vec f\in(\mathcal{S}')^m$,
\begin{align*}
\left\|\vec f\right\|_{H^p_{W,N}}
&\sim\left\|M_{W}^p\left(\vec f,\psi\right)\right\|_{L^p}
\sim\left\|\left(M^*_a\right)^p_{W}\left(\vec f,\psi\right)\right\|_{L^p}
\sim\left\|\left(M_{N}\right)^p_{W}\left(\vec f\right)\right\|_{L^p}\nonumber\\
&\sim\left\|\left(M^*_{a,N}\right)^p_{W}\left(\vec f\right)\right\|_{L^p}
\sim\left\|\left(M^{**}_l\right)^p_{W}\left(\vec f,\psi\right)\right\|_{L^p}
\sim\left\|\left(M^{**}_{l,N}\right)^p_{W}\left(\vec f\right)\right\|_{L^p}\notag,
\end{align*}
where the positive equivalence constants are independent of $\vec f$.
\end{theorem}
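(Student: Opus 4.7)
The strategy is to exploit the two engines that have been developed earlier in the section: Theorem~\ref{weight and reducing}, which transfers any matrix-weighted maximal function to its $\mathbb A$-reducing-operator counterpart (up to equivalent $L^p$-quasi-norms), and Theorem~\ref{if and only if A}, which delivers the full chain of equivalences among the $\mathbb A$-matrix-weighted maximal functions on $L^p$. Since each of the seven quantities in the claim admits an $\mathbb A$-analogue and these analogues are all $L^p$-equivalent under the stated constraints on $l$ and $N$, the only remaining work is to assemble these reductions and to add a few trivial pointwise inequalities to close all directions.

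\textbf{Step 1: trivial pointwise comparisons.} I would first note the elementary pointwise inequalities, valid for every $\vec f\in(\mathcal S')^m$ and every $x\in\mathbb R^n$:
\begin{align*}
M^p_{W}(\vec f,\psi)(x) &\le (M^*_a)^p_{W}(\vec f,\psi)(x) \le (1+a)^{l}(M^{**}_l)^p_{W}(\vec f,\psi)(x),\\
(M_N)^p_{W}(\vec f)(x) &\le (M^*_{a,N})^p_{W}(\vec f)(x) \le (1+a)^{l}(M^{**}_{l,N})^p_{W}(\vec f)(x),
\end{align*}
obtained directly from the definitions by enlarging the suprema. Moreover, after rescaling $\psi$ by $\|\psi\|_{\mathcal S_N}$, one has $\psi/\|\psi\|_{\mathcal S_N}\in\mathcal S_N$, and hence
$M^p_{W}(\vec f,\psi)\le\|\psi\|_{\mathcal S_N}(M_N)^p_{W}(\vec f)$,
$(M^*_a)^p_{W}(\vec f,\psi)\le\|\psi\|_{\mathcal S_N}(M^*_{a,N})^p_{W}(\vec f)$, and
$(M^{**}_l)^p_{W}(\vec f,\psi)\le\|\psi\|_{\mathcal S_N}(M^{**}_{l,N})^p_{W}(\vec f)$. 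These give one direction of every $\sim$ in the statement for free.

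\textbf{Step 2: transfer to reducing operators, then back.} For the reverse directions, I would fix a family $\mathbb A=\{A_Q\}_{Q\in\mathscr Q}$ of reducing operators of order $p$ for $W$, and invoke Theorem~\ref{weight and reducing} to replace each of the seven $L^p$-quasi-norms in the statement by the $L^p$-quasi-norm of the corresponding $\mathbb A$-matrix-weighted maximal function (in which $W^{1/p}(x)$ is replaced by $A_Q^{1/p}$ on the dyadic cube $Q$ containing $x$). After this substitution, Theorem~\ref{if and only if A} supplies the equivalence of all the $\mathbb A$-versions under exactly the thresholds on $l$ and $N$ that are imposed here. Transferring back through Theorem~\ref{weight and reducing} closes every remaining chain, completing the equivalences.

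\textbf{Main obstacles.} The substantive content is already encapsulated in Theorems~\ref{if and only if A} and~\ref{weight and reducing}; the classical Fefferman--Stein passage from a radial maximal function to a grand Peetre-type maximal function is carried out there at the level of reducing operators, where the ``continuity'' of the $A_Q$'s (in contrast to the pointwise erratic behavior of $W^{1/p}$) makes the argument tractable. The only book-keeping difficulty at this stage is to verify that the hypotheses $l>n/p+[d_{p,\infty}^{\mathrm{lower}}(W)+d_{p,\infty}^{\mathrm{upper}}(W)]/p$ and $N>l+d_{p,\infty}^{\mathrm{upper}}(W)/p$ chosen here are at least as strong as those required by each application of Theorems~\ref{weight and reducing} and~\ref{if and only if A} (in particular, that the exponent $l$ is large enough to absorb the weight's lower/upper $A_{p,\infty}$-dimensions on both sides of the transfer, and that $N$ is large enough for the grand versions). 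Once this is checked, the proof reduces to combining Step~1 with two applications of Theorem~\ref{weight and reducing} sandwiching one application of Theorem~\ref{if and only if A}.
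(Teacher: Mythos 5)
Your proposal is correct and matches the paper's approach; in fact, the paper's entire proof is a one-line remark that Theorem~\ref{if and only if W} follows directly from Theorem~\ref{weight and reducing}, whose statement already lists the complete chain $\|\vec f\|_{H^p_{\mathbb A}}\sim\|\vec f\|_{H^p_W}\sim\|M^p_W(\vec f,\psi)\|_{L^p}\sim\cdots\sim\|(M^{**}_{l,N})^p_W(\vec f)\|_{L^p}$ under precisely the hypotheses on $l$ and $N$ given here (and whose proof internally uses Theorem~\ref{if and only if A} plus the observation that $M^p_W(\vec f,\psi)$ and $(M^{**}_{l,N})^p_W(\vec f)$ are pointwise the smallest and largest of the matrix-weighted maximal functions). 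One small slip: in the $\mathbb A$-matrix-weighted maximal functions of Definition~\ref{reducing hardy}, the object $W^{1/p}(x)$ is replaced by $A_t(x)$ itself, not by $A_Q^{1/p}$; a reducing operator of order $p$ (Definition~\ref{reduce}) already plays the role of the $p$-th root, so no additional power is taken.
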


Based on Theorem \ref{if and only if W}, in what follows, we denote
$H_{W,N(W)}^p$ simply by $H_W^p$,
where $W\in A_{p,\infty}$ and
\begin{align}\label{N(W)}
N(W):=\left\lfloor\frac np+\frac{d_{p,\infty}^{\mathrm{lower}}(W)
+2d_{p,\infty}^{\mathrm{upper}}(W)}p\right\rfloor+1.
\end{align}

\begin{remark}
\begin{enumerate}[\rm(i)]
\item Let $m=1$, $p\in(0,\infty)$, and $W\equiv 1$.
Then, in this case, $H^p_W$ reduces back to the classical Hardy space
$H^p$ and Theorem \ref{if and only if W}
reduces back to \cite[Theorem 2.1.4]{g14m}, which establishes
various maximal function characterizations of the classical Hardy space $H^p$.

\item Let $m=1$ and $p\in(0,1]$. Then, in this case,
Theorem \ref{if and only if W} reduces back to
various maximal function characterizations of the scalar weighted
Hardy space $H^p_W$, which
is a special case of \cite[Theorem 3.1]{shyy} with
$X:=L^p_W$.
\end{enumerate}
\end{remark}

To show Theorem \ref{if and only if W}, in Subsection \ref{averaging}
we first introduce the corresponding $\mathbb A$-matrix-weighted Hardy spaces and
various $\mathbb A$-matrix-weighted maximal functions and establish their equivalences.
Then, in Subsection \ref{weighted}, we establish the relations
between the $\mathbb A$-maximal functions and the matrix-weighted maximal functions,
which further implies Theorem \ref{if and only if W}.

\subsection{$\mathbb A$-Matrix-Weighted Maximal Functions} \label{averaging}

We first recall the concept of reducing operators,
one of the most important tool in
the study of matrix weights, which was originally introduced by
Volberg in \cite[(3.1)]{v97}.

\begin{definition}\label{reduce}
Let $p\in(0,\infty)$, $W$ be a matrix weight,
and $E\subset\mathbb{R}^n$ a bounded measurable set satisfying $|E|\in(0,\infty)$.
The matrix $A_E\in M_m(\mathbb{C})$ is called a
\emph{reducing operator} of order $p$ for $W$
if $A_E$ is positive definite and,
for any $\vec z\in\mathbb{C}^m$,
\begin{equation}\label{equ_reduce}
\left|A_E\vec z\right|
\sim\left[\fint_E\left|W^{\frac{1}{p}}(x)\vec z\right|^p\,dx\right]^{\frac{1}{p}},
\end{equation}
where the positive equivalence constants depend only on $m$ and $p$.
\end{definition}

\begin{remark}\label{r2.13}
\begin{enumerate}[{\rm(i)}]
\item In Definition \ref{reduce}, the existence of $A_E$ is guaranteed by
\cite[Proposition 1.2]{Gold} and \cite[p.\,1237]{fr04}; we omit the details.
Obviously, for a given matrix weight $W$, its reducing operators are
not unique.

\item Let $m=1$ and all the other symbols be the same as in Definition \ref{reduce}.
In this case, for any bounded measurable set
$E\subset\mathbb{R}^n$ with $|E|\in(0,\infty)$,
$|A_E|\sim [\frac {W(E)}{|E|}]^{\frac 1p}$,
where $W(E):=\int_EW(x)\,dx$
and the positive equivalence constants depend only on $p$.
\end{enumerate}
\end{remark}

Based on the observation in Remark \ref{r2.13}(ii),
reducing operators can be regarded as the average of the matrix weight under
consideration. Moreover, we have the following useful equivalent
characterization of reducing operators.

\begin{proposition}\label{reduceM}
Let $p\in(0,\infty)$, $W$ be a matrix weight,
and $E\subset\mathbb{R}^n$ a bounded measurable set satisfying $|E|\in(0,\infty)$.
Then $A_E$ is a reducing operator of order $p$ for $W$
if and only if, for any matrix $M\in M_m(\mathbb{C})$,
\begin{align} \label{equ_reduceM}
\|A_EM\|\sim\left[\fint_E\left\|W^{\frac{1}{p}}(x)M\right\|^p\,dx\right]^{\frac{1}{p}},
\end{align}
where the positive equivalence constants depend only on $m$ and $p$.
\end{proposition}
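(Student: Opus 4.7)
The plan is to show both implications of the equivalence, with the ``if'' direction being almost immediate and the ``only if'' direction requiring a column-decomposition trick to handle the $x$-dependence of the optimizing vector in the operator norm $\|W^{1/p}(x)M\|$.

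For the ``if'' direction, suppose \eqref{equ_reduceM} holds for every $M\in M_m(\mathbb{C})$. Given any $\vec z\in\mathbb{C}^m$, let $M_{\vec z}$ be the matrix whose first column equals $\vec z$ and whose remaining columns are zero. Then for any unit vector $\vec u\in\mathbb{C}^m$ one has $M_{\vec z}\vec u=u_1\vec z$, which gives $\|M_{\vec z}\|=|\vec z|$, $\|A_EM_{\vec z}\|=|A_E\vec z|$ and $\|W^{1/p}(x)M_{\vec z}\|=|W^{1/p}(x)\vec z|$ for a.e.\ $x\in E$. Substituting $M_{\vec z}$ into \eqref{equ_reduceM} recovers \eqref{equ_reduce}, so $A_E$ is a reducing operator of order $p$ for $W$.

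For the ``only if'' direction, assume $A_E$ is a reducing operator of order $p$ for $W$. For the upper estimate of $\|A_EM\|$, pick $\vec z_0\in\mathbb{C}^m$ with $|\vec z_0|=1$ realizing (or approaching) the supremum defining $\|A_EM\|$, set $\vec w:=M\vec z_0$, and apply \eqref{equ_reduce} to $\vec w$. Since $|W^{1/p}(x)\vec w|\le\|W^{1/p}(x)M\|$ for a.e.\ $x\in E$, this yields $\|A_EM\|=|A_E\vec w|\lesssim[\fint_E\|W^{1/p}(x)M\|^p\,dx]^{1/p}$.

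The main obstacle is the reverse direction, because a naive extremal-vector argument fails: the unit vector $\vec z(x)$ realizing $\|W^{1/p}(x)M\|$ depends measurably on $x$, and \eqref{equ_reduce} cannot be applied to a varying $\vec z(x)$. The remedy is to exploit the finite-dimensional equivalence of norms on $M_m(\mathbb{C})$: letting $\{\vec e_j\}_{j=1}^m$ be the standard basis of $\mathbb{C}^m$ and $\vec m_j:=M\vec e_j$ the columns of $M$, there exist positive constants $c,C$ depending only on $m$ and $p$ such that, for every $N\in M_m(\mathbb{C})$,
\begin{align*}
c\sum_{j=1}^m\left|N\vec e_j\right|^p
\le\|N\|^p
\le C\sum_{j=1}^m\left|N\vec e_j\right|^p.
\end{align*}
Applying this equivalence pointwise to $N=W^{1/p}(x)M$, integrating, invoking \eqref{equ_reduce} for each column $\vec m_j$, and finally applying the equivalence in reverse to $N=A_EM$, one obtains
\begin{align*}
\fint_E\left\|W^{\frac1p}(x)M\right\|^p\,dx
\sim\sum_{j=1}^m\fint_E\left|W^{\frac1p}(x)\vec m_j\right|^p\,dx
\sim\sum_{j=1}^m\left|A_E\vec m_j\right|^p
\sim\|A_EM\|^p,
\end{align*}
with constants depending only on $m$ and $p$. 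Combining this with the upper estimate completes the proof of \eqref{equ_reduceM}.
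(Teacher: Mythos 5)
Your proof is correct. The sufficiency (``if'') direction is essentially the paper's own argument: both substitute a rank-one, single-column matrix into \eqref{equ_reduceM} to recover \eqref{equ_reduce} (the paper uses the last column, you the first; this is immaterial). For the necessity (``only if'') direction the paper simply cites \cite[Lemma 2.10]{bhyy1}, whereas you supply a self-contained argument that decomposes into columns and uses the equivalence on $M_m(\mathbb{C})$ between $\|N\|$ and $\bigl(\sum_{j=1}^m|N\vec e_j|^p\bigr)^{1/p}$; this makes the dependence of the constants on $m$ and $p$ transparent and avoids the external reference, which is arguably a gain. Two small points: for $p\in(0,1)$ the functional $N\mapsto\bigl(\sum_j|N\vec e_j|^p\bigr)^{1/p}$ is only a quasi-norm, so invoking ``equivalence of norms in finite dimensions'' is slightly loose---it is cleaner to verify the two-sided bound directly from the elementary chain $\max_j|N\vec e_j|\le\|N\|\le\sqrt{m}\,\max_j|N\vec e_j|$ together with $\max_j|N\vec e_j|^p\le\sum_j|N\vec e_j|^p\le m\max_j|N\vec e_j|^p$. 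Also, the preliminary upper bound for $\|A_EM\|$ via an extremal $\vec z_0$ is subsumed by the two-sided column-decomposition chain that follows, so it is logically redundant (though harmless as motivation).
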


\begin{proof}
The necessity of the present proposition is exactly \cite[Lemma 2.10]{bhyy1}.
Conversely, if \eqref{equ_reduceM} holds,
then, for any $\vec z\in\mathbb C^m$, taking $M:=[O_{m,m-1}\ \vec z]$,
which means the first
$m-1$ columns of
$M$ are all $\vec0$ and the
$m$th column is $\vec z$, we obtain
\eqref{equ_reduce}, that is, $A_E$ is a reducing operator of order $p$ for $W$.
This finishes the proof of the sufficiency and hence Proposition \ref{reduceM}.
\end{proof}

The following lemma is exactly \cite[Proposition 6.5]{bhyyNew}.

\begin{lemma}\label{sharp estimate}
Let $p\in(0,\infty)$, $W\in A_{p,\infty}$,
and $\{A_Q\}_{\mathrm{cube}\,Q}$ be a family of
reducing operators of order $p$ for $W$.
Assume that $d_1\in[\![d_{p,\infty}^{\mathrm{lower}}(W),n)$ and
$d_2\in[\![d_{p,\infty}^{\mathrm{upper}}(W),\infty)$.
Then there exists a positive constant $C$ such that,
for any cubes $Q,R\subset\mathbb{R}^n$,
\begin{equation}\label{0127}
\left\|A_QA_R^{-1}\right\|
\le C\max\left\{\left[\frac{l(R)}{l(Q)}\right]^{\frac{d_1}p},
\left[\frac{l(Q)}{l(R)}\right]^{\frac{d_2}p}\right\}
\left[1+\frac{|c_Q-c_R|}{\max\{l(Q),l(R)\}}\right]^{\frac{d_1+d_2}p}.
\end{equation}
\end{lemma}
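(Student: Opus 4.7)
The plan is to estimate $\|A_QA_R^{-1}\|$ by combining the reducing-operator characterization of Proposition \ref{reduceM} with the geometric-mean $A_{p,\infty}$-dimension conditions of Definition \ref{AinftyDim}, via a factorization through a common enclosing cube.

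First I would fix an enclosing cube $K\supset Q\cup R$ with $l(K)\sim\max\{l(Q),l(R)\}[1+|c_Q-c_R|/\max\{l(Q),l(R)\}]$; such a $K$ is given by a bounded dilation of the smallest axis-parallel cube containing both $Q$ and $R$. Submultiplicativity gives
\begin{align*}
\|A_QA_R^{-1}\|\le\|A_QA_K^{-1}\|\cdot\|A_KA_R^{-1}\|,
\end{align*}
so it suffices to prove the two nested-cube bounds: for any cube $S\subset K$ with $\lambda:=l(K)/l(S)\in[1,\infty)$,
\begin{align*}
\|A_SA_K^{-1}\|\le C\lambda^{d_1/p}\quad\text{and}\quad\|A_KA_S^{-1}\|\le C\lambda^{d_2/p}.
\end{align*}
Applying these with $S=Q$ and $S=R$ respectively and checking the two cases $l(Q)\le l(R)$ and $l(R)\le l(Q)$ then produces exactly \eqref{0127}.

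For the first nested bound, submultiplicativity gives, for every $y\in\mathbb R^n$,
\begin{align*}
\|A_SA_K^{-1}\|\le\|A_SW^{-1/p}(y)\|\cdot\|W^{1/p}(y)A_K^{-1}\|.
\end{align*}
Taking logarithms and averaging $y$ over a suitable cube enclosing both $K$ and $S$ (namely $\lambda S$, which has measure $\sim|K|$ since $S\subset K$ forces $|c_S-c_K|_\infty\le l(K)/2$), the second-factor contribution is absorbed into an absolute constant by Jensen's inequality together with $\fint_K\|W^{1/p}(y)A_K^{-1}\|^p\,dy\sim\|A_KA_K^{-1}\|^p=1$ (Proposition \ref{reduceM} with $M=A_K^{-1}$). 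The first-factor contribution is controlled by applying Proposition \ref{reduceM} with $M=W^{-1/p}(y)$ to identify $\|A_SW^{-1/p}(y)\|^p\sim\fint_S\|W^{1/p}(x)W^{-1/p}(y)\|^p\,dx$, and then invoking the $A_{p,\infty}$-lower dimension condition in Definition \ref{AinftyDim} with dilation parameter $\lambda$, which supplies the required geometric-mean bound. Exponentiating yields $\|A_SA_K^{-1}\|\le C\lambda^{d_1/p}$. The second nested bound $\|A_KA_S^{-1}\|\le C\lambda^{d_2/p}$ follows by an entirely analogous argument, with the outer averaging now taken over $S$ and the $A_{p,\infty}$-upper dimension condition replacing the lower one.

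The main obstacle is the careful bookkeeping: the $A_{p,\infty}$-dimension conditions yield only geometric-mean bounds, so one must take logarithms before averaging, precisely match the side on which $W^{\pm 1/p}(y)$ sits to whichever (lower or upper) dimension applies, and track the transitions between the cubes $K$, $S$, and $\lambda S$ which have comparable measures but distinct centers and sizes. Verifying that the two asymmetric nested-cube bounds combine through the enclosing-cube factorization to produce precisely the combined exponent $(d_1+d_2)/p$ on the distance factor, together with the correct $\max$ of $d_1$- and $d_2$-type side-length ratios in \eqref{0127}, is the most delicate step of the argument.
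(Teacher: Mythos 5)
The paper does not prove this lemma; it cites it verbatim from an external reference (\cite[Proposition 6.5]{bhyyNew}), so there is no in-text proof to compare against, and your argument must be assessed on its own merits.

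Your overall strategy --- factoring through a common enclosing cube $K$, reducing to two asymmetric nested-cube bounds, and then verifying that the case analysis $l(Q)\le l(R)$ versus $l(R)\le l(Q)$ reproduces both the $\max$ and the exponent $(d_1+d_2)/p$ --- is sound, and your combination step is correct. However, there is a genuine gap in the nested-cube bound $\|A_SA_K^{-1}\|\lesssim\lambda^{d_1/p}$. The dilate $\lambda S$ is concentric with $S$, not with $K$, so in general $K\not\subset\lambda S$ (one only has $K\subset 2\lambda S$ and $\lambda S\subset 2K$); your claim that $\lambda S$ ``encloses both $K$ and $S$'' is therefore false. More importantly, the estimate $\fint_K\|W^{1/p}(y)A_K^{-1}\|^p\,dy\sim1$ that you invoke for the second factor is an average over $K$, whereas your logarithm is averaged over $\lambda S$; these are different cubes of equal size with non-coinciding centers, and transferring the bound requires $\|A_{\lambda S}A_K^{-1}\|\lesssim1$, which is itself a special case of the very lemma you are proving, so as written the step is circular.

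The repair stays within the spirit of your argument. First establish the concentric case directly: for any cube $Q$ and $\mu\ge1$, $\|A_QA_{\mu Q}^{-1}\|\lesssim\mu^{d_1/p}$ and $\|A_{\mu Q}A_Q^{-1}\|\lesssim\mu^{d_2/p}$, by splitting $\|A_QA_{\mu Q}^{-1}\|\le\|A_QW^{-1/p}(y)\|\cdot\|W^{1/p}(y)A_{\mu Q}^{-1}\|$, taking logarithms of $p$-th powers, averaging over $y\in\mu Q$, and using the lower- (resp.\ upper-) dimension condition for the first factor together with Jensen's inequality and Proposition~\ref{reduceM} for the second. Then, for $S\subset K$ with $\lambda=l(K)/l(S)$, route through $(2\lambda)S$ rather than $\lambda S$. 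One checks $K\subset(2\lambda)S\subset3K$, so $\|A_SA_K^{-1}\|\le\|A_SA_{(2\lambda)S}^{-1}\|\cdot\|A_{(2\lambda)S}A_K^{-1}\|$; the first factor is $\lesssim(2\lambda)^{d_1/p}\sim\lambda^{d_1/p}$ by the concentric case, while the second is bounded because
\begin{align*}
\left\|A_{(2\lambda)S}A_K^{-1}\right\|^p\sim\fint_{(2\lambda)S}\left\|W^{\frac1p}(y)A_K^{-1}\right\|^p\,dy\lesssim\fint_{3K}\left\|W^{\frac1p}(y)A_K^{-1}\right\|^p\,dy\sim\left\|A_{3K}A_K^{-1}\right\|^p\lesssim1,
\end{align*}
the first comparability by Proposition~\ref{reduceM}, the middle inequality by $(2\lambda)S\subset3K$ with $|3K|\sim|(2\lambda)S|$, and the last bound again by the concentric case. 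With these two adjustments your argument closes.
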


The following corollary follows directly from
Lemma \ref{sharp estimate} and Remark \ref{p012}.

\begin{corollary}\label{p01}
Let $p\in(0,1]$, $W\in A_p$, and $\{A_Q\}_{\mathrm{cube}\,Q}$ be a family of
reducing operators of order $p$ for $W$.
Assume that $d\in[\![d_{p,\infty}^{\mathrm{lower}}(W),n)$.
Then there exists a positive constant $C$ such that,
for any cubes $Q,R\subset\mathbb{R}^n$,
\begin{equation*}
\left\|A_QA_R^{-1}\right\|
\le C\max\left\{\left[\frac{l(R)}{l(Q)}\right]^{\frac{d}{p}},
1\right\}
\left[1+\frac{|c_Q-c_R|}{\max\{l(Q),l(R)\}}\right]^{\frac{d}{p}}.
\end{equation*}
\end{corollary}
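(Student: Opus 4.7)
The plan is to obtain Corollary \ref{p01} as an immediate specialization of Lemma \ref{sharp estimate}, by choosing the two free dimension parameters in \eqref{0127} as favorably as the hypotheses allow. First I would invoke Remark \ref{p012}: since $p\in(0,1]$ and $W\in A_p$, we have $W\in A_{p,\infty}$, so Lemma \ref{sharp estimate} is applicable, and moreover $W$ has $A_{p,\infty}$-upper dimension exactly $0$. By the very definition of the symbol $[\![d_{p,\infty}^{\mathrm{upper}}(W),\infty)$ (it includes its left endpoint precisely when that endpoint is attained as an $A_{p,\infty}$-upper dimension), this forces $0\in[\![d_{p,\infty}^{\mathrm{upper}}(W),\infty)$, so the choice $d_2:=0$ is admissible in Lemma \ref{sharp estimate}. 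For the lower dimension parameter, the hypothesis hands us $d\in[\![d_{p,\infty}^{\mathrm{lower}}(W),n)$ directly, so I would simply set $d_1:=d$.

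Substituting $d_1=d$ and $d_2=0$ into \eqref{0127}, the first factor collapses to
\[
\max\left\{\left[\frac{l(R)}{l(Q)}\right]^{\frac{d}{p}},\left[\frac{l(Q)}{l(R)}\right]^{0}\right\}=\max\left\{\left[\frac{l(R)}{l(Q)}\right]^{\frac{d}{p}},1\right\},
\]
while the exponent $(d_1+d_2)/p$ in the second factor becomes $d/p$. This yields the displayed inequality verbatim, with the constant $C$ inherited from Lemma \ref{sharp estimate} (hence depending only on $m$, $p$, $n$, $d$, $[W]_{A_{p,\infty}}$, and the reducing family). There is no real obstacle here: the only point requiring a moment's care is the set-membership $0\in[\![d_{p,\infty}^{\mathrm{upper}}(W),\infty)$, and this is exactly what Remark \ref{p012} guarantees, because it asserts that $W$ \emph{has} $A_{p,\infty}$-upper dimension $0$, not merely that the infimum equals $0$.
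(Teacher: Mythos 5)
Your proposal is correct and follows exactly the paper's (unwritten) argument: invoke Remark \ref{p012} to conclude $W\in A_{p,\infty}$ with $A_{p,\infty}$-upper dimension $0$, so that $0\in[\![d_{p,\infty}^{\mathrm{upper}}(W),\infty)$, then apply Lemma \ref{sharp estimate} with $d_1=d$ and $d_2=0$. Your careful remark about why $0$ lies in the half-open/open interval (because the infimum is actually attained) is exactly the point Remark \ref{p012} is designed to supply.
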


In some applications, it is unnecessary to require a family of matrices
$\mathbb{A}:=\{A_Q\}_{\mathrm{cube}\,Q}$ to be
the reducing operators of a matrix weight.
Instead, weaker properties, such as those estimates in \eqref{0127},
are sufficient.
In analogy to \cite[Definition 2.1]{fr21},
we give the following definition.

\begin{definition}\label{doubling}
Let $\beta_1,\beta_2,\omega\in\mathbb R$.
A family of matrices $\mathbb{A}:=\{A_Q\}_{\mathrm{cube}\,Q}$ is said to be
\begin{enumerate}[\rm(i)]
\item \emph{strongly doubling} of order $(\beta_1,\beta_2,\omega)$
if there exists a positive constant $C$ such that,
for any cubes $Q,R\subset\mathbb{R}^n$,
\begin{align*}
\left\|A_QA_R^{-1}\right\|
\le C\max\left\{\left[\frac{ l(R)}{ l(Q)}\right]^{\beta_1},
\left[\frac{ l(Q)}{ l(R)}\right]^{\beta_2}\right\}
\left[1+\frac{|c_Q-c_R|}{\max\{l(Q),l(R)\}}\right]^{\omega};
\end{align*}
\item \emph{weakly doubling} of order $\omega$
if there exists a positive constant $C$ such that,
for any cubes $Q,R\subset\mathbb{R}^n$ with $l(Q)=l(R)$,
\begin{align*}
\left\|A_QA_R^{-1}\right\|
\le C\left[1+\frac{|c_Q-c_R|}{l(Q)}\right]^\omega.
\end{align*}
\end{enumerate}
\end{definition}

\begin{remark}\label{r2.18}
Let $p\in(0,\infty)$, $W\in A_{p,\infty}$,
and $\{A_Q\}_{\mathrm{cube}\,Q}$ be a family of
reducing operators of order $p$ for $W$.
By Lemma \ref{sharp estimate} and Definition \ref{doubling}, we conclude that
$\{A_Q\}_{\mathrm{cube}\,Q}$ is strongly doubling of order $(\frac{d_1}p,\frac{d_2}p,\frac{d_1+d_2}p)$
and weakly doubling of order $\frac{d_1+d_2}p$. In this sense,
the requirements that $W\in A_{p,\infty}$ and hence the estimates satisfied by
its reducing operators
are stronger assumptions than those requirements in Definition \ref{doubling}.
\end{remark}

Now, we introduce $\mathbb A$-matrix-weighted Hardy spaces.
In what follows, for any $t\in(0,\infty)$, let
\begin{align}\label{deQt}
\mathscr{Q}_t:=\left\{t([0, 1)^n+k):\
k\in\mathbb Z^n\right\}
\end{align}
and
$\mathscr{Q}:=\bigcup_{t\in(0,\infty)}\mathscr{Q}_t$.
For a family of matrices
$\mathbb A:=\{A_Q\}_{Q\in\mathscr Q}$ and any $t\in(0,\infty)$, let
\begin{align}\label{Atref}
A_t:=\sum_{Q\in\mathscr{Q}_t}A_Q\mathbf{1}_Q.
\end{align}
Note that the dyadic version of \eqref{Atref}
[that is, $t$ in \eqref{Atref} equals to $2^j$ for any $j\in\mathbb Z$]
is already known (see, for instance, \cite[(3.8)]{bhyy1}).
Let $a\in(0,\infty)$ and $N\in\mathbb Z_+$.
Then, for any $\vec f\in(\mathcal{S}')^m$,
the \emph{$\mathbb A$-matrix-weighted grand non-tangential maximal function}
$(M_{a,N}^*)_{\mathbb A}(\vec f)$ of $\vec f$ is defined by setting,
for any $x\in\mathbb R^n$,
\begin{align*}
\left(M_{a,N}^*\right)_{\mathbb A}\left(\vec f\right)(x):=
\sup_{\phi\in\mathcal{S}_N}\sup_{t\in(0,\infty)}\sup_{y\in B(x,at)}
\left|A_{t}(x)\phi_t*\vec f(y)\right|.
\end{align*}

\begin{definition}
Let $p\in(0,\infty)$, $N\in\mathbb Z_+$, and
$\mathbb A:=\{A_Q\}_{Q\in\mathscr Q}$
be a family of positive definite matrices.
The \emph{$\mathbb A$-matrix-weighted Hardy space} $H_{\mathbb A,N}^p$
is defined to be the set of all $\vec f\in(\mathcal{S}')^m$ such that
$(M_{1,N}^*)_{\mathbb A}(\vec f)\in L^p$, equipped with the quasi-norm
$\|\vec f\|_{H_{\mathbb A,N}^p}
:=\|(M_{1,N}^*)_{\mathbb A}(\vec f)\|_{L^p}.$
\end{definition}

Next, we introduce various $\mathbb A$-matrix-weighted maximal functions.
The maximal function in Definition \ref{reducing hardy}(iv)
is new even in the scalar case.
This maximal function actually serves as a bridge connecting the
$\mathbb A$-maximal function and the corresponding
matrix-weighted maximal function, playing a key role in the proof of
the equivalences of various matrix-weighted maximal functions.

\begin{definition}\label{reducing hardy}
Let $\psi\in\mathcal{S}$, $N\in\mathbb N$, and $a,b,l\in(0,\infty)$.
Let $\mathbb A:=\{A_Q\}_{Q\in\mathscr Q}$
be a family of positive definite matrices
and $\vec f\in(\mathcal{S}')^m$.
Then the \emph{$\mathbb A$-matrix-weighted radial maximal function}
$M_{\mathbb A}(\vec f,\psi)$,
the \emph{$\mathbb A$-matrix-weighted grand radial maximal function}
$(M_N)_{\mathbb{A}}(\vec f)$,
the \emph{$\mathbb A$-matrix-weighted non-tangential maximal function}
$(M^*_a)_{\mathbb A}(\vec f,\psi)$,
the \emph{$\mathbb A$-matrix-weighted non-tangential infimum maximal function}
$(\widetilde M^*_{a,b})_{\mathbb A}(\vec f,\psi)$,
the \emph{$\mathbb A$-matrix-weighted maximal function
$(M^{**}_l)_{\mathbb A}(\vec f,\psi)$ of Peetre type},
and the \emph{$\mathbb A$-matrix-weighted grand maximal function
$(M^{**}_{l,N})_{\mathbb A}(\vec f,\psi)$ of Peetre type} of $\vec f$
are defined, respectively, by setting, for any $x\in\mathbb R^n$,
\begin{align*}
M_{\mathbb A}\left(\vec f,\psi\right)(x):=\sup_{t\in(0,\infty)}
\left|A_{t}(x)\psi_t*\vec f(x)\right|,\ \left(M_N\right)_{\mathbb{A}}\left(\vec f\right)(x):=
\sup_{\phi\in\mathcal{S}_N}\sup_{t\in(0,\infty)}
\left|A_{t}(x)\phi_t*\vec f(x)\right|,
\end{align*}
\begin{align*}
\left(M^*_a\right)_{\mathbb A}\left(\vec f,\psi\right)(x):=
\sup_{t\in(0,\infty)}\sup_{y\in B(x,at)}
\left|A_{t}(x)\psi_t*\vec f(y)\right|,
\end{align*}
\begin{align*}
\left(\widetilde M^*_{a,b}\right)_{\mathbb A}\left(\vec f,\psi\right)(x):=
\sup_{t\in(0,\infty)}
\max_{\{Q\in\mathscr{Q}_{bt}:\,Q\cap B(x,at)\neq\emptyset\}}
\inf_{y\in Q}\left|A_{t}(x)\psi_t*\vec f(y)\right|,
\end{align*}
\begin{align*}
\left(M^{**}_l\right)_{\mathbb A}\left(\vec f,\psi\right)(x):=
\sup_{t\in(0,\infty)}\sup_{y\in\mathbb R^n}
\left|A_{t}(x)\psi_t*\vec f(x-y)\right|
\left(1+\frac{|y|}{t}\right)^{-l},
\end{align*}
and
\begin{align*}
\left(M^{**}_{l,N}\right)_{\mathbb A}\left(\vec f\right)(x)
:=\sup_{\phi\in\mathcal{S}_N}\sup_{t\in(0,\infty)}\sup_{y\in\mathbb R^n}
\left|A_{t}(x)\phi_t*\vec f(x-y)\right|
\left(1+\frac{|y|}{t}\right)^{-l}.
\end{align*}
\end{definition}

The following theorem establishes the relations among
various $\mathbb A$-matrix-weighted maximal functions.

\begin{theorem}\label{equivalent A}
Let $\psi\in\mathcal{S}$ satisfy $\int_{\mathbb R^n}\psi(x)\,dx\neq0$.
Assume that $N\in\mathbb N$ and $p,a,l\in(0,\infty)$.
Let $\mathbb A:=\{A_Q\}_{Q\in\mathscr{Q}}$
be a family of matrices.
Then the following statements hold.
\begin{enumerate}[\rm(i)]
\item For any $\vec f\in(\mathcal{S}')^m$ and $x\in\mathbb R^n$,
\begin{align*}
M_{\mathbb A}\left(\vec f,\psi\right)(x)
\le\left(M^*_a\right)_{\mathbb A}\left(\vec f,\psi\right)(x)
\le(1+a)^l\left(M^{**}_l\right)_{\mathbb A}\left(\vec f,\psi\right)(x),
\end{align*}
\begin{align*}
(M_N)_{\mathbb A}\left(\vec f\right)(x)
\le\left(M^*_{a,N}\right)_{\mathbb A}\left(\vec f,\psi\right)(x)
\le(1+a)^l\left(M^{**}_{l,N}\right)_{\mathbb A}\left(\vec f\right)(x),
\end{align*}
\begin{align*}
M_{\mathbb A}\left(\vec f,\psi\right)(x)
\le\|\psi\|_{\mathcal S_N}\left(M_N\right)_{\mathbb A}\left(\vec f\right)(x),
\ \
\left(M^*_a\right)_{\mathbb A}\left(\vec f,\psi\right)(x)
\le\|\psi\|_{\mathcal S_N}\left(M^*_{a,N}\right)_{\mathbb A}\left(\vec f,\psi\right)(x),
\end{align*}
\begin{align*}
\mathrm{and}\ \left(M^{**}_l\right)_{\mathbb A}\left(\vec f,\psi\right)(x)
\le\|\psi\|_{\mathcal S_N}\left(M^{**}_{l,N}\right)_{\mathbb A}\left(\vec f\right)(x).
\end{align*}
For any $\widetilde{N}\in\mathbb N$ with $\widetilde{N}\le N$ and for
any $\vec f\in(\mathcal{S}')^m$ and $x\in\mathbb R^n$,
$(M^{**}_{l,N})_{\mathbb A}(\vec f)(x)
\le(M^{**}_{l,\widetilde{N}})_{\mathbb A}(\vec f)(x).$

\item If $\mathbb A$ is weakly doubling of order $\omega$
for some $\omega\in[0,\infty)$ and if $l\in(\omega+\frac np,\infty)$,
then there exists a positive constant $C$ such that,
for any $\vec f\in(\mathcal{S}')^m$,
\begin{align*}
\left\|\left( M^{**}_l\right)_{\mathbb A}
\left(\vec f,\psi\right)\right\|_{L^p}
\le C\left\|\left(M^*_a\right)_{\mathbb A}
\left(\vec f,\psi\right)\right\|_{L^p}.
\end{align*}

\item If $\mathbb A$ is strongly doubling of order $(\beta_1,\beta_2,\omega)$
for some $\beta_1,\beta_2,\omega\in[0,\infty)$ and if $N\geq l+\beta_2$,
then there exists a positive constant $C$ such that,
for any $\vec f\in(\mathcal{S}')^m$ and $x\in\mathbb R^n$,
\begin{align}\label{aaa}
\left(M^{**}_{l,N}\right)_{\mathbb A}\left(\vec f\right)(x)
\le C\left(M^{**}_l\right)_{\mathbb A}\left(\vec f,\psi\right)(x).
\end{align}

\item
If $\mathbb A$ is strongly doubling of order $(\beta_1,\beta_2,\omega)$
for some $\beta_1,\beta_2,\omega\in[0,\infty)$,
then there exists a positive constant $C$ such that,
for any $\vec f\in(\mathcal{S}')^m$,
$\|(M^*_a)_{\mathbb A}(\vec f,\psi)\|_{L^p}
\le C\|M_{\mathbb A}(\vec f,\psi)\|_{L^p}.$

\item If $\mathbb A$ is strongly doubling of order $(\beta_1,\beta_2,\omega)$
for some $\beta_1,\beta_2,\omega\in[0,\infty)$,
then there exist $\delta\in(0,\infty)$ and a positive constant
$C$ such that, for any $b\in(0,\delta]$
and $\vec f\in(\mathcal{S}')^m$,
$\|(M^*_a)_{\mathbb A}(\vec f,\psi)\|_{L^p}
\le C\|(\widetilde M^*_{a,b})_{\mathbb A}(\vec f,\psi)\|_{L^p}$
and, for any $b\in(0,\infty)$, $\vec f\in(\mathcal S')^m$, and $x\in\mathbb R^n$,
\begin{align}\label{zm}
\left(\widetilde M^*_{a,b}\right)_{\mathbb A}\left(\vec f,\psi\right)(x)
\le\left(M^*_a\right)_{\mathbb A}\left(\vec f,\psi\right)(x).
\end{align}
\end{enumerate}
\end{theorem}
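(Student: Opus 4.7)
The plan is to prove the five parts in the order (i), (ii), (iii), (v), (iv), since the absorption argument that yields (v) rests on (ii) and (iii), while (iv) in turn will follow most cleanly by combining (v) with an infimum-averaging argument. Part (i) is immediate from the definitions: taking $y=x$ in the supremum defining $(M^*_a)_{\mathbb A}$ yields $M_{\mathbb A}\le(M^*_a)_{\mathbb A}$; the estimate $(M^*_a)_{\mathbb A}\le(1+a)^l(M^{**}_l)_{\mathbb A}$ follows because $(1+|y-x|/t)^{-l}\ge(1+a)^{-l}$ whenever $y\in B(x,at)$; the grand analogues are identical; the inequalities carrying $\|\psi\|_{\mathcal S_N}$ come from $\psi/\|\psi\|_{\mathcal S_N}\in\mathcal S_N$; and monotonicity in $N$ holds because $\|\cdot\|_{\mathcal S_{\widetilde N}}\le\|\cdot\|_{\mathcal S_N}$ when $\widetilde N\le N$, so $\mathcal S_N\subset\mathcal S_{\widetilde N}$.

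For (ii), I would run a Fefferman--Stein Peetre-type argument adapted to the matrix setting. Fix $t>0$ and $x,y\in\mathbb R^n$; for $z$ in a ball around $x-y$ of radius comparable to $\alpha t$ with $\alpha\le a$, decompose $A_t(x)\psi_t*\vec f(x-y)=[A_t(x)A_t(z)^{-1}]\cdot A_t(z)\psi_t*\vec f(x-y)$. The weak doubling of $\mathbb A$ controls $\|A_t(x)A_t(z)^{-1}\|$ by a constant multiple of $(1+|y|/t)^\omega$, while $x-y\in B(z,at)$ bounds $|A_t(z)\psi_t*\vec f(x-y)|\le(M^*_a)_{\mathbb A}(\vec f,\psi)(z)$. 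Choosing $r\in(n/(l-\omega),p)$ (possible because $l>n/p+\omega$), raising to the $r$-th power, averaging in $z$, and translating the ball back to be centered at $x$ yields the pointwise bound
\begin{align*}
(M^{**}_l)_{\mathbb A}(\vec f,\psi)(x)\lesssim\mathcal M\bigl[(M^*_a)_{\mathbb A}(\vec f,\psi)^r\bigr](x)^{1/r},
\end{align*}
where $\mathcal M$ is the Hardy--Littlewood maximal operator, and its $L^{p/r}$-boundedness closes the argument.

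For (iii), I would deploy a Calder\'on reproducing formula of the form $\phi_t*\vec f(x-y)=\int_0^\infty\int_{\mathbb R^n}(\phi_t*\eta_s)(u)\,(\psi_s*\vec f)(x-y-u)\,du\,ds/s$, with $\eta\in\mathcal S$ chosen from $\psi$. Inserting $A_s(x-y-u)^{-1}A_s(x-y-u)$ and applying the strong doubling of $\mathbb A$ produces a factor $\max\{(s/t)^{\beta_1},(t/s)^{\beta_2}\}(1+|y+u|/\max\{s,t\})^\omega$, while the Peetre definition of $(M^{**}_l)_{\mathbb A}(\vec f,\psi)$ bounds $|A_s(x-y-u)\psi_s*\vec f(x-y-u)|$ by $(1+|y+u|/s)^l(M^{**}_l)_{\mathbb A}(\vec f,\psi)(x)$. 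Standard Schwartz kernel estimates give decay of the form $|(\phi_t*\eta_s)(u)|\lesssim\min\{s/t,t/s\}^{N+1}(s+t)^{-n}(1+|u|/\max\{s,t\})^{-N-n-1}$, and the hypothesis $N\ge l+\beta_2$ is exactly what is needed to make the resulting $(s,u)$-integral converge at both endpoints after dividing by $(1+|y|/t)^l$ and taking the suprema in $\phi,t,y$.

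For (v), the pointwise inequality \eqref{zm} is a one-line observation: for any $Q\in\mathscr Q_{bt}$ with $Q\cap B(x,at)\neq\emptyset$, picking $y_0\in Q\cap B(x,at)$ bounds the inf by $|A_t(x)\psi_t*\vec f(y_0)|\le(M^*_a)_{\mathbb A}(\vec f,\psi)(x)$. For the $L^p$-reverse, I would fix $y\in B(x,at)$ and let $Q\in\mathscr Q_{bt}$ contain $y$; then for $z\in Q$, $|y-z|\le\sqrt n\,bt$, and the mean value theorem combined with $\nabla(\psi_t*\vec f)=t^{-1}(\nabla\psi)_t*\vec f$ produces $|A_t(x)[\psi_t*\vec f(y)-\psi_t*\vec f(z)]|\lesssim b\,(M^{**}_l)_{\mathbb A}(\vec f,\nabla\psi)(x)$. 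Taking $\inf_{z\in Q}$ and $\sup_{y,t}$ then gives $(M^*_a)_{\mathbb A}(\vec f,\psi)\le(\widetilde M^*_{a,b})_{\mathbb A}(\vec f,\psi)+Cb\,(M^{**}_l)_{\mathbb A}(\vec f,\nabla\psi)$; applying (iii) and (ii) to $\nabla\psi\in\mathcal S$ bounds the $L^p$-norm of the last term by $\|(M^*_a)_{\mathbb A}(\vec f,\psi)\|_{L^p}$, so that choosing $\delta$ small enough to absorb $Cb$ for $b\in(0,\delta]$ (after a routine truncation to secure finiteness on the left) proves the claim. Finally, for (iv), using (v) reduces matters to $\|(\widetilde M^*_{a,b})_{\mathbb A}(\vec f,\psi)\|_{L^p}\lesssim\|M_{\mathbb A}(\vec f,\psi)\|_{L^p}$; here $[\inf_Q g]^r\le\fint_Q g^r$ for $r\in(0,p)$, the strong doubling estimate $\|A_t(x)A_t(y)^{-1}\|\lesssim 1$ for $y\in Q$ (since $|x-y|\lesssim t$), and $|A_t(y)\psi_t*\vec f(y)|\le M_{\mathbb A}(\vec f,\psi)(y)$ yield a dominating average that is at most $\mathcal M[M_{\mathbb A}(\vec f,\psi)^r](x)$, and $L^{p/r}$-boundedness completes the argument. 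The main obstacle I anticipate is part (iii): one must balance precisely the Schwartz decay of $\phi_t*\eta_s$ against the polynomial growth from the strong doubling of $\mathbb A$ in both the dilation ratio $t/s$ and the spatial variable $u$, and this balance is exactly what forces the threshold $N\ge l+\beta_2$; any weakening would make the $s$-integral diverge at one endpoint.
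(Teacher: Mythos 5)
Parts (i) and (ii) are fine and follow essentially the same route as the paper: (i) is a definition-chase, and (ii) is the Peetre--Fefferman--Stein averaging argument in which the weak doubling factor $(1+|y|/t)^\omega$ is absorbed into the exponent $l$ before applying the Hardy--Littlewood maximal operator at height $r\in(n/(l-\omega),p)$.

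Part (iii) contains a genuine gap. You propose a two-sided Calder\'on reproducing formula of the form $\phi_t*\vec f(x-y)=\int_0^\infty\int(\phi_t*\eta_s)(u)\,(\psi_s*\vec f)(x-y-u)\,du\,ds/s$, together with the kernel estimate $|(\phi_t*\eta_s)(u)|\lesssim\min\{s/t,t/s\}^{N+1}(s+t)^{-n}(1+|u|/\max\{s,t\})^{-N-n-1}$. The branch $(s/t)^{N+1}$ for $s\le t$ indeed follows from vanishing moments of $\eta$; but the branch $(t/s)^{N+1}$ for $s>t$ would require vanishing moments of $\phi$, and a generic $\phi\in\mathcal S_N$ has none. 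In fact, for $s\gg t$, $\phi_t*\eta_s(u)=\hat\phi(0)\eta_s(u)+O\bigl(t/s\bigr)$, so $\|\phi_t*\eta_s\|_{L^1}\sim 1$ with no decay; coupling this with the Peetre weight (whose $u$-integral against $\eta_s$ at scale $s$ contributes roughly $(s/t)^l$) and the strong-doubling factor $(s/t)^{\beta_1}$, your $s$-integral diverges at $s\to\infty$. Note also that your dimensional count at this endpoint would involve $\beta_1$, not $\beta_2$, whereas the hypothesis is $N\ge l+\beta_2$, so the threshold would not even match. The paper avoids all of this by invoking \cite[Lemma 2.1.5]{g14m}, which writes $\phi=\int_0^1\Theta^{(s)}*\psi_s\,ds$ with the moment estimate $\int(1+|z|)^N|\Theta^{(s)}(z)|\,dz\lesssim s^N$; after dilation every scale appearing is $st\le t$, so one only ever needs the $\beta_2$ direction of strong doubling, and there is no $s\to\infty$ endpoint to worry about. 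Your argument needs to be replaced by (or restricted to) such a one-sided decomposition.

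Parts (iv) and (v) are, up to the detail just flagged, in good shape and in fact take a somewhat different and arguably cleaner route than the paper's. Your pointwise estimate $(M^*_a)_{\mathbb A}(\vec f,\psi)\le(\widetilde M^*_{a,b})_{\mathbb A}(\vec f,\psi)+Cb\sum_j(M^{**}_l)_{\mathbb A}(\vec f,\partial_j\psi)$ via the mean value theorem, followed by absorption, is essentially the role that the auxiliary gradient maximal function $U^*_a$ and the good set $E_\varepsilon$ play in the paper. One small care point: you apply (iii) to $\nabla\psi$, but $\partial_j\psi$ has zero integral and therefore cannot serve as the $\psi$ of (iii); the correct route is to observe that $\partial_j\psi/\|\partial_j\psi\|_{\mathcal S_N}\in\mathcal S_N$, so $(M^{**}_l)_{\mathbb A}(\vec f,\partial_j\psi)\lesssim(M^{**}_{l,N})_{\mathbb A}(\vec f)$, and only then use (iii) with the \emph{original} $\psi$ to bound $(M^{**}_{l,N})_{\mathbb A}(\vec f)\lesssim(M^{**}_l)_{\mathbb A}(\vec f,\psi)$, after which (ii) yields the $L^p$ bound. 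The truncation you wave at as ``routine'' is not entirely so: the a priori finiteness of $\|(M^*_a)_{\mathbb A}(\vec f,\psi)\|_{L^p}$ is obtained in the paper via the weights $(t/(t+\varepsilon))^K(1+\varepsilon|y|)^{-K}$ and a two-step argument (first a $K$-dependent, hence $\vec f$-dependent, constant to get qualitative membership in $L^p$, then a rerun at $\varepsilon=0$ for a clean constant); your argument should at least acknowledge this two-pass structure. Your derivation of (iv) from (v) plus the elementary observation $[\inf_Q g]^r\le\fint_Q g^r$ and the strong-doubling bound $\|A_t(x)A_t(y)^{-1}\|\lesssim 1$ for $|x-y|\lesssim t$ is correct; the paper instead proves (iv) and (v) in parallel with the same $E_\varepsilon$/$U^*_a$ machinery and only uses the averaging trick later in the proof of Theorem \ref{weight and reducing}, so your ordering represents a legitimate alternative streamlining.
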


To prove Theorem \ref{equivalent A}, we need the following several technical lemmas.
The following conclusion is a simple application of \cite[Appendix B.3]{g14m}.

\begin{lemma}\label{lucas}
Let $L\in\mathbb Z_+$, $M\in(0,\infty)$,
and $\varphi,\psi\in\mathcal{S}$.
Suppose that, for any $\alpha\in\mathbb Z_+^n$ with $|\alpha|\le L$,
$\int_{\mathbb R^n}x^\alpha\psi(x)\,dx=0$.
Then there exists a positive constant $C$ such that,
for any $s,t\in(0,\infty)$ with $s\le t$ and for any $x\in\mathbb R^n$,
$|\varphi_t*\psi_s(x)|
\le C\frac{s^{L+1}}{t^{n+L+1}(1+t^{-1}|x|)^M}.$
\end{lemma}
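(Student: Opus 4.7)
The plan is to reduce this to a standard almost-orthogonality estimate by exploiting the vanishing moments of $\psi$ through a Taylor expansion of $\varphi_t$. Since $s\le t$, the smaller scale belongs to $\psi_s$ (which has the moment conditions), while $\varphi_t$ is the smooth object to be expanded. Writing $\varphi_t*\psi_s(x)=\int_{\mathbb{R}^n}\varphi_t(x-y)\psi_s(y)\,dy$ and letting $P_y$ denote the order-$L$ Taylor polynomial of $y\mapsto\varphi_t(x-y)$ at the origin, the hypothesis $\int x^\alpha\psi(x)\,dx=0$ for $|\alpha|\le L$ (which transfers to $\psi_s$ by scaling) gives $\int_{\mathbb{R}^n}P_y(y)\psi_s(y)\,dy=0$, so
\begin{align*}
\varphi_t*\psi_s(x)=\int_{\mathbb{R}^n}\bigl[\varphi_t(x-y)-P_y(y)\bigr]\psi_s(y)\,dy.
\end{align*}

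The first key step is to bound the Taylor remainder using the integral form: for some $|\alpha|=L+1$,
\begin{align*}
\left|\varphi_t(x-y)-P_y(y)\right|\lesssim |y|^{L+1}\sup_{\theta\in[0,1]}\left|(\partial^\alpha\varphi_t)(x-\theta y)\right|,
\end{align*}
and then to use the Schwartz decay $|(\partial^\alpha\varphi)(z)|\lesssim_{M'}(1+|z|)^{-M'}$ together with the scaling identity $(\partial^\alpha\varphi_t)(z)=t^{-n-|\alpha|}(\partial^\alpha\varphi)(z/t)$, yielding the pointwise bound
\begin{align*}
\sup_{\theta\in[0,1]}\left|(\partial^\alpha\varphi_t)(x-\theta y)\right|\lesssim t^{-n-L-1}\sup_{\theta\in[0,1]}\left(1+\frac{|x-\theta y|}{t}\right)^{-M'}.
\end{align*}

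The main obstacle, and the step that requires the most care, is handling the supremum over $\theta$ in a way that yields a clean factor $(1+|x|/t)^{-M}$ independent of $y$. This is done by the standard near/far splitting: if $|y|\le |x|/2$, then $|x-\theta y|\ge |x|/2$ uniformly in $\theta$, so the supremum is dominated by $(1+|x|/t)^{-M'}$ up to a constant; if $|y|>|x|/2$, one instead estimates the supremum by $1$ and absorbs the loss into $(1+|y|/t)^{M'}$ at the cost of the inequality $(1+|x-\theta y|/t)^{-M'}\lesssim(1+|x|/t)^{-M'}(1+|y|/t)^{M'}$ valid for all $x,y$ and $\theta\in[0,1]$. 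Feeding this back gives
\begin{align*}
\left|\varphi_t*\psi_s(x)\right|\lesssim \frac{1}{t^{n+L+1}}\left(1+\frac{|x|}{t}\right)^{-M'}\int_{\mathbb{R}^n}|y|^{L+1}\left(1+\frac{|y|}{t}\right)^{M'}|\psi_s(y)|\,dy.
\end{align*}

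Finally, I would change variables $y=su$ and use the Schwartz decay $|\psi(u)|\lesssim_K(1+|u|)^{-K}$ to write the remaining integral as $s^{L+1}\int_{\mathbb{R}^n}|u|^{L+1}(1+s|u|/t)^{M'}(1+|u|)^{-K}\,du$. Since $s\le t$, the factor $(1+s|u|/t)^{M'}$ is bounded by $(1+|u|)^{M'}$, so choosing $K$ large enough (specifically $K>M'+L+n+1$) makes the integral converge to a finite constant. Taking $M'=M$ then produces exactly the desired estimate $|\varphi_t*\psi_s(x)|\le C\,s^{L+1}t^{-n-L-1}(1+|x|/t)^{-M}$, completing the argument. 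The only genuinely subtle point is the $\theta$-uniform control of the remainder, and this is precisely the content invoked from \cite[Appendix B.3]{g14m}.
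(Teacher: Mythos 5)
Your proof is correct and follows the standard Taylor-expansion argument that underlies the paper's citation of \cite[Appendix B.3]{g14m}: subtract the degree-$L$ Taylor polynomial of $\varphi_t(x-\cdot)$ using the vanishing moments of $\psi_s$, bound the remainder by $|y|^{L+1}$ times $\sup_{\theta\in[0,1]}|\partial^\alpha\varphi_t(x-\theta y)|$, extract the decay via the Peetre-type inequality $(1+|x-\theta y|/t)^{-M}\le(1+|x|/t)^{-M}(1+|y|/t)^{M}$, and rescale $y=su$ with $s\le t$ to produce the factor $s^{L+1}$. One small remark: the near/far case split is unnecessary, since the Peetre inequality you quote already covers both regimes uniformly in $\theta\in[0,1]$.
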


\begin{lemma}\label{approximation function 1}
Let $p\in(0,\infty)$ and $\mathbb A:=\{A_Q\}_{Q\in\mathscr{Q}}$
be strongly doubling of order $(\beta_1,\beta_2,\omega)$ for some $\beta_1,\beta_2,\omega\in[0,\infty)$.
Assume that $a\in(0,\infty)$ and $\psi\in\mathcal{S}$.
For any $\varepsilon,K\in[0,\infty)$,
$\vec f\in(\mathcal S')^m$, and $x\in\mathbb R^n$, let
\begin{align}\label{M* epsilon}
(M^*_a)_{\mathbb A}^{\varepsilon,K}\left(\vec f,\psi\right)(x)
:=\sup_{t\in(0,\varepsilon^{-1})}\sup_{y\in B(x,at)}
\left|A_{t}(x)\psi_t*\vec f(y)\right|
\left(\frac t{t+\varepsilon}\right)^K
\left(\frac 1{1+\varepsilon|y|}\right)^K,
\end{align}
where $0^{-1}:=\infty$ and $A_t$ for any
$t\in(0,\varepsilon^{-1})$ is the same as in \eqref{Atref}.
Then there exists $\widetilde K\in(0,\infty)$,
depending only on $p$, $\beta_1$, $\omega$, $\psi$, $\vec f$, and $n$, such that,
for any $\varepsilon\in(0,\infty)$ and $K\in(\widetilde K,\infty)$,
\begin{align*}
\left(M^*_a\right)_{\mathbb A}^{\varepsilon,K}\left(\vec f,\psi\right)
\in L^p\cap L^\infty.
\end{align*}
\end{lemma}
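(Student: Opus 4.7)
The plan is to exhibit a pointwise polynomial bound on $(M^*_a)_{\mathbb{A}}^{\varepsilon,K}(\vec f,\psi)(x)$ in which the damping factors $(t/(t+\varepsilon))^K$ and $(1+\varepsilon|y|)^{-K}$ absorb, for $K$ sufficiently large, both the blowup in $t$ at $0$ caused by the tempered-distribution nature of $\vec f$ and the growth in $|y|$ (hence in $|x|$) caused by the polynomial growth of the matrices $A_t(x)$. Once such a bound is in place, boundedness in $L^\infty$ and integrability in $L^p$ will follow by comparing exponents.

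First, fix a reference cube $Q_0:=[0,1)^n\in\mathscr{Q}_1$ and, for each $t\in(0,\infty)$ and $x\in{\mathbb{R}^n}$, write $A_t(x)=A_{Q_t(x)}$, where $Q_t(x)\in\mathscr{Q}_t$ is the unique cube containing $x$. The strong doubling of $\mathbb{A}$ of order $(\beta_1,\beta_2,\omega)$, after bounding $|c_{Q_t(x)}-c_{Q_0}|$ by a constant multiple of $1+|x|+t$ and $\max\{l(Q_t(x)),l(Q_0)\}$ by $\max\{t,1\}$, yields
\begin{align*}
\|A_t(x)\|\le\left\|A_{Q_t(x)}A_{Q_0}^{-1}\right\|\left\|A_{Q_0}\right\|\lesssim\max\left\{t^{-\beta_1},t^{\beta_2}\right\}(1+|x|)^\omega.
\end{align*}
On the other hand, since $\vec f\in(\mathcal{S}')^m$, there exist $M_0,N_0\in\mathbb Z_+$ and $C_{\vec f}\in(0,\infty)$ such that, for any $\phi\in\mathcal{S}$, $|\langle \vec f,\phi\rangle|\le C_{\vec f}\sum_{|\alpha|\le M_0,|\beta|\le N_0}\sup_z|z^\alpha\partial^\beta\phi(z)|$. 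Applying this with $\phi(z):=\psi_t(y-z)$, splitting $|z|^{|\alpha|}\lesssim(1+|y|)^{|\alpha|}+t^{|\alpha|}|(y-z)/t|^{|\alpha|}$, and invoking the Schwartz decay of $\psi$ to control $t^{-n-|\beta|}|(y-z)/t|^{|\alpha|}|(\partial^\beta\psi)((y-z)/t)|$, I would obtain
\begin{align*}
\left|\psi_t*\vec f(y)\right|\le C_{\psi,\vec f}(1+|y|)^{M_0}\left(1+t^{-1}\right)^{L_0}
\end{align*}
for some $L_0\in\mathbb Z_+$ depending only on $\psi$, $\vec f$, and $n$.

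Combining the two estimates and using $t^{\beta_2}\le\varepsilon^{-\beta_2}$ whenever $t<\varepsilon^{-1}$ (absorbed into an $\varepsilon$-dependent constant) gives
\begin{align*}
\left|A_t(x)\psi_t*\vec f(y)\right|\le C_\varepsilon(1+|x|)^\omega(1+|y|)^{M_0}\left(1+t^{-1}\right)^{L_0+\beta_1}.
\end{align*}
To tame the $t^{-1}$ blowup, I would split into $t\in(0,\varepsilon)$ and $t\in[\varepsilon,\varepsilon^{-1})$: on the former, $(t/(t+\varepsilon))^K\le(t/\varepsilon)^K$ beats $(1+t^{-1})^{L_0+\beta_1}$ as long as $K\ge L_0+\beta_1$, while on the latter, both factors are dominated by $\varepsilon$-dependent constants. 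Using the elementary inequality $1+|y|\le(1+\varepsilon^{-1})(1+\varepsilon|y|)$, the factor $(1+\varepsilon|y|)^{-K}$ absorbs $(1+|y|)^{M_0}$ and leaves the residual decay $(1+\varepsilon|y|)^{M_0-K}$.

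Finally, since $y\in B(x,at)$ with $t<\varepsilon^{-1}$ forces $|y|\ge|x|-a\varepsilon^{-1}$, the supremum over admissible $y$ of $(1+\varepsilon|y|)^{M_0-K}$ is bounded by $C_\varepsilon(1+\varepsilon|x|)^{M_0-K}$ once $|x|>2a\varepsilon^{-1}$ and by a constant on the complementary compact set. Hence pointwise
\begin{align*}
(M^*_a)_{\mathbb{A}}^{\varepsilon,K}\left(\vec f,\psi\right)(x)\le C_\varepsilon(1+|x|)^\omega(1+\varepsilon|x|)^{M_0-K},
\end{align*}
which is uniformly bounded whenever $K\ge\omega+M_0$ and, by a direct exponent count at infinity, belongs to $L^p$ whenever $K>\omega+M_0+n/p$. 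Taking $\widetilde K:=\max\{L_0+\beta_1,\omega+M_0+n/p\}$, which depends only on $p,\beta_1,\omega,\psi,\vec f,n$ as required, closes the argument. The main technical obstacle I anticipate is the bookkeeping in step two: extracting the polynomial bound on $|\psi_t*\vec f(y)|$ with clean joint dependence on $(y,t)$ and the correct exponents $M_0,L_0$ from the continuity estimate for $\vec f$, and verifying that no $\beta_2$ enters $\widetilde K$, which is ultimately guaranteed by the truncation $t<\varepsilon^{-1}$.
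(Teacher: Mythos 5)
Your argument is correct and follows the same strategy as the paper's proof: bound $\|A_t(x)\|$ via strong doubling against a fixed reference matrix, bound $|\psi_t*\vec f(y)|$ via the polynomial growth estimate from $\vec f\in(\mathcal{S}')^m$, and then observe that the damping factors in \eqref{M* epsilon} overpower both blowups and leave polynomial decay in $|x|$, yielding $L^p\cap L^\infty$ membership for $K$ large. A minor presentational blemish: your intermediate bound $|\psi_t*\vec f(y)|\le C_{\psi,\vec f}(1+|y|)^{M_0}(1+t^{-1})^{L_0}$ omits a $(1+t)^{M_0}$ growth factor for large $t$, but since you only use it on $t<\varepsilon^{-1}$ this is harmlessly absorbed into the $\varepsilon$-dependent constants, exactly as in the paper's estimate, which keeps an explicit $(1+t^L)$ factor before absorbing it in the same way.
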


\begin{proof}
Let $\varepsilon\in(0,\infty)$. By Definition \ref{doubling}(i), we find that,
for any $t\in(0,\varepsilon^{-1})$ and $x\in\mathbb R^n$,
$$
\left\|A_{t}(x)\right\|
\le\left\|A_{t}(x)\left[A_{\varepsilon^{-1}}(\mathbf{0})\right]^{-1}\right\|
\left\|A_{\varepsilon^{-1}}(\mathbf{0})\right\|
\lesssim t^{-\beta_1}(1+\varepsilon|x|)^\omega,
$$
where the implicit positive constant depends on $\varepsilon$.
From the arguments used in \cite[pp.\,64--65]{g14m} via replacing
$f$ therein by $\vec f$, we infer that
there exists $L\in\mathbb Z_+$,
depending only on $\psi$ and $\vec f$, such that,
for any $t\in(0,\infty)$ and $y\in\mathbb R^n$,
\begin{align*}
\left|\psi_t*\vec f(y)\right|
\lesssim (1+\varepsilon|y|)^L\left(1+t^L\right)\left(t^{-n}+t^{-n-L}\right).
\end{align*}
These further imply that, for any $t\in(0,\varepsilon^{-1})$,
$K\in[n+L+\beta_1,\infty)$,
$x\in\mathbb R^n$, and $y\in B(x,at)$,
\begin{align}\label{qwert}
&\left|A_{t}(x)\psi_t*\vec f(y)\right|
\left(\frac t{t+\varepsilon}\right)^K
\left(\frac 1{1+\varepsilon|y|}\right)^K\\
&\quad\lesssim \left(1+\varepsilon|x|\right)^\omega
\left(t^{K-n-\beta_1}+t^{K-n-L-\beta_1}\right)
\left(\frac 1{t+\varepsilon}\right)^K
\left(\frac 1{1+\varepsilon|y|}\right)^{K-L}
\lesssim\left(1+\varepsilon|x|\right)^\omega
\left(\frac 1{1+\varepsilon|y|}\right)^{K-L}.\nonumber
\end{align}
Notice that
$1+\varepsilon|x|
\le 1+\varepsilon|y|+\varepsilon|x-y|
< 1+\varepsilon|y|+\varepsilon at
< (1+a)(1+\varepsilon|y|).$
From this, \eqref{qwert}, and the definition of
$(M^*_a)_{\mathbb A}^{\varepsilon,K}(\vec f,\psi)$, we deduce that
\begin{align*}
\left(M^*_a\right)_{\mathbb A}^{\varepsilon,K}\left(\vec f,\psi\right)(x)
\lesssim\left(\frac 1{1+\varepsilon|x|}\right)^{K-L-\omega}.
\end{align*}
Thus, for any $K\in(L+\max\{n+\beta_1,\omega+\frac np\},\infty)$,
$
(M^*_a)_{\mathbb A}^{\varepsilon,K}(\vec f,\psi)
\in L^p\cap L^\infty.
$
This finishes the proof of Lemma \ref{approximation function 1}.
\end{proof}

In the remainder of this article, we use $\mathcal{M}$ to denote
the classical \emph{Hardy--Littlewood maximal operator}, which
is defined by setting, for any $f\in L^1_{\rm loc}$ and $x\in\mathbb{R}^n$,
$\mathcal{M}(f)(x):=\sup_{x\in B}\fint_{B}|f(y)|\,dy,$
where the supremum is taken over all balls $B$ in $\mathbb{R}^n$ containing $x$.

\begin{lemma}\label{approximation function 2}
Let $p\in(0,\infty)$ and $\mathbb A:=\{A_Q\}_{Q\in\mathscr{Q}}$
be weakly doubling of order $\omega$ for some $\omega\in[0,\infty)$.
Assume that $a,l\in(0,\infty)$, $\varepsilon,K\in[0,\infty)$,
and $\psi\in\mathcal{S}$.
For any $\vec f\in(\mathcal S')^m$ and $x\in\mathbb R^n$, let
\begin{align*}
\left(M^{**}_l\right)_{\mathbb A}^{\varepsilon,K}\left(\vec f,\psi\right)(x)
:=\sup_{t\in(0,\varepsilon^{-1})}\sup_{y\in\mathbb R^n}
\left|A_{t}(x)\psi_t*\vec f(x-y)\right|
\left(1+\frac{|y|}{t}\right)^{-l}\left(\frac t{t+\varepsilon}\right)^K
\left(\frac 1{1+\varepsilon|x-y|}\right)^K,\notag
\end{align*}
where $0^{-1}:=\infty$ and $A_t$ for any
$t\in(0,\varepsilon^{-1})$ is the same as in \eqref{Atref}.
If $l\in(\omega+\frac np,\infty)$, then
there exists a positive constant $C$,
independent of $\varepsilon$ and $K$, such that,
for any $\vec f\in(\mathcal S')^m$,
\begin{align*}
\left\|\left(M^{**}_l\right)_{\mathbb A}^{\varepsilon,K}
\left(\vec f,\psi\right)\right\|_{L^p}
\le C\left\|\left(M^*_a\right)_{\mathbb A}^{\varepsilon,K}
\left(\vec f,\psi\right)\right\|_{L^p},
\end{align*}
where $(M^*_a)_{\mathbb A}^{\varepsilon,K}(\vec f,\psi)$
is the same as in \eqref{M* epsilon}.
\end{lemma}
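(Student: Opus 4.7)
The plan is to reduce the estimate to the $L^{p/r}$-boundedness of the Hardy--Littlewood maximal operator $\mathcal M$ for a well-chosen $r\in(0,p)$. Since the hypothesis $l > \omega + n/p$ gives $n/(l-\omega) < p$, we may fix $r \in (n/(l-\omega),p)$. The heart of the argument is the pointwise bound
\begin{align*}
(M^{**}_l)^{\varepsilon,K}_{\mathbb A}\left(\vec f,\psi\right)(x)
\le C\left[\mathcal M\left(\left[(M^*_a)^{\varepsilon,K}_{\mathbb A}\left(\vec f,\psi\right)\right]^r\right)(x)\right]^{1/r}
\end{align*}
with $C$ independent of $\varepsilon$ and $K$. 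Once this is in place, taking $L^p$-norms and invoking the $L^{p/r}$-boundedness of $\mathcal M$ (legitimate because $p/r > 1$) immediately yields the desired inequality, with the same uniformity in $\varepsilon$ and $K$.

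To prove the pointwise bound, I would fix $x$, $t\in(0,\varepsilon^{-1})$, and $y\in\mathbb R^n$, and set $z:=x-y$. For any auxiliary point $x'\in B(z,at)$, one has $z\in B(x',at)$ and $|x-x'|\le |y|+at$. The weak doubling of $\mathbb A$ of order $\omega$ [Definition \ref{doubling}(ii)] then yields
\begin{align*}
\left\|A_t(x)A_t(x')^{-1}\right\|
\lesssim \left(1+\frac{|x-x'|}{t}\right)^{\omega}
\lesssim \left(1+\frac{|y|}{t}\right)^{\omega}.
\end{align*}
Inserting the factorisation $A_t(x) = [A_t(x)A_t(x')^{-1}]A_t(x')$ and observing that the damping factors $(\frac{t}{t+\varepsilon})^K(1+\varepsilon|z|)^{-K}$ appear identically in both maximal functions, we obtain, for every $x'\in B(z,at)$,
\begin{align*}
\left|A_t(x)\psi_t*\vec f(z)\right|\left(1+\frac{|y|}{t}\right)^{-l}\left(\frac{t}{t+\varepsilon}\right)^K(1+\varepsilon|z|)^{-K}
\lesssim \left(1+\frac{|y|}{t}\right)^{\omega-l}(M^*_a)^{\varepsilon,K}_{\mathbb A}\left(\vec f,\psi\right)(x').
\end{align*}

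Raising to the $r$-th power, averaging over $x'\in B(z,at)$, and then enlarging the ball to $B(x,at+|y|)$ to recentre at $x$, the right-hand side is dominated by
\begin{align*}
\left(1+\frac{|y|}{t}\right)^{r(\omega-l)+n}\mathcal M\left(\left[(M^*_a)^{\varepsilon,K}_{\mathbb A}\left(\vec f,\psi\right)\right]^r\right)(x).
\end{align*}
The choice $r>n/(l-\omega)$ makes the exponent $r(\omega-l)+n$ nonpositive, so the $|y|/t$-factor is harmless, and taking suprema in $t$ and $y$ yields the desired pointwise inequality. The main technical obstacle is precisely the step of transporting the weight $A_t(x)$ to a nearby $A_t(x')$: this is exactly the role of the weak-doubling assumption, which introduces the polynomial loss $(1+|y|/t)^{\omega}$, and the quantitative hypothesis $l>\omega+n/p$ is exactly what is needed to absorb this loss within the Peetre factor while still leaving room to choose $r\in(n/(l-\omega),p)$ so that the Hardy--Littlewood inequality can close the argument.
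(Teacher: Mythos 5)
Your proof is correct and follows essentially the same route as the paper's: recenter $A_t(x)$ to a nearby point $x'$ close to $z=x-y$, use weak doubling to control $\|A_t(x)[A_t(x')]^{-1}\|$ by $(1+|y|/t)^{\omega}$, average a fixed power over the ball $B(z,at)$, enlarge to $B(x,at+|y|)$ to reach the Hardy--Littlewood maximal function at $x$, and conclude via $L^{p/r}$-boundedness of $\mathcal M$. The only cosmetic difference is that the paper uses the boundary exponent $r=n/(l-\omega)$ (so the Peetre factor cancels exactly), while you pick any $r\in(n/(l-\omega),p)$ so the surviving power of $(1+|y|/t)$ is nonpositive and can simply be discarded; both variants hinge on the same hypothesis $l>\omega+n/p$.
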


\begin{proof}
Let $l\in(\omega+\frac np,\infty)$ and $\vec f\in(\mathcal S')^m$.
Writing $A_t(x)$ into $A_t(x)[A_t(z)]^{-1}A_t(z)$ and using
the definition of $(M^*_a)_{\mathbb A}^{\varepsilon,K}(\vec f,\psi)$
and Definition \ref{doubling}(ii), we find that,
for any $t\in(0,\infty)$, $x,y\in\mathbb R^n$, and $z\in B(x-y,at)$,
\begin{align*}
I(t,x,y)
:=&\,\left|A_{t}(x)\psi_t*\vec f(x-y)\right|
\left(\frac t{t+\varepsilon}\right)^K
\left(\frac 1{1+\varepsilon|x-y|}\right)^K\\
\le&\,\left\|A_{t}(x)
\left[A_{t}(z)\right]^{-1}\right\|
\left(M^*_a\right)_{\mathbb A}^{\varepsilon,K}\left(\vec f,\psi\right)(z)
\lesssim\left(1+\frac{|x-z|}{t}\right)^\omega
\left(M^*_a\right)_{\mathbb A}^{\varepsilon,K}\left(\vec f,\psi\right)(z),
\end{align*}
which, together with $|x-z|<|y|+at$, further implies that
\begin{align*}
I(t,x,y)
\lesssim\left(1+\frac{|y|+at}t\right)^\omega
\left(M^*_a\right)_{\mathbb A}^{\varepsilon,K}\left(\vec f,\psi\right)(z)
\sim\left(1+\frac{|y|}t\right)^\omega
\left(M^*_a\right)_{\mathbb A}^{\varepsilon,K}\left(\vec f,\psi\right)(z).
\end{align*}
Now, taking the average with respect to $z$
over $B(x-y,at)$ on both sides of the above inequality,
we further obtain, for any $t\in(0,\infty)$ and $x,y\in\mathbb R^n$,
\begin{align*}
[I(t,x,y)]^{\frac n{l-\omega}}
&\lesssim\left(1+\frac{|y|}t\right)^{\frac{n\omega}{l-\omega}}
\fint_{B(x-y,at)}\left[(M^*_a)_{\mathbb A}^{\varepsilon,K}
\left(\vec f,\psi\right)(z)\right]^{\frac n{l-\omega}}\,dz\\
&\le\left(1+\frac{|y|}t\right)^{\frac{n\omega}{l-\omega}}
\left(\frac{|y|+at}{at}\right)^n\fint_{B(x,|y|+at)}
\left[(M^*_a)_{\mathbb A}^{\varepsilon,K}
\left(\vec f,\psi\right)(z)\right]^{\frac n{l-\omega}}\,dz\\
&\lesssim\left(1+\frac{|y|}t\right)^{\frac{nl}{l-\omega}}
\mathcal{M}\left(\left[(M^*_a)_{\mathbb A}^{\varepsilon,K}\left(\vec f,\psi\right)
\right]^{\frac n{l-\omega}}\right)(x).
\end{align*}
Thus, from the definition of $(M^{**}_l)_{\mathbb A}^{\varepsilon,K}$,
we infer that, for any $x\in\mathbb R^n$,
\begin{align*}
\left(M^{**}_l\right)_{\mathbb A}^{\varepsilon,K}\left(\vec f,\psi\right)(x)
\lesssim \left[\mathcal{M}\left(\left[(M^*_a)_{\mathbb A}^{\varepsilon,K}
\left(\vec f,\psi\right)\right]^{\frac n{l-\omega}}\right)(x)\right]^{\frac{l-\omega}n}.
\end{align*}
Taking the $L^p$ norm of both side of the above inequality and using
$l\in(\omega+\frac np,\infty)$ and the boundedness of $\mathcal M$
on $L^{\frac{l-\omega}np}$, we find that
\begin{align*}
\left\|\left(M^{**}_l\right)^{\varepsilon,K}
_{\mathbb A}\left(\vec f,\psi\right)\right\|_{L^p}^p
\lesssim\int_{\mathbb R^n}
\left[\mathcal{M}\left(\left[(M^*_a)_{\mathbb A}^{\varepsilon,K}
\left(\vec f,\psi\right)\right]^{\frac n{l-\omega}}
\right)(x)\right]^{\frac{l-\omega}np}\,dx\lesssim\int_{\mathbb R^n}
\left[(M^*_a)_{\mathbb A}^{\varepsilon,K}
\left(\vec f,\psi\right)(x)\right]^p\,dx.
\end{align*}
This finishes the proof of Lemma \ref{approximation function 2}.
\end{proof}

For any $\vec f:=(f_1,\ldots,f_m)^T\in(C^\infty)^m$, let
\begin{equation*}
\nabla \vec f:=\left[\begin{matrix}
\partial_1 f_1&\cdots&\partial_nf_1\\
\vdots&\ddots&\vdots\\
\partial_1 f_m&\cdots&\partial_nf_m
\end{matrix}\right].
\end{equation*}

\begin{lemma}\label{approximation function 3}
Let $\psi\in\mathcal{S}$ satisfy $\int_{\mathbb R^n}\psi(x)\,dx\neq0$.
Assume that $p\in(0,\infty)$ and $\mathbb A:=\{A_Q\}_{Q\in\mathscr{Q}}$
is strongly doubling of order $(\beta_1,\beta_2,\omega)$ for some $\beta_1,\beta_2,\omega\in[0,\infty)$.
Let $a,l\in(0,\infty)$ and $\varepsilon,K\in[0,\infty)$.
For any $\vec f\in(\mathcal S')^m$ and $x\in\mathbb R^n$, define
\begin{align*}
\left(U^*_a\right)_{\mathbb A}^{\varepsilon,K}\left(\vec f,\psi\right)(x)
:=\sup_{t\in(0,\varepsilon^{-1})}\sup_{y\in B(x,at)}
t\left\|A_{t}(x)
\nabla\left(\psi_t*\vec f\right)(y)\right\|\left(\frac t{t+\varepsilon}\right)^K
\left(\frac 1{1+\varepsilon|y|}\right)^K,\notag
\end{align*}
where $0^{-1}:=\infty$ and $A_t$ for any
$t\in(0,\varepsilon^{-1})$ is the same as in \eqref{Atref}.
Then there exists a positive constant $C$, independent of $\varepsilon$, such that,
for any $\vec f\in(\mathcal S')^m$ and $x\in\mathbb R^n$,
\begin{align*}
\left(U^*_a\right)_{\mathbb A}^{\varepsilon,K}\left(\vec f,\psi\right)(x)
\le C\left(M^{**}_l\right)_{\mathbb A}^{\varepsilon,K}\left(\vec f,\psi\right)(x).
\end{align*}
\end{lemma}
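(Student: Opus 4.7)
My plan is to reduce the gradient estimate to a convolution estimate using the scaling identity, and then to bound that convolution by the Peetre-type quantity via a Calder\'on-type reproducing argument powered by the hypothesis $\int\psi\neq 0$. More precisely, a direct computation based on $\psi_t(x)=t^{-n}\psi(x/t)$ gives, for each $i\in\{1,\ldots,n\}$,
\begin{align*}
t\,\partial_{y_i}(\psi_t*\vec f)(y)=(\partial_i\psi)_t*\vec f(y),
\end{align*}
so the required inequality reduces to showing that, for every $y\in B(x,at)$ and every $i$, the quantity $|A_t(x)(\partial_i\psi)_t*\vec f(y)|$, multiplied by the two cutoff factors $(t/(t+\varepsilon))^K$ and $(1+\varepsilon|y|)^{-K}$, is dominated by $(M^{**}_l)^{\varepsilon,K}_{\mathbb A}(\vec f,\psi)(x)$ up to a multiplicative constant independent of $\varepsilon$.

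To produce the required representation, I would set $\tilde\psi(z):=-n\psi(z)-z\cdot\nabla\psi(z)$, which satisfies $\int\tilde\psi=0$ and $\frac{d}{ds}\psi_s=s^{-1}\tilde\psi_s$; the standard Calder\'on reproducing formula coming from $\int\psi\neq 0$ then yields an identity of the shape
\begin{align*}
(\partial_i\psi)_t*\vec f = -\frac1{\int\psi}\int_0^\infty s^{-1}\bigl[(\partial_i\psi)_t*\tilde\psi_s\bigr]*\vec f\,ds,
\end{align*}
where the kernel $(\partial_i\psi)_t*\tilde\psi_s$ can be controlled by Lemma~\ref{lucas} (both $\partial_i\psi$ and $\tilde\psi$ satisfy the vanishing-moment hypothesis with $L=0$): one gets the bound $Cs/[t^{n+1}(1+|z|/t)^M]$ when $s\le t$ and the symmetric bound $Ct/[s^{n+1}(1+|z|/s)^M]$ when $s\ge t$, making the $s$-integral absolutely convergent. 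An integration by parts in $s$ against $\tilde\psi_s=s\frac{d}{ds}\psi_s$ (regularized by a smooth cutoff $\chi(\log(s/t))$ to produce valid boundary terms) then expresses $(\partial_i\psi)_t*\vec f(y)$ as a weighted superposition of $\psi_s*\vec f(z)$ with an integrable kernel in the $(s,z)$-variables that decays in $|y-z|/\max\{s,t\}$.

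At this stage one inserts the defining Peetre bound $|A_s(x)\psi_s*\vec f(z)|\le (M^{**}_l)^{\varepsilon,K}_{\mathbb A}(\vec f,\psi)(x)(1+|x-z|/s)^l (t/(t+\varepsilon))^{-K}(1+\varepsilon|x-z|)^K$, together with the bridge $A_t(x)=A_t(x)A_s^{-1}(x)\cdot A_s(x)$; the strong doubling hypothesis and the fact that $x\in Q$ for the cubes appearing in $A_t$ and $A_s$ give $\|A_t(x)A_s^{-1}(x)\|\le C\max\{(s/t)^{\beta_1},(t/s)^{\beta_2}\}$. The triangle inequality $|x-z|\le at+|y-z|$ (from $y\in B(x,at)$) and the Schwartz decay of the Calder\'on kernel make the integral in $z$ convergent with bound $\lesssim (1+|x-y|/t)^l$, while the factor $\max\{(s/t)^{\beta_1},(t/s)^{\beta_2}\}$ is absorbed by the kernel bounds from Lemma~\ref{lucas}, and the $s$-integral then converges. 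The main obstacle will be verifying that the two $\varepsilon$-cutoff factors propagate through the argument with constants independent of $\varepsilon$: this amounts to checking that $(t/(t+\varepsilon))^K$ varies slowly across the effective range of scales $s\sim t$ in the integral and that $(1+\varepsilon|x-z|)^{-K}$ varies slowly under the spatial replacements $z\leftrightarrow y\leftrightarrow x$, both of which reduce to elementary monotonicity estimates once one restricts attention to $t\in(0,\varepsilon^{-1})$ and $y\in B(x,at)$, so that $|x-z|$ and $|x-y|$ differ by at most a constant multiple of $\varepsilon^{-1}$.
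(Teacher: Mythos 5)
Your overall strategy is the same as the paper's: reduce the gradient to the convolutions $(\partial_j\psi)_t*\vec f$, express $\partial_j\psi$ through a Calder\'on-type reproducing formula that brings $\psi_s$ into play, insert the Peetre bound, and absorb the resulting $\|A_t(x)[A_{s'}(x)]^{-1}\|$ factor using strong doubling. The paper achieves this by directly invoking \cite[Lemma 2.1.5]{g14m}, which supplies
$\partial_j\psi = \int_0^1 \Theta^{(s)}*\psi_s\,ds$
with a family $\{\Theta^{(s)}\}_{s\in(0,1]}\subset\mathcal{S}$ obeying $\int(1+|z|)^{L_0}|\Theta^{(s)}(z)|\,dz\lesssim s^{L_0}$; the compact parameter range $(0,1]$ means only $\psi_{st}$ with $st\le t$ enters, so one needs only $\|A_t(x)[A_{st}(x)]^{-1}\|\lesssim s^{-\beta_2}$, and the factor $s^{-\beta_2-l-K}$ is killed by choosing $L_0$ large enough.

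Your route, by contrast, uses the ``global'' Calder\'on formula $\delta_0 = c\int_0^\infty s^{-1}\tilde\psi_s\,ds$ with $\tilde\psi = -n\psi - z\cdot\nabla\psi$, and it has two genuine gaps. First, the limit $\psi_s*\vec f\to 0$ as $s\to\infty$ fails for a general $\vec f\in(\mathcal{S}')^m$ (consider $\vec f$ a nonzero polynomial), so the formula over $(0,\infty)$ does not converge for arbitrary tempered distributions; Grafakos's lemma is engineered precisely to confine $s$ to a compact range so as to avoid this. Second, and more seriously, the integration by parts you propose to pass from $\tilde\psi_s*\vec f$ (which does not appear in $(M^{**}_l)^{\varepsilon,K}_{\mathbb A}$) to $\psi_s*\vec f$ (which does) is circular as sketched: using $s^{-1}\tilde\psi_s = \frac{d}{ds}\psi_s$, a cutoff $\chi(\log(s/t))$ with $\chi\to 1$ at $s\to 0^+$ produces a boundary term equal to a nonzero constant multiple of $(\partial_i\psi)_t*\vec f(y)$ itself, i.e.\ the quantity you are trying to estimate. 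You would need a more careful two-sided cutoff together with a second independent reproducing identity to get a usable representation, at which point you are essentially rederiving the content of \cite[Lemma 2.1.5]{g14m}. I also note that your reliance on Lemma~\ref{lucas} is not how the paper proceeds in this lemma; the kernel bounds needed here come directly from the $\Theta^{(s)}$-estimate, and the range $s\in(0,1]$ means the $\max\{(s/t)^{\beta_1},(t/s)^{\beta_2}\}$ dichotomy you describe never arises.
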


\begin{proof}
Let $\vec f\in(\mathcal S')^m$. By the facts that
$A_{t}(x)\nabla(\psi_t*\vec f)(y)=\nabla [A_{t}(x)(\psi_t*\vec f)](y)$
and that all the norms in a given finite dimensional vector space are equivalent,
we conclude that, for any $t\in(0,\infty)$ and $x,y\in\mathbb R^n$,
\begin{align}\label{delta-psi}
t\left\|A_{t}(x)
\nabla\left(\psi_t*\vec f\right)(y)\right\|
&\sim\sum_{i=1}^m\sum_{j=1}^n
t\left|\partial_j\left[A_{t}(x)
\psi_t*\vec f\right]_i(y)\right|\\
&=\sum_{i=1}^m\sum_{j=1}^n
\left|\left[A_{t}(x)\left(\partial_j\psi\right)_t*\vec f\right]_i(y)\right|
\sim\sum_{j=1}^n
\left|A_{t}(x)\left(\partial_j\psi\right)_t*\vec f(y)\right|.\notag
\end{align}

From \cite[Lemma 2.1.5]{g14m} with $\Psi$, $\Phi$, and $m$
replaced, respectively, by $\partial_j\psi$, $\psi$,
and $L_0:=\lfloor l+1\rfloor+\lceil\beta_2\rceil+K$ and
from $\psi\in\mathcal{S}$,
we deduce that there exist
$\{\Theta^{(s)}\}_{s\in(0,1]}\subset \mathcal{S}$ such that,
for any $j\in\{1,\ldots,n\}$ and $x\in\mathbb R^n$,
\begin{align}\label{lucas1 alter}
\partial_j\psi(x)=\int_0^1\left[\Theta^{(s)}*\psi_s\right](x)\,ds
\end{align}
and, for any $s\in(0,1]$,
\begin{align}\label{lucas1 ineq}
\int_{\mathbb R^n}(1+|z|)^{L_0}\left|\Theta^{(s)}(z)\right|\,dz
\lesssim s^{L_0}.
\end{align}

Using \eqref{lucas1 alter} and the definition of
$(M^{**}_{\lfloor l+1\rfloor})_{\mathbb A}^{\varepsilon,K}
(\vec f,\psi)$ and writing $A_t(x)$ as $A_t(x)[A_{st}(x)]^{-1}
A_{st}(x)$, we find that,
for any $j\in\{1,\ldots,n\}$,
$t\in(0,\varepsilon^{-1})$,
$x\in\mathbb R^n$, and $y\in B(x,at)$,
\begin{align*}
I(t,j,x,y)
:=&\,\left|A_{t}(x)
\left(\partial_j\psi\right)_t*\vec f(y)\right|
\left(\frac t{t+\varepsilon}\right)^K
\left(\frac 1{1+\varepsilon|y|}\right)^K\\
\le&\,\int_0^1\int_{\mathbb R^n}\left|\left[\Theta^{(s)}\right]_t(z)\right|
\left|A_{t}(x)\psi_{st}*\vec f(y-z)\right|\\
&\quad\times\left(\frac{st}{st+\varepsilon}\right)^K s^{-K}
\left(\frac 1{1+\varepsilon|y-z|}\right)^K
\left(\frac{1+\varepsilon|y-z|}{1+\varepsilon|y|}\right)^K\,dz\,ds\\
\le&\,\int_0^1\int_{\mathbb R^n}\left|\left[\Theta^{(s)}\right]_t(z)\right|
\left\|A_{t}(x)\left[A_{st}(x)\right]^{-1}\right\|\\
&\quad\times\left(M^{**}_{\lfloor l+1\rfloor}\right)_{\mathbb A}^{\varepsilon,K}
\left(\vec f,\psi\right)(x)
\left[1+\frac{|x-y+z|}{st}\right]^{\lfloor l+1\rfloor}
s^{-K} (1+\varepsilon|z|)^K \,dz\,ds,
\end{align*}
which, together with Definition \ref{doubling}(i) and
\eqref{lucas1 ineq}, further implies that
\begin{align*}
I(t,j,x,y)
&\lesssim
\left(M^{**}_{\lfloor l+1\rfloor}\right)_{\mathbb A}^{\varepsilon,K}
\left(\vec f,\psi\right)(x)
\int_0^1 s^{-\lfloor l+1\rfloor-\beta_2-K}\\
&\quad\times\int_{\mathbb R^n}\left|\left[\Theta^{(s)}\right]_t(z)\right|
\left(1+\frac{|x-y|}{t}+\frac{|z|}{t}\right)^{\lfloor l+1\rfloor}
\left(1+\varepsilon t\frac{|z|}t\right)^K\,dz\,ds\\
&\le
\left(M^{**}_{\lfloor l+1\rfloor}\right)_{\mathbb A}^{\varepsilon,K}
\left(\vec f,\psi\right)(x)
\int_0^1 s^{-L_0}\int_{\mathbb R^n}\left|\left[\Theta^{(s)}\right]_t(z)\right|
\left(1+a+\frac{|z|}{t}\right)^{\lfloor l+1\rfloor}
\left(1+\frac{|z|}t\right)^K\,dz\,ds\\
&\lesssim
\left(M^{**}_{\lfloor l+1\rfloor}\right)_{\mathbb A}^{\varepsilon,K}
\left(\vec f,\psi\right)(x)
\int_0^1 s^{-L_0}
\int_{\mathbb R^n}\left|\Theta^{(s)}(z)\right|
(1+|z|)^{L_0}\,dz\,ds\lesssim\left(M^{**}_{\lfloor l+1\rfloor}\right)_{\mathbb A}^{\varepsilon,K}
\left(\vec f,\psi\right)(x).
\end{align*}
Taking the supremum with respect to
$y\in\mathbb{R}^n$ on both sides of the above inequality and using
the definition of $(U^*_a)_{\mathbb A}^{\varepsilon,K}(\vec f,\psi)$
and $(M^{**}_{l})_{\mathbb A}^{\varepsilon,K}
(\vec f,\psi)$, together with \eqref{delta-psi},
we conclude that, for any $x\in\mathbb R^n$,
\begin{align*}
\left(U^*_a\right)_{\mathbb A}^{\varepsilon,K}\left(\vec f,\psi\right)(x)
\lesssim\left(M^{**}_{\lfloor l+1\rfloor}\right)_{\mathbb A}^{\varepsilon,K}
\left(\vec f,\psi\right)(x)
\le\left(M^{**}_l\right)_{\mathbb A}^{\varepsilon,K}\left(\vec f,\psi\right)(x).
\end{align*}
This finishes the proof of Lemma \ref{approximation function 3}.
\end{proof}

The following lemma provides the largest ball
contained in the intersection region of two balls,
which is used in the proof of Theorem \ref{equivalent A}(v).

\begin{lemma} \label{ball}
Let $r,\delta\in(0,\infty)$ and $x,y\in\mathbb R^n$ satisfy $|x-y|<(1+\delta)r$.
Then the following statements hold.
\begin{enumerate}[\rm(i)]
\item There exists $z\in\mathbb R^n$ such that
$B(z,r^*)\subset [B(x,r)\cap B(y,\delta r)]$, where
$$
r^*:=\frac{(1+\delta)r-\max\{|x-y|,|1-\delta|r\}}2.
$$

\item If we further assume $y\in B(x,r)$, then
\begin{align}\label{ball subset}
B(z,C_\delta r)\subset \left[B(x,r)\cap B(y,\delta r)\right]
\end{align}
and
\begin{align}\label{ball est}
|B(x,r)\cap B(y,\delta r)|
\geq (C_\delta)^n |B(x,r)|,
\end{align}
where $C_\delta:=\min\{\frac\delta 2,1\}$,
\end{enumerate}
\end{lemma}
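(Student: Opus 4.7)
The plan is to produce the center $z$ explicitly by placing it on the line through $x$ and $y$, arranged so that the inscribed ball $B(z,r^*)$ has its boundary internally tangent to $\partial B(x,r)$ and to $\partial B(y,\delta r)$ simultaneously. For part (i), I would first dispose of the degenerate case $x=y$ by taking $z=x$: then $B(x,r)\cap B(y,\delta r)=B(x,\min\{r,\delta r\})$, and the formula for $r^*$ collapses to $\min\{r,\delta r\}$ (checking the two subcases $\delta\ge 1$ and $\delta<1$). When $x\neq y$, set $e:=(y-x)/|y-x|$ and
\[
z:=x+(r-r^*)e,
\]
so that $|z-x|=|r-r^*|$. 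The argument then splits according to whether $\max\{|x-y|,|1-\delta|r\}$ equals $|x-y|$ (the non-nested regime, i.e.\ $|x-y|>|1-\delta|r$) or equals $|1-\delta|r$ (one ball contained in the other).

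In the non-nested subcase, $r^*=\frac{(1+\delta)r-|x-y|}{2}$. A short calculation using $|x-y|>|1-\delta|r$ shows $0\le r-r^*\le|x-y|$, so $|z-y|=|x-y|-(r-r^*)=\delta r-r^*$. Hence $|z-x|+r^*=r$ and $|z-y|+r^*=\delta r$, giving $B(z,r^*)\subset B(x,r)\cap B(y,\delta r)$. In the nested subcase, $r^*$ simplifies to $\min\{r,\delta r\}$, and one may take $z$ to be the center of the smaller ball (either $y$ if $\delta\le1$ or $x$ if $\delta\ge1$); the inclusion is immediate from $|x-y|\le|1-\delta|r$.

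For part (ii), the hypothesis $y\in B(x,r)$ gives $|x-y|<r$, so
\[
\max\{|x-y|,|1-\delta|r\}\le\max\{r,|1-\delta|r\}.
\]
A case split on whether $\delta\le1$, $1<\delta<2$, or $\delta\ge2$ then shows that in every case $r^*\ge\min\{\delta r/2,\,r\}=C_\delta r$; combining this with part (i) yields \eqref{ball subset}, and taking Lebesgue measure gives $|B(x,r)\cap B(y,\delta r)|\ge|B(z,C_\delta r)|=(C_\delta)^n|B(x,r)|$, which is \eqref{ball est}.

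The argument is entirely elementary Euclidean geometry, with no input from matrix weights or harmonic analysis; the only mild subtlety is the bookkeeping of cases (nested vs.\ non-nested balls, and the sign of $1-\delta$), and one must verify that the choice $z=x+(r-r^*)e$ actually lies on the segment $[x,y]$ in the non-nested regime so that the formula $|z-y|=|x-y|-(r-r^*)$ is valid — this is where the hypothesis $|x-y|>|1-\delta|r$ is used. No other step is expected to cause difficulty.
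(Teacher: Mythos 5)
Your proof is correct and follows the same two-case decomposition as the paper's (nested balls when $|x-y|\le|1-\delta|r$, overlapping balls otherwise), but you carry out explicitly, via the choice $z=x+(r-r^*)\frac{y-x}{|x-y|}$ and the verification that $0\le r-r^*\le|x-y|$, what the paper dismisses with the phrase ``by some geometrical observations.'' The added rigor is welcome; the only cosmetic remark is that the three-way split on $\delta$ in part (ii) can be collapsed to $\delta\le 2$ versus $\delta>2$, since in the former $\max\{|x-y|,|1-\delta|r\}\le r$ directly gives $r^*\ge\delta r/2$.
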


\begin{proof}
Statement (ii) follows directly from (i) and hence we only need to show (i).
To this end, we consider the following two cases on $|x-y|$.

\emph{Case 1)} $|x-y|\in[0,|1-\delta|r]$.
In this case, $r^*=\min\{r,\delta r\}$ and
either of the two balls is contained within the other.
Therefore, (i) holds in this case.

\emph{Case 2)} $|x-y|\in(|1-\delta|r,(1+\delta)r)$.
In this case, $2r^*=(1+\delta)r-|x-y|$ and the two balls intersect.
By some geometrical observations, we find that $r^*$ is the radius
of the largest ball contained in the intersection region
of two balls under consideration, which further implies (i) in this case.
This finishes the proof of Lemma \ref{ball}.
\end{proof}

Next, we are able to prove Theorem \ref{equivalent A} by borrowing some ideas from
\cite[Theorem 2.1.4]{g14m}.

\begin{proof}[Proof of Theorem \ref{equivalent A}]
Assertion (i) follows directly from Definition \ref{reducing hardy}
and assertion (ii) is exactly Lemma \ref{approximation function 2}
with $\varepsilon$ and $K$ replaced by $0$.

Now, we show (iii).
Let $\vec f\in(\mathcal S')^m$
and $N\in\mathbb N$ satisfy $N\geq l+\beta_2$.
Applying \cite[Lemma 2.1.5]{g14m} with $\Psi$,  $\Phi$, and $m$
replaced, respectively, by $\phi$, $\psi$, and $N$,
we find that, for any $\phi\in\mathcal S_N$, there exist
$\{\Theta^{(s)}\}_{s\in(0,1]}\subset \mathcal{S}$ such that,
for any $x\in\mathbb R^n$,
\begin{align}\label{lucas1}
\phi(x)=\int_0^1\left[\Theta^{(s)}*\psi_s\right](x)\,ds
\end{align}
and, for any $s\in(0,1]$,
\begin{align*}
I(s):=\int_{\mathbb R^n}(1+|z|)^N\left|\Theta^{(s)}(z)\right|\,dz
\lesssim s^N\int_{\mathbb R^n}(1+|z|)^N
\sum_{\alpha\in\mathbb Z_+^n,\, |\alpha|\le N+1}
|\partial^\alpha\phi(z)|\,dz,
\end{align*}
where the implicit positive constant is independent of $\phi$.
By this and $\phi\in\mathcal S_N$, we obtain, for any $s\in(0,1]$,
\begin{align}\label{lucas2}
I(s)\lesssim s^N\int_{\mathbb R^n}(1+|z|)^{-(n+1)}\|\phi\|_{\mathcal S_N}\,dz
\lesssim s^N.
\end{align}
From \eqref{lucas1} via writing $A_t(x)$ as
$A_t(x)[A_{st}(x)]^{-1}A_{st}(x)$ and from the definition of
$(M^{**}_l)_{\mathbb A}(\vec f,\psi)$
and Definition \ref{doubling}(i), it follows that,
for any $t\in(0,\infty)$ and $x,y\in\mathbb R^n$,
\begin{align*}
\left|A_{t}(x)\phi_t*\vec f(x-y)\right|
&\le\int_0^1\int_{\mathbb R^n}\left|\left[\Theta^{(s)}\right]_t(z)\right|
\left|A_{t}(x)\psi_{st}*\vec f(x-y-z)\right|\,dz\,ds\\
&\le\int_0^1\int_{\mathbb R^n}\left|\left[\Theta^{(s)}\right]_t(z)\right|
\left\|A_{t}(x)\left[A_{st}(x)\right]^{-1}\right\|\left(M^{**}_l\right)_{\mathbb A}\left(\vec f,\psi\right)(x)
\left(1+\frac{|y+z|}{st}\right)^l\,dz\,ds\\
&\lesssim
\left(M^{**}_l\right)_{\mathbb A}\left(\vec f,\psi\right)(x)
\int_0^1 s^{-l-\beta_2}
\int_{\mathbb R^n}\left|\left[\Theta^{(s)}\right]_t(z)\right|
\left(1+\frac{|y|}{t}+\frac{|z|}{t}\right)^l\,dz\,ds\\
&\le
\left(M^{**}_l\right)_{\mathbb A}\left(\vec f,\psi\right)(x)
\left(1+\frac{|y|}{t}\right)^l
\int_0^1 s^{-N}\int_{\mathbb R^n}\left|\Theta^{(s)}(z)\right|
(1+|z|)^N\,dz\,ds,
\end{align*}
where $A_t$ is the same as in \eqref{Atref}.
Using this and \eqref{lucas2}, we conclude that,
for any $t\in(0,\infty)$ and $x,y\in\mathbb R^n$,
\begin{align*}
\left|A_{t}(x)\phi_t*\vec f(x-y)\right|\left(1+\frac{|y|}{t}\right)^{-l}
\lesssim\left(M^{**}_l\right)_{\mathbb A}\left(\vec f,\psi\right)(x).
\end{align*}
Taking the supremum with respect to $y\in\mathbb{R}^n$ in left-hand side of the above inequality
and using the definition of
$(M^{**}_{l,N})_{\mathbb A}(\vec f)$,
we conclude that \eqref{aaa} holds.
This finishes the proof of (iii).

Next, we prove (iv) and (v) together because both proofs are similar.
Inequality \eqref{zm} follows immediately from the definitions of
$(M^*_a)_{\mathbb A}(\vec f,\psi)$ and
$(\widetilde M^*_{a,b})_{\mathbb A}(\vec f,\psi)$
and hence we focus on the others.
If $M_{\mathbb A}(\vec f,\psi)\notin L^p$, then (iv) obviously holds
and, if $(\widetilde M^*_{a,b})_{\mathbb A}(\vec f,\psi)\notin L^p$,
then (v) obviously holds.
Now, we consider the case where
\begin{align}\label{biaohao}
M_{\mathbb A}(\vec f,\psi)\in L^p
\ \ \text{or}\ \ (\widetilde M^*_{a,b})_{\mathbb A}(\vec f,\psi)\in L^p.
\end{align}
Let $\varepsilon,K\in[0,\infty)$ and
$\vec f\in(\mathcal S')^m$.
By Lemmas \ref{approximation function 2} and \ref{approximation function 3},
we find that there exists a positive constant $C_1$,
independent of $\varepsilon$ and $\vec f$, such that
\begin{align}\label{U<M}
\left\|\left(U^*_a\right)_{\mathbb A}^{\varepsilon,K}
\left(\vec f,\psi\right)\right\|_{L^p}
\le C_1\left\|\left(M^*_a\right)_{\mathbb A}^{\varepsilon,K}
\left(\vec f,\psi\right)\right\|_{L^p}.
\end{align}
Let
\begin{align*}
E_\varepsilon
:=\left\{x\in\mathbb R^n:\
(U^*_a)_{\mathbb A}^{\varepsilon,K}\left(\vec f,\psi\right)(x)
\le 2^{\frac1p}C_1(M^*_a)_{\mathbb A}^{\varepsilon,K}\left(\vec f,\psi\right)(x)\right\}.
\end{align*}
From \eqref{U<M}, we infer that
\begin{align}\label{E epsilon C}
\int_{(E_\varepsilon)^\complement}
\left[(M^*_a)_{\mathbb A}^{\varepsilon,K}\left(\vec f,\psi\right)(x)\right]^p\,dx
&\le \frac12C_1^{-p}\int_{(E_\varepsilon)^\complement}
\left[(U^*_a)_{\mathbb A}^{\varepsilon,K}\left(\vec f,\psi\right)(x)\right]^p\,dx\\
&\le \frac12\int_{(E_\varepsilon)^\complement}
\left[(M^*_a)_{\mathbb A}^{\varepsilon,K}\left(\vec f,\psi\right)(x)\right]^p\,dx.\notag
\end{align}

Next, we show that
\begin{align}\label{C3add}
&\int_{E_\varepsilon}\left[(M^*_a)_{\mathbb A}^{\varepsilon,K}\left(\vec f,\psi\right)(x)\right]^p\,dx\\
&\quad\lesssim\min\left\{
\int_{\mathbb R^n}\left[M_{\mathbb A}\left(\vec f,\psi\right)(x)
\right]^p\,dx,
\int_{E_\varepsilon}\left[\left(\widetilde M^*_{a,b}\right)_{\mathbb A}\left(\vec f,\psi\right)(x)
\right]^p\,dx
\right\}=:\text{M}_{\min}.\notag
\end{align}
To this end, we first consider the part related to $M_{\mathbb A}(\vec f,\psi)$.
Let $x\in E_\varepsilon$.
Indeed, using the definition of $(M^*_a)_{\mathbb A}^{\varepsilon,K}$,
we conclude that there exist $t_x\in(0,\varepsilon^{-1})$
and $y_x\in B(x,at_x)$ such that
\begin{align}\label{2023.11.22}
\frac12 \left(M^*_a\right)_{\mathbb A}^{\varepsilon,K}\left(\vec f,\psi\right)(x)
\le\left|A_{t_x}(x)\psi_{t_x}*\vec f(y_x)\right|
\left(\frac{t_x}{t_x+\varepsilon}\right)^K
\left(\frac 1{1+\varepsilon|y_x|}\right)^K.
\end{align}
For any $\xi\in\mathbb R^n$, let
$J(\xi):=t_x\|A_{t_x}(x)\nabla(\psi_{t_x}*\vec f)(\xi)\|$.
From the definitions of $(U^*_a)_{\mathbb A}^{\varepsilon,K}$ and $E_\varepsilon$,
we deduce that, for any $\xi\in B(x,at_x)$,
\begin{align*}
J(\xi)
\left(\frac{t_x}{t_x+\varepsilon}\right)^K
\left(\frac 1{1+\varepsilon|\xi|}\right)^K
\le (U^*_a)_{\mathbb A}^{\varepsilon,K}\left(\vec f,\psi\right)(x)
\le 2^{\frac1p}C_1(M^*_a)_{\mathbb A}^{\varepsilon,K}\left(\vec f,\psi\right)(x),
\end{align*}
which, together with \eqref{2023.11.22}, further implies that
\begin{align*}
J(\xi)
\le 2^{1+\frac1p}C_1\left|A_{t_x}(x)\psi_{t_x}*\vec f(y_x)\right|
\left(\frac{1+\varepsilon|\xi|}{1+\varepsilon|y_x|}\right)^K.
\end{align*}
By this and
$
(1+\varepsilon|\xi|)/(1+\varepsilon|y_x|)
\le 1+\varepsilon|\xi-y_x|
< 1+2\varepsilon at_x
< 1+2a,
$
we find that
\begin{align*}
J(\xi)
\le 2^{1+\frac1p}(1+2a)^KC_1
\left|A_{t_x}(x)\psi_{t_x}*\vec f(y_x)\right|.
\end{align*}
Using this, the mean value theorem, and the Cauchy--Schwartz inequality,
we obtain, for any $y\in B(x,at_x)$,
\begin{align}\label{imp}
&\left|\,\left|A_{t_x}(x)\psi_{t_x}*\vec f(y)\right|
-\left|A_{t_x}(x)\psi_{t_x}*\vec f(y_x)\right|\,\right|\\
&\quad\le\left|A_{t_x}(x)\psi_{t_x}*\vec f(y)
-A_{t_x}(x)\psi_{t_x}*\vec f(y_x)\right|\nonumber\\
&\quad\sim\sum_{i=1}^m\left|\left[A_{t_x}(x)\psi_{t_x}*\vec f(y)
-A_{t_x}(x)\psi_{t_x}*\vec f(y_x)\right]_i\right|
\le\sum_{i=1}^m\left|\nabla\left(\left[A_{t_x}(x)
\psi_{t_x}*\vec f\right]_i\right)(\xi_i)\right||y-y_x|\notag\\
&\quad\le\sum_{i=1}^m\left\|A_{t_x}(x)
\nabla\left(\psi_{t_x}*\vec f\right)(\xi_i)\right\||y-y_x|
\lesssim
\left|A_{t_x}(x)\psi_{t_x}*\vec f(y_x)\right|\frac{|y-y_x|}{t_x},\notag
\end{align}
where the implicit positive constants are independent of $\varepsilon$ and $\vec f$ and,
for any $i\in\{1,\ldots,m\}$, $\xi_i=\theta_iy+(1-\theta_i)y_x$ for some $\theta_i\in[0,1]$.
Let $C_2$ be the implicit positive constant in \eqref{imp}.
Then, for any $y\in B(x,at_x)\cap B(y_x,(2C_2)^{-1}t_x)$,
\begin{align*}
\left|\,\left|A_{t_x}(x)\psi_{t_x}*\vec f(y)\right|
-\left|A_{t_x}(x)\psi_{t_x}*\vec f(y_x)\right|\,\right|
\le\frac12\left|A_{t_x}(x)\psi_{t_x}*\vec f(y_x)\right|
\end{align*}
and hence
$|A_{t_x}(x)\psi_{t_x}*\vec f(y)|
\geq\frac12|A_{t_x}(x)\psi_{t_x}*\vec f(y_x)|$,
which, together with \eqref{2023.11.22}, further implies that
\begin{align}\label{chao}
\left|A_{t_x}(x)\psi_{t_x}*\vec f(y)\right|
\geq\frac14(M^*_a)_{\mathbb A}^{\varepsilon,K}\left(\vec f,\psi\right)(x).
\end{align}
From this, the definition of $M_{\mathbb A}(\vec f,\psi)$, and
\eqref{ball est}, it follows that, for any $q\in(0,\infty)$,
\begin{align*}
\mathcal M\left(\left[M_{\mathbb A}\left(\vec f,\psi\right)
\right]^q\right)(x)
&\geq\frac{1}{|B(x,at_x)|}
\int_{B(x,at_x)\cap B(y_x,(2C_2)^{-1}t_x)}\left[M_{\mathbb A}
\left(\vec f,\psi\right)(y)\right]^q\,dy\\
&\geq\frac{1}{|B(x,at_x)|}
\int_{B(x,at_x)\cap B(y_x,(2C_2)^{-1}t_x)}\frac{1}{4^q}\left[(M^*_a)_{\mathbb A}
^{\varepsilon,K}\left(\vec f,\psi\right)(x)\right]^q\,dy\nonumber\\
&\gtrsim\left[(M^*_a)_{\mathbb A}^{\varepsilon,K}
\left(\vec f,\psi\right)(x)\right]^q.\nonumber
\end{align*}
Let $q\in(0,p)$. Taking the $L^\frac pq$ norm on both
sides of the above inequality and using
the boundedness of $\mathcal{M}$ on $L^{\frac pq}$, we obtain
\begin{align*}
\int_{E_\varepsilon}\left[(M^*_a)_{\mathbb A}^{\varepsilon,K}\left(\vec f,\psi\right)(x)\right]^p\,dx
&\lesssim
\int_{\mathbb R^n}\left[\mathcal M\left(\left[M_{\mathbb A}\left(\vec f,\psi\right)
\right]^q\right)(x)\right]^{\frac pq}\,dx
\lesssim
\int_{\mathbb R^n}\left[M_{\mathbb A}\left(\vec f,\psi\right)(x)
\right]^p\,dx.
\end{align*}
So far we have proved one part of \eqref{C3add}, next we focus on the other part.
Still let $t_x$ and $y_x$ be the same as in \eqref{2023.11.22},
$C_2$ as in \eqref{imp}, and $x\in E_\varepsilon$.
From \eqref{ball subset}, we infer that there exists $z\in\mathbb R^n$ such that
$$
B\left(z,\min\left\{(4C_2)^{-1},a\right\}t_x\right)
\subset \left[B(x,at_x)\cap B\left(y_x,(2C_2)^{-1} t_x\right)\right].
$$
Let $\delta:=n^{-\frac12}\min\{(4C_2)^{-1},a\}$.
Then it is easy to show that, for any $b\in(0,\delta]$, there exists $Q\in\mathscr{Q}_{bt_x}$
such that $z\in Q$ and hence
$$
Q
\subset B\left(z,\min\left\{(4C_2)^{-1},a\right\}t_x\right)
\subset \left[B(x,at_x)\cap B\left(y_x,(2C_2)^{-1} t_x\right)\right].
$$
This, together with the definition of $(\widetilde M^*_{a,b})_{\mathbb A}(\vec f,\psi)$
and \eqref{chao}, further implies that
$(\widetilde M^*_{a,b})_{\mathbb A}(\vec f,\psi)(x)
\geq\frac14(M^*_a)_{\mathbb A}^{\varepsilon,K}(\vec f,\psi)(x),$
and hence
\begin{align*}
\int_{E_\varepsilon}\left[(M^*_a)_{\mathbb A}^{\varepsilon,K}\left(\vec f,\psi\right)(x)\right]^p\,dx
\leq4^p
\int_{E_\varepsilon}\left[\left(\widetilde M^*_{a,b}\right)_{\mathbb A}\left(\vec f,\psi\right)(x)
\right]^p\,dx.
\end{align*}
This finishes the proof of the other part of \eqref{C3add} and hence \eqref{C3add} holds.

Let $C_3$ be the implicit positive constant in \eqref{C3add} and notice
that $C_3$ is independent of $\varepsilon$ and $\vec f$.
Combining \eqref{E epsilon C} and \eqref{C3add}, we conclude that
\begin{align}\label{important}
\left\|(M^*_a)_{\mathbb A}^{\varepsilon,K}\left(\vec f,\psi\right)\right\|_{L^p}^p
\le C_3\text{M}_{\min}
+\frac12\left\|(M^*_a)_{\mathbb A}^{\varepsilon,K}\left(\vec f,\psi\right)\right\|_{L^p}^p.
\end{align}
By this and Lemma \ref{approximation function 1}, we find that
there exists $\widetilde K\in(0,\infty)$, depending on $\vec f$, such that,
for any $\varepsilon\in(0,\infty)$ and $K\in(\widetilde K,\infty)$,
$(M^*_a)_{\mathbb A}^{\varepsilon,K}(\vec f,\psi)\in L^p$ and hence
\begin{align}\label{MaAMA}
\left\|(M^*_a)_{\mathbb A}^{\varepsilon,K}\left(\vec f,\psi\right)\right\|_{L^p}
\le 2^{\frac1p}C_3^{\frac1p}\text{M}_{\min}.
\end{align}
Notice that, for any $t\in(0,\varepsilon^{-1})$ and $y\in B(x,at)$,
$\frac {1+\varepsilon|x|}{1+\varepsilon|y|}
\geq\frac 1{1+\varepsilon|x-y|}
>\frac 1{1+\varepsilon at}
>\frac 1{1+a}$
and, consequently, from the definition of
$(M^*_a)_{\mathbb A}^{\varepsilon,K}(\vec f,\psi)$, we deduce that,
for any $x\in\mathbb{R}^n$,
\begin{align*}
\left(M^*_a\right)_{\mathbb A}^{\varepsilon,K}\left(\vec f,\psi\right)(x)
\geq\sup_{t\in(0,\varepsilon^{-1})}\sup_{y\in B(x,at)}
\left|A_{t}(x)\psi_t*\vec f(y)\right|
\left(\frac t{t+\varepsilon}\right)^K
\frac {(1+a)^{-K}}{(1+\varepsilon|x|)^K}.\notag
\end{align*}
Using this and the definition of $(M^*_a)_{\mathbb A}^{\varepsilon,K}(\vec f,\psi)$
and letting $\varepsilon\to 0^+$, we obtain,
for any $x\in\mathbb{R}^n$,
\begin{align*}
\liminf_{\varepsilon\to 0^+}
(M^*_a)_{\mathbb A}^{\varepsilon,K}\left(\vec f,\psi\right)(x)
\geq(1+a)^{-K}(M^*_a)_{\mathbb A}\left(\vec f,\psi\right)(x).
\end{align*}
Taking the $L^p$ norm on both sides of the above inequality
and applying the Fatou lemma and \eqref{MaAMA},
we find that, for any $K\in(\widetilde K,\infty)$,
\begin{align}\label{2N+1p}
\left\|(M^*_a)_{\mathbb A}\left(\vec f,\psi\right)\right\|_{L^p}
\le (1+a)^{K}2^{\frac1p}C_3^{\frac1p}\text{M}_{\min}.
\end{align}
This is close to what we need, but $(1+a)^K$ depends on $K$ and hence on $\vec f$.
However, by \eqref{biaohao} and \eqref{2N+1p}, we obtain an important property that
$(M^*_a)_{\mathbb A}(\vec f,\psi)\in L^p$.
Applying \eqref{important} with $\varepsilon=0$, we conclude that
\begin{align*}
\left\|(M^*_a)_{\mathbb A}\left(\vec f,\psi\right)\right\|_{L^p}^p
\le C_3\text{M}_{\min}
+\frac12\left\|(M^*_a)_{\mathbb A}\left(\vec f,\psi\right)\right\|_{L^p}^p,
\end{align*}
which, together with the just proved important property
[namely $(M^*_a)_{\mathbb A}(\vec f,\psi)\in L^p$],
further implies that
$\|(M^*_a)_{\mathbb A}(\vec f,\psi)\|_{L^p}
\le 2^{\frac1p}C_3^{\frac1p}M_{\min}.$
This finishes the proofs of (iv) and (v) and
hence Theorem \ref{equivalent A}.
\end{proof}

Now, we are able to provide the equivalences of various
$\mathbb A$-matrix-weighted maximal functions.

\begin{theorem}\label{if and only if A}
Let $p\in(0,\infty)$ and the family of matrices $\mathbb A:=\{A_Q\}_{Q\in\mathscr{Q}}$
be strongly doubling of order $(\beta_1,\beta_2,\omega)$
for some $\beta_1,\beta_2,\omega\in[0,\infty)$.
Assume that $\psi\in\mathcal{S}$ satisfies $\int_{\mathbb R^n}\psi(x)\,dx\neq0$.
Let $l\in(\frac{n}p+\omega,\infty)$
and $N\in\mathbb N$ satisfy $N\geq l+\beta_2$.
Suppose that $a\in(0,\infty)$ and $b\in(0,\delta]$, where $\delta$ is the same as in
Theorem \ref{equivalent A}(v).
Then, for any $\vec f\in(\mathcal{S}')^m$,
\begin{align*}
\left\|\vec f\right\|_{H^p_{\mathbb A,N}}
&\sim\left\|M_{\mathbb A}\left(\vec f,\psi\right)\right\|_{L^p}
\sim\left\|\left(M^*_a\right)_{\mathbb A}\left(\vec f,\psi\right)\right\|_{L^p}
\sim\left\|\left(\widetilde M^*_{a,b}\right)_{\mathbb A}\left(\vec f,\psi\right)\right\|_{L^p}
\sim\left\|\left(M_N\right)_{\mathbb{A}}\left(\vec f\right)\right\|_{L^p}\\
&\sim\left\|\left(M^*_{a,N}\right)_{\mathbb A}\left(\vec f\right)\right\|_{L^p}
\sim\left\|\left(M^{**}_l\right)_{\mathbb A}\left(\vec f,\psi\right)\right\|_{L^p}
\sim\left\|\left(M^{**}_{l,N}\right)_{\mathbb A}\left(\vec f\right)\right\|_{L^p},
\end{align*}
where the positive equivalence constants are independent of $\vec f$.
\end{theorem}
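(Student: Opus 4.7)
My plan is to arrange the inequalities of Theorem \ref{equivalent A} into a cycle whose links are either pointwise bounds or $L^p$ bounds. The strong doubling hypothesis on $\mathbb A$ supplies the structural assumption needed for every one of parts (ii)--(v); in particular, taking cubes of equal side length in Definition \ref{doubling}(i) shows that strong doubling implies weak doubling of order $\omega$, so Theorem \ref{equivalent A}(ii) is indeed available with this $\omega$.

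First, I will extract the pointwise chain from Theorem \ref{equivalent A}(i):
\begin{align*}
M_{\mathbb A}\left(\vec f, \psi\right)
&\leq \left(M^*_a\right)_{\mathbb A}\left(\vec f, \psi\right)
\leq (1+a)^l \left(M^{**}_l\right)_{\mathbb A}\left(\vec f, \psi\right)
\leq (1+a)^l \|\psi\|_{\mathcal S_N}\left(M^{**}_{l,N}\right)_{\mathbb A}\left(\vec f\right),\\
M_{\mathbb A}\left(\vec f, \psi\right)
&\leq \|\psi\|_{\mathcal S_N}\left(M_N\right)_{\mathbb A}\left(\vec f\right)
\leq \|\psi\|_{\mathcal S_N}\left(M^*_{a,N}\right)_{\mathbb A}\left(\vec f\right)
\leq (1+a)^l \|\psi\|_{\mathcal S_N}\left(M^{**}_{l,N}\right)_{\mathbb A}\left(\vec f\right).
\end{align*}
Taking $L^p$ norms, these furnish one direction of every equivalence at once, so it will suffice to close the cycle by bounding $\|(M^{**}_{l,N})_{\mathbb A}(\vec f)\|_{L^p}$ (the largest quantity) by $\|M_{\mathbb A}(\vec f, \psi)\|_{L^p}$ (the smallest). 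I will do this in three consecutive steps: since $N \geq l + \beta_2$, Theorem \ref{equivalent A}(iii) yields the pointwise bound $(M^{**}_{l,N})_{\mathbb A}(\vec f) \lesssim (M^{**}_l)_{\mathbb A}(\vec f, \psi)$, which passes to $L^p$; since $l > n/p + \omega$, Theorem \ref{equivalent A}(ii) gives $\|(M^{**}_l)_{\mathbb A}(\vec f, \psi)\|_{L^p} \lesssim \|(M^*_a)_{\mathbb A}(\vec f, \psi)\|_{L^p}$; and finally Theorem \ref{equivalent A}(iv) closes the cycle with $\|(M^*_a)_{\mathbb A}(\vec f, \psi)\|_{L^p} \lesssim \|M_{\mathbb A}(\vec f, \psi)\|_{L^p}$. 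Together with the pointwise chain above, this gives mutual $L^p$ equivalence of the six non-infimum quantities, and since $\|\vec f\|_{H^p_{\mathbb A, N}}$ is by definition $\|(M^*_{1,N})_{\mathbb A}(\vec f)\|_{L^p}$ (the case $a = 1$), it lies in the same equivalence class.

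Finally, I will insert $\|(\widetilde M^*_{a,b})_{\mathbb A}(\vec f, \psi)\|_{L^p}$ into the cycle using Theorem \ref{equivalent A}(v): the pointwise estimate \eqref{zm} gives $(\widetilde M^*_{a,b})_{\mathbb A}(\vec f, \psi) \leq (M^*_a)_{\mathbb A}(\vec f, \psi)$, while the $L^p$ bound in (v), valid precisely when $b \in (0, \delta]$ (which is exactly the hypothesis of the present theorem), gives the reverse $\|(M^*_a)_{\mathbb A}(\vec f, \psi)\|_{L^p} \lesssim \|(\widetilde M^*_{a,b})_{\mathbb A}(\vec f, \psi)\|_{L^p}$. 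Since the entire analytic content has already been absorbed into Theorem \ref{equivalent A}, the only real obstacle here is careful bookkeeping: verifying that the four quantitative conditions---strong doubling of $\mathbb A$, $l \in (n/p + \omega, \infty)$, $N \geq l + \beta_2$, and $b \in (0, \delta]$---are simultaneously supplied by the hypotheses of Theorem \ref{if and only if A} so that each step of the cycle can legitimately be invoked.
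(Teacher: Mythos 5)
Your proof is correct and follows essentially the same route as the paper's: both arguments extract the pointwise chain from Theorem \ref{equivalent A}(i), close the cycle via (iii), (ii), (iv), and bring in the infimum maximal function via \eqref{zm} and the $L^p$ estimate in (v). Your explicit remark that strong doubling implies weak doubling of order $\omega$ (needed to invoke (ii)) is a detail the paper leaves implicit but is exactly right.
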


\begin{proof}
Using (i) and (v) of Theorem \ref{equivalent A}, we conclude that
$(\widetilde M^*_{a,b})_{\mathbb A}(\vec f,\psi)$ and $(M^{**}_{l,N})_{\mathbb A}(\vec f)$
are, respectively, the minimum and the maximum of these maximal functions.
On the other hand, from (ii) through (iv) of Theorem \ref{equivalent A}, we infer that
\begin{align*}
\left\|\left(M^{**}_{l,N}\right)_{\mathbb A}\left(\vec f\right)\right\|_{L^p}
\lesssim\left\|\left(M^{**}_l\right)_{\mathbb A}\left(\vec f,\psi\right)\right\|_{L^p}
\lesssim\left\|\left(M^{*}_a\right)_{\mathbb A}\left(\vec f,\psi\right)\right\|_{L^p}\lesssim\left\|M_{\mathbb A}\left(\vec f,\psi\right)\right\|_{L^p}
\lesssim\left\|\left(\widetilde M^*_{a,b}\right)_{\mathbb A}\left(\vec f,\psi\right)\right\|_{L^p},
\end{align*}
which further implies the desired conclusions and hence completes the proof of
Theorem \ref{if and only if A}.
\end{proof}

Based on Theorem \ref{if and only if A}, in what follows we denote
$H_{\mathbb A,N(\mathbb A)}^p$ simply by
$H_{\mathbb{A}}^p$, where the family of matrices $\mathbb A:=\{A_Q\}_{Q\in\mathscr{Q}}$
is strongly doubling of order $(\beta_1,\beta_2,\omega)$
for some $\beta_1,\beta_2,\omega\in[0,\infty)$ and
$$
N(\mathbb A):=\left\lfloor\frac np+\omega+\beta_2\right\rfloor+1.
$$

\subsection{Relations Between Matrix-Weighted Hardy Spaces
and $\mathbb A$-Matrix-Weighted Hardy Spaces} \label{weighted}

The following theorem is the main result of this subsection,
which establishes the relations between matrix-weighted Hardy spaces
and $\mathbb A$-matrix-weighted Hardy spaces.

\begin{theorem}\label{weight and reducing}
Let $p\in(0,\infty)$, $W\in A_{p,\infty}$, and
$\mathbb A:=\{A_Q\}_{Q\in\mathscr{Q}}$
be a family of reducing operators of order $p$ for $W$.
Assume that $\psi\in\mathcal{S}$ satisfies
$\int_{\mathbb R^n}\psi(x)\,dx\neq 0$.
Let $a\in(0,\infty)$,
$l\in(n/p+[d_{p,\infty}^{\mathrm{lower}}(W)+d_{p,\infty}^{\mathrm{upper}}(W)]/p,\infty)$,
and $N\in\mathbb N$ satisfy $N> l+ d_{p,\infty}^{\mathrm{upper}}(W)/p$.
Then, for any $\vec f\in(\mathcal S')^m$,
\begin{align*}
\left\|\vec f\right\|_{H^p_{\mathbb A}}
&\sim\left\|\vec f\right\|_{H^p_W}\sim\left\|M^p_W\left(\vec f,\psi\right)\right\|_{L^p}
\sim\left\|\left(M^*_a\right)^p_W\left(\vec f,\psi\right)\right\|_{L^p}
\sim\left\|\left(M_N\right)^p_W\right\|_{L^p}\\
&\sim\left\|\left(M^*_{a,N}\right)^p_W\left(\vec f\right)\right\|_{L^p}
\sim\left\|\left(M^{**}_l\right)^p_W\left(\vec f,\psi\right)\right\|_{L^p}
\sim\left\|\left(M^{**}_{l,N}\right)^p_W\left(\vec f\right)\right\|_{L^p},
\end{align*}
where the positive equivalence constants are independent of $\vec f$.
\end{theorem}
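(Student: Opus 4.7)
The strategy is to interpose the $\mathbb{A}$-matrix-weighted maximal function $L^p$-norms between the matrix-weighted ones. Since $\mathbb{A}$ consists of reducing operators of order $p$ for $W\in A_{p,\infty}$, Lemma \ref{sharp estimate} together with Remark \ref{r2.18} imply that $\mathbb{A}$ is strongly doubling with parameters determined by $d_{p,\infty}^{\mathrm{lower}}(W)$ and $d_{p,\infty}^{\mathrm{upper}}(W)$, so the hypotheses on $l$ and $N$ allow Theorem \ref{if and only if A} to be applied and give the mutual equivalence of all seven $\mathbb{A}$-norms on the right-hand side of the statement. Moreover, the submultiplicativity-based pointwise inequalities of Theorem \ref{equivalent A}(i) remain valid word-for-word with $A_t(x)$ replaced by $W^{1/p}(x)$, yielding the matrix-weighted hierarchy $M^p_W\le (M^*_a)^p_W\le (1+a)^l(M^{**}_l)^p_W$ and analogous comparisons through the grand variants. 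It therefore suffices to prove a matching pair of bridging $L^p$-inequalities that sandwich the matrix-weighted family between two $\mathbb{A}$-norms; chaining these bridges through Theorem \ref{if and only if A} and the aforementioned hierarchy then propagates the equivalence to every pair of maximal functions in the statement.

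The bridges rest on two ingredients. First, for any $x\in Q\in\mathscr Q_t$ and $\vec u\in\mathbb C^m$, operator-norm submultiplicativity yields the pointwise inequalities
\[
|W^{1/p}(x)\vec u|\le\|W^{1/p}(x)A_Q^{-1}\|\,|A_Q\vec u|\quad\text{and}\quad |A_Q\vec u|\le\|A_QW^{-1/p}(x)\|\,|W^{1/p}(x)\vec u|,
\]
supplemented by the averaged identity $\fint_Q\|W^{1/p}(x)A_Q^{-1}\|^p\,dx\sim 1$ obtained by taking $M:=A_Q^{-1}$ in Proposition \ref{reduceM}. Second, the Volberg-style large-subset observation from the introduction: for $W\in A_{p,\infty}$, each cube $Q$ contains a subset $E_Q\subset Q$ with $|E_Q|\ge c|Q|$ on which both $\|W^{1/p}(z)A_Q^{-1}\|$ and $\|A_QW^{-1/p}(z)\|$ are bounded by a uniform constant; this is a standard consequence of the $A_{p,\infty}$ self-improvement applied to Definition \ref{def ap,infty} and Proposition \ref{reduceM}, and it is precisely the geometric fact that the new infimum $\mathbb{A}$-maximal function of Definition \ref{reducing hardy}(iv) is designed to exploit (by placing the convolution point inside a small cube $Q\in\mathscr Q_{bt}$ that with positive probability intersects $E_{Q_t(x)}$).

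The main obstacle is the supremum over the scale $t$: the exchange factors $\|W^{1/p}(x)A_{Q_t(x)}^{-1}\|$ and $\|A_{Q_t(x)}W^{-1/p}(z)\|$ depend on $t$ through the cube $Q_t(x)\in\mathscr Q_t$ containing $x$ and are not uniformly bounded as $t$ varies, so any naive pointwise argument fails after $\sup_t$. The resolution is a Hardy--Littlewood maximal trick with a subexponent $s\in(0,p)$: restrict $z$ to $E_{Q_t(x)}$ to replace the exchange factor by a uniform constant, use $|z-x|\le t\sqrt n$ to transfer convolution points within the same cube, raise the resulting inequality to the $s$-th power, and average over $E_{Q_t(x)}\subset Q_t(x)$. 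This schematically produces
\[
[\text{one side}(x)]^s\lesssim \sup_{t>0}\fint_{Q_t(x)}[\text{other side}(z)]^s\,dz\le \mathcal M\bigl([\text{other side}]^s\bigr)(x),
\]
after which the boundedness of $\mathcal M$ on $L^{p/s}$ (valid since $p/s>1$) converts this pointwise inequality into the desired $L^p$-bridge. Applying this recipe with the two pointwise inequalities in opposite roles delivers the two bridging inequalities; chaining them with Theorem \ref{if and only if A} and the matrix-weighted hierarchy then produces every equivalence stated in Theorem \ref{weight and reducing}.
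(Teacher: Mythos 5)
Your route through the $\mathbb{A}$-norms via Theorem \ref{if and only if A}, and your argument for the direction $\|\vec f\|_{H^p_{\mathbb A}}\lesssim\|M^p_W(\vec f,\psi)\|_{L^p}$, match the paper: the large-good-subset observation (Lemma \ref{le-Apinfty}) applied to the infimum maximal function $(\widetilde M^*_{a,b})_{\mathbb A}$, followed by a subexponent $\nu\in(0,p)$ and $\mathcal M$-boundedness on $L^{p/\nu}$, is exactly the paper's proof of \eqref{A<W}. However, the other bridge $\|(M^{**}_{l,N})^p_W(\vec f)\|_{L^p}\lesssim\|\vec f\|_{H^p_{\mathbb A}}$ does \emph{not} follow by running the same recipe ``with the two pointwise inequalities in opposite roles,'' and this is a genuine gap. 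In that direction the exchange factor one needs to absorb is $\gamma_{2^{-j}}(x)=\|W^{1/p}(x)A_{2^{-j}}^{-1}(x)\|$ at the \emph{fixed} point $x$ that is being integrated in $\|\cdot\|_{L^p}$; you cannot move $x$ into a good subset because $x$ is the integration variable, and there is no infimum structure in $(M^{**}_{l,N})^p_W$ that lets you choose the point where $W^{1/p}$ is evaluated. Concretely, the pointwise step your scheme would need is of the form $[(M^{**}_{l,N})^p_W(\vec f)(x)]^s\lesssim \sup_j[\gamma_{2^{-j}}(x)]^s\mathcal M([(M^{**}_{l,N})_{\mathbb A}(\vec f)]^s)(x)$, and the prefactor $\sup_j\gamma_{2^{-j}}(x)$ is not locally bounded and need not lie in any $L^q$. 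What actually makes the paper's direction work is Lemma \ref{Nazarov 2}, the $L^p\ell^\infty$ Fefferman--Stein-type inequality $\|\{\gamma_{2^{-j}}E_{2^{-j}}(f_j)\}_j\|_{L^p\ell^\infty}\lesssim\|\{E_{2^{-j}}(f_j)\}_j\|_{L^p\ell^\infty}$. This in turn rests on the Carleson-type condition \eqref{gammaj} of Proposition \ref{nazarov} satisfied by the whole sequence $\{\gamma_{2^{-j}}\}_j$, which is a statement about a weighted family of averaging operators at all dyadic scales simultaneously, and is strictly stronger than (and not derivable from) the single-cube good-subset fact your proposal uses. The Hardy--Littlewood maximal trick cannot substitute for it: the scale achieving the supremum varies with $x$, and bounding $\gamma_{2^{-j}}(x)$ by its local average would need a reverse-Jensen pointwise estimate that matrix $A_{p,\infty}$ weights do not provide. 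To close the gap you would need to introduce the dyadic localization $J_j(x)$ that is constant on cubes of scale $2^{-j}$ and invoke Lemma \ref{Nazarov 2} (or an equivalent vector-valued maximal inequality) explicitly.
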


\begin{remark}\label{2.29}
It is worth mentioning that Theorems \ref{if and only if A} and
\ref{weight and reducing} provide various $\mathbb A$-matrix-weighted
maximal function characterizations of matrix-weighted Hardy spaces,
which are new even in the scalar case.
Let $m=1$, $p\in(0,\infty)$, $W\in A_\infty(\mathbb{R}^n)$, $\{A_Q\}_{\mathrm{cube}\,Q}$
be a family of
reducing operators of order $p$ for $W$, and $\psi\in\mathcal{S}$ satisfy
$\int_{\mathbb R^n}\psi(x)\,dx\neq 0$.
Here, and thereafter, for any cube $Q$, $W(Q):=\int_QW(x)\,dx$.
Observe that, in this case, by Definition
\ref{reduce}, one obviously has, for any cube
$Q$, $|A_Q|\sim [\frac {W(Q)}{|Q|}]^{\frac 1p}$, where
the positive equivalence constants
depend only on $p$, and hence, with $a,b$ as in Theorem \ref{if and only if A},
for any $f\in \mathcal{S}'$,
\begin{align}\label{e11.29}
\|f\|_{H^p_W}
\sim\left\|\sup_{t\in(0,\infty)}\left\{\sum_{Q\in\mathscr{Q}_t}
\left[\frac{ W(Q)}{|Q|}\right]^{\frac{1}{p}}
\mathbf{1}_Q(\cdot)\right\}\max_{\{Q\in\mathscr{Q}_{bt}:\,Q\cap B(\cdot,at)\neq\emptyset\}}
\inf_{y\in Q}\left|\psi_t*f(y)\right|
\right\|_{L^p},
\end{align}
where the positive equivalence constants are independent of $f$.
Obviously, the right-hand side of \eqref{e11.29} is a discretization
of the weighted $L^p$ norm of the non-tangential infimum
maximal function, and, by Theorem \ref{weight and reducing},
it can be replaced by any of all the other
quasi-norms related to reducing operators appearing
in Theorem \ref{if and only if A} with $m=1$.
Thus, in this sense, the $\mathbb A$-matrix-weighted maximal functions
in Definition \ref{reducing hardy} can be regarded as
the discrete variant of the corresponding
matrix-weighted maximal functions in Definition \ref{HW}.
\end{remark}

To show Theorem \ref{weight and reducing}, we need the following two lemmas.
The following lemma is exactly \cite[Corollary 3.9]{bhyyNew}.

\begin{lemma}\label{le-Apinfty}
Let $p\in(0,\infty)$, $W\in A_{p,\infty}$,
and $\{A_Q\}_{\mathrm{cube}\,Q}$ be a family of
reducing operators of order $p$ for $W$.
Then there exists a positive constant $\widetilde C$,
depending only on $m$ and $p$, such that,
for any cube $Q\subset\mathbb R^n$ and any $M\in(0,\infty)$,
\begin{align}\label{reduceback}
\left|\left\{y\in Q:\ \left\|A_Q W^{-\frac1p}(y)\right\|^p\geq e^M\right\}\right|
\le \frac{\log(\widetilde C[W]_{A_{p,\infty}})}{M} |Q|.
\end{align}
\end{lemma}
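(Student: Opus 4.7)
My plan is to recast \eqref{reduceback} as a uniform $L^1$ bound on the positive part of a logarithmic function and then split that bound into two pieces, one coming from the $A_{p,\infty}$ condition and one from an elementary duality identity for $A_Q$. Set $f(y):=\log\|A_QW^{-\frac1p}(y)\|^p$ and $f_\pm:=\max\{\pm f,0\}$. Since $\{y\in Q:\|A_QW^{-\frac1p}(y)\|^p\geq e^M\}=\{y\in Q:f(y)\geq M\}$, Markov's inequality reduces the target \eqref{reduceback} to showing
\[
\fint_Q f_+(y)\,dy\leq\log(\widetilde C[W]_{A_{p,\infty}}),
\]
and writing $\fint_Q f_+=\fint_Q f+\fint_Q f_-$ it suffices to bound the two averages on the right.

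The upper bound on $\fint_Q f$ will come directly from the $A_{p,\infty}$ definition. Applying Proposition \ref{reduceM} with the constant matrix $W^{-\frac1p}(y)$ yields a constant $c_1=c_1(m,p)$ with $\|A_QW^{-\frac1p}(y)\|^p\leq c_1\fint_Q\|W^{\frac1p}(x)W^{-\frac1p}(y)\|^p\,dx$ for a.e. $y\in Q$. Taking the logarithm, averaging in $y$ over $Q$, and invoking Definition \ref{def ap,infty} then gives $\fint_Q f\leq\log(c_1[W]_{A_{p,\infty}})$.

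The key new ingredient is the control on $\fint_Q f_-$, for which I would introduce the companion function $g(y):=\log\|W^{\frac1p}(y)A_Q^{-1}\|^p$. Since $A_QW^{-\frac1p}(y)\cdot W^{\frac1p}(y)A_Q^{-1}=I_m$, submultiplicativity of the operator norm gives $\|A_QW^{-\frac1p}(y)\|\cdot\|W^{\frac1p}(y)A_Q^{-1}\|\geq 1$, so that $f+g\geq 0$ pointwise and hence $f_-\leq g_+$. To estimate $\fint_Q g_+$ I would apply Proposition \ref{reduceM} a second time, with matrix $A_Q^{-1}$: since $\|A_QA_Q^{-1}\|^p=1$, this yields $\fint_Q\|W^{\frac1p}(y)A_Q^{-1}\|^p\,dy\leq C_0$ for some $C_0=C_0(m,p)$. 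Chebyshev's inequality then gives $|\{y\in Q:g(y)\geq t\}|\leq C_0e^{-t}|Q|$ for every $t\geq 0$, and the layer-cake formula produces $\fint_Q g_+\leq\int_0^\infty\min\{1,C_0e^{-t}\}\,dt\leq 1+\log^+ C_0=:C_2$.

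Combining these three estimates I obtain $\fint_Q f_+\leq\log(c_1[W]_{A_{p,\infty}})+C_2=\log(\widetilde C[W]_{A_{p,\infty}})$ with $\widetilde C:=c_1e^{C_2}$, and Markov's inequality finishes the proof. The main obstacle is the handling of $\fint_Q f_-$: the $A_{p,\infty}$ condition controls only the averaged logarithm of $\|A_QW^{-\frac1p}(y)\|^p$ from above, so the negative contribution of $f$ must be bounded separately, and the device of introducing $g$ via the matrix identity $A_QW^{-\frac1p}(y)\cdot W^{\frac1p}(y)A_Q^{-1}=I_m$ and exploiting the dual application of Proposition \ref{reduceM} is precisely what converts the one-sided logarithmic bound into the desired $L^1$ estimate on $f_+$.
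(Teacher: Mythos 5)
The paper does not actually prove this lemma: it is quoted directly from \cite[Corollary 3.9]{bhyyNew}, so there is no in-paper argument to compare against. Judged on its own terms, your proof is correct. The structure is sound: Markov's inequality reduces the claim to an $L^1(Q)$ bound on $f_+$ where $f=\log\|A_QW^{-1/p}(\cdot)\|^p$; the average of $f$ itself is controlled via Proposition \ref{reduceM} (with $M=W^{-1/p}(y)$ for each fixed $y$) and the $A_{p,\infty}$ constant; and the negative part $f_-$ is dominated by $g_+$ with $g(y)=\log\|W^{1/p}(y)A_Q^{-1}\|^p$ through the submultiplicativity inequality $\|A_QW^{-1/p}(y)\|\,\|W^{1/p}(y)A_Q^{-1}\|\geq\|I_m\|=1$, after which a second application of Proposition \ref{reduceM} (with $M=A_Q^{-1}$) closes the argument. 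All constants depend only on $m$ and $p$, and the integrability bookkeeping is fine: your bound $\fint_Q f_-\leq\fint_Q g_+<\infty$ together with the $A_{p,\infty}$ hypothesis (which makes the positive part of $\log\bigl(\fint_Q\|W^{1/p}(x)W^{-1/p}(\cdot)\|^p\,dx\bigr)$ integrable on $Q$) guarantees that $\fint_Q f$ is well defined in $(-\infty,\infty]$, so the pointwise identity $f_+=f+f_-$ integrates correctly. One cosmetic simplification: since $\log^+t\leq t$, you have $g_+\leq e^{g}=\|W^{1/p}(\cdot)A_Q^{-1}\|^p$ pointwise, so $\fint_Q g_+\leq C_0$ follows immediately from the second use of Proposition \ref{reduceM} without invoking the Chebyshev--layer-cake computation (your version gives the marginally sharper constant $1+\log^+C_0$, but either suffices).
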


\begin{remark}\label{re-AQW}
If $m=1$, $W\in A_\infty(\mathbb{R}^n)$, and
$\{A_Q\}_{\mathrm{cube}\,Q}$ is a family of
reducing operators of order $p$ for $W$, then
\eqref{reduceback} becomes
$|\{y\in Q:\ W(y)\le e^{-M}\frac{W(Q)}{|Q|}\}|
\lesssim \frac{|Q|}{M},$
where the implicit positive constant is independent of $Q$ and $M$.
This coincides with the formula in \cite[Theorem 7.3.3(a)]{g14c}.
In the matrix case, Volberg \cite[p.\,454,\ Remark]{v97} pointed out that
the set where the $A_{p,\infty}$ matrix weight
is much smaller than its average is small,
which is exactly what is expressed in Lemma \ref{le-Apinfty}.
\end{remark}

For any $t\in(0,\infty)$ and any nonnegative measurable function $f$
on $\mathbb R^n$ (or any $f\in L^1_{\mathrm{loc}}$), let
\begin{equation*}
E_t (f):=\sum_{Q\in\mathscr{Q}_t}\fint_Q f(x)\,dx\mathbf{1}_Q,
\end{equation*}
where $\mathscr{Q}_t$ is the same as in \eqref{deQt}.
The following lemma is exactly \cite[Corollary 5.8]{bhyyNew},
which is a suitable replacement of the Fefferman--Stein vector-valued maximal
inequality in the case of matrix weights and plays a key role
in the proof of Theorem \ref{weight and reducing}.

\begin{lemma}\label{Nazarov 2}
Let $p\in(0,\infty)$, $q\in(0,\infty]$, $W\in A_{p,\infty}$,
and $\{A_Q\}_{Q\in\mathscr{Q}}$ be a family of
reducing operators of order $p$ for $W$.
For any $t\in(0,\infty)$, define
\begin{align}\label{2.28x}
\gamma_t:=\left\|W^{\frac{1}{p}}A_t^{-1}\right\|,
\end{align}
where $A_t$ is the same as in \eqref{Atref}.
Then there exists a positive constant $C$ such that,
for any sequence $\{f_j\}_{j\in\mathbb{Z}}$ of
nonnegative measurable functions on $\mathbb R^n$
or $\{f_j\}_{j\in\mathbb{Z}}\subset L^1_{\mathrm{loc}}$,
$$
\left\|\left\{\gamma_{2^{-j}}E_{2^{-j}}\left(f_j\right)\right\}_{j\in\mathbb Z}\right\|_{L^pl^q}
\le C\left\|\left\{E_{2^{-j}}\left(f_j\right)\right\}_{j\in\mathbb Z}\right\|_{L^pl^q},
$$
where, for any $\{f_j\}_{j\in{\mathbb Z}}$,
$\|\{f_j\}_{j\in{\mathbb Z}}\|_{L^pl^q}:=\|(\sum_{j\in{\mathbb Z}}|f_j|^q)^{\frac1q}\|_{L^p}$.
\end{lemma}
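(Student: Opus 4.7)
My plan is to establish this weighted vector-valued maximal inequality by combining a reverse Hölder self-improvement of the factor $\gamma_t$ (extracted from the hypothesis $W\in A_{p,\infty}$) with the classical Fefferman–Stein vector-valued maximal inequality.

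First, I would normalize $\gamma_t$: applying Proposition \ref{reduceM} with $M:=A_Q^{-1}$ yields, for every cube $Q\in\mathscr{Q}$,
\begin{align*}
\fint_Q \gamma_{l(Q)}(x)^{p}\, dx
= \fint_Q \left\|W^{\frac{1}{p}}(x) A_Q^{-1}\right\|^{p} dx
\sim \left\|A_Q A_Q^{-1}\right\|^{p} = 1,
\end{align*}
so $\gamma_t$ has a uniformly bounded $L^p$ mean on each dyadic cube of side $t$. I would then upgrade this to a reverse Hölder inequality: the existence of $\sigma>1$ and $C>0$ such that for every $Q\in\mathscr{Q}$,
\begin{align*}
\left(\fint_Q \gamma_{l(Q)}(x)^{p\sigma}\, dx\right)^{1/(p\sigma)} \le C.
\end{align*}
The main tool here is Lemma \ref{le-Apinfty}, a John–Nirenberg-type distributional estimate for the companion quantity $\|A_Q W^{-1/p}\|^{p}$ under $W\in A_{p,\infty}$. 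Since $W^{1/p}$ and $A_Q$ are self-adjoint, one has $\|W^{1/p}(x)A_Q^{-1}\|=\|A_Q^{-1}W^{1/p}(x)\|$, so after transferring the distributional control via the symmetric/dual structure of the matrix $A_{p,\infty}$ class, a standard Coifman–Fefferman iteration produces the required exponent $\sigma>1$.

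With the reverse Hölder in hand, I would reduce to the classical Fefferman–Stein inequality. Both sides of the claimed inequality depend only on $E_{2^{-j}}(f_j)$, so without loss of generality each $f_j\ge 0$ is $\mathscr{Q}_{2^{-j}}$-measurable. The diagonal case $p=q$ is then immediate by Fubini and the normalization:
\begin{align*}
\left\|\left\{\gamma_{2^{-j}} f_j\right\}_{j\in\mathbb{Z}}\right\|_{L^p l^p}^{p}
= \sum_{j\in\mathbb{Z}} \sum_{Q\in\mathscr{Q}_{2^{-j}}} f_j|_Q^{p} \int_Q \gamma_{2^{-j}}(x)^{p}\, dx
\sim \sum_{j\in\mathbb{Z}} \int_{\mathbb{R}^n} f_j(x)^{p}\, dx.
\end{align*}
For general $p\in(0,\infty)$ and $q\in(0,\infty]$, I would combine this with the reverse Hölder via a Hölder-plus-Fefferman–Stein argument: choosing $\tau\in(1,\sigma)$ and, if $\min\{p,q\}\le 1$, a further Kolmogorov-type exponent reduction, the classical vector-valued Fefferman–Stein maximal inequality applied at exponents $p/\tau$ and $q/\tau$ (both $>1$) absorbs the pointwise factor $\gamma_{2^{-j}}$ into the local averages and closes the estimate.

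The main obstacle is the reverse Hölder self-improvement. While the scalar analogue (Coifman–Fefferman for $A_\infty$ weights) is classical, the operator-norm character of $\gamma_t$ and the asymmetry between the quantity $\|A_Q W^{-1/p}\|$ controlled by Lemma \ref{le-Apinfty} and the quantity $\|W^{1/p}A_Q^{-1}\|$ appearing in $\gamma_t$ require a delicate transfer that relies on the structural machinery for matrix $A_{p,\infty}$ weights developed in \cite{bhyyNew}. A secondary subtlety is handling the small-exponent regime $\min\{p,q\}\le 1$ in the final Fefferman–Stein step, where one must replace direct boundedness by the Kolmogorov-type reduction using the margin $\sigma-1>0$ provided by step two.
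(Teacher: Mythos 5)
Your proposal has two gaps. The smaller one: the reverse H\"older inequality for $\gamma_t=\|W^{1/p}A_t^{-1}\|$ cannot be obtained from Lemma~\ref{le-Apinfty} via the transfer you describe. Lemma~\ref{le-Apinfty} bounds the measure of the set where $\|A_Q W^{-1/p}(y)\|^p$ is large, which (already in the scalar case) is where $W$ is much \emph{smaller} than its average, whereas $\gamma_t$ is large precisely where $W$ is much \emph{larger} than its average. The self-adjointness identity $\|W^{1/p}A_Q^{-1}\|=\|A_Q^{-1}W^{1/p}\|$ does not bridge this: $A_Q^{-1}W^{1/p}(x)$ and $A_QW^{-1/p}(x)$ are \emph{inverses}, not adjoints, of one another, so a bound on one operator norm says nothing about the other. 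The reverse H\"older you want is instead stated directly as Lemma~\ref{86}, i.e., \cite[Corollary~5.7(i)]{bhyyNew}.

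The larger gap is structural: even granted the reverse H\"older, your ``H\"older-plus-Fefferman--Stein'' step does not close. Since $\gamma_{2^{-j}}(x)$ can be arbitrarily large pointwise, no absorption of the form $\gamma_{2^{-j}}(x)\,g_j(x)\lesssim[\mathcal{M}(g_j^\delta)(x)]^{1/\delta}$ can hold: with $g_j$ constant on the dyadic cube $Q\in\mathscr Q_{2^{-j}}$ containing $x$, the right-hand side is at least $g_j|_Q$ while the left-hand side equals $\gamma_{2^{-j}}(x)\,g_j|_Q$, and no choice of $\delta$ removes the factor. The per-scale reverse H\"older $\fint_{Q'}\gamma_{l(Q')}^{pr}\lesssim1$ controls each scale separately, but the estimate for $q\ne p$ (and especially the $\ell^\infty$ endpoint) requires a genuinely cross-scale Carleson-type bound, namely the $\mathcal K$-condition
\begin{align*}
\sup_{Q\in\mathscr Q^{\mathrm{dy}}}\fint_Q\sup_{j\geq j_Q}\gamma_{2^{-j}}(x)^p\,dx\lesssim 1,
\end{align*}
which folds all dyadic scales finer than $Q$ into a single average. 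That condition is exactly the content of \cite[Corollary~5.7(ii)]{bhyyNew}, and the paper's Proposition~\ref{nazarov} together with Corollary~\ref{nazarov cor} shows how Lemma~\ref{Nazarov 2} rests on it rather than on any Fefferman--Stein absorption. Your diagonal computation at $p=q$ is correct, but it uses only the per-scale normalization, which is strictly weaker and does not propagate to general $q$.
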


Next, we prove Theorem \ref{weight and reducing}.

\begin{proof}[Proof of Theorem \ref{weight and reducing}]
Let $d_1\in[\![d_{p,\infty}^{\mathrm{lower}}(W),n)$ and
$d_2\in[\![d_{p,\infty}^{\mathrm{upper}}(W),\infty)$.
Then, applying Remark \ref{r2.18}, we conclude that
$\mathbb A$ is strongly doubling of order
$(\frac{d_1}{p},\frac{d_2}{p},\frac{d_1+d_2}{p})$.
By Definition \ref{HW}, we find that
$M^p_W(\vec f,\psi)$ and $(M^{**}_{l,N})^p_W(\vec f)$
are, respectively, the minimum and the maximum of these maximal functions.
Therefore, to show the present theorem, it is enough to prove
\begin{align}\label{W<A}
\left\|\left(M^{**}_{l,N}\right)^p_W\left(\vec f\right)\right\|_{L^p}
\lesssim\left\|\vec f\right\|_{H^p_{\mathbb A}}
\end{align}
and
\begin{align}\label{A<W}
\left\|\vec f\right\|_{H^p_{\mathbb A}}
\lesssim \left\|M^p_W\left(\vec f,\psi\right)\right\|_{L^p}.
\end{align}

We first show \eqref{W<A}.
From the definition of $\gamma_t$, it follows that,
for any $j\in\mathbb Z$, $t\in(2^{-j-1},2^{-j}]$, and $x\in\mathbb R^n$,
\begin{align}\label{1223}
&\sup_{\phi\in\mathcal S_N}\sup_{y\in{\mathbb{R}^n}}
\left|W^{\frac{1}{p}}(x)\phi_t*\vec f(x-y)\right|
\left(1+\frac{|y|}{t}\right)^{-l}\\
&\quad\le\gamma_{2^{-j}}(x)
\sup_{\phi\in\mathcal S_N}\sup_{y\in{\mathbb{R}^n}}
\left|A_{2^{-j}}(x)\phi_t*\vec f(x-y)\right|
\left(1+\frac{|y|}{t}\right)^{-l}
\le\gamma_{2^{-j}}(x)J_j(x),\nonumber
\end{align}
where $A_{2^{-j}}$ is the same as in
\eqref{Atref} with $t:=2^{-j}$,
$\gamma_{2^{-j}}$ is the same as \eqref{2.28x}, and
\begin{align*}
J_j(x):=\sum_{Q\in\mathscr{Q}_{2^{-j}}}\sup_{\phi\in\mathcal S_N}
\sup_{t\in(2^{-j-1},2^{-j}]}\sup_{z\in Q}\sup_{y\in\mathbb R^n}
\left|A_Q\phi_t*\vec f(z-y)\right|
\left(1+\frac{|y|}{t}\right)^{-l}\mathbf{1}_Q(x).
\end{align*}
Using \eqref{1223} and the definition of $(M^{**}_{l,N})^p_{W}(\vec f)$,
we conclude that
$
\|(M^{**}_{l,N})^p_{W}(\vec f)\|_{L^p}
\le\|\{\gamma_{2^{-j}}J_j\}_{j\in\mathbb Z}\|_{L^pl^\infty},
$
which, together with Lemma \ref{Nazarov 2} and the observation that $J_j$
is constant on any $Q\in\mathscr{Q}_{2^{-j}}$, further implies that
\begin{align}\label{sim4 equ}
\left\|\left(M^{**}_{l,N}\right)^p_{W}\left(\vec f\right)\right\|_{L^p}
\le\left\|\left\{\gamma_{2^{-j}}E_{2^{-j}}\left(J_j\right)\right\}_{j\in\mathbb Z}\right\|_{L^pl^\infty}
\lesssim\left\|\left\{E_{2^{-j}}\left(J_j\right)\right\}_{j\in\mathbb Z}\right\|_{L^pl^\infty}
=\left\|\left\{J_j\right\}_{j\in\mathbb Z}\right\|_{L^pl^\infty}.
\end{align}
By Definition \ref{doubling}(i) and the definition of $(M^{**}_{l,N})_{\mathbb A}(\vec f)$,
we find that, for any $j\in\mathbb Z$,
$Q\in\mathscr{Q}_{2^{-j}}$, and $x\in Q$,
$\|A_{2^{-j}}(x)A_t^{-1}(x)\|\lesssim 1$ with $t\in(2^{-j-1},2^{-j}]$
and hence
\begin{align*}
J_j(x)
&\le\sup_{\phi\in\mathcal S_N}\sup_{t\in(2^{-j-1},2^{-j}]}
\sup_{y\in\mathbb R^n}\left\|A_{2^{-j}}(x)A_t^{-1}(x)\right\|
\left|A_t(x)\phi_t*\vec f(x-y)\right|
\sup_{z\in Q}\left(1+\frac{|x-z|}{t}+\frac{|y|}{t}\right)^{-l}\\
&\lesssim\sup_{\phi\in\mathcal S_N}\sup_{t\in(2^{-j-1},2^{-j}]}
\sup_{y\in\mathbb R^n}
\left|A_t(x)\phi_t*\vec f(x-y)\right|
\left(1+\frac{|y|}{t}\right)^{-l}
\lesssim\left(M^{**}_{l,N}\right)_{\mathbb A}\left(\vec f\right)(x).
\end{align*}
From this, \eqref{sim4 equ}, and Theorem \ref{if and only if A}, we deduce that
\begin{align*}
\left\|\left(M^{**}_{l,N}\right)^p_{W}\left(\vec f\right)\right\|_{L^p}
\lesssim\left\|\left(M^{**}_{l,N}\right)_{\mathbb A}\left(\vec f\right)
\right\|_{L^p}
\sim \left\|\vec f\right\|_{H^p_{\mathbb A}},
\end{align*}
which completes the proof of \eqref{W<A}.

Now, we prove \eqref{A<W}.
Let $\nu\in(0,p)$ and $b$ be the same
as in Theorem \ref{if and only if A}.
Let $L:=2\log(\widetilde C[W]_{A_{p,\infty}})$,
where $\widetilde C$
is the same as in Lemma \ref{le-Apinfty},
and, for any $Q\in\mathscr{Q}$,
$E_{Q}:=\{y\in Q:\ \|A_QW^{-\frac 1p}(y)\|^p<e^L\}$, where,
by Lemma \ref{le-Apinfty},
we conclude that, for any $Q\in\mathscr{Q}$,
$|E_Q|\geq\frac12|Q|$.
From this and Definition \ref{doubling}(i), we infer that,
for any $t\in(0,\infty)$, $Q\in\mathscr{Q}_t$, $R\in\mathscr{Q}_{bt}$, $x\in Q$,
and $R\cap B(x,at)\neq\emptyset$,
$E_R\subset R\subset (1+2a+2b)Q$ and $|E_R|\sim|(1+2a+2b)Q|$.
By these and the definition of $M^p_{W}(\vec f,\psi)$,
we find that, for any $t\in(0,\infty)$, $Q\in\mathscr{Q}_t$,
$R\in\mathscr{Q}_{bt}$, $x\in Q$, and $R\cap B(x,at)\neq\emptyset$,
\begin{align*}
\inf_{y\in R}\left|A_t(x)\psi_t*\vec f(y)\right|^\nu&\le
\inf_{y\in R}\left\|A_t(x)A_R^{-1}\right\|^\nu\left|A_R\psi_t*\vec f(y)\right|^\nu
\lesssim
\inf_{y\in R}\left|A_R\psi_t*\vec f(y)\right|^\nu\\
&\leq\inf_{y\in E_R}\left\|A_RW^{-\frac1p}(y)\right\|^\nu
\left|W^{\frac1p}(y)\psi_t*\vec f(y)\right|^\nu\\
&\lesssim
\inf_{y\in E_R}\left|W^{\frac1p}(y)\psi_t*\vec f(y)\right|^\nu
\le\fint_{E_R}\left|W^{\frac1p}(y)\psi_t*\vec f(y)\right|^\nu\,dy\\
&\lesssim\fint_{(1+2a+2b)Q}\left[M^p_{W}\left(\vec f,\psi\right)(y)\right]^\nu\,dy
\lesssim\mathcal{M}\left(\left[M^p_{W}\left(\vec f,\psi\right)\right]^\nu\right)(x),
\end{align*}
which, together with the definition of
$(\widetilde M^{*}_{a,b})_{\mathbb A}(\vec f,\psi)$,
further implies that, for any $x\in\mathbb R^n$,
$$
\left[\left(\widetilde M^{*}_{a,b}\right)_{\mathbb A}\left(\vec f,\psi\right)(x)\right]^\nu
\lesssim \mathcal{M}\left(\left[M^p_{W}\left(\vec f,\psi\right)\right]^\nu\right)(x).
$$
Combining this, Theorem \ref{if and only if A},
and the boundedness of $\mathcal{M}$ on $L^{\frac{p}{\nu}}$,
we conclude that
\begin{align*}
\left\|\vec f\right\|_{H^p_{\mathbb A}}
&\sim\left\|\left(\widetilde M^{*}_{a,b}\right)_{\mathbb A}
\left(\vec f,\psi\right)\right\|_{L^p}
=\left\|\left[\left(\widetilde M^{*}_{a,b}\right)_{\mathbb A}
\left(\vec f,\psi\right)\right]^{\nu}\right\|_{L^{\frac p\nu}}^\frac1\nu\\
&\lesssim\left\|\mathcal{M}\left(\left[M^p_{W}\left(\vec f,\psi
\right)\right]^\nu\right)\right\|_{L^{\frac p\nu}}^\frac1\nu
\lesssim\left\|\left[M^p_{W}\left(\vec f,\psi\right)\right]^\nu
\right\|_{L^{\frac p\nu}}^\frac1\nu
=\left\|M^p_{W}\left(\vec f,\psi\right)\right\|_{L^p}.
\end{align*}
This finishes the proof of \eqref{A<W} and hence Theorem \ref{weight and reducing}.
\end{proof}

Obviously, Theorem \ref{if and only if W}
follows directly from Theorem \ref{weight and reducing};
we omit the details. Observe that the $\mathbb A$-matrix-weighted
non-tangential infimum maximal function
plays a key role in the above proof of Theorem \ref{weight and reducing}.

Finally, we present an embedding proposition for matrix-weighted Hardy spaces.

\begin{proposition}\label{embedding}
Let $p\in(0,\infty)$ and $W\in A_{p,\infty}$.
Then $H_{W}^p\subset(\mathcal S')^m$.
Moreover, there exists a positive constant $C$,
depending only on $n$, $p$, and $W$,
such that, for any $\vec f\in(\mathcal S')^m$
and $\phi\in\mathcal{S}$,
\begin{align*}
\left|\left\langle\vec f,\phi\right\rangle\right|
\le C\|\phi\|_{S_{N(W)}}\left\|\vec f\right\|_{H_{W}^p},
\end{align*}
where $N(W)$ is the same as in \eqref{N(W)}.
\end{proposition}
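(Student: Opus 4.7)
The plan is to bound $|\langle \vec f, \phi\rangle|$ pointwise by the matrix-weighted grand non-tangential maximal function $(M_{1,N(W)}^*)^p_W(\vec f)$ on $B(\mathbf{0}, 1)$, and then convert this vector-valued estimate into a scalar bound by averaging against a reducing operator.

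First I would rewrite the pairing as a convolution value. Setting $\tilde\phi(y) := \phi(-y)$, one has $\langle \vec f, \phi\rangle = \tilde\phi_1 * \vec f(\mathbf{0})$, where the subscript $1$ denotes dilation at scale $t = 1$. Assuming $\phi$ is not identically zero (otherwise both sides vanish), let $\psi := \tilde\phi/\|\phi\|_{\mathcal{S}_{N(W)}}$. Since $|\partial^\alpha\tilde\phi(x)| = |\partial^\alpha\phi(-x)|$, one has $\|\tilde\phi\|_{\mathcal{S}_{N(W)}} = \|\phi\|_{\mathcal{S}_{N(W)}}$, and hence $\psi \in \mathcal{S}_{N(W)}$. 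For any $x \in B(\mathbf{0},1)$ we have $\mathbf{0} \in B(x,1)$, so Definition \ref{de-HW} (taking $t := 1$, $y := \mathbf{0}$, and test function $\psi$) directly gives
\begin{align*}
\left|W^{1/p}(x)\langle \vec f, \phi\rangle\right|
= \|\phi\|_{\mathcal{S}_{N(W)}}\left|W^{1/p}(x)\psi_1 * \vec f(\mathbf{0})\right|
\le \|\phi\|_{\mathcal{S}_{N(W)}}\left(M_{1,N(W)}^*\right)^p_W(\vec f)(x).
\end{align*}

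To strip the matrix weight, I would fix a cube $Q_0 \subset B(\mathbf{0},1)$ of positive measure depending only on $n$ (for instance $Q_0 := [-(2\sqrt n)^{-1}, (2\sqrt n)^{-1}]^n$) and let $A_{Q_0}$ be a reducing operator of order $p$ for $W$, whose existence is noted in Remark \ref{r2.13}(i). Raising the preceding pointwise bound to the $p$-th power, averaging over $Q_0$, and applying \eqref{equ_reduce} with $\vec z := \langle \vec f, \phi\rangle$ then yields
\begin{align*}
\left|A_{Q_0}\langle \vec f, \phi\rangle\right|^p
\lesssim \fint_{Q_0}\left|W^{1/p}(x)\langle \vec f, \phi\rangle\right|^p\,dx
\le \frac{\|\phi\|_{\mathcal{S}_{N(W)}}^p}{|Q_0|}\left\|(M_{1,N(W)}^*)^p_W(\vec f)\right\|_{L^p}^p,
\end{align*}
and by Theorem \ref{if and only if W} the last factor is comparable to $\|\vec f\|_{H^p_W}^p$. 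Since $|\vec z| \le \|A_{Q_0}^{-1}\|\,|A_{Q_0}\vec z|$ for any $\vec z \in \mathbb{C}^m$, applying this to $\vec z := \langle \vec f, \phi\rangle$ closes the estimate, with implicit constants depending only on $n$, $p$, and $W$ (through $|Q_0|$, $\|A_{Q_0}^{-1}\|$, and the equivalence constants in \eqref{equ_reduce} and Theorem \ref{if and only if W}). The inclusion $H^p_W \subset (\mathcal{S}')^m$ is built into Definition \ref{de-HW}; the inequality just sketched upgrades it to a continuous embedding.

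The main conceptual obstacle is that the natural pointwise estimate produces the \emph{matrix-weighted} norm $|W^{1/p}(x)\langle \vec f,\phi\rangle|$ rather than the plain Euclidean norm $|\langle \vec f, \phi\rangle|$; a naive single-point argument would introduce a factor $\|W^{-1/p}(x)\|$ that is not uniformly bounded in $x$. Averaging against the reducing operator $A_{Q_0}$ is precisely the correct device to bridge this gap, after which no machinery beyond that already developed in Section \ref{equ-M} is required.
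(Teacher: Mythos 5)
Your argument is correct, and at the key step it takes a genuinely different route from the paper. Both proofs begin with the same pointwise estimate on $B(\mathbf{0},1)$:
\begin{equation*}
\left|W^{\frac1p}(x)\left\langle\vec f,\phi\right\rangle\right|
\le\|\phi\|_{\mathcal S_{N(W)}}\left(M^*_{1,N(W)}\right)^p_W\left(\vec f\right)(x),
\end{equation*}
obtained by testing the grand non-tangential maximal function at $t=1$, $y=\mathbf{0}$. The paper then strips the matrix weight by first writing $|\langle\vec f,\phi\rangle|\le\|W^{-1/p}(x)\|\,|W^{1/p}(x)\langle\vec f,\phi\rangle|$, taking logarithms of $p$-th powers, averaging over $B(\mathbf{0},1)$, and invoking the bound $\fint_{B(\mathbf{0},1)}\log\|W^{-1/p}(x)\|^p\,dx\le C_0$ (from the $A_{p,\infty}$ hypothesis) together with Jensen's inequality. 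You instead average the $p$-th power of the pointwise bound over a fixed cube $Q_0\subset B(\mathbf{0},1)$, recognize the left side via \eqref{equ_reduce} as $|A_{Q_0}\langle\vec f,\phi\rangle|^p$ up to constants, and finally undo $A_{Q_0}$ using that it is positive definite and hence invertible with $\|A_{Q_0}^{-1}\|<\infty$. This avoids the $\log$/Jensen step entirely and uses only the existence of reducing operators (Remark \ref{r2.13}(i)), so it is somewhat cleaner; in fact the $A_{p,\infty}$ assumption then only enters implicitly through the definition of $N(W)$. One small economy you could make: by the paper's convention $\|\vec f\|_{H^p_W}=\|(M^*_{1,N(W)})^p_W(\vec f)\|_{L^p}$ exactly, so the invocation of Theorem \ref{if and only if W} in your last display is unnecessary. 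Everything else is in order, and the dependence of the constant on $n$, $p$, $W$ (and the fixed $m$) through $|Q_0|$, $\|A_{Q_0}^{-1}\|$, and the equivalence constants in \eqref{equ_reduce} matches the statement.
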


\begin{proof}
By the definition of $(M^*_{1,N(W)})^p_{W}$,
we find that, for almost every $x\in B(\mathbf{0},1)$,
\begin{align*}
\left|\left\langle\vec f,\phi\right\rangle\right|
&=\left|\widetilde{\phi}*\vec f(\mathbf{0})\right|
\le\left\|W^{-\frac1p}(x)\right\|
\left|W^{\frac1p}(x)\widetilde{\phi}*\vec f(\mathbf{0})\right|\\
&\le\left\|W^{-\frac1p}(x)\right\|\|\phi\|_{S_{N(W)}}
\left(M^*_{1,N(W)}\right)^p_{W}\left(\vec f\right)(x),
\end{align*}
where $\widetilde{\phi}:=\phi(-\cdot)$, and hence
\begin{align*}
\log\left(\left|\left\langle\vec f,\phi\right\rangle\right|^p\right)
\le \log\left(\left\|W^{-\frac1p}(x)\right\|^p\right)
+\log\left(\|\phi\|_{S_{N(W)}}^p\right)
+p\log\left(\left(M^*_{1,N(W)}\right)^p_{W}\left(\vec f\right)(x)\right).
\end{align*}
Taking the average with respect to $x$ over
$B(\mathbf0,1)$ on both sides of the above inequality
and using \cite[(3.1)]{bhyyNew} and the Jensen inequality,
we conclude that
\begin{align*}
\log\left(\left|\left\langle\vec f,\phi\right\rangle\right|^p\right)
&\le\fint_{B(\mathbf{0},1)}\log\left(\left\|W^{-\frac1p}(x)\right\|^p\right)\,dx
+\log\left(\|\phi\|_{S_{N(W)}}^p\right)\\
&\quad+\fint_{B(\mathbf{0},1)}\log\left(\left[\left(M^*_{1,N(W)}
\right)^p_{W}\left(\vec f\right)(x)\right]^p\right)\,dx\\
&\le C_0+\log\left(\|\phi\|_{S_{N(W)}}^p\right)+\log\left(\fint_{B(\mathbf{0},1)}\left[\left(M^*_{1,N(W)}
\right)^p_{W}\left(\vec f\right)(x)\right]^p\,dx\right)\\
&=\log\left(e^{C_0}\|\phi\|_{S_{N(W)}}^p\fint_{B(\mathbf{0},1)}\left[\left(M^*_{1,N(W)}
\right)^p_{W}\left(\vec f\right)(x)\right]^p\,dx\right).
\end{align*}
From this and the definition of $H^p_W$, we deduce that
\begin{align*}
\left|\left\langle\vec f,\phi\right\rangle\right|^p
\lesssim\|\phi\|_{S_{N(W)}}^p
\fint_{B(\mathbf{0},1)}\left[\left(M^*_{1,N(W)}\right)^p_{W}\left(\vec f\right)(x)\right]^p\,dx
\lesssim\|\phi\|_{S_{N(W)}}^p
\left\|\vec f\right\|_{H_{W}^p}^p.
\end{align*}
This finishes the proof of Proposition \ref{embedding}.
\end{proof}

\begin{proposition}\label{jwdj}
Let $p\in(0,\infty)$ and $W\in A_{p,\infty}$.
Then $H^p_W$ is complete.
\end{proposition}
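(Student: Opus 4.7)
The plan is to follow the standard argument that combines the embedding from Proposition \ref{embedding} with a Fatou-type lower semicontinuity of the grand non-tangential maximal function. Completeness of a quasi-Banach space is equivalent to closure under Cauchy sequences, so it suffices to take any Cauchy sequence $\{\vec f_k\}_{k\in\mathbb{N}}\subset H^p_W$ and produce a limit inside $H^p_W$.

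First, I would use Proposition \ref{embedding} to extract a soft limit. Since, for any $\phi\in\mathcal{S}$,
\[
\left|\left\langle\vec f_k-\vec f_j,\phi\right\rangle\right|
\le C\|\phi\|_{\mathcal{S}_{N(W)}}\left\|\vec f_k-\vec f_j\right\|_{H^p_W},
\]
the sequence $\{\vec f_k\}_{k\in\mathbb{N}}$ is Cauchy in $(\mathcal{S}')^m$ endowed with the weak-$*$ topology. By the sequential completeness of $(\mathcal{S}')^m$, there exists $\vec f\in(\mathcal{S}')^m$ such that $\vec f_k\to\vec f$ in $(\mathcal{S}')^m$. In particular, for any fixed $\phi\in\mathcal{S}_{N(W)}$, $t\in(0,\infty)$, and $x,y\in\mathbb{R}^n$, testing against the Schwartz function $\phi_t(y-\cdot)$ component-wise yields the pointwise convergence
\[
W^{\frac1p}(x)\phi_t*(\vec f_j-\vec f_k)(y)\longrightarrow W^{\frac1p}(x)\phi_t*(\vec f-\vec f_k)(y)\qquad(j\to\infty).
\]

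Next, I would show $\vec f\in H^p_W$ and convergence in norm. Fix $k\in\mathbb N$. Using the elementary inequality $\sup_\alpha\liminf_j a_{\alpha,j}\le\liminf_j\sup_\alpha a_{\alpha,j}$ applied to the three suprema (over $\phi\in\mathcal{S}_{N(W)}$, $t\in(0,\infty)$, and $y\in B(x,t)$) defining the matrix-weighted grand non-tangential maximal function, the pointwise limit above upgrades to the pointwise estimate
\[
\left(M^*_{1,N(W)}\right)^p_W\left(\vec f-\vec f_k\right)(x)
\le\liminf_{j\to\infty}\left(M^*_{1,N(W)}\right)^p_W\left(\vec f_j-\vec f_k\right)(x)
\]
for every $x\in\mathbb{R}^n$. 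Applying Fatou's lemma in $L^p$ (valid for nonnegative measurable functions and all $p\in(0,\infty)$) then gives
\[
\left\|\vec f-\vec f_k\right\|_{H^p_W}
\le\liminf_{j\to\infty}\left\|\vec f_j-\vec f_k\right\|_{H^p_W}.
\]
Given $\varepsilon>0$, the Cauchy property makes the right-hand side less than $\varepsilon$ for all sufficiently large $k$, which simultaneously shows $\vec f-\vec f_k\in H^p_W$ (hence $\vec f\in H^p_W$) and $\vec f_k\to\vec f$ in $H^p_W$. There is no serious obstacle here: the only ingredients are Proposition \ref{embedding}, the weak-$*$ sequential completeness of $(\mathcal{S}')^m$, and $L^p$ Fatou, and the only point worth a careful sentence is that the suprema defining $(M^*_{1,N(W)})^p_W$ interact with $\liminf_{j\to\infty}$ in the lower-semicontinuous direction.
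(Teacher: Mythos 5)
Your proposal is correct, and while the overall strategy (embedding into $(\mathcal{S}')^m$, extract a distributional limit, then use Fatou to close in $H^p_W$) matches the paper's, you take a slightly more self-contained route in the Fatou step. The paper tests against a fixed $\psi\in\mathcal{S}$ with $\int\psi\neq0$, bounds the radial maximal function $M^p_W(\cdot,\psi)$ via Fatou, and then invokes Theorem \ref{weight and reducing} to convert the radial-maximal $L^p$ bound back into an $H^p_W$ norm bound. Working with a single supremum over $t$ makes the pointwise lower-semicontinuity step essentially automatic, at the cost of relying on the nontrivial equivalence theorem. You instead apply the lower-semicontinuity argument directly to the grand non-tangential maximal function $(M^*_{1,N(W)})^p_W$ that defines the $H^p_W$ quasi-norm, carefully observing that the triple supremum (over $\phi\in\mathcal{S}_{N(W)}$, $t$, and $y\in B(x,t)$) commutes with $\liminf_j$ in the correct direction. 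This avoids any appeal to the maximal-function equivalence theorem, so your proof uses only Proposition \ref{embedding}, the sequential completeness of $(\mathcal{S}')^m$, and $L^p$ Fatou -- a modest but real gain in self-containment, and worth keeping.
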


\begin{proof}
Suppose that $\{\vec f_k\}_{k\in\mathbb{N}}$ is a Cauchy sequence in $H^p_W$.
Using this and
Proposition \ref{embedding},
we find that $\{\vec f_k\}_{k\in\mathbb{N}}$
is also a Cauchy sequence in $(\mathcal{S}')^m$, which, together with
the completeness of $(\mathcal{S}')^m$, further implies that there exists
$\vec f\in(\mathcal{S}')^m$ such that
$\vec f=\lim_{k\to\infty}\vec f_k$ in $(\mathcal{S}')^m$.
Let $\psi\in\mathcal S$ satisfy
$\int_{\mathbb R^n}\psi(x)\,dx\neq 0$.
From the definition of $M^p_W(\vec f,\psi)$, we infer that,
for any $k,l\in\mathbb{N}$, $t\in(0,\infty)$, and $x\in\mathbb{R}^n$,
$$\left|W^{\frac1p}(x)\left\langle\vec f_{k+l}-\vec f_k,\psi_t(\cdot-x)
\right\rangle\right|\le M^p_W\left(\vec f_{k+l}-\vec f_k,\psi\right)(x).$$
Letting $l\to\infty$, we obtain,
for any $k\in\mathbb{N}$, $t\in(0,\infty)$, and $x\in\mathbb{R}^n$,
$$\left|W^{\frac1p}(x)\left\langle\vec f-\vec f_k,\psi_t(\cdot-x)
\right\rangle\right|\le\liminf_{l\to\infty}
M^p_W\left(\vec f_{k+l}-\vec f_k,
\psi\right)(x),
$$
which, together with the definition of $M^p_W(\vec f,\psi)$, further implies that
$$M^p_W\left(\vec f-\vec f_k,
\psi\right)(x)\le\liminf_{l\to\infty}
M^p_W\left(\vec f_{k+l}-\vec f_k,
\psi\right)(x).
$$
Taking the $L^p$ norm on both sides of the above inequality
and using the Fatou lemma, we conclude that
$$\left\|M^p_W\left(\vec f-\vec f_k,
\psi\right)\right\|_{L^p}\le\liminf_{l\to\infty}
\left\|M^p_W\left(\vec f_{k+l}-\vec f_k,
\psi\right)\right\|_{L^p}\to0
$$
as $k\to\infty$,
which, together with Theorem \ref{weight and reducing} and the
sublinearity of the quasi-norm of $H^p_W$, further implies that
$\vec f\in H^p_W$ and
$\lim_{k\to\infty}\|\vec f-\vec f_k\|_{H^p_W}=0$.
This finishes the proof of Proposition \ref{jwdj}.
\end{proof}

\section{Atomic Characterization\label{ad}}

In this section, we establish an atomic characterization of $H^p_W$
via beginning with the following concept of atoms.
\begin{definition}\label{F-atom}
Let $p\in(0,\infty)$, $q\in[1,\infty]$, and $s\in{\mathbb Z}_+$.
\begin{itemize}
\item[\rm (I)] Let $W$ be a matrix weight.
A function $\vec a$ is called a \emph{$(p,q,s)_{W}$-atom}
supported in a cube $Q$ if
\begin{enumerate}
\item[\rm (i)] $\operatorname{supp}\vec a\subset Q$,
\item[\rm (ii)]
$\{\int_Q[\int_{Q}|W^{\frac1p}(y)\vec a(x)
|^p\,dy]^{\frac qp}\,dx\}^{\frac1q}\leq|Q|^{\frac1q},$
\item[\rm (iii)]
for any $\gamma:=(\gamma_1,\ldots,\gamma_n)\in\mathbb{Z}_+^n$
with $|\gamma|:=\gamma_1+\cdots+\gamma_n\le s$,
$\int_{\mathbb{R}^n}x^\gamma \vec a(x)\,dx=\vec{0}$,
where $x^\gamma:=x_1^{\gamma_1}\cdots x_n^{\gamma_n}$
for any $x:=(x_1,\ldots,x_n)\in\mathbb{R}^n$.
\end{enumerate}
\item[\rm (II)] Let $\mathbb A:=\{A_Q\}_{\mathrm{cube}\,Q}$
be a family of positive definite matrices.
A function $\vec a$ is called a \emph{$(p,q,s)_{\mathbb A}$-atom}
supported in a cube $Q$ if $\vec a$ satisfies (i) and (iii) of (I) and
\begin{enumerate}
\item[\rm (iv)]
$[\int_Q|A_Q\vec a(x)|^q\,dx]^\frac1q\le|Q|^{\frac1q-\frac1p}.$
\end{enumerate}
\end{itemize}
\end{definition}
\iffalse
\begin{definition}\label{F-atom}
Let $p\in(0,\infty)$, $q\in[1,\infty]$, $s\in{\mathbb Z}_+$,
$W$ be a matrix weight, and $\mathbb A:=\{A_Q\}_{\mathrm{cube}\,Q}$ be a family of positive
definite matrices.
\begin{itemize}
\item[\rm (I)] A function $\vec a$ is called an \emph{$(p,q,s)_{W}$-atom}
supported in a cube $Q$ if
\begin{enumerate}
\item[\rm (i)] $\operatorname{supp}\vec a\subset Q$,
\item[\rm (ii)]
$$\left\{\int_Q\left[\int_{Q}\left|W^{\frac1p}(y)\vec a(x)
\right|^p\,dy\right]^{\frac qp}\,dx\right\}^{\frac1q}\leq|Q|^{\frac1q},$$
\item[\rm (iii)]
for any $\gamma:=(\gamma_1,\ldots,\gamma_n)\in\mathbb{Z}_+^n$
with $|\gamma|:=\gamma_1+\cdots+\gamma_n\le s$,
$\int_{\mathbb{R}^n}x^\gamma \vec a(x)\,dx=\vec{0}$,
where $x^\gamma:=x_1^{\gamma_1}\cdots x_n^{\gamma_n}$
for any $x:=(x_1,\ldots,x_n)\in\mathbb{R}^n$.
\end{enumerate}
\item[\rm (II)] A function $\vec a$ is called an \emph{$(p,q,s)_{\mathbb A}$-atom}
supported in a cube $Q$ if $\vec a$ satisfies (i), (ii'), and (iii)
\begin{enumerate}
\item[\rm (ii')]
$$\left[\int_Q\left|A_Q\vec a(x)\right|^q\,dx\right]^\frac1q\le|Q|^{\frac1q-\frac1p},$$
\end{enumerate}
\end{itemize}
\end{definition}
\fi
\begin{remark}\label{kfc} Let all the symbols be
the same as in Definition \ref{F-atom}.
\begin{enumerate}[{\rm(i)}]
\item From Definition \ref{reduce}, we easily deduce that,
if $W$ is a matrix weight and
$\mathbb A:=\{A_Q\}_{\mathrm{cube}\,Q}$ is a family of
reducing operators of order $p$ for $W$, then a $(p,q,s)_{\mathbb A}$-atom
is a harmless positive constant multiple of a $(p,q,s)_W$-atom.

\item If $m=1$ and $W \equiv 1$, then Definition \ref{F-atom}(ii) reduces back to
$\|\vec{a}\|_{L^q}\le |Q|^{\frac1q-\frac1p}$.
In this case, the $(p,q,s)_{W}$-atom coincides with
the classical atom of Hardy spaces originally introduced by
Coifman \cite[p.\,269]{c74} and Latter \cite[p.\,95]{l78}.

\item
If $m=1$ and $W$ is a scalar weight, then Definition \ref{F-atom}(ii)
becomes $\|\vec a\|_{L^q}\le \frac{|Q|^{\frac{1}{q}}}{[W(Q)]^\frac1p}$.
In this case, the $(p,q,s)_{W}$-atom coincides with
the atom of weighted Hardy spaces;
see, for instance, \cite[Definition 3.5]{shyy}
with $X:=L^p_W$ and see also \cite[p.\,230]{r20}.
\end{enumerate}
\end{remark}
\iffalse
From Definition \ref{reduce}, we easily deduce
the following conclusion; we omit the details.

\begin{proposition}
Let all the symbols be the same as in Definition \ref{F-atom}.
Then $\vec a$ is an averaging $(p,q,s)_W$-atom if and only if
$\vec a$ satisfies both (i) and (iii) of Definition \ref{F-atom}
and
$$\left\{\int_Q\left[\int_{Q}\left|W^{\frac1p}(y)\vec a(x)
\right|^p\,dy\right]^{\frac qp}\,dx\right\}^{\frac1q}\lesssim|Q|^{\frac1q},$$
where the implicit positive constant depends only on $m$ and $p$,
but is independent of $W$, $q$, and $\vec a$.
\end{proposition}
\fi
To establish the atomic characterization of $H^p_W$,
we also need the reverse H\"older inequality associated with
matrix weights, which is
exactly \cite[Corollary 5.7(i)]{bhyyNew}.

\begin{lemma}\label{86}
Let $p\in(0,\infty)$, $W\in A_{p,\infty}$,
and $\mathbb A:=\{A_Q\}_{Q\in\mathscr{Q}}$ be a family of
reducing operators of order $p$ for $W$.
Then there exist a positive constant $r\in(1,\infty)$, depending only on $n$, $m$, $p$,
and $[W]_{A_{p,\infty}}$, and a positive constant $C$,
depending only on $m$ and $p$, such that
\begin{equation}\label{kappa}
\sup_{\mathrm{cube}\,Q}
\left[\fint_Q\left\|W^{\frac{1}{p}}(x)A_Q^{-1}\right\|^{rp}\,dx\right]^{\frac{1}{rp}}
\le C.
\end{equation}
\end{lemma}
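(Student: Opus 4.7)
The plan is to first reduce the statement to a self-improvement claim. By Proposition \ref{reduceM} applied with $M := A_Q^{-1}$, one has
\[
\left[\fint_Q \left\|W^{\frac{1}{p}}(x)A_Q^{-1}\right\|^p \, dx\right]^{\frac{1}{p}}
\sim \left\|A_Q A_Q^{-1}\right\| = 1,
\]
so the $L^p$ average is already a bounded constant (depending only on $m$ and $p$). The real content of the lemma is therefore a Gehring-type self-improvement: upgrading this $L^p$ bound to $L^{rp}$ for some $r > 1$, with $r$ allowed to depend on $n$, $m$, $p$, and $[W]_{A_{p,\infty}}$, while keeping the final constant $C$ dependent only on $m$ and $p$.

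Next, I would try to use the $A_{p,\infty}$ condition from Definition \ref{def ap,infty} to control the function $f_Q(x) := \log\|W^{\frac{1}{p}}(x)A_Q^{-1}\|^p$. Using the submultiplicative inequality
\[
\left\|W^{\frac{1}{p}}(x)A_Q^{-1}\right\|
\le \left\|W^{\frac{1}{p}}(x)W^{-\frac{1}{p}}(y)\right\|
\left\|W^{\frac{1}{p}}(y)A_Q^{-1}\right\|,
\]
taking logarithms, averaging in $y$ over $Q$, and invoking Jensen's inequality together with the reducing operator identity established above, I obtain a pointwise control
\[
\log\left\|W^{\frac{1}{p}}(x)A_Q^{-1}\right\|^p
\le \fint_Q \log\left\|W^{\frac{1}{p}}(x)W^{-\frac{1}{p}}(y)\right\|^p dy + C_{m,p}.
\]
Averaging then in $x$ over $Q$ and using the $A_{p,\infty}$ defining quantity yields a BMO-type bound on $f_Q$; a matrix-analogue John--Nirenberg argument (in the spirit of Lemma \ref{le-Apinfty}, whose level-set estimate for $\|A_Q W^{-1/p}(y)\|^p$ already has the characteristic ``logarithmic decay in $M$'' shape) should then upgrade polynomial decay to exponential decay of the level sets of $f_Q$.

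Once exponential integrability of $\log\|W^{\frac{1}{p}}(x)A_Q^{-1}\|^p$ is in hand, applying the layer-cake formula and choosing $r-1 > 0$ sufficiently small (depending on the exponential decay rate, hence on $[W]_{A_{p,\infty}}$) will give the desired $L^{rp}$ bound with a constant depending only on $m$ and $p$. An alternative route, which avoids going through BMO/John--Nirenberg, is a direct Calder\'on--Zygmund stopping time argument on $f_Q$ combined with a good-$\lambda$ comparison between $A_Q$ and $A_{Q'}$ for dyadic subcubes $Q'$ via Lemma \ref{sharp estimate}.

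The main obstacle I expect is the careful handling of non-commutativity: in the scalar case the product $w^{1/p}(x) \cdot A_Q^{-1}$ factors multiplicatively and a one-line reverse H\"older argument suffices, but in the matrix case the operator norm of a product loses information and one must be careful not to accumulate dimension-dependent factors or $[W]_{A_{p,\infty}}$-dependent factors in the final constant $C$. In particular, the quantitative dependence that the lemma claims — namely $r$ depending on $[W]_{A_{p,\infty}}$ but $C$ depending only on $m$ and $p$ — requires packaging all the matrix-weight information into the choice of $r$, which is the delicate step.
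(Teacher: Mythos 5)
The paper does not prove this lemma; it cites it directly from \cite[Corollary 5.7(i)]{bhyyNew}, so there is no in-paper argument to compare against. Your reduction is correct: by Proposition \ref{reduceM} the $L^p$ average is $\sim 1$ with constants depending only on $m,p$, so the content is indeed a Gehring-type self-improvement, and your pointwise estimate via submultiplicativity and Jensen is sound.

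The genuine gap is at the claimed BMO-type bound on $f_Q(x):=\log\|W^{1/p}(x)A_Q^{-1}\|^p$. Averaging your pointwise estimate in $x$ over $Q$, together with Jensen and the $A_{p,\infty}$ condition, only controls the \emph{mean} $\fint_Q f_Q\,dx\lesssim\log[W]_{A_{p,\infty}}$; a genuine BMO bound requires controlling the oscillation $\fint_{Q'}|f_Q-\langle f_Q\rangle_{Q'}|\,dx$ for \emph{every} subcube $Q'\subset Q$. The natural move is to compare $f_Q$ with $f_{Q'}$ on $Q'$, but submultiplicativity only gives $|f_Q-f_{Q'}|\lesssim\max\{\log\|A_QA_{Q'}^{-1}\|,\log\|A_{Q'}A_Q^{-1}\|\}$, and Lemma \ref{sharp estimate} bounds this by a multiple of $\log(l(Q)/l(Q'))$, which is unbounded as $Q'$ shrinks, so the naive subcube comparison does not produce a uniform BMO estimate. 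This is precisely why the Calder\'on--Zygmund stopping-time route you list as an ``alternative'' is not really optional: one must restrict comparisons to a bounded number of adjacent dyadic generations at a time and iterate geometrically to prevent this error from accumulating across scales. Two secondary issues: (1) even granting a BMO bound, John--Nirenberg produces exponential level-set decay at a rate $c$ depending on the BMO norm, and $c$ may well be $\le1$, in which case the layer-cake integral alone does not converge for any $r>1$; the standard fix (use exponential integrability of $\pm\varepsilon f_Q$ to put a small power $e^{\varepsilon f_Q}$ into scalar $A_2$ with controlled constant, then apply the $A_2$ reverse H\"older self-improvement) is an essential ingredient that should be stated. (2) Lemma \ref{le-Apinfty} concerns level sets of $\|A_QW^{-1/p}(\cdot)\|^p$, i.e.\ the reciprocal of the smallest singular value of $A_Q^{-1}W^{1/p}(\cdot)$, whereas the present lemma concerns its largest singular value; these two estimates are not interchangeable, so that lemma cannot be invoked, even ``in spirit,'' in the direction you need.
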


Let
\begin{align}\label{r_w}
r_W:=\sup\left\{r:\ r\text{ satisfies \eqref{kappa}}\right\}.
\end{align}

\begin{remark}
Let $m=1$ and all the other symbols be the same as in Lemma \ref{86}.
Then \eqref{kappa} reduces back to that, for any cube $Q$,
$\{\fint_Q[W(x)]^{r}\,dx\}^{\frac{1}{r}}
\le C\fint_QW(x)\,dx$
and $r_W$ coincides with a special case of the critical index
for the reverse H\"older condition in \cite[(1.14)]{ylk}.
\end{remark}

In what follows, $l^p$ is defined to be the space of all sequences
$\{\lambda_k\}_{k\in\mathbb{Z}}$ of $\mathbb{C}$ such that
$$\|\{\lambda_k\}_{k\in\mathbb{Z}}\|_{l^p}:=
\left(\sum_{k\in\mathbb Z}|\lambda_k|^p\right)^{\frac 1p}<\infty.$$

\begin{theorem}\label{W-F-atom}
Let $p\in(0,1]$ and $W$ be a matrix weight.
Then the following two statements hold.
\begin{enumerate}[{\rm(i)}]
\item Let $W\in A_{p,\infty}$,
$q\in(\max\{1,\frac{r_Wp}{r_W-1}\},\infty]$
with $r_W$ the same as in \eqref{r_w},
and $s\in(\lfloor n(\frac{1}{p}-1)+\frac{d^{\mathrm
{upper}}_{p,\infty}(W)}{p}\rfloor,\infty)\cap\mathbb Z_+$.
For any $\{\lambda_k\}_{k\in{\mathbb Z}}\in l^p$ and
any $(p,q,s)_W$-atoms $\{\vec a_k\}_{k\in\mathbb Z}$,
there exists $\vec f\in H^p_W$ such that
$\vec f=\sum_{k\in\mathbb Z}\lambda_k\vec a_k$
in both $H^p_W$ and $(\mathcal{S}')^m$.
Moreover, there exists a positive constant $C$, independent of both
$\{\lambda_k\}_{k\in\mathbb Z}$ and $\{\vec a_k\}_{k\in\mathbb Z}$, such that
$\|\vec f\|_{H^p_W}\le
C\|\{\lambda_k\}_{k\in\mathbb Z}\|_{l^p}.$

\item Let $W\in A_p$ and
$s\in(\lfloor n(\frac{1}{p}-1)\rfloor,\infty)\cap\mathbb Z_+$.
For any $\vec f\in H^p_W$, there exist
a sequence $\{\lambda_k\}_{k\in\mathbb Z}\in l^p$
and $(p,\infty,s)_W$-atoms $\{\vec a_k\}_{k\in\mathbb Z}$ such that
$\vec f=\sum_{k\in\mathbb Z}\lambda_k\vec a_k$ in both $H^p_W$ and $(\mathcal{S}')^m$.
Moreover, there exists a positive constant $C$, independent of $\vec f$,
such that
$\|\{\lambda_k\}_{k\in\mathbb Z}\|_{l^p}
\le C\|\vec f\|_{H^p_W}.$
\end{enumerate}
\end{theorem}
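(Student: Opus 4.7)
The plan is to prove (i) and (ii) separately, working throughout with a family of reducing operators $\mathbb A:=\{A_Q\}_{Q\in\mathscr Q}$ of order $p$ for $W$, since by Remark \ref{kfc}(i) a $(p,q,s)_{\mathbb A}$-atom is a harmless constant multiple of a $(p,q,s)_W$-atom and conversely; this lets me use the cleaner condition (iv) of Definition \ref{F-atom} in pointwise estimates while still producing atoms of the desired type. I would consistently use the grand maximal function $(M_{N(W)})^p_W$ to measure the $H^p_W$-norm, which is justified by Theorems \ref{if and only if W} and \ref{weight and reducing}.

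\textbf{Part (i).} Because $p\le 1$, the quasi-norm on $H^p_W$ is $p$-subadditive, so it suffices to establish the uniform bound $\|\vec a\|_{H^p_W}^p\le C$ over all $(p,q,s)_W$-atoms $\vec a$ supported in a cube $Q$. Splitting
\[
\left\|\vec a\right\|_{H^p_W}^p=\int_{2\sqrt nQ}\bigl[(M_{N(W)})^p_W(\vec a)(x)\bigr]^p\,dx+\int_{(2\sqrt nQ)^{\complement}}\cdots=:\mathrm I+\mathrm{II},
\]
I would handle $\mathrm I$ via the pointwise domination $(M_{N(W)})^p_W(\vec a)(x)\lesssim\|W^{\frac1p}(x)A_Q^{-1}\|\,\mathcal M(|A_Q\vec a|)(x)$ (obtained by inserting $A_Q^{-1}A_Q$ inside the convolution) followed by Hölder with exponents $r$ and $r'$, where $r<r_W$ is chosen so close to $r_W$ that simultaneously $pr'>1$ and $pr'<q$; such a choice is possible precisely because $q>r_Wp/(r_W-1)$. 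The first factor is controlled by the reverse Hölder inequality of Lemma \ref{86}, and the second by the $L^{pr'}$-boundedness of $\mathcal M$ combined with the atomic size estimate transferred to $|A_Q\vec a|$; the exponents balance to the clean bound $\mathrm I\lesssim 1$. For $\mathrm{II}$ I would exploit the vanishing moments of $\vec a$ through a Taylor expansion of $\phi_t$ of order $s$ at $c_Q$, obtaining the standard kernel decay, and then use Lemma \ref{sharp estimate} with annular cubes $2^{k}Q$ to control $\|W^{\frac1p}(x)A_Q^{-1}\|^p$ at the cost of a factor $2^{kd^{\mathrm{upper}}_{p,\infty}(W)}$; the tail sum converges exactly when $(s+1)p>n(1-p)+d^{\mathrm{upper}}_{p,\infty}(W)$, which is the hypothesis on $s$. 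Convergence in $(\mathcal S')^m$ follows from Proposition \ref{embedding}.

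\textbf{Part (ii).} Here I would invoke the matrix-weighted Calderón--Zygmund decomposition (Theorem \ref{l-CZ}) applied to $\vec f$ at every height $\lambda_k=2^k$, $k\in\mathbb Z$, relative to the level sets $\Omega_k:=\{x:(M^*_{1,N(W)})^p_W(\vec f)(x)>2^k\}$ with Whitney cover $\{Q^k_j\}_j$. This produces $\vec f=\vec g^k+\sum_j\vec b^k_j$ with $\vec b^k_j$ supported in a dilate of $Q^k_j$ and enjoying vanishing moments of order $s$, while $\vec g^k$ satisfies average controls through $W$ at scale $2^k$. Telescoping
\[
\vec f=\sum_{k\in\mathbb Z}\bigl(\vec g^{k+1}-\vec g^k\bigr)\quad\text{in }(\mathcal S')^m
\]
(the endpoints $k\to\pm\infty$ are handled via Proposition \ref{embedding} and the $L^p$-control of the grand maximal function) and rewriting each difference in terms of the bad pieces and the local polynomial corrections produces a decomposition $\vec g^{k+1}-\vec g^k=\sum_jh^k_j$ with $h^k_j$ supported near $Q^k_j$ and with the requisite moment cancellation. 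Setting $\lambda^k_j:=C\,2^k|Q^k_j|^{\frac1p}$ and $\vec a^k_j:=(\lambda^k_j)^{-1}h^k_j$, I would verify that $\vec a^k_j$ is a $(p,\infty,s)_W$-atom; summing,
\[
\sum_{k,j}|\lambda^k_j|^p\lesssim\sum_k 2^{kp}\sum_j|Q^k_j|\lesssim\sum_k 2^{kp}|\Omega_k|\sim\left\|(M^*_{1,N(W)})^p_W(\vec f)\right\|_{L^p}^p\sim\left\|\vec f\right\|_{H^p_W}^p.
\]

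\textbf{Main obstacle.} Part (i) is essentially an accounting exercise, the only nontrivial point being the compatibility of exponents, for which the hypothesis on $q$ is sharp. The genuine difficulty is concentrated in Part (ii), and specifically in verifying that the pieces $h^k_j$ produce honest $(p,\infty,s)_W$-atoms. In the scalar unweighted case the ``good'' function is pointwise bounded by $C\cdot 2^k$, which gives the atomic size bound for free; in the matrix-weighted setting, as the introduction emphasizes, one has no pointwise control on $\vec g^k$ but only averaged controls of the combined quantity $W^{\frac1p}\vec g^k$. The definition in \ref{F-atom}(ii) with $q=\infty$ demands an $L^\infty$ bound on $|W^{\frac1p}(y)\vec a^k_j(x)|$ over $Q\times Q$, and obtaining it forces one to build the ``good'' function out of local polynomial pieces whose coefficients are controlled by cube-averages of the bad parts, and then to compare reducing operators on nested cubes through Lemma \ref{sharp estimate}. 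This is the step that will absorb the bulk of the technical work, and it is exactly the novelty that the authors highlight as overcoming the inseparability of matrix weight and vector function.
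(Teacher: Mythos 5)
Your proposal follows the paper's route in both halves: part~(i) is the single-atom estimate split at a fixed dilate of $Q$, with H\"older against the reverse H\"older inequality inside and a Taylor-expansion/moment-cancellation argument plus Lemma~\ref{sharp estimate} over dyadic annuli outside; part~(ii) is the Calder\'on--Zygmund stopping-time decomposition at heights $2^j$ followed by telescoping of the good parts and an atomic verification of the differences. Your exponent bookkeeping in~(i), namely H\"older with a pair $(r,r')$, $r<r_W$, $pr'\le q$, imposes exactly the constraint $q>\frac{r_Wp}{r_W-1}$ used by the paper (which takes the pair $(\frac{q}{q-p},\frac{q}{p})$); so~(i) is complete in spirit and in essential detail.

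The material gap is in~(ii), and it sits immediately upstream of the obstacle you identify. Theorem~\ref{l-CZ}(iii) --- the pointwise identity $\vec f=\sum_k\vec g_k+\sum_k\vec b_k$ on $O$ together with $\sup_{Q_k^*}|A_{Q_k^*}\vec g_k|\lesssim\alpha$ --- is only stated, and only makes sense, under the extra hypothesis $\vec f\in(L^1_{\mathrm{loc}})^m$, whereas a generic $\vec f\in H^p_W$ is merely a tempered distribution. Your proposal applies the decomposition directly to $\vec f$ and then manipulates the pieces $h^k_j$ as pointwise-defined functions with an $L^\infty$ bound after normalization, but for distributional $\vec f$ these pieces are not functions and the size condition \ref{F-atom}(iv) is not even meaningful. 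The paper resolves this by first establishing Proposition~\ref{dense} --- that $H^p_W\cap(L^1_{\mathrm{loc}})^m$ is dense in $H^p_W$ --- which is itself a nontrivial theorem relying on Corollary~\ref{p01} (hence on $W\in A_p$ rather than merely $A_{p,\infty}$), then running the atomic decomposition on that dense class and telescoping $\vec f=\sum_{i}(\vec f_{i+1}-\vec f_i)$ in $H^p_W$. A proof outline that omits this reduction entirely has a hole, not just a deferred computation. Two smaller points in the same direction: the endpoint convergence $\vec g_j\to 0$ as $j\to-\infty$ is not a routine consequence of Proposition~\ref{embedding}; the paper's argument splits the Whitney family into three classes $S_1^\kappa$, $S_2^\kappa$, $S_3$ by size and distance from the origin and uses the $L^1_{\mathrm{loc}}$ structure throughout, and this step deserves more than a parenthetical. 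And your framing that one has ``no pointwise control on $\vec g^k$ but only averaged controls'' slightly misstates the mechanism: the paper \emph{does} obtain a pointwise bound on the good parts, just on the reducing-operator-twisted quantity $A_{Q_k^*}\vec g_k$ rather than on $W^{1/p}\vec g_k$; identifying that this is the right object is precisely what the polynomial/projection construction buys.
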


\begin{remark}\label{wsw} Let all the symbols be the same as
in Theorem \ref{W-F-atom} and, in addition, let $\mathbb A:=\{A_Q\}_{\mathrm{cube}\,Q}$ be a family of
reducing operators of order $p$ for $W$.
\begin{enumerate}[{\rm(i)}]
\item Using Remark \ref{kfc}(i), we find that Theorem \ref{W-F-atom}
still holds with the $(p,q,s)_W$-atoms therein replaced by
the $(p,q,s)_{\mathbb A}$-atoms.

\item Let $1\le q_1<q_2\le\infty$
and $s\in{\mathbb Z}_+$.
If $\vec a$ is a $(p,q_2,s)_{\mathbb A}$-atom
supported in a cube $Q$, then, by Definition \ref{F-atom}(iv), we find that
$$\left[\int_Q\left|A_Q\vec a(x)\right|^{q_1}\,dx\right]^\frac1{q_1}
\le|Q|^{\frac1{q_1}-\frac1{q_2}}
\left[\int_Q\left|A_Q\vec a(x)\right|^{q_2}\,dx\right]^\frac1{q_2}
\le|Q|^{\frac1{q_1}-\frac1p},$$
which, together with both (i) and (iii) of Definition \ref{F-atom},
further implies that $\vec a$ is also a $(p,q_1,s)_{\mathbb A}$-atom.
This and Remark \ref{kfc}(i) further show that, for any $q\in[1,\infty)$,
Theorem \ref{W-F-atom}(ii) still holds if $(p,\infty,s)_W$-atoms therein
are replaced, respectively, by $(p,q,s)_W$-atoms or $(p,q,s)_{\mathbb A}$-atoms.

\item Let $m=1$, $p=1$, $q=\infty$, $s=0$, and $W\in A_1({\mathbb{R}^n})$.
Then in this case
Theorem \ref{W-F-atom} reduces back to \cite[Theorem 5.1]{B81}.
\end{enumerate}
\end{remark}

In what follows, for any $\vec f\in(\mathscr{M})^m$ and $g\in\mathscr{M}$, let
$$
\left\langle\vec f, g\right\rangle
:=\int_{\mathbb R^n}\vec f(x)g(x)\,dx
$$
whenever the integral in the right-hand side of the above equality exists.
The proof of Theorem \ref{W-F-atom} strongly depends on
the following variant of the Calder\'on--Zygmund
decomposition associated with matrix weights,
which is of independent interest.

\begin{theorem}\label{l-CZ}
Let $p\in(0,1]$, $W\in A_{p,\infty}$, and
$\mathbb{A}:=\{A_Q\}_{Q\in\mathscr{Q}}$
be a family of reducing operators of order $p$ for $W$.
Assume that $N\in[N(W),\infty)\cap\mathbb N$ with $N(W)$ the same as in
\eqref{N(W)}, $s\in(\lfloor n(\frac{1}{p}-1)+
d_{p,\infty}^{\mathrm{upper}}(W)\rfloor,\infty)\cap{\mathbb Z}_+$,
and $\psi\in\mathcal{S}$ satisfies
$\operatorname{supp}\psi\subset B(\mathbf{0},1)$
and $\int_{\mathbb R^n}\psi(x)\,dx\neq0$.
Then there exists a positive constant $C$ such that, for any
$\vec f\in H_W^p$ and $\alpha\in(0,\infty)$, the following statements hold.
\begin{enumerate}[{\rm(i)}]
\item There exists an open set $O\subsetneqq\mathbb R^n$ such that, for any $x\in O$,
\begin{equation}\label{>alpha}
(M_N)_{\mathbb{A}}\left(\vec f\right)(x)>\alpha
\end{equation}
and, for any $x\in O^\complement$,
\begin{equation}\label{<alpha}
(M_N)_{\mathbb{A}}\left(\vec f\right)(x)\le C\alpha.
\end{equation}

\item If $O\neq\emptyset$,
then there exist $\vec g\in(\mathcal S')^m$,
$\{\vec b_k\}_{k\in\mathbb N}\subset(\mathcal{S}')^m$, and a sequence
$\{Q_k^*\}_{k\in\mathbb N}$ of closed cubes such that
$O=\bigcup_{k\in\mathbb N}Q_k^*$ and
\begin{align}\label{except1}
\vec f=\vec g+\vec b
:=\vec g+ \sum_{k\in\mathbb N}\vec b_k
\end{align}
in $(\mathcal S')^m$.
For any $k\in{\mathbb N}$, $\operatorname{supp}\vec b_k\subset Q_k^*$,
\begin{align}\label{3.4x}
\left\langle\vec b_k, \cdot^\gamma\right\rangle=\vec 0
\end{align}
for any $\gamma\in\mathbb Z_+^n$ with $|\gamma|\le s$, and
\begin{align}\label{estimate-f}
\int_{\mathbb R^n}\left[M_W\left(\vec b_k,\psi\right)(x)\right]^p\,dx
\le C\int_{Q^*_k}\left[(M_N)_{\mathbb{A}}\left(\vec f\right)(x)\right]^p\,dx.
\end{align}

\item If $\vec f\in(L^1_{\rm{loc}})^m$,
then $\vec g\in(L^1_{\rm{loc}})^m$, $\{\vec b_k\}_{k\in\mathbb N}\subset(L^1_{\rm{loc}})^m$,
and there exist $\{\vec g_k\}_{k\in\mathbb N}\subset(L^1_{\rm{loc}})^m$ and
$\vec g_0\in(L^1_{\rm{loc}})^m$ such that, for any $x\in O$,
\begin{align}\label{except}
\vec f(x)
=\vec g(x)+\vec b(x)
=\sum_{k\in\mathbb N}\vec g_k(x) + \sum_{k\in\mathbb N}\vec b_k(x)
\end{align}
pointwisely and, for any $x\in O^\complement$,
\begin{align}\label{g0}
\vec g_0(x)=\vec g(x)=\vec f(x).
\end{align}
Moreover, for any $k\in{\mathbb N}$,
$\operatorname{supp}\vec g_k\subset Q_k^*$ and
\begin{align}\label{estimate-g}
\sup_{x\in Q_k^*}\left|A_{Q_k^*}\vec g_k(x)\right|\le C\alpha.
\end{align}

\end{enumerate}
\end{theorem}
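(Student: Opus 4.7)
The plan is to adapt the classical Fefferman--Stein Calder\'on--Zygmund decomposition to the matrix-weighted setting, using the grand maximal function $(M_N)_{\mathbb A}(\vec f)$ as the stopping-time quantity. For part (i), I would set $O:=\{x\in\mathbb R^n:(M_N)_{\mathbb A}(\vec f)(x)>\alpha\}$, which automatically satisfies \eqref{>alpha} and \eqref{<alpha} once we incorporate a harmless constant coming from changing maximal functions (the pointwise comparisons in Theorem \ref{equivalent A}(i) can absorb such a constant). The openness of $O$ follows from the lower semicontinuity of $(M_N)_{\mathbb A}(\vec f)$, which in turn is a consequence of the continuity of $\phi_t*\vec f$ in $(t,y)$ together with $A_t$ being locally constant on dyadic pieces. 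That $O\subsetneq\mathbb R^n$ follows from $\vec f\in H^p_W$ combined with Theorems \ref{if and only if A} and \ref{weight and reducing}, which give $(M_N)_{\mathbb A}(\vec f)\in L^p$ and hence finiteness a.e.

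For part (ii), I would apply a Whitney decomposition to $O$, producing disjoint cubes $\{Q_k\}_{k\in\mathbb N}$ whose moderately dilated copies $Q_k^*$ (say $\tfrac{9}{8}Q_k$) still sit inside $O$ with controlled overlap and with $l(Q_k)\sim\operatorname{dist}(Q_k,O^\complement)$. I would then fix a smooth partition of unity $\{\zeta_k\}_{k\in\mathbb N}$ subordinate to $\{Q_k^*\}_{k\in\mathbb N}$ with $\sum_k\zeta_k=\mathbf 1_O$ and $\|\partial^\gamma\zeta_k\|_{L^\infty}\lesssim l(Q_k)^{-|\gamma|}$. For each $k$, I would define $\vec P_k$ as the unique $\mathbb C^m$-valued polynomial of degree at most $s$ satisfying $\langle\vec f-\vec P_k,\zeta_k x^\gamma\rangle=\vec 0$ for all $|\gamma|\le s$, and then set $\vec b_k:=(\vec f-\vec P_k)\zeta_k$ and $\vec g:=\vec f-\sum_k\vec b_k$. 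The support conclusion and the moment condition \eqref{3.4x} are immediate from the construction. The convergence of $\sum_k\vec b_k$ in $\mathcal S'$ and identity \eqref{except1} will follow from the finite overlap of the $Q_k^*$ combined with the $H^p_W$ bound.

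The heart of the argument, and the step I expect to be the main obstacle, is \eqref{estimate-f}, namely the control of $M_W(\vec b_k,\psi)$ by the grand maximal function $(M_N)_{\mathbb A}(\vec f)$ localized to $Q_k^*$. My plan is a standard dichotomy on the location of $x$. For $x\in c_0Q_k^*$ with $c_0>1$ fixed, I would replace $\psi_t*\vec b_k$ by $\psi_t*[(\vec f-\vec P_k)\zeta_k]$ and pass the weight through using the reducing operator $A_{Q_k^*}$: writing $W^{1/p}(x)=W^{1/p}(x)A_{Q_k^*}^{-1}A_{Q_k^*}$ and using Lemma \ref{86} together with the $A_{p,\infty}$ reverse H\"older estimate will reduce matters to bounding $A_{Q_k^*}\psi_t*(\vec f\zeta_k)$ and $A_{Q_k^*}\psi_t*(\vec P_k\zeta_k)$ by $(M_N)_{\mathbb A}(\vec f)$ on $Q_k^*$, the latter by invoking the classical polynomial reproducing estimate controlling $\vec P_k$ by a suitable test-function average of $\vec f$ inside $Q_k^*$. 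For $x$ far from $Q_k^*$, I would exploit \eqref{3.4x}: Taylor-expand $\psi_t(y-x)$ around $c_{Q_k^*}$ to order $s$, so that the moment conditions kill the low-order terms and leave a remainder of size $\lesssim(l(Q_k^*)/|x-c_{Q_k^*}|)^{s+1}$; the key is to then handle the scale-change in the reducing operators through Lemma \ref{sharp estimate}, which produces the factor $[l(Q_k^*)/l(Q)]^{d_{p,\infty}^{\mathrm{upper}}(W)/p}$ that forces the choice $s>\lfloor n(1/p-1)+d_{p,\infty}^{\mathrm{upper}}(W)\rfloor$ for summability in $L^p$.

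For part (iii), if $\vec f\in(L^1_{\mathrm{loc}})^m$, then each $\vec b_k$ becomes a genuine locally integrable function, and I would rewrite $\vec g(x)=\vec f(x)\mathbf 1_{O^\complement}(x)+\sum_k\vec P_k(x)\zeta_k(x)$, so that setting $\vec g_0:=\vec f\mathbf 1_{O^\complement}$ and $\vec g_k:=\vec P_k\zeta_k$ yields \eqref{except} and \eqref{g0} immediately. The pointwise bound \eqref{estimate-g} then reduces to controlling $\|A_{Q_k^*}\vec P_k\|_{L^\infty(Q_k^*)}$: since $\vec P_k$ is a polynomial of degree at most $s$ on $Q_k^*$, finite-dimensionality gives $\|A_{Q_k^*}\vec P_k\|_{L^\infty(Q_k^*)}\lesssim\fint_{Q_k^*}|A_{Q_k^*}\vec P_k(y)|\,dy$, which by the defining orthogonality can be converted into a test-function pairing with $\vec f$ against a bump of the form $A_{Q_k^*}\zeta_k\cdot(\text{polynomial})/|Q_k^*|$ suitably normalized; the Whitney condition provides a point $x_k\in O^\complement$ at distance $\sim l(Q_k)$ from $Q_k^*$, and evaluating $(M_N)_{\mathbb A}(\vec f)(x_k)\le C\alpha$ with a carefully rescaled test function in $\mathcal S_N$ yields the required bound.
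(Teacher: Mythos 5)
Your outline broadly tracks the paper's strategy — Whitney decomposition of $O$, a degree-$s$ polynomial projection adapted to a weight-function partition of unity, moment cancellation for the bad parts, and a Whitney witness in $O^\complement$ for the good parts — but two of your key steps have real gaps.

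First, in part (i) you take $O:=\{(M_N)_{\mathbb A}(\vec f)>\alpha\}$ and assert openness via lower semicontinuity of $(M_N)_{\mathbb A}(\vec f)$, citing that $A_t$ is locally constant on dyadic pieces. This is precisely what breaks lower semicontinuity rather than what guarantees it: at a boundary point $x_0$ between adjacent $Q,Q'\in\mathscr Q_t$, the map $x\mapsto A_t(x)$ jumps from $A_Q$ to $A_{Q'}$, and a piecewise-constant function need not be lower semicontinuous at such a point (a one-sided drop gives $\liminf<f(x_0)$). So the superlevel set need not be open. The paper instead sets $O:=\overline{\{(M_N)_{\mathbb A}(\vec f)\le\alpha\}}^\complement$, which is automatically open and satisfies \eqref{>alpha}; the price is that \eqref{<alpha} on $O^\complement$ must then be proved: one approximates $x\in O^\complement$ by points $x_i$ with $(M_N)_{\mathbb A}(\vec f)(x_i)\le\alpha$ and uses Lemma \ref{sharp estimate} to control $\|A_t(x)A_t(x_i)^{-1}\|\lesssim1$ for $|x-x_i|<t$, plus continuity of $\phi_t*\vec f$. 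This is the actual source of the constant $C$ in \eqref{<alpha}; it is not a harmless constant from switching maximal functions.

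Second, the interior estimate for \eqref{estimate-f} cannot be run with a single fixed reducing operator $A_{Q_k^*}$. You propose to factor $W^{1/p}(x)=W^{1/p}(x)A_{Q_k^*}^{-1}A_{Q_k^*}$ and then bound $\sup_{t}|A_{Q_k^*}\psi_t*(\vec f\zeta_k)(x)|$ by $(M_N)_{\mathbb A}(\vec f)$ inside $Q_k^*$. But the grand maximal function carries the scale-adapted operator $A_t(y)$, and by Lemma \ref{sharp estimate} the conversion factor $\|A_{Q_k^*}A_t(y)^{-1}\|$ grows like $(l(Q_k)/t)^{d_{p,\infty}^{\mathrm{upper}}(W)/p}$ as $t\to0$, so a uniform pointwise comparison is impossible; the reverse H\"older bound of Lemma \ref{86} does not compensate for this. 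The paper must split the supremum at scale $l_k$: for $t\lesssim l_k$ it retains $A_{2^{-j}}(x)$ at matching scale and absorbs the weight factor $\|W^{1/p}A_{2^{-j}}^{-1}\|$ via the dyadic matrix-weighted Fefferman--Stein substitute (Proposition \ref{nazarov}, Corollary \ref{nazarov cor}), a tool your outline omits and which appears unavoidable here. Your handling of the exterior region (Taylor expansion of $\psi_t$, moment cancellation, summability governed by $s>\lfloor n(1/p-1)+d_{p,\infty}^{\mathrm{upper}}(W)\rfloor$) and your reduction of \eqref{estimate-g} to a test-function pairing near a Whitney witness in $O^\complement$ do match the paper's argument.
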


To prove Theorem \ref{l-CZ}, we need several lemmas.
The following lemma is extensively used
in the proofs of Theorems \ref{W-F-atom} and \ref{l-CZ} and
Lemma \ref{dense}.

\begin{lemma}\label{le<}
Let $N\in\mathbb Z_+$ and $\mathbb{A}:=\{A_Q\}_{Q\in\mathscr{Q}}$
be a family of positive definite matrices. If $\varphi\in\mathcal{S}$
satisfies that $\operatorname{supp}\varphi\subset B(x_0,t)$
with $x_0\in\mathbb R^n$ and $t\in(0,\infty)$ and if,
for any $\alpha\in\mathbb Z^n_+$ with $|\alpha|\le N+1$,
\begin{align}\label{le-w-<}
\sup_{x\in\mathbb R^n}\left|\partial^\alpha\varphi(x)\right|
\le t^{-(n+|\alpha|)},
\end{align}
then, for any $\vec f\in(\mathcal{S}')^m$ and $x\in\mathbb R^n$,
\begin{equation}\label{<MN}
\left|A_t(x)\left\langle \vec f,\varphi\right\rangle\right|
\le \left(2+t^{-1}|x-x_0|\right)^{N+n+1}
(M_N)_{\mathbb{A}}\left(\vec f\right)(x),
\end{equation}
where $A_t$ is the same as in \eqref{Atref}.
\end{lemma}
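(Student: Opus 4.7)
The plan is to reduce $\langle \vec f, \varphi\rangle$ to a convolution $\phi_t * \vec f(x)$ for some cleverly rescaled $\phi \in \mathcal S_N$ at the same scale $t$, so that the grand maximal function $(M_N)_{\mathbb A}(\vec f)(x)$ directly controls the quantity $A_t(x)\langle \vec f,\varphi\rangle$. The factor $(2+t^{-1}|x-x_0|)^{N+n+1}$ will arise naturally as the price paid for normalizing $\phi$ so that its $\mathcal S_N$ quasi-norm is at most $1$.

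Concretely, I would set $\lambda := (2+t^{-1}|x-x_0|)^{-(N+n+1)}$ and define
\[
\phi(z) := \lambda\, t^{n}\, \varphi(x-tz), \qquad z\in\mathbb R^n.
\]
A direct change of variables shows $\phi_t(x-y) = t^{-n}\phi((x-y)/t) = \lambda\,\varphi(y)$, so that $\phi_t * \vec f(x) = \lambda\langle \vec f,\varphi\rangle$. Hence
\[
\bigl|A_t(x)\langle \vec f,\varphi\rangle\bigr| = \lambda^{-1}\bigl|A_t(x)\phi_t * \vec f(x)\bigr|,
\]
and the conclusion follows from the definition of $(M_N)_{\mathbb A}(\vec f)(x)$, \emph{provided} that $\phi \in \mathcal S_N$.

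To verify $\|\phi\|_{\mathcal S_N} \le 1$, I would compute, for any $\alpha\in\mathbb Z_+^n$ with $|\alpha|\le N+1$, that $\partial^{\alpha}\phi(z) = (-1)^{|\alpha|}\lambda\, t^{n+|\alpha|}\,(\partial^{\alpha}\varphi)(x-tz)$, whence the hypothesis \eqref{le-w-<} yields $|\partial^{\alpha}\phi(z)| \le \lambda$ for every $z\in\mathbb R^n$. Moreover, $\phi(z)\ne 0$ forces $x-tz\in B(x_0,t)$, i.e. $|z-(x-x_0)/t| < 1$, and therefore $1+|z| \le 2 + t^{-1}|x-x_0|$. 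Combining these estimates,
\[
(1+|z|)^{N+n+1}|\partial^{\alpha}\phi(z)| \le \lambda\,(2+t^{-1}|x-x_0|)^{N+n+1} = 1,
\]
uniformly in $z$ and in $\alpha$ with $|\alpha|\le N+1$. This is exactly $\|\phi\|_{\mathcal S_N}\le 1$.

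There is no real obstacle here: the only subtle point is the bookkeeping of the scaling factor $t^n$ in the definition of $\phi_t$ against the derivative weights in $\|\cdot\|_{\mathcal S_N}$, which is arranged so that the polynomial factor $(2+t^{-1}|x-x_0|)^{N+n+1}$ absorbs both the loss coming from translating the support of $\varphi$ away from the origin and the derivative bounds. Note that since the scale $t$ matches on both sides, no doubling hypothesis on $\mathbb A$ is needed, and $A_t(x)$ appears identically on both sides of the estimate.
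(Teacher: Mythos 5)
Your proposal is correct and follows essentially the same approach as the paper: you define the same rescaled function $\phi(z)=\lambda t^n\varphi(x-tz)$ with $\lambda=(2+t^{-1}|x-x_0|)^{-(N+n+1)}$ (the paper writes this as $\phi^{(x)}=t^n\varphi(x-t\cdot)/C_{(x)}$ with $C_{(x)}=\lambda^{-1}$), note that $\phi_t*\vec f(x)=\lambda\langle\vec f,\varphi\rangle$, and verify $\|\phi\|_{\mathcal S_N}\le 1$ by combining the derivative bound \eqref{le-w-<} with the support constraint $x-tz\in B(x_0,t)$. The bookkeeping of $t^n$ against $t^{-(n+|\alpha|)}$ and the weight $(1+|z|)^{N+n+1}$ is exactly as in the paper's argument.
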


\begin{proof}
Let $C_{(x)}:=(2+\frac{|x-x_0|}t)^{N+n+1}$ and
$\phi^{(x)}(\cdot):=\frac{t^n}{C_{(x)}}\varphi(x-t\cdot)$.
Then, obviously, we have, for any $x\in\mathbb R^n$,
$|A_t(x)\langle \vec f,\varphi\rangle|
=C_{(x)}|A_t(x)\phi^{(x)}_t*\vec f(x)|$.
Therefore, by the definition of $(M_N)_{\mathbb{A}}(\vec f)$,
we conclude that, to show \eqref{<MN},
it suffices to prove $\phi^{(x)}\in\mathcal S_N$.
Using $\operatorname{supp}\varphi\subset B(x_0,t)$ and \eqref{le-w-<},
we find that, for any $\alpha\in\mathbb Z^n_+$ with $|\alpha|\le N+1$,
\begin{align*}
\sup_{y\in\mathbb R^n}(1+|y|)^{N+n+1}\left|\partial^\alpha\phi^{(x)}(y)\right|
&=\frac{t^{n+|\alpha|}}{C_{(x)}}\sup_{y\in B(x_0,t)}
\left(1+\frac{|x-y|}t\right)^{N+n+1}\left|\partial^\alpha\varphi(y)\right|\\
&\le\left(2+\frac{|x-x_0|}t\right)^{-(N+n+1)}
\sup_{y\in B(x_0,t)}\left(1+\frac{|x-x_0|+|x_0-y|}t\right)^{N+n+1}
\le1
\end{align*}
and hence $\phi^{(x)}\in\mathcal{S}_N$.
This finishes the proof of Lemma \ref{le<}.
\end{proof}

The following Whitney decomposition is exactly \cite[p.\,609, Proposition]{g14c}.
Let $\mathscr Q^{\mathrm{dy}}:=\bigcup_{j\in\mathbb Z}\mathscr Q_{2^{-j}}$
be the set of all dyadic cubes, where $\mathscr Q_{2^{-j}}$ is the same as in
\eqref{deQt} with $t:=2^{-j}$ for any $j\in\mathbb Z$.

\begin{lemma}\label{wd}
Let $\Omega$ be an open nonempty proper subset of $\mathbb R^n$.
Then there exists a sequence of closed dyadic cubes $\{Q_k\}_{k\in{\mathbb N}}$
such that the following statements hold.
\begin{enumerate}[{\rm(i)}]
\item
$\Omega=\bigcup_{k\in{\mathbb N}}Q_k$ and $\{Q_k\}_{k\in{\mathbb N}}$ have disjoint interiors.
\item
For any $k\in{\mathbb N}$, $\sqrt{n}l(Q_k)\le\mathrm{dist}(Q_k,\Omega^\complement)\le4\sqrt{n}l(Q_k)$
and hence $10\sqrt{n}Q_k\cap\Omega^\complement\neq\emptyset$.
\item
Let $k,j\in{\mathbb N}$.
If $Q_k\cap Q_j\neq\emptyset$, then
$\frac{1}{4}\le\frac{l(Q_k)}{l(Q_j)}\le4.$
\item
For any $j\in\mathbb N$,
$\#\{k\in\mathbb N:\ Q_k\cap Q_j\neq\emptyset\}\le 12^n-4^n$.
\item
For any $\lambda\in(1,\frac{5}{4})$,
$\sum_{k\in{\mathbb N}}\mathbf{1}_{\lambda Q_k}
\le\left(12^n-4^n+1\right)\mathbf{1}_{\Omega}.$
\end{enumerate}
\end{lemma}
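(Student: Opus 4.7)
The plan is to carry out the classical Whitney construction: build an initial family of all dyadic cubes whose side length is comparable to their distance from $\Omega^\complement$ and then pass to the maximal ones. Concretely, I would first set
\begin{align*}
\mathcal{F}:=\left\{Q\in\mathscr{Q}^{\mathrm{dy}}:\ \sqrt{n}\,l(Q)\le \mathrm{dist}(Q,\Omega^\complement)\le 4\sqrt{n}\,l(Q)\right\}.
\end{align*}
Every $Q\in\mathcal{F}$ is automatically contained in $\Omega$ since otherwise $\mathrm{dist}(Q,\Omega^\complement)=0$. To see that $\bigcup_{Q\in\mathcal{F}}Q=\Omega$, fix $x\in\Omega$, let $d(x):=\mathrm{dist}(x,\Omega^\complement)>0$, choose $j\in\mathbb{Z}$ with $2\sqrt{n}\cdot 2^{-j}\le d(x)<4\sqrt{n}\cdot 2^{-j}$, and take the unique dyadic cube $Q\in\mathscr{Q}_{2^{-j}}$ containing $x$; then $\mathrm{diam}(Q)=\sqrt{n}\,l(Q)$ gives $\mathrm{dist}(Q,\Omega^\complement)\in[\sqrt{n}\,l(Q),4\sqrt{n}\,l(Q)]$.

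Next I would extract from $\mathcal{F}$ the \emph{maximal} dyadic cubes $\{Q_k\}_{k\in\mathbb N}$: the maximality is well-defined because any $Q\in\mathcal{F}$ has side length bounded above (its doubled ancestor would force $\mathrm{dist}\ge 2\sqrt{n}\,l(Q)$ but $\mathrm{dist}$ cannot grow faster than affine in $l$, so the chain of ancestors in $\mathcal{F}$ terminates). Two distinct maximal dyadic cubes have disjoint interiors by the nesting property of dyadic cubes, so (i) is proved. Since each $Q_k\in\mathcal{F}$, (ii) is immediate, and $10\sqrt{n}Q_k\cap\Omega^\complement\ne\emptyset$ follows from $\mathrm{dist}(Q_k,\Omega^\complement)\le 4\sqrt{n}\,l(Q_k)$.

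For (iii), suppose $Q_k\cap Q_j\ne\emptyset$ with $l(Q_k)\le l(Q_j)$; pick $y\in Q_k\cap Q_j$ and $z\in\Omega^\complement$ with $|z-y|\le \mathrm{dist}(Q_j,\Omega^\complement)+\varepsilon\le 4\sqrt{n}\,l(Q_j)+\varepsilon$. Then
\begin{align*}
\sqrt{n}\,l(Q_j)\le \mathrm{dist}(Q_j,\Omega^\complement)\le |z-y|\le 4\sqrt{n}\,l(Q_k)+\sqrt{n}\,l(Q_j)\cdot 0+\cdots
\end{align*}
— more cleanly, combining the two inequalities in (ii) with the triangle inequality applied to an $\Omega^\complement$-point realizing the distance from $Q_k$ yields $\sqrt{n}\,l(Q_j)\le\sqrt{n}\,l(Q_k)+4\sqrt{n}\,l(Q_k)$, forcing $l(Q_j)/l(Q_k)\le 5$; since the side-length ratio of distinct dyadic cubes is a power of $2$, the ratio is at most $4$. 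For (iv), any $Q_k$ touching a fixed $Q_j$ lies in the cube $3Q_j$ inflated slightly (by (iii)), and each such $Q_k$ has side length at least $l(Q_j)/4$, so a volume count in $3Q_j$ after removing $Q_j$ itself gives the stated bound $12^n-4^n$. Finally, (v) follows from (iv) by observing that for $\lambda\in(1,5/4)$ the dilate $\lambda Q_k$ can only intersect $Q_j$ if $Q_k$ and $Q_j$ are neighbors (i.e.\ share a face, edge, or vertex), plus the possibility $k=j$.

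The only subtle step is the proof of (iii); the volume/covering arguments for (iv) and (v) are routine once (iii) is established, and (i)–(ii) are standard dyadic bookkeeping. I expect the main care to be needed in getting the sharp constant $4$ (not merely $5$) in (iii) from the dyadic restriction on the side-length ratio, and in the overlap count for (iv) where the constant $12^n-4^n$ must be verified by a direct geometric enumeration of dyadic neighbors of a fixed cube of comparable size.
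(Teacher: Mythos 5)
The paper gives no proof of this lemma; it cites it as exactly the Whitney decomposition from Grafakos \cite[p.\,609, Proposition]{g14c}, so there is nothing in the paper itself to compare against. Your construction --- collect the dyadic cubes $Q$ with $\sqrt n\, l(Q)\le\mathrm{dist}(Q,\Omega^\complement)\le 4\sqrt n\, l(Q)$ and pass to the maximal ones --- is the classical route, and your handling of (i), (ii), and (iii) is sound (in particular, your covering argument and the observation that the chain of ancestors within $\mathcal F$ of any $Q\in\mathcal F$ has bounded length are both correct).

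However, your sketches of (iv) and (v) contain genuine gaps. For (iv), a naive volume count in $3Q_j$ does not go through, because a neighboring cube $Q_k$ with $l(Q_k)=4\,l(Q_j)$ need not be contained in $3Q_j$. The usual repair is to observe via (iii) that each $Q_k$ with $Q_k\cap Q_j\neq\emptyset$ contains a dyadic subcube of side length $l(Q_j)/4$ that touches $Q_j$ and therefore lies in $\tfrac32 Q_j\subset 3Q_j$; these subcubes are pairwise disjoint and, for $k\neq j$, have interior disjoint from $Q_j$, and one counts \emph{those} rather than the $Q_k$ themselves. For (v), the assertion that $\lambda Q_k\cap Q_j\neq\emptyset$ with $\lambda\in(1,\tfrac54)$ already forces $Q_k\cap Q_j\neq\emptyset$ is the entire content of the step and cannot simply be ``observed''; it is not a formal consequence of (iii) or (iv). Two further facts are needed: (a) two disjoint dyadic cubes are separated, in the coordinate where their projections are disjoint, by a gap that is a positive integer multiple of the smaller side length, hence $\ge\min\{l(Q_k),l(Q_j)\}$; and (b) if some $x\in\lambda Q_k\cap Q_j$, rerunning the triangle-inequality argument of (iii) while absorbing the extra error $\mathrm{dist}(x,Q_k)<\tfrac{\lambda-1}{2}\sqrt n\,l(Q_k)<\tfrac18\sqrt n\,l(Q_k)$ still yields $\tfrac14\le l(Q_k)/l(Q_j)\le4$. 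Combining (a) and (b), the per-coordinate overhang of $\lambda Q_k$ beyond $Q_k$, which is $<\tfrac18 l(Q_k)\le\tfrac12\min\{l(Q_k),l(Q_j)\}$, is strictly smaller than the gap, so $\lambda Q_k$ cannot reach a $Q_j$ disjoint from $Q_k$; only then does (v) reduce to (iv). Finally, your parenthetical ``plus the possibility $k=j$'' is misplaced: the set counted in (iv) as stated already includes $k=j$, so you should not be adding $1$ on that account.
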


For any $P\in\mathscr Q^{\mathrm{dy}}$, let its \emph{level}
$j_P$ be the unique integer such that
$P\in\mathscr{Q}_{2^{-j_P}}$.
Strongly depending on \cite[Theorem 3.6]{fr21},
we obtain the following sharp conclusion,
which is of independent interests.

\begin{proposition}\label{nazarov}
Let $p\in(0,\infty)$.
Then, for any sequence $\omega:=\{\omega_j\}_{j\in\mathbb Z}$
of non-negative measurable functions on ${\mathbb R}^n$ satisfying
\begin{equation}\label{gammaj}
\left\|\omega\right\|_{\mathcal K}:=
\sup_{Q\in\mathscr Q^{\mathrm{dy}}}\left\{\fint_Q
\sup_{j\in\mathbb Z,\,j\geq j_Q}[\omega_j(x)]^p\,dx\right\}^\frac1p\leq1,
\end{equation}
any $k\in\mathbb Z$, any $P\in\mathscr Q^{\mathrm{dy}}$,
and any sequence $\{f_Q\}_{Q\in\mathscr{Q}^{\mathrm{dy}}}$ of complex numbers,
\begin{align}\label{1202}
\left\|\sup_{j\in\mathbb Z,\,j\geq j_P+k}\omega_j|f_j|\right\|_{L^p(P)}
\le 2^\frac np\max\left\{1,2^{-\frac{kn}{p}}\right\}
\left\|\sup_{j\in\mathbb Z,\,j\geq j_P+k}|f_j|\right\|_{L^p(P)},
\end{align}
where, for any $j\in\mathbb Z$,
$f_j:=\sum_{Q\in\mathscr{Q}_{2^{-j}}}f_Q\mathbf{1}_Q$.
In addition, the constant in \eqref{1202} is sharp in the following sense:
For any $k\in\mathbb Z$,
if let $C_k:=\sup\|\sup_{j\in\mathbb Z,\,j\geq j_P+k}\omega_j|f_j|\,\|_{L^p(P)}
\|\sup_{j\in\mathbb Z,\,j\geq j_P+k}|f_j|\,\|_{L^p(P)}^{-1}$
with the first supremum taken over all $\omega$,
$\{f_j\}_{j\in\mathbb Z}$, and $P\in\mathscr Q^{\mathrm{dy}}$,
then $C_k\geq
\max\{1,2^{-\frac{kn}{p}}\}$.
\end{proposition}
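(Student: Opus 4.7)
The plan is to establish the upper bound by a case split on the sign of $k$, and then to exhibit explicit extremal sequences that witness the claimed lower bound on the constant. All of the analytic content sits in the base case $k=0$, for which I would invoke \cite[Theorem 3.6]{fr21} directly: the $\mathcal K$-norm hypothesis on $\omega$ is exactly the Carleson-type condition required there, and its conclusion yields
$$\left\|\sup_{j\in\mathbb Z,\,j\geq j_P}\omega_j|f_j|\right\|_{L^p(P)}\le 2^{n/p}\left\|\sup_{j\in\mathbb Z,\,j\geq j_P}|f_j|\right\|_{L^p(P)}.$$
The remaining cases are combinatorial reductions to this one.

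For $k\ge 1$, I would partition $P$ into its $2^{kn}$ dyadic children $\{R\}$ at level $j_P+k$; since on each such $R$ the range $j\ge j_P+k$ coincides with $j\ge j_R$, applying the $k=0$ estimate on each $R$ and summing the $p$-th powers of the $L^p$-norms delivers the full bound with constant $2^{n/p}=2^{n/p}\max\{1,2^{-kn/p}\}$.

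For $k\le -1$ the supremum additionally picks up the ``ancestor'' scales $j_P+k\le j<j_P$, on which each $f_j$ is constant on $P$ (it equals $f_{Q_j}$, where $Q_j\supseteq P$ is the level-$j$ ancestor of $P$). Accordingly, I would split
$$\sup_{j\ge j_P+k}\omega_j|f_j|\le \max\Big(c\,\omega^{**},\,\sup_{j\ge j_P}\omega_j|f_j|\Big)\quad\text{on }P,$$
with $c:=\max_{j_P+k\le j<j_P}|f_{Q_j}|$ and $\omega^{**}:=\max_{j_P+k\le j<j_P}\omega_j$. Because $c\le H:=\sup_{j\ge j_P+k}|f_j|$ pointwise on $P$, and the $\mathcal K$-norm hypothesis applied at the ancestor cube $Q_{j_P+k}$ (which has $j_{Q_{j_P+k}}=j_P+k$) yields $\int_P(\omega^{**})^p\le |Q_{j_P+k}|=2^{-kn}|P|$, one gets $\int_Pc^p(\omega^{**})^p\le 2^{-kn}\int_PH^p$. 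Combining this with the $k=0$ bound applied to the second term on $P$ (which contributes $2^n\int_PH^p$) via $\max(a,b)^p\le a^p+b^p$, and using the elementary inequality $2^{-kn}+2^n\le 2^{n-kn}$ (equivalent to $2^{-n}+2^{kn}\le 1$, valid for $n\ge 1$ and $k\le -1$), produces the stated constant.

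For the sharpness claim, fix $k\le -1$ and a dyadic cube $P$, let $Q$ be the ancestor of $P$ at level $j_P+k$, and take $f_{j_P+k}:=\mathbf 1_Q$ with $f_j:=0$ for $j\ne j_P+k$, together with $\omega_{j_P+k}:=2^{-kn/p}\mathbf 1_P$ and $\omega_j:=0$ for $j\ne j_P+k$. A short verification over the natural families of dyadic cubes (subcubes of $P$ and $P$ itself; $Q$ and cubes strictly between $P$ and $Q$; and proper ancestors of $Q$) confirms that $\|\omega\|_{\mathcal K}=1$, with the supremum being attained at $R=Q$; direct computation then gives $H\equiv 1$ and the weighted sup $\equiv 2^{-kn/p}$ on $P$, producing the ratio $2^{-kn/p}$. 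Together with the trivial lower bound $C_k\ge 1$ (from $\omega_j\equiv 1$ and any $\{f_j\}$), this yields $C_k\ge\max\{1,2^{-kn/p}\}$. The main obstacle is thus the $k=0$ estimate, which encapsulates the nontrivial control of a weighted dyadic supremum by its unweighted counterpart at mismatched scales and is exactly where \cite[Theorem 3.6]{fr21} does the work; the two sign-reductions and the sharpness construction are bookkeeping with the dyadic tree and the $\mathcal K$-norm condition.
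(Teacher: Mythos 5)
Your proof is correct, but it takes a genuinely different route from the paper's for the case $k\le-1$, and uses a different extremal example for the sharpness claim. The paper handles all $k\le 0$ in a single pass: it defines a truncated sequence $g_Q:=f_Q\mathbf 1_{\{Q\cap P\neq\emptyset,\,j_Q\ge j_P+k\}}$, applies \cite[Lemma 3.6]{fr21} once on $\mathbb R^n$ to $\{g_j\}$ (yielding constant $\|\omega\|_{\mathcal K}\le 1$ rather than $2^{n/p}$), and then bounds $\|\sup_{j\ge j_P+k}|g_j|\|_{L^p}^p$ by a counting argument over ancestor scales, $\sum_{i=0}^{-k}2^{in}\int_P H^p$. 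You instead split the supremum pointwise on $P$ into the ancestor block ($j_P+k\le j<j_P$), which you control by a single Carleson estimate at the ancestor cube $Q_{j_P+k}$ giving precisely the factor $2^{-kn}$, and the descendant block ($j\ge j_P$), which you reduce to the $k=0$ case; you then glue via $\max(a,b)^p\le a^p+b^p$ and a short numerical check. Your route makes the origin of the factor $2^{-kn/p}$ (the measure ratio $|Q_{j_P+k}|/|P|$) transparent and avoids setting up the truncation, at the mild cost of invoking the Carleson-type estimate twice rather than once; the paper's route is more unified and slightly tighter at $k=0$. For $k\ge 1$ you partition $P$ into its level-$(j_P+k)$ children and apply $k=0$ on each, whereas the paper simply replaces $f_j$ by $f_j\mathbf 1_{j\ge j_P+k}$ and reapplies $k=0$ on $P$ itself; both are valid. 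Your sharpness example concentrates both $\omega$ and $f$ at the single scale $j_P+k$, while the paper takes $\omega_j=2^{-jn/p}\mathbf 1_{Q_0}$ with geometric decay across all scales and $f_j\equiv 1$; both yield $C_k\ge 2^{-kn/p}$ and your verification that $\|\omega\|_{\mathcal K}=1$ (attained at the ancestor $Q$) is correct. One small inconsistency to note: the paper's proof cites \cite[Lemma 3.6]{fr21} (not Theorem 3.6) and obtains constant $\|\omega\|_{\mathcal K}\le 1$, not $2^{n/p}$, in the $k=0$ step, though since $1\le 2^{n/p}$ your subsequent estimate $2^{-kn}+2^n\le 2^{n-kn}$ still closes either way.
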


Slightly different from \cite[Theorem 3.7(ii)]{fr21} on $\mathbb{R}^n$,
Proposition \ref{nazarov} is its dyadic variant on dyadic cubes,
which can be regarded as a substitute of the Fefferman--Stein vector-valued
maximal inequality in the case of matrix weights.

\begin{proof}[Proof of Proposition \ref{nazarov}]
If the present proposition holds in the case $k=0$, then,
when $k\in\mathbb N$, for any sequence $\omega:=\{\omega_j\}_{j\in\mathbb Z}$
of non-negative measurable functions on ${\mathbb R}^n$ satisfying \eqref{gammaj},
any sequence $\{f_Q\}_{Q\in\mathscr{Q}^{\mathrm{dy}}}$ of
complex numbers, and any $P\in\mathscr Q^{\mathrm{dy}}$,
\begin{align*}
\left\|\sup_{j\in\mathbb Z,\,j\geq j_P+k}\omega_j|f_j|\right\|_{L^p(P)}
&=\left\|\sup_{j\in\mathbb Z,\,j\geq j_P}\omega_j
|f_j|\mathbf{1}_{j\geq j_P+k}\right\|_{L^p(P)}\\
&\leq2^\frac np\left\|\sup_{j\in\mathbb Z,\,j\geq j_P}
|f_j|\mathbf{1}_{j\geq j_P+k}\right\|_{L^p(P)}
=2^\frac np\left\|\sup_{j\in\mathbb Z,\,j\geq j_P+k}|f_j|\right\|_{L^p(P)}\nonumber
\end{align*}
and hence the present proposition when $k\in\mathbb N$ also holds.
Thus, without loss of generality, in the remainder
of the present proof we may assume $k\le 0$.
Let $\omega:=\{\omega_j\}_{j\in\mathbb Z}$ be any sequence
of non-negative measurable functions on ${\mathbb R}^n$ satisfying \eqref{gammaj},
$\{f_Q\}_{Q\in\mathscr{Q}^{\mathrm{dy}}}$
be any sequence of complex numbers, and $P\in\mathscr Q^{\mathrm{dy}}$.
For any $Q\in\mathscr Q^{\mathrm{dy}}$, let
\begin{align}\label{gQ}
g_Q:=\begin{cases}
f_Q&\text{if }Q\cap P\neq\emptyset\text{ and }j_Q\geq j_P+k,\\
0&\text{otherwise}.
\end{cases}
\end{align}
For any $j\in\mathbb Z$, let
$g_j:=\sum_{Q\in\mathscr{Q}_{2^{-j}}}g_Q\mathbf{1}_Q$.
Then
\begin{equation}\label{first est}
\left\|\sup_{j\in\mathbb Z,\,j\geq j_P+k}\omega_j|f_j|\right\|_{L^p(P)}
\le\left\|\sup_{j\in\mathbb Z,\,j\geq j_P+k}\omega_j|g_j|\right\|_{L^p}.
\end{equation}
By \cite[Lemma 3.6]{fr21}, the fact that, for any
$j\in\mathbb Z$, $|g_j|^p\mathbf{1}_{j\geq j_P+k}$
is constant on each $Q\in\mathscr{Q}_{2^{-j}}$, and \eqref{gammaj},
we conclude that
\begin{align}\label{nazarov equ}
\left\|\sup_{j\in\mathbb Z,\,j\geq j_P+k}\omega_j|g_j|\right\|_{L^p}^p
&=\left\|\sup_{j\in\mathbb Z}\omega_j^p|g_j|^p
\mathbf{1}_{j\geq j_P+k}\right\|_{L^1}\\
&\le\|\omega\|_{\mathcal K}^p\left\|\sup_{j\in\mathbb Z}|g_j|^p
\mathbf{1}_{j\geq j_P+k}\right\|_{L^1}
\leq\left\|\sup_{j\in\mathbb Z,\,j\geq j_P+k}|g_j|
\right\|_{L^p}^p.\notag
\end{align}
In addition, from the definition of $\{g_j\}_{j\in\mathbb Z}$,
we infer that
\begin{align*}
\left\|\sup_{j\in\mathbb Z,\,j\geq j_P+k}|g_j|\right\|_{L^p}^p
&\leq\int_{P}\sup_{j\geq j_P}|g_j(x)|^p\,dx+\sum_{j=j_P+k}^{j_P-1}
\int_{\mathbb R^n}|g_j(x)|^p\,dx\\
&\leq\sum_{i=0}^{-k}2^{in}\int_{P}\sup_{j\geq j_P+k}|f_j|^p\,dx
\le2^{(-k+1)n}\left\|\sup_{j\in\mathbb Z,\,j\geq j_P+k}|f_j|\right\|_{L^p(P)}^p.
\end{align*}
Combining this, \eqref{first est}, and \eqref{nazarov equ},
we find \eqref{1202} holds.

Now, we show that the constant in \eqref{1202} is sharp.
If, for any $j\in\mathbb Z$, $\omega_j\equiv 1$,
then it is easy to prove that $\omega:=\{\omega_j\}_{j\in\mathbb Z}$
satisfies \eqref{gammaj} and, for any $k\in\mathbb Z$, $C_k\geq1$.
Let $Q_0$ be a fixed dyadic cube with edge length $1$.
If, for any $j\in\mathbb Z$, $\omega_j:=2^{-\frac{jn}{p}}\mathbf 1_{Q_0}$
and $f_j\equiv 1$, then $\omega:=\{\omega_j\}_{j\in\mathbb Z}$
satisfies \eqref{gammaj} and, for any $k\in\mathbb Z$,
\begin{align*}
\left\|\sup_{j\in\mathbb Z,\,j\geq k}\omega_j|f_j|\right\|_{L^p(Q_0)}
\left\|\sup_{j\in\mathbb Z,\,j\geq k}|f_j|\right\|_{L^p(Q_0)}^{-1}
=\left(\int_{Q_0}2^{-kn}\,dx\right)^\frac1p
\left(\int_{Q_0}1\,dx\right)^{-\frac1p}=2^{-\frac{kn}{p}}
\end{align*}
and hence $C_k\geq2^{-\frac{kn}{p}}$.
This finishes the proof of the sharpness of \eqref{1202} and hence Proposition \ref{nazarov}.
\end{proof}

As a simple application of Proposition \ref{nazarov},
we obtain the following conclusion.

\begin{corollary}\label{nazarov cor}
Let $p\in(0,\infty)$ and $W\in A_{p,\infty}$.
Then there exists a positive constant $C$ such that,
for any $k\in\mathbb Z$, any $P\in\mathscr Q^{\mathrm{dy}}$,
and any sequence $\{f_Q\}_{Q\in\mathscr{Q}^{\mathrm{dy}}}$ of complex numbers,
\begin{align*}
\left\|\sup_{j\in\mathbb Z,\,j\geq j_P+k}\gamma_{2^{-j}}|f_j|\right\|_{L^p(P)}
\le C\max\left\{1,2^{-\frac{kn}{p}}\right\}
\left\|\sup_{j\in\mathbb Z,\,j\geq j_P+k}|f_j|\right\|_{L^p(P)},
\end{align*}
where, for any $j\in\mathbb Z$, $\gamma_{2^{-j}}$
is the same as in \eqref{2.28x}
and $f_j:=\sum_{Q\in\mathscr{Q}_{2^{-j}}}f_Q\mathbf{1}_Q$.
\end{corollary}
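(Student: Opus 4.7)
The plan is to deduce Corollary \ref{nazarov cor} directly from Proposition \ref{nazarov} by choosing $\omega_j:=\gamma_{2^{-j}}$. Since Proposition \ref{nazarov} is normalized to $\|\omega\|_{\mathcal K}\leq 1$, the first step is to establish the uniform bound
$$
C_W:=\left\|\{\gamma_{2^{-j}}\}_{j\in\mathbb Z}\right\|_{\mathcal K}
=\sup_{Q\in\mathscr Q^{\mathrm{dy}}}\left\{\fint_Q\sup_{j\in\mathbb Z,\,j\geq j_Q}[\gamma_{2^{-j}}(x)]^p\,dx\right\}^{1/p}<\infty,
$$
with $C_W$ depending only on $n$, $m$, $p$, and $[W]_{A_{p,\infty}}$. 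Once this is in hand, I substitute the normalized sequence $\omega_j/C_W$ into \eqref{1202} and multiply the resulting inequality through by $C_W$ to obtain the desired estimate with $C:=2^{n/p}C_W$.

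The only place where the assumption $W\in A_{p,\infty}$ is used, and the main obstacle to overcome, is this uniform $\mathcal K$ bound. Fix $Q_0\in\mathscr Q^{\mathrm{dy}}$. For $x\in Q_0$ and $j\geq j_{Q_0}$, let $Q(x,j)\in\mathscr Q_{2^{-j}}$ denote the unique dyadic cube of side length $2^{-j}$ containing $x$; then $Q(x,j)\subset Q_0$ and $\gamma_{2^{-j}}(x)=\|W^{1/p}(x)A_{Q(x,j)}^{-1}\|$. The reverse Hölder inequality of Lemma \ref{86} supplies some $r\in(1,\infty)$, depending only on $n$, $m$, $p$, and $[W]_{A_{p,\infty}}$, such that $\fint_Q\|W^{1/p}(x)A_Q^{-1}\|^{rp}\,dx\lesssim 1$ for every cube $Q$. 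Summing this bound over the pairwise disjoint level-$j$ dyadic descendants of $Q_0$ yields
$$
\fint_{Q_0}[\gamma_{2^{-j}}(x)]^{rp}\,dx\lesssim 1\qquad \text{for all }j\geq j_{Q_0},
$$
that is, a scale-by-scale $L^{rp}$ bound that is uniform in $j$.

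The technically delicate point, which I expect to require the most care, is to upgrade this uniform-in-scale $L^{rp}$ control into an $L^p$ bound on the pointwise dyadic supremum appearing in the definition of $\|\cdot\|_{\mathcal K}$. A naive summation in $j$ diverges because arbitrarily deep nested cubes are involved, so the plan is instead to invoke the strongly doubling estimate of Lemma \ref{sharp estimate} to compare the reducing operators $A_{Q(x,j)}$ across neighboring dyadic scales, and then use a Kolmogorov/good-$\lambda$ type argument that trades the reverse Hölder gain $r-1>0$ against the depth of the nested supremum. The key heuristic supporting this is that once $\gamma_{2^{-j}}(x)$ is large at some scale $j$, Lemma \ref{sharp estimate} forces comparable behavior at scales close to $j$, so the excess can be charged against the extra integrability $r$. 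With $C_W<\infty$ thus established, the first paragraph completes the proof.
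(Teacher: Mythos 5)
Your strategy coincides with the paper's: establish $C_W:=\|\{\gamma_{2^{-j}}\}_{j\in\mathbb Z}\|_{\mathcal K}<\infty$, normalize, and apply Proposition \ref{nazarov}. The normalization and the resulting constant $C=2^{n/p}C_W$ are handled correctly. In the paper, the bound $C_W\lesssim 1$ is obtained by citing \cite[Corollary 5.7(ii)]{bhyyNew}; that external estimate carries the entire weight of the corollary.

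The gap is that you attempt to prove $C_W<\infty$ from scratch but leave the decisive step undone. You correctly extract the scale-by-scale reverse-H\"older estimate $\fint_{Q_0}[\gamma_{2^{-j}}(x)]^{rp}\,dx\lesssim 1$ for each $j\ge j_{Q_0}$ from Lemma \ref{86}, and you correctly observe that this alone cannot control $\fint_{Q_0}\sup_{j\ge j_{Q_0}}[\gamma_{2^{-j}}(x)]^p\,dx$: a uniform $L^{rp}$ bound over infinitely many scales does not dominate the $L^p$ norm of the pointwise supremum, and the naive comparison $\gamma_{2^{-j}}(x)\le\|A_{Q_0}A_{Q(x,j)}^{-1}\|\,\gamma_{2^{-j_{Q_0}}}(x)$ degrades like $2^{(j-j_{Q_0})d_2/p}$ via Lemma \ref{sharp estimate}, so it works against you. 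But your proposed remedy --- a ``Kolmogorov/good-$\lambda$ type argument'' pitting the reverse-H\"older gain $r-1$ against the nesting depth, supported by a ``key heuristic'' --- is only announced, never executed. The stopping-time construction, the good-$\lambda$ inequality, and the exponent bookkeeping that closes the estimate are precisely the content of \cite[Corollary 5.7(ii)]{bhyyNew}, and none of it appears in the proposal. As written, the proof of the central inequality is missing; one should either carry out that argument in full or, as the paper does, simply cite the reference.
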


\begin{proof}
By \cite[Corollary 5.7(ii)]{bhyyNew}, we find that
$\{\omega_j:=\gamma_{2^{-j}}\}_{j\in\mathbb Z}$ satisfies \eqref{gammaj}
and hence the present corollary is immediately
deduced from Proposition \ref{nazarov}.
This finishes the proof of Corollary \ref{nazarov cor}.
\end{proof}

We also need the following
approximation of the identity.

\begin{lemma}\label{<sup}
Let $\psi\in\mathcal{S}$ satisfy
$\operatorname{supp}\psi \subset B(\mathbf{0},1)$ and
$\int_{{\mathbb{R}^n}}\psi(x)\,dx=1$. Assume that $\vec f\in(L^1_{\mathrm{loc}})^m$
and $H:\ \mathbb{R}^n\to M_m(\mathbb{C})$ is a matrix-valued function
satisfying $\operatorname{ess\,sup}_{x\in{\mathbb{R}^n}}\|H(x)\|<\infty$.
Then, for almost every $x\in{\mathbb{R}^n}$,
$$\lim_{t\to0^+}H(x)\psi_t*\vec f(x)=
H(x)\vec f(x).
$$
\end{lemma}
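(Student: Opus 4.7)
The plan is to reduce the matrix-valued statement to the classical scalar approximation of the identity and then exploit the essential boundedness of $H$. Writing $\vec f = (f_1,\ldots,f_m)^T$ with each $f_i \in L^1_{\mathrm{loc}}$, the convolution acts componentwise, so $\psi_t * \vec f(x) = (\psi_t * f_1(x),\ldots,\psi_t * f_m(x))^T$. Thus it suffices to show $\psi_t * f_i(x) \to f_i(x)$ almost everywhere for each $i \in \{1,\ldots,m\}$, and then take the (finite) intersection of the resulting full-measure sets.

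First, I would note that because $\psi \in \mathcal{S}$ has $\operatorname{supp}\psi \subset B(\mathbf{0},1)$, the function $\psi$ is bounded and compactly supported, so $|\psi(x)| \le \|\psi\|_{L^\infty}\mathbf{1}_{B(\mathbf{0},1)}(x)$, which is a radial decreasing $L^1$-majorant. Combined with $\int_{\mathbb{R}^n}\psi(x)\,dx = 1$, the standard approximation of the identity theorem (see, e.g., \cite[Theorem 1.2.19]{g14c}) then yields, for each $f_i \in L^1_{\mathrm{loc}}$, that $\psi_t * f_i(x) \to f_i(x)$ at every Lebesgue point of $f_i$ as $t \to 0^+$. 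Since the set of Lebesgue points has full measure and we are taking a finite intersection over $i$, we obtain $\psi_t * \vec f(x) \to \vec f(x)$ for a.e. $x \in \mathbb{R}^n$.

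Finally, let $M := \operatorname{ess\,sup}_{x \in \mathbb{R}^n}\|H(x)\| < \infty$, so $\|H(x)\| \le M$ for a.e. $x$. Then, for a.e. $x$ at which both $\psi_t * \vec f(x) \to \vec f(x)$ and $\|H(x)\| \le M$ hold,
$$
\left|H(x)\psi_t * \vec f(x) - H(x)\vec f(x)\right| \le \|H(x)\|\,\left|\psi_t * \vec f(x) - \vec f(x)\right| \le M\left|\psi_t * \vec f(x) - \vec f(x)\right| \longrightarrow 0
$$
as $t \to 0^+$, which is the desired conclusion. I expect no substantive obstacle in this proof: the only technical point is choosing the correct version of the approximation of the identity applicable to $L^1_{\mathrm{loc}}$ (not merely $L^1$) functions, which is available once one observes the compact support of $\psi$; the passage from the scalar case to the matrix-weighted statement is then immediate via the operator-norm bound on $H$.
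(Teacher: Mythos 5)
Your proof is correct and takes essentially the same approach as the paper's: the paper inserts $\int_{\mathbb{R}^n}\psi_t=1$ and bounds $|H(x)\psi_t*\vec f(x)-H(x)\vec f(x)|\lesssim\fint_{B(\mathbf{0},t)}|\vec f(x-y)-\vec f(x)|\,dy\to0$ via the Lebesgue differentiation theorem, which is the same mechanism underlying the approximation-of-the-identity result you cite. Your componentwise reduction and your invocation of a radial decreasing $L^1$ majorant are just mild repackagings of the identical argument.
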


\begin{proof}
Using the Lebesgue differentiation theorem, we conclude that,
for any  $t\in(0,\infty)$
and almost every $x\in\mathbb R^n$,
\begin{align*}
\left|H(x)\psi_t*\vec f(x)-H(x)\vec f(x)\right|
&\le \int_{\mathbb R^n}\left|\frac1{t^n}\psi\left(\frac yt\right) \right|
\left|H(x)\vec f(x-y)-H(x)\vec f(x)\right|\,dy\\
&\lesssim\fint_{B(\mathbf{0},t)}\left|\vec f(x-y)-\vec f(x)\right|\,dy
\to 0
\end{align*}
as $t\to 0^+$.
This finishes the proof of Lemma \ref{<sup}.
\end{proof}

Next, using the above several lemmas,
we show Theorem \ref{l-CZ}.

\begin{proof}[Proof of Theorem \ref{l-CZ}]
We first prove (i).
Let $\vec f\in H^p_W$ and $O:=\overline{\{x\in{\mathbb{R}^n}:
\ (M_N)_{\mathbb{A}}(\vec f)(x)\le\alpha\}}^\complement.$
Then $O$ is open and \eqref{>alpha} holds.
By \eqref{>alpha} and Theorems \ref{if and only if A}
and \ref{weight and reducing}, we obtain
\begin{align}\label{pro-O}
|O|
\le\frac1{\alpha^p}\int_O\left|(M_N)_{\mathbb{A}}
\left(\vec f\right)(x)\right|^p\,dx
\le\frac1{\alpha^p}\left\|(M_N)_{\mathbb{A}}
\left(\vec f\right)\right\|_{L^p}^p
\sim\frac1{\alpha^p}\left\|\vec f\right\|_{H^p_W}^p
<\infty,
\end{align}
and hence $O$ is a proper subset of $\mathbb R^n$.
Now, we verify \eqref{<alpha}.
Let
$x\in O^\complement
=\overline{\{x\in\mathbb R^n:\
(M_N)_{\mathbb{A}}(\vec f)(x)\le\alpha\}}$.
Then there exists a sequence $\{x_i\}_{i\in{\mathbb N}}\subset{\mathbb{R}^n}$
such that, for any $i\in{\mathbb N}$, $(M_N)_{\mathbb{A}}(\vec f)(x_i)\le\alpha$
and $\lim_{i\to\infty} x_i=x$.
From this, we infer that,
for any $t\in(0,\infty)$ and $\phi\in\mathcal{S}_N$,
there exists $K\in\mathbb N$ such that,
for any $i\geq K$, $|x-x_i|<t$,
which, together with Lemma \ref{sharp estimate} and
the definition of $(M_N)_{\mathbb{A}}(\vec f)$, further implies that
\begin{align}\label{3.18x}
\left|A_{t}(x)\phi_t*\vec f(x)\right|
&\le\left|A_{t}(x)\phi_t*\vec f(x_i)\right|+
\left|A_{t}(x)\left[\phi_t*\vec f(x)-\phi_t*\vec f(x_i)\right]\right|\\
&\le\left\|A_{t}(x)A^{-1}_{t}(x_i)\right\|
\left|A_{t}(x_i)\phi_t*\vec f(x_i)\right|+
\left|A_{t}(x)\left[\phi_t*\vec f(x)-\phi_t*\vec f(x_i)\right]\right|\nonumber\\
&\lesssim\alpha+\left|A_{t}(x)\left[\phi_t*\vec f(x)-\phi_t*\vec f(x_i)\right]\right|,\nonumber
\end{align}
where $A_t$ is the same as in \eqref{Atref}. In addition,
by $\phi_t*\vec f\in (C^\infty)^m$ which can be deduced from
\cite[Theorem 2.3.20]{g14c}, we find that
$$
\lim_{i\to\infty} \left|A_{t}(x)\left[\phi_t*\vec f(x)-\phi_t*\vec f(x_i)\right]\right|=0,
$$
which, together with \eqref{3.18x}, further implies that
\eqref{<alpha} holds.
This finishes the proof of (i).

Next, we show (ii).
Using the above proven fact that $O$ is an open nonempty proper subset of $\mathbb R^n$ and Lemma \ref{wd},
we conclude that there exists a sequence
$\{Q_k\}_{k\in\mathbb N}$ of closed dyadic cubes
such that
\begin{enumerate}[{\rm(a)}]
\item
$O=\bigcup_{k\in{\mathbb N}}Q_k$,
\item
$10\sqrt{n}Q_k\cap O^\complement\neq\emptyset$, and
\item
for any $\lambda\in(1,\frac{5}{4})$,
$\sum_{k\in{\mathbb N}}\mathbf{1}_{\lambda Q_k}
\le(12^n-4^n+1)\mathbf{1}_{O}$.
\end{enumerate}
For any $k\in\mathbb N$, let $c_k$ be the center of $Q_k$
and $l_k$ the edge length of $Q_k$.
Let $1 <\widetilde a < a^*:=\frac98<\frac{5}{4}$. Then
$Q_k \subset \widetilde{Q}_k:= \widetilde{a}Q_k \subset Q^*_k:= a^*Q_k$.
From (a) and (c), we infer that $O=\bigcup_{k\in{\mathbb N}}Q_k^*$
and
\begin{align}\label{jrfzwsw}
\sum_{k\in{\mathbb N}}\mathbf{1}_{Q^*_k}
\le (12^n-4^n+1)\mathbf{1}_O.
\end{align}
Using the estimate in \cite[p.\,102]{S93}, we conclude that there exists a sequence of functions
\begin{align}\label{de-eta}
\left\{\eta_k:\ \mathbb R^n\to [0,1]\right\}_{k\in{\mathbb N}}
\subset C_{\mathrm c}^\infty,
\end{align}
where $C_{\mathrm c}^\infty$
denotes the set of all infinitely differentiable functions
on $\mathbb R^n$ with compact support,
such that
\begin{align}\label{dlsl}
\mathbf{1}_{O}(x)=\sum_{k\in{\mathbb N}}\eta_k(x)\ \text{for any}\ x\in\mathbb R^n,
\end{align}
\begin{align}\label{supp-eta}
\operatorname{supp}\eta_k\subset \widetilde{Q}_k\ \text{for any }k\in\mathbb N,
\end{align}
\begin{align}\label{betaeta}
\sup_{x\in{\mathbb{R}^n}}\left|\partial^\alpha\eta_k(x)\right|\lesssim
\left[ l(Q_k)\right]^{-|\alpha|}\ \text{for any } k\in\mathbb N\text{ and }\alpha\in\mathbb Z^n_+,
\end{align}
where the implicit positive constant depends only on $\alpha$,
and
\begin{align}\label{betaeta2}
\int_{\mathbb R^n}\eta_k(x)\,dx\sim|Q_k|\ \text{for any }k\in{\mathbb N},
\end{align}
where the positive equivalence constants depend only on $n$.
For any $k\in\mathbb N$, let
\begin{align}\label{w-eta}
\widetilde{\eta}_k:=\frac{\eta_k}{\int_{{\mathbb{R}^n}}\eta_k(x)\,dx}.
\end{align}
Fix $k\in\mathbb N$. For any $s\in\mathbb Z_+$,
let $\mathcal{P}_s$ be the set of all polynomials
of total degree not greater than $s$ on ${\mathbb{R}^n}$.
Let $\mathcal{H}_s$ be $\mathcal{P}_s$ considered as a subspace of
the Hilbert space $L^2(Q^*_k,\widetilde{\eta}_k\,dx)$.
Let $M\in{\mathbb N}$ and $\{e_i\}_{i=1}^M$ be polynomials forming an orthonormal basis of $\mathcal{H}_s$,
that is, for any $i,j\in\{1,\ldots,M\}$,
\begin{align}\label{defi-t}
\left\langle e_i,e_j\widetilde{\eta}_k\right\rangle
= \begin{cases}
1&\text{if } i=j,\\
0&\text{otherwise}.
\end{cases}
\end{align}
Using \cite[p.\,104,\ (28)]{S93}, we find that,
for any $j\in\{1,\ldots,M\}$ and $\alpha\in\mathbb Z^n_+$,
\begin{align}\label{qqq}
\sup_{x\in Q_k^*}\left|\partial^\alpha e_j(x)\right|
\lesssim [ l(Q_k)]^{-|\alpha|},
\end{align}
where the implicit positive constant depends only on $\alpha$.
Let $\vec b_k:=(\vec f-P_k\vec f)\eta_k,$
where $P_k$ is the projection operator defined by setting
\begin{align}\label{de-Pk}
P_k\vec f:=\sum_{j=1}^{M}\left\langle\vec f,
e_j\widetilde{\eta}_k\right\rangle e_j.
\end{align}
From \eqref{supp-eta}, we deduce that
$\operatorname{supp}\vec b_k
\subset\operatorname{supp}\eta_k
\subset\widetilde{Q}_k$.
By \eqref{defi-t}, we find that,
for any $q:=\sum_{j=1}^{M}c_je_j\in\mathcal{P}_s$
and $\vec h\in(\mathcal{S}')^m$,
\begin{align}\label{qg}
\left\langle \left(P_k\vec h-\vec h\right)\eta_k, q\right\rangle
&=\sum_{j=1}^{M}c_j\left\langle P_k\vec h, e_j\eta_k\right\rangle-
\sum_{j=1}^{M}c_j\left\langle\vec h, e_j\eta_k\right\rangle\\
&=\sum_{j=1}^{M}c_j\left\langle\vec h, e_j\widetilde \eta_k\right\rangle
\left\langle e_j, e_j\eta_k\right\rangle-
\sum_{j=1}^{M}c_j\left\langle\vec h, e_j\eta_k\right\rangle
=\vec 0\notag
\end{align}
and hence
\begin{align}\label{vanish}
\left\langle\vec b_k, q\right\rangle
=\left\langle \left(P_k\vec f-\vec f\right)\eta_k, q\right\rangle
=\vec 0,
\end{align}
which further implies that, for any $\gamma\in\mathbb Z_+^n$ with $|\gamma|\le s$,
$\langle\vec b_k, \cdot^\gamma\rangle=\vec 0$.

Next, we estimate $\vec b_k$.
From the definition of $\vec b_k$, we infer that
\begin{align*}
\int_{\mathbb R^n}\left[M_W\left(\vec b_k,\psi\right)(x)\right]^p\,dx
&\le\int_{Q_k^*}\left[M_{W}\left(\left[
P_k\vec f\right]\eta_k,\psi\right)(x)\right]^p\,dx
+\int_{Q_k^*}\left[M_{W}\left(\vec f\eta_k,\psi\right)(x)\right]^p\,dx\\
&\quad
+\int_{(Q_k^*)^\complement}
\left[M_{W}\left(\vec b_k,\psi\right)(x)\right]^p\,dx\\
&=:\mathrm{I}+\mathrm{II}+\mathrm{III}.
\end{align*}

We first estimate $\mathrm{I}$. For any $t\in(0,\infty)$ and $x\in Q_k^*$,
by the definition of $P_k$, we obtain
\begin{align*}
\left|W^{\frac1p}(x)\psi_t*\left[\left(P_k\vec f\right)\eta_k\right](x)\right|
\le\left\|W^{\frac1p}(x)\left[A_{l_k}(c_k)\right]^{-1}\right\|
\sum_{j=1}^{M}\left|A_{l_k}(c_k)\left\langle\vec f,e_j\widetilde{\eta}_k\right\rangle\right|
\left|\psi_t*\left(e_j\eta_k\right)(x)\right|
\end{align*}
and, from \eqref{betaeta} and \eqref{qqq}, we deduce that
\begin{align}\label{phit*e}
\left|\psi_t*\left(e_j\eta_k\right)(x)\right|
\le\int_{\mathbb R^n}|\psi_t(y)|\left|e_j(x-y)\right||\eta_k(x-y)|\,dx
\lesssim\|\psi\|_{L^1}.
\end{align}
These, together with Proposition \ref{reduceM},
Lemma \ref{sharp estimate},
and \eqref{control}, further imply that
\begin{align*}
\mathrm{I}
&\lesssim\int_{Q^*_k}\left\|W^{\frac1p}(x)\left[A_{l_k}(c_k)\right]^{-1}\right\|^p\,dx
\sum_{j=1}^{M}\left|A_{l_k}(c_k)\left\langle\vec f,e_j\widetilde{\eta}_k\right\rangle\right|^p\\
&\lesssim\left\|A_{Q^*_k}\left[A_{l_k}(c_k)\right]^{-1}\right\|^p
\sum_{j=1}^{M}\int_{Q^*_k}\left\|A_{l_k}(c_k)\left[A_{r_k}(y)\right]^{-1}\right\|^p
\left|A_{r_k}(y)\left\langle\vec f,e_j\widetilde{\eta}_k\right\rangle\right|^p\,dy\nonumber\\
&\lesssim\int_{Q^*_k}\left(2+r_k^{-1}|c_k-y|\right)^{N+n+1}
\left[(M_N)_{\mathbb{A}}\left(\vec f\right)(y)\right]^p\,dy
\lesssim\int_{Q^*_k}\left[(M_N)_{\mathbb{A}}\left(\vec f\right)(y)\right]^p\,dy,\nonumber
\end{align*}
where $r_k:=\sqrt{n}\widetilde{a}l_k$.
This finishes the estimation of $\mathrm{I}$.

Now, we estimate $\mathrm{II}$. Notice that
\begin{align*}
\mathrm{II}
\le\int_{Q_k^*}\sup_{t\in(0,l_k]}\left|W^{\frac1p}(x)
\psi_t*\left(\vec f\eta_k\right)(x)\right|^p\,dx
+\int_{Q_k^*}\sup_{t\in(l_k,\infty)}\cdots\,dx
=:\mathrm{II}_1+\mathrm{II}_2.
\end{align*}

We first estimate $\mathrm{II}_1$.
For any $k\in{\mathbb N}$, let $j_k:=-\log_2 l_k$.
Using $a^*=\frac98$, we obtain
$Q_k^*=\frac98Q_k=\bigcup_{R\in\mathscr Q_{l_k/16},\, R\subset Q_k^*}R$.
By Corollary \ref{nazarov cor} and Lemma \ref{sharp estimate},
we conclude that
\begin{align}\label{II1}
\mathrm{II}_1
&=\sum_{R\in\mathscr Q_{l_k/16},\, R\subset Q_k^*}\int_R
\sup_{\{j\in\mathbb Z:\,j\geq j_R-4\}}\sup_{t\in(2^{-(j+1)},2^{-j}]}
\left|W^{\frac1p}(x)
\psi_t*\left(\vec f\eta_k\right)(x)\right|^p\,dx\\
&\le\sum_{R\in\mathscr Q_{l_k/16},\, R\subset Q_k^*}\int_R
\sup_{\{j\in\mathbb Z:\,j\geq j_R-4\}}
\left\|W^{\frac1p}(x)\left[A_{2^{-j}}(x)\right]^{-1}\right\|^p\notag
\sup_{t\in(2^{-(j+1)},2^{-j}]}\left\|A_{2^{-j}}(x)\left[A_t(x)\right]^{-1}\right\|\\
&\quad\times
\sum_{Q\in\mathscr Q_{2^{-j}}}\sup_{y\in Q}\left|A_t(x)\psi_t*\left(\vec f\eta_k\right)(y)\right|^p\mathbf{1}_Q(x)\,dx\notag\\
&\lesssim\int_{Q_k^*} \sup_{\{j\in\mathbb Z:\,j\geq j_k\}}\sup_{t\in(2^{-(j+1)},2^{-j}]}
\sum_{Q\in\mathscr Q_{2^{-j}}}\sup_{y\in Q}\
\left|A_t(x)\left\langle\vec f,\zeta_{(t,y)}\right\rangle\right|^p\mathbf{1}_Q(x)\,dx,\notag
\end{align}
where $\zeta_{(t,y)}(\cdot):=\psi_t(y-\cdot)\eta_k(\cdot)$.
Let $j\in\mathbb Z$ satisfy $j\geq j_k$,
$t\in(2^{-(j+1)},2^{-j}]$, $Q\in \mathscr Q_{2^{-j}}$, and $x,y\in Q$.
From
\eqref{de-eta},
$\psi\in\mathcal{S}$,
$\operatorname{supp}\psi\subset B(\mathbf{0},1)$,
\eqref{betaeta},
and $t\in(0,l_k]$, we infer that
$\zeta_{(t,y)}\in\mathcal{S}$ satisfies that
$\operatorname{supp}\zeta_{(t,y)}
\subset\operatorname{supp}[\psi_t(y-\cdot)]
\subset B(y,t)$
and, for any $\alpha:=(\alpha_1,\ldots,\alpha_n)\in\mathbb Z^n_+$ with $|\alpha|\le N+1$,
\begin{align*}
\sup_{z\in\mathbb R^n}\left|\partial^\alpha\zeta_{(t,y)}(z)\right|
&\sim\sup_{z\in\mathbb R^n}\left|\sum_{\beta\in\mathbb Z^n_+,\,\beta\le\alpha}
t^{-(n+|\beta|)}\partial^\beta\psi\left(\frac{y-z}t\right)
\partial^{\alpha-\beta}\eta_k(z)\right|\\
&\lesssim \sum_{\beta\in\mathbb Z^n_+,\,\beta\le\alpha}
t^{-(n+|\beta|)} l_k^{-|\alpha-\beta|}
\lesssim t^{-(n+|\alpha|)},
\end{align*}
where, for any $\beta:=(\beta_1,\ldots,\beta_n)\in\mathbb Z^n_+$,
$\beta\le\alpha$ means that, for any $i\in\{1,\ldots,n\}$, $\beta_i\le\alpha_i$.
These, together with Lemma \ref{le<}, further imply that
\begin{align*}
\left|A_{t}(x)\left\langle\vec f,\zeta_{(t,y)}\right\rangle\right|
\lesssim \left(2+\frac{|x-y|}t\right)^{N+n+1}
(M_N)_{\mathbb{A}}\left(\vec f\right)(x)
\sim (M_N)_{\mathbb{A}}\left(\vec f\right)(x).
\end{align*}
By this and \eqref{II1}, we find that
\begin{align*}
\mathrm{II}_1
\lesssim\int_{Q_k^*}\left[(M_N)_{\mathbb{A}}\left(\vec f\right)(x)\right]^p\,dx.
\end{align*}
This finishes the estimation of $\mathrm{II}_1$.

Next, we estimate $\mathrm{II}_2$.
Let $x\in Q_k^*$ and $t\in(l_k,\infty)$, and we then have
\begin{align}\label{II2}
\left|W^{\frac1p}(x)\psi_t*\left(\vec f\eta_k\right)(x)\right|
\le\left\|W^{\frac1p}(x)\left[A_{l_k}(c_k)\right]^{-1}\right\|
\left|A_{l_k}(c_k)\left\langle\vec f,\zeta_{(t,x)}\right\rangle\right|.
\end{align}
From
\eqref{de-eta},
$\psi\in\mathcal{S}$,
\eqref{supp-eta},
\eqref{betaeta},
and $t\in(l_k,\infty)$, we deduce that
$\zeta_{(t,x)}\in\mathcal{S}$ satisfies that
$$
\operatorname{supp}\zeta_{(t,x)}
\subset\operatorname{supp}\eta_k
\subset \widetilde{Q}_k
\subset B(c_k,r_k),
$$
where $r_k:=\sqrt{n}\widetilde{a}l_k$, and,
for any $\alpha\in\mathbb Z^n_+$ with $|\alpha|\le N+1$,
\begin{align*}
\sup_{y\in\mathbb R^n}\left|\partial^\alpha\zeta_{(t,x)}(y)\right|
&\sim\sup_{y\in\mathbb R^n}\left|\sum_{\beta\in\mathbb Z^n_+,\,\beta\le\alpha}
t^{-(n+|\beta|)}\partial^\beta\psi\left(\frac{x-y}t\right)
\partial^{\alpha-\beta}\eta_k(y)\right|\\
&\lesssim \sum_{\beta\in\mathbb Z^n_+,\,\beta\le\alpha}
t^{-(n+|\beta|)} l_k^{-|\alpha-\beta|}
\lesssim l_k^{-(n+|\alpha|)}.
\end{align*}
These, together with Lemmas \ref{sharp estimate} and \ref{le<},
further imply that, for any
$y\in 10\sqrt{n}Q_k=\frac{80}{9}\sqrt{n}Q^*_k$,
\begin{align}\label{1005}
\left|A_{l_k}(c_k)\left\langle\vec f,\zeta_{(t,x)}\right\rangle\right|
\le\left\|A_{l_k}(c_k)\left[A_{r_k}(y)\right]^{-1}\right\|
\left|A_{r_k}(y)\left\langle\vec f,\zeta_{(t,x)}\right\rangle\right|
\lesssim(M_N)_{\mathbb{A}}\left(\vec f\right)(y).
\end{align}
Taking the average with respect to $y$ over $Q^*_k$
on both sides of the above inequality, we obtain
$$
\left|A_{l_k}(c_k)\left\langle\vec f,\zeta_{(t,x)}\right\rangle\right|^p
\lesssim\fint_{Q_k^*}\left[(M_N)_{\mathbb{A}}\left(\vec f\right)(y)\right]^p\,dy.
$$
By this, \eqref{II2}, Proposition \ref{reduceM},
and Lemma \ref{sharp estimate}, we find that
\begin{align*}
\mathrm{II}_2
&\lesssim\int_{Q_k^*}\left\|W^{\frac1p}(x)\left[A_{l_k}(c_k)\right]^{-1}\right\|^p\,dx
\fint_{Q_k^*}\left[(M_N)_{\mathbb{A}}\left(\vec f\right)(y)\right]^p\,dy\\
&\sim
\left\|A_{Q_k^*}\left[A_{l_k}(c_k)\right]^{-1}\right\|^p
\int_{Q_k^*}\left[(M_N)_{\mathbb{A}}\left(\vec f\right)(y)\right]^p\,dy
\lesssim\int_{Q_k^*}\left[(M_N)_{\mathbb{A}}\left(\vec f\right)(y)\right]^p\,dy.
\end{align*}
This finishes the estimation of $\mathrm{II}_2$.
Combining the estimates of $\mathrm{II}_1$ and $\mathrm{II}_2$, we obtain
\begin{align*}
\mathrm{II}
\le\mathrm{II}_1+\mathrm{II}_2
\lesssim\int_{Q_k^*}\left[(M_N)_{\mathbb{A}}\left(\vec f\right)(x)\right]^p\,dx,
\end{align*}
which completes the estimation of $\mathrm{II}$.

Now, we estimate $\mathrm{III}$.
Let $q_{(t,x)}$ be the $s$th-degree Taylor polynomial of $\psi_t(x-\cdot)$
centered at $c_k$.
From \eqref{vanish} and the definition of $\vec b_k$, we infer that,
for any $x\in(Q^*_k)^\complement$,
\begin{align}\label{3sbtyssr}
\sup_{t\in(0,\infty)}
\left|A_{l_k}(c_k)\psi_t*\vec b_k(x)\right|&=\sup_{t\in(0,\infty)}
\left|A_{l_k}(c_k)\left\langle\vec b_k,
\psi_t(x-\cdot)-q_{(t,x)}(\cdot)\right\rangle
\right|\\
&\le\sup_{t\in(0,\infty)}
\left|A_{l_k}(c_k)\left\langle\vec f,
\eta_k(\cdot)\left[\psi_t(x-\cdot)-q_{(t,x)}(\cdot)\right]\right\rangle
\right|\nonumber\\
&\quad+\sup_{t\in(0,\infty)}
\left|A_{l_k}(c_k)\left\langle P_k\vec f,
\eta_k(\cdot)\left[\psi_t(x-\cdot)-q_{(t,x)}(\cdot)\right]\right\rangle
\right|\nonumber\\
&=\sup_{t\in(0,\infty)}
\left|A_{l_k}(c_k)\left\langle\vec f,\phi_{(t,x)}\right\rangle\right|
+\sup_{t\in(0,\infty)}
\left|A_{l_k}(c_k)\left\langle P_k\vec f,\phi_{(t,x)}\right\rangle\right|\nonumber\\
&=:III_1(x)+III_2(x),\nonumber
\end{align}
where $\phi_{(t,x)}(\cdot):=\eta_k(\cdot)[\psi_t(x-\cdot)-q_{(t,x)}(\cdot)]$.

We first consider $III_1(x)$.
By \eqref{supp-eta}, we conclude that,
for any $t\in(0,\infty)$ and $x\in(Q^*_k)^\complement$,
\begin{equation}\label{supp phi}
\operatorname{supp}\phi_{(t,x)}
\subset\operatorname{supp}\eta_k
\subset \widetilde Q_k
\subset B(c_k,r_k),
\end{equation}
where $r_k:=\sqrt{n}\widetilde al_k$.
Using the formula in \cite[p.\,105]{S93}, we find that,
for any $\alpha\in{\mathbb Z}^n_+$, $t\in(0,\infty)$, $x\in(Q_k^*)^\complement$,
and $y\in{\mathbb{R}^n}$,
\begin{align}\label{pbeta}
\sup_{y\in{\mathbb{R}^n}}\left|\partial^\alpha\phi_{(t,x)}(y)\right|
\lesssim\frac{l_k^{n+s+1}}{|x-c_k|^{n+s+1}}l_k^{-(n+|\alpha|)}
\sim\frac{l_k^{n+s+1}}{|x-c_k|^{n+s+1}}r_k^{-(n+|\alpha|)},
\end{align}
where the implicit positive constants depend on $\alpha$
but are independent of $t$, $x$, and $l_k$.
These, together with Lemmas \ref{sharp estimate} and
\ref{le<}, further imply that,
for any $y\in\mathbb R^n$, $t\in(0,\infty)$, and $x\in(Q_k^*)^\complement$,
\begin{align}\label{<alpha2}
\left|A_{l_k}(y)\left\langle\vec f,\phi_{(t,x)}\right\rangle\right|
&\le\left\|A_{l_k}(y)\left[A_{r_k}(y)\right]^{-1}\right\|
\left|A_{r_k}(y)\left\langle\vec f,\phi_{(t,x)}\right\rangle\right|\\
&\lesssim\frac{l_k^{n+s+1}}{|x-c_k|^{n+s+1}}
\left(2+\frac{|y-c_k|}{l_k}\right)^{N+n+1}
(M_N)_{\mathbb{A}}\left(\vec f\right)(y).\notag
\end{align}
Due to (b), we are able to choose $y_k\in 10\sqrt{n}Q_k\cap O^\complement$.
From Lemma \ref{sharp estimate}, \eqref{<alpha2}, and \eqref{<alpha}, we deduce that,
for any $x\in(Q_k^*)^\complement$,
\begin{align}\label{estimate of III}
III_1(x)&\le
\left\|A_{l_k}(c_k)\left[A_{l_k}(y_k)\right]^{-1}\right\|
\sup_{t\in(0,\infty)}\left|A_{l_k}(y_k)\left\langle\vec f,\phi_{(t,x)}\right\rangle\right|\\
&\lesssim
\frac{l_k^{n+s+1}}{|x-c_k|^{n+s+1}}
(M_N)_{\mathbb{A}}\left(\vec f\right)(y_k)\lesssim
\frac{l_k^{n+s+1}}{|x-c_k|^{n+s+1}}\alpha.\notag
\end{align}
Next, we estimate $III_2(x)$.
From \eqref{de-eta}, the fact that $\{e_i\}_{i=1}^M$ are polynomials,
and \eqref{supp-eta},
we infer that $\varphi_{(j,k)}:=e_j\widetilde{\eta}_k
\in C_{\mathrm c}^\infty\subset\mathcal S$
satisfies that
$\operatorname{supp}\varphi_{(j,k)}
\subset\operatorname{supp}\eta_k
\subset \widetilde{Q}_k
\subset B(c_k,r_k)$,
where $r_k:=\sqrt{n}\widetilde{a}l_k$.
Using \eqref{betaeta}, \eqref{betaeta2}, \eqref{w-eta}, and \eqref{qqq},
we find that,
for any $\alpha\in\mathbb Z^n_+$ with $|\alpha|\le N+1$,
$$
\sup_{x\in\mathbb R^n}\left|\partial^\alpha\varphi_{(j,k)}(x)\right|
\sim\sup_{x\in\mathbb R^n}\left|\sum_{\beta\le\alpha}\partial^\alpha
e_j(x)\partial^{\alpha-\beta}\widetilde{\eta}_k(x)\right|
\lesssim\sum_{\beta\le\alpha}[ l(Q_k)]^{-n-|\beta|-|\alpha-\beta|}
\sim r_k^{-(n+|\alpha|)}.
$$
These, together with Lemma \ref{le<}, further imply that,
for any $x\in\mathbb R^n$,
\begin{equation}\label{control}
\left|A_{r_k}(x)\left\langle\vec f,\varphi_{(j,k)}\right\rangle\right|
\lesssim\left(2+r_k^{-1}|x-c_k|\right)^{N+n+1}
(M_N)_{\mathbb{A}}\left(\vec f\right)(x).
\end{equation}
By (b), we are able to choose $y_k\in 10\sqrt{n}Q_k\cap O^\complement$.
From this, Lemma \ref{sharp estimate}, \eqref{control}, and \eqref{<alpha},
we deduce that, for any $x\in Q_k^*$,
\begin{align}\label{hnt}
\left|A_{l_k}(x)\left\langle\vec f,\varphi_{(j,k)}\right\rangle\right|
\le\left\|A_{l_k}(x)\left[A_{r_k}(y_k)\right]^{-1}\right\|
\left|A_{r_k}(y_k)\left\langle\vec f,\varphi_{(j,k)}\right\rangle\right|
\lesssim(M_N)_{\mathbb{A}}\left(\vec f\right)(y_k)
\lesssim\alpha.
\end{align}
Applying this and the definition of $P_k$, we obtain, for any $x\in Q_k^*$,
\begin{align}\label{estimate of gk}
\left|A_{l_k}(x)P_k\vec f(x)\right|
\lesssim\sum_{j=1}^{M}\left|A_{l_k}(x)
\left\langle\vec f,e_j\widetilde{\eta}_k\right\rangle\right|
\lesssim\alpha.
\end{align}
By this, \eqref{supp phi}, Lemma \ref{sharp estimate}, and \eqref{pbeta},
we find that, for any $x\in(Q_k^*)^\complement$,
\begin{align}\label{Pk<alpha}
III_2(x)
&\le
\sup_{t\in(0,\infty)}\int_{Q^*_k}\left\|A_{l_k}(c_k)A_{l_k}(y)^{-1}\right\|
\left|A_{l_k}(y)P_k\vec f(y)\right|\left|\phi_{(t,x)}(y)\right|\,dy\\
&\lesssim
\int_{Q^*_k}\alpha\frac{l_k^{s+1}}{|x-c_k|^{n+s+1}}\,dy\sim
\frac{l_k^{n+s+1}}{|x-c_k|^{n+s+1}}\alpha.\nonumber
\end{align}
Combining \eqref{3sbtyssr}, \eqref{estimate of III}, and \eqref{Pk<alpha},
we conclude that, for any $x\in(Q_k^*)^\complement$,
\begin{align}\label{exin}
\sup_{t\in(0,\infty)}
\left|A_{l_k}(c_k)\psi_t*\vec b_k(x)\right|
\lesssim\frac{l_k^{n+s+1}}{|x-c_k|^{n+s+1}}\alpha,
\end{align}
which, together with the definition of $M_{W}(\vec b_k,\psi)$, further implies
that
\begin{align}\label{ex}
M_{W}\left(\vec b_k,\psi\right)(x)
\lesssim\left\|W^{\frac{1}{p}}(x)\left[A_{l_k}(c_k)\right]^{-1}\right\|
\frac{l_k^{n+s+1}}{|x-c_k|^{n+s+1}}\alpha.
\end{align}
Using $s\in(\lfloor n(\frac{1}{p}-1)+
d_{p,\infty}^{\mathrm{upper}}(W)\rfloor,\infty)$
and the definition of $d_{p,\infty}^{\mathrm{upper}}(W)$,
we find that there exists $d\in(d_{p,\infty}^{\mathrm{upper}}(W),\infty)$
such that $s\in(\lfloor n(\frac{1}{p}-1)+d\rfloor,\infty)$
and $d$ is an $A_{p,\infty}$-upper dimension of $W$.
These, together with \eqref{ex}, Proposition \ref{reduceM}, Lemma
\ref{sharp estimate}, and \eqref{>alpha}, further imply that
\begin{align*}
\text{III}
&\lesssim\sum_{i=1}^{\infty}
\left[\frac{l_k^{n+s+1}}{(2^il_k)^{n+s+1}}\alpha\right]^p
\int_{2^{i}Q^*_k\setminus2^{i-1}Q^*_k}
\left\|W^{\frac{1}{p}}(x)\left[A_{l_k}(c_k)\right]^{-1}\right\|^p\,dx\\
&\lesssim\sum_{i=1}^{\infty}
\left[\frac{\alpha}{2^{i(n+s+1)}}\right]^p\left(2^il_k\right)^n
\left\|A_{2^{i}Q^*_k}\left[A_{l_k}(c_k)\right]^{-1}\right\|^p\notag\\
&\lesssim\alpha^p\left|Q_k^*\right|\sum_{i=1}^{\infty}2^{-i(n+s+1)p}2^{in}2^{idp}
\sim\alpha^p\left|Q_k^*\right|
<\int_{Q^*_k}\left[(M_N)_{\mathbb{A}}\left(\vec f\right)(x)\right]^p\,dx.\nonumber
\end{align*}
This finishes the estimation of $\mathrm{III}$.

Combining the estimates of I, II, and III, we obtain
\begin{align*}
\int_{\mathbb R^n}\left[M_W\left(\vec b_k,\psi\right)(x)\right]^p\,dx
\lesssim\int_{Q^*_k}\left[(M_N)_{\mathbb{A}}\left(\vec f\right)(x)\right]^p\,dx.
\end{align*}
This finishes the estimation of $\vec b_k$.
Using this, Theorem \ref{if and only if W}, $p\in(0,1]$,
\eqref{jrfzwsw}, and \eqref{>alpha}, we conclude that
\begin{align}\label{bkcon}
\left\|\sum_{k\in\mathbb{N}}\vec b_k\right\|^p_{H^p_W}&\sim
\int_{{\mathbb{R}^n}}\left[M_{W}\left(\sum_{k\in\mathbb{N}}\vec b_k,\psi\right)(x)\right]^p\,dx
\le\sum_{k\in{\mathbb N}}\int_{{\mathbb{R}^n}}\left[M_{W}\left(\vec b_k,\psi\right)(x)\right]^p\,dx\\
&\lesssim\int_{\bigcup_{k\in{\mathbb N}}Q^{*}_k}\left[(M_N)_{\mathbb{A}}\left(\vec f\right)(x)\right]^p\,dx
\le\int_{\{(M_N)_{\mathbb{A}}(\vec f)>\alpha\}}
\left[(M_N)_{\mathbb{A}}\left(\vec f\right)(x)\right]^p\,dx,\nonumber
\end{align}
which further implies that
$\|\sum_{k\in\mathbb{N}}\vec b_k\|_{H^p_W}\lesssim
\|\vec f\|_{H^p_W}$
and hence
\begin{align}\label{0125}
\vec b=\sum_{k\in\mathbb{N}}\vec b_k
\end{align}
in $H^p_W$. From this and Proposition \ref{embedding},
we infer that the right-hand side of \eqref{0125} converges in $(\mathcal{S}')^m$.
Let $\vec g:=\vec f-\sum_{k\in\mathbb N}\vec b_k$. Then \eqref{except1} holds.
This finishes the proof of (ii).

In the end, we prove (iii).
For any $k\in\mathbb{N}$, let $\vec g_k:=(P_k\vec f)\eta_k$.
By \eqref{supp-eta} and (c), we conclude that, for any $x\in\mathbb R^n$,
\begin{align*}
\#\left\{k\in\mathbb N:\ \eta_k(x)\neq 0\right\}
\le\sum_{k\in\mathbb N}\mathbf{1}_{\widetilde a Q_k}(x)
\le (12^n-4^n+1)\mathbf{1}_Q(x),
\end{align*}
which further implies that
$\vec g(x):=\sum_{k\in\mathbb N} \vec g_k(x)$
and
$\vec b(x):=\sum_{k\in\mathbb N} \vec b_k(x)$
are finite summation and hence they are well defined.
Using this and \eqref{dlsl},
we find that, for any $x\in O$,
$\vec g(x)+\vec b(x)
=\vec f(x)\sum_{k\in{\mathbb N}}\eta_k(x)
=\vec f(x).$
For any $x\in\mathbb{R}^n$, let
$\vec g_0(x):=\vec f(x)-\sum_{k\in\mathbb{N}}\vec g_k(x)-
\sum_{k\in\mathbb{N}}\vec b_k(x)=\vec f(x)\mathbf{1}_{O^\complement}(x)$.
Then \eqref{except} holds.
From the definition of $\vec g_k$ and \eqref{supp-eta}, we deduce that
$\operatorname{supp}\vec g_k
\subset\operatorname{supp}\eta_k
\subset\widetilde{Q}_k
\subset Q_k^*$.
Combining the definitions of $\vec g_k$ and $\eta_k$,
\eqref{estimate of gk}, and
Lemma \ref{sharp estimate}, we obtain
\begin{align*}
\left|A_{Q_k^*}(x)\vec g_k(x)\right|
\le\left\|A_{Q_k^*}(x)A_{l_k}(x)^{-1}\right\|
\left|A_{l_k}(x)P_k\vec f(x)\right|
\lesssim\alpha.
\end{align*}
This finishes the proof of (iii) and hence Theorem \ref{l-CZ}.
\end{proof}

To show Theorem \ref{W-F-atom}(ii),
we need the following properties of $A_p$-matrix weights, which are
superior to those of $A_{p,\infty}$ weights.
The following lemma is essentially contained in \cite[Lemmas 3.2 and 3.3]{fr21}.

\begin{lemma}\label{8}
Let $p\in(0,\infty)$, $W\in A_p$,
and $\{A_Q\}_{\mathrm{cube}\,Q}$ be a family of
reducing operators of order $p$ for $W$.
\begin{enumerate}
\item[{\rm(i)}] If $p\in(0,1]$, then
$$
\sup_{\emph{cube}\,Q}\mathop{\mathrm{\,ess\,sup\,}}_{x\in Q}
\left\|A_QW^{-\frac{1}{p}}(x)\right\|
\sim[W]_{A_p}^{\frac{1}{p}},
$$
where the positive equivalence constants depend only on $m$ and $p$.
\item[{\rm(ii)}] If $p\in(1,\infty)$,
then there exist a positive constant $\varepsilon$,
depending only on $n$, $m$, $p$, and $[W]_{A_p}$,
and a positive constant $C$,
depending only on $m$ and $p$,
such that, for any $r\in[0,p'+\varepsilon]$,
\begin{equation*}
\sup_{Q\in\mathscr{Q}}
\left[\fint_Q\left\|A_QW^{-\frac{1}{p}}(x)\right\|^r\,dx\right]^{\frac{1}{r}}
\le C[W]_{A_p}^{\frac{1}{p}}.
\end{equation*}
\end{enumerate}
\end{lemma}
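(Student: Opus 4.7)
The plan is to reduce each assertion to the very definition of the $A_p$ characteristic via Proposition \ref{reduceM}, with (ii) additionally requiring a duality construction and the matrix reverse H\"older property.

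For (i) with $p\in(0,1]$, the argument is essentially a direct unpacking of definitions. Apply Proposition \ref{reduceM} with the matrix $M:=W^{-\frac1p}(y)$ (well defined for a.e.\ $y\in Q$ by Definition \ref{MatrixWeight}) to obtain, with constants depending only on $m$ and $p$,
$$\left\|A_QW^{-\frac1p}(y)\right\|^p\sim\fint_Q\left\|W^{\frac1p}(x)W^{-\frac1p}(y)\right\|^p\,dx.$$
Taking essential supremum in $y\in Q$ and then supremum over all cubes $Q$ turns the right-hand side into exactly $[W]_{A_p}$ from Definition \ref{def ap} in the range $p\in(0,1]$, so (i) follows immediately.

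For (ii) with $p\in(1,\infty)$, the "outer $p/p'$, inner $p'$" structure of $[W]_{A_p}$ cannot be matched by a single application of Proposition \ref{reduceM}. I would first handle the endpoint $r=p'$ by introducing a dual reducing operator $B_Q$ of order $p'$ for the matrix weight $W^{-p'/p}$, characterized via the analog of Proposition \ref{reduceM} by $\|B_QM\|^{p'}\sim\fint_Q\|W^{-\frac1p}(y)M\|^{p'}\,dy$ for every $M\in M_m(\mathbb{C})$. Since $W^{\pm 1/p}$ is self-adjoint, passing to adjoints gives $\|A_QW^{-\frac1p}(y)\|=\|W^{-\frac1p}(y)A_Q\|$, and setting $M:=A_Q$ yields
$$\fint_Q\left\|A_QW^{-\frac1p}(y)\right\|^{p'}\,dy\sim\|B_QA_Q\|^{p'}=\|A_QB_Q\|^{p'}.$$
It then remains to establish $\|A_QB_Q\|^p\lesssim[W]_{A_p}$. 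This follows by two successive applications of Proposition \ref{reduceM}: first with $M:=B_Q$ to obtain $\|A_QB_Q\|^p\lesssim\fint_Q\|W^{\frac1p}(x)B_Q\|^p\,dx$ after pulling the vector supremum inside, and then (on the dual side, with $M:=W^{\frac1p}(x)$) to rewrite $\|W^{\frac1p}(x)B_Q\|^{p'}\sim\fint_Q\|W^{\frac1p}(x)W^{-\frac1p}(y)\|^{p'}\,dy$, whence
$$\|A_QB_Q\|^p\lesssim\fint_Q\left[\fint_Q\left\|W^{\frac1p}(x)W^{-\frac1p}(y)\right\|^{p'}\,dy\right]^{p/p'}\,dx\le[W]_{A_p}.$$
This settles $r=p'$ with constant $C[W]_{A_p}^{1/p}$, and $r\in[0,p']$ reduces to this case by H\"older's inequality.

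The main obstacle is the self-improvement to $r\in(p',p'+\varepsilon]$, which is the only place where the quantitative dependence of $\varepsilon$ on $[W]_{A_p}$ enters. The plan is to invoke the matrix reverse H\"older inequality for $A_p$ weights originally due to Bownik \cite{b01} and refined in \cite{bhyyNew}; this produces the required $\varepsilon=\varepsilon(n,m,p,[W]_{A_p})>0$ promoting the $r=p'$ bound to the slightly larger exponent. The underlying matrix John--Nirenberg-type argument is genuinely more intricate than its scalar analog; however, as the result is already available as a black box from the cited literature (and is essentially the content of \cite[Lemma 3.3]{fr21}), the present lemma follows without additional work at this step.
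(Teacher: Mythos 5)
Your proposal is correct, and it is actually more explicit than what the paper offers: the paper does not prove Lemma~\ref{8} but merely cites \cite[Lemmas 3.2 and 3.3]{fr21}. Your part~(i) is a clean one-line derivation from Proposition~\ref{reduceM} applied with $M:=W^{-1/p}(y)$, matching Definition~\ref{def ap} exactly; nothing more is needed.

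For part~(ii), the duality structure you set up is the right one and all the intermediate steps check out: the dual reducing operators $B_Q$ of order $p'$ for $W^{-p'/p}$ exist because the $A_p$ condition forces local $L^{p'}$-integrability of $W^{-1/p}$, and the chain $\|A_QB_Q\|^p\lesssim\fint_Q\|W^{1/p}(x)B_Q\|^p\,dx$ followed by the dual version of Proposition~\ref{reduceM} applied to $\|W^{1/p}(x)B_Q\|=\|B_QW^{1/p}(x)\|$ does give $\|A_QB_Q\|^p\lesssim[W]_{A_p}$. The only place your write-up is a little terse is the self-improvement to $r\in(p',p'+\varepsilon]$: the reverse H\"older in Lemma~\ref{86} (and its analog for the dual weight $W^{-p'/p}\in A_{p',\infty}$) controls $\|W^{-1/p}(x)B_Q^{-1}\|$, not $\|A_QW^{-1/p}(x)\|$ directly, so one still needs the factorization $\|A_QW^{-1/p}(x)\|\le\|A_QB_Q\|\,\|B_Q^{-1}W^{-1/p}(x)\|$ and then your bound $\|A_QB_Q\|\lesssim[W]_{A_p}^{1/p}$ to glue the pieces together. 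You have already established every ingredient for this; spelling out that factorization would close the argument fully and avoid the appearance of citing the conclusion as part of the proof. Beyond that, your appeal to the reverse H\"older literature is at the same level of rigor as the paper's own treatment, so the proposal is an acceptable, and more informative, proof of the lemma.
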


\begin{proposition}\label{dense}
Let $p\in(0,1]$ and $W\in A_{p}$.
Then $H^p_W\cap (L^1_{\mathrm{loc}})^m$ is dense in $H^p_W$.
\end{proposition}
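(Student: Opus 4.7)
The plan is to approximate $\vec f\in H^p_W$ by the good parts produced by the Calder\'on--Zygmund decomposition of Theorem \ref{l-CZ} at increasing heights $\alpha$. Given $\vec f\in H^p_W$, for each $\alpha\in(0,\infty)$ Theorem \ref{l-CZ}(i)(ii) yields a decomposition $\vec f=\vec g^\alpha+\vec b^\alpha$ in $(\mathcal S')^m$, where $\vec b^\alpha=\sum_{k\in\mathbb N}\vec b_k^\alpha$ and each $\vec b_k^\alpha$ is supported in a Whitney cube $Q_k^*$ of the open set $O_\alpha:=\{x\in\mathbb R^n:\,(M_N)_{\mathbb A}(\vec f)(x)>\alpha\}$. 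The estimate \eqref{bkcon} derived in the proof of Theorem \ref{l-CZ} gives
\begin{align*}
\|\vec b^\alpha\|_{H^p_W}^p\lesssim\int_{O_\alpha}\left[(M_N)_{\mathbb A}(\vec f)(x)\right]^p\,dx,
\end{align*}
which tends to $0$ as $\alpha\to\infty$ by the dominated convergence theorem, since $(M_N)_{\mathbb A}(\vec f)\in L^p$ by Theorems \ref{if and only if A} and \ref{weight and reducing}. Therefore $\vec g^\alpha\to\vec f$ in $H^p_W$, and the task reduces to showing that $\vec g^\alpha\in (L^1_{\mathrm{loc}})^m$ for every $\alpha$.

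The heart of the argument, and the main obstacle, is identifying the distribution $\vec f$ on $O_\alpha^\complement$ with a locally bounded function (since Theorem \ref{l-CZ}(iii) is not available a priori under the mere hypothesis $\vec f\in H^p_W$). This is where the assumption $W\in A_p$, rather than just $W\in A_{p,\infty}$, is essential: by Remark \ref{p012} we have $d_{p,\infty}^{\mathrm{upper}}(W)=0$, and then Corollary \ref{p01} yields $\|A_{Q_0}A_R^{-1}\|\lesssim1$ for every fixed cube $Q_0$ and every dyadic cube $R$ with $R\cap Q_0\neq\emptyset$ and $l(R)\le l(Q_0)$. Fixing $\psi\in\mathcal S$ with $\int_{\mathbb R^n}\psi\,dx=1$ and combining with \eqref{<alpha}, we find
\begin{align*}
|\psi_t*\vec f(x)|\le\|A_R^{-1}\|\,|A_R\psi_t*\vec f(x)|\lesssim\|A_{Q_0}^{-1}\|\,(M_N)_{\mathbb A}(\vec f)(x)\lesssim\alpha\|A_{Q_0}^{-1}\|
\end{align*}
uniformly in $t\in(0,l(Q_0)]$ and $x\in Q_0\cap O_\alpha^\complement$, where $R\in\mathscr Q_t$ is the cube containing $x$. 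Since $\psi_t*\vec f\to\vec f$ in $\mathcal S'$ as $t\to0^+$, pairing against $\phi\in C_c^\infty(U)^m$ with $U$ any open subset of $Q_0\cap O_\alpha^\complement$ yields $|\langle\vec f,\phi\rangle|\lesssim\alpha\|A_{Q_0}^{-1}\|\,\|\phi\|_{L^1}$, whence $\vec f|_U$ is represented by an $L^\infty(U)$ function of norm $\lesssim\alpha\|A_{Q_0}^{-1}\|$. Thus $\vec f\mathbf 1_{O_\alpha^\complement}$ is a well-defined element of $(L^\infty_{\mathrm{loc}})^m$.

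With this identification, the distributional identity
\begin{align*}
\vec g^\alpha=\vec f-\sum_{k\in\mathbb N}(\vec f-P_k\vec f)\eta_k=\vec f\mathbf 1_{O_\alpha^\complement}+\sum_{k\in\mathbb N}(P_k\vec f)\eta_k,
\end{align*}
whose second equality uses $\sum_k\eta_k=\mathbf 1_{O_\alpha}$ from \eqref{dlsl}, now makes sense and exhibits $\vec g^\alpha$ as the sum of an element of $(L^\infty_{\mathrm{loc}})^m$ and a locally finite sum, by the Whitney bounded-overlap property of Lemma \ref{wd}(v), of smooth compactly supported functions $(P_k\vec f)\eta_k$, each satisfying the pointwise bound $|A_{Q_k^*}(P_k\vec f)\eta_k|\lesssim\alpha$ from \eqref{estimate of gk} and hence lying in $L^\infty$ with norm $\lesssim\alpha\|A_{Q_k^*}^{-1}\|$. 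Consequently $\vec g^\alpha\in(L^\infty_{\mathrm{loc}})^m\subset(L^1_{\mathrm{loc}})^m$, which completes the proof.
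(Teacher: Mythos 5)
Your overall strategy matches the paper's (apply the Calder\'on--Zygmund decomposition at height $\alpha$, let $\alpha\to\infty$, use \eqref{bkcon} to conclude $\vec b^\alpha\to\vec 0$ in $H^p_W$, and then argue $\vec g^\alpha\in(L^1_{\mathrm{loc}})^m$). The opening paragraph is correct. The gap is in the second half, where you identify $\vec g^\alpha$ as a function.

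The key identity you invoke,
\[
\vec g^\alpha=\vec f-\sum_{k\in\mathbb N}(\vec f-P_k\vec f)\eta_k=\vec f\mathbf 1_{O_\alpha^\complement}+\sum_{k\in\mathbb N}(P_k\vec f)\eta_k,
\]
is not a legitimate distributional identity under the hypotheses you have established, for three reasons. First, the term $\vec f\mathbf 1_{O_\alpha^\complement}$ is a product of a tempered distribution with a discontinuous indicator function, which is not defined; what you have actually shown is that, for any open set $U$ contained in $O^\complement$ (equivalently, any open $U\subset(\overline O)^\complement$), $\vec f|_U$ is represented by an $L^\infty(U)$ function. This says nothing about the behavior of $\vec g^\alpha$ on $\partial O$, which is not negligible: $O$ is only known to be open with finite measure, so $|\partial O|$ need not vanish, and even if $|\partial O|=0$, a distribution can carry a singular part concentrated on $\partial O$ that local bounds away from $\partial O$ cannot rule out. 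Second, the rearrangement $\sum_k(\vec f-P_k\vec f)\eta_k=\sum_k\vec f\eta_k-\sum_k(P_k\vec f)\eta_k$ requires that the two sums converge separately in $(\mathcal S')^m$, which has not been shown (what \eqref{0125} gives is only the convergence of $\sum_k\vec b_k$). Third, the phrase ``a locally finite sum, by the Whitney bounded-overlap property of Lemma \ref{wd}(v)'' conflates bounded overlap with local finiteness: any compact set meeting $\partial O$ intersects infinitely many Whitney cubes $Q_k^*$, so the sum $\sum_k(P_k\vec f)\eta_k$ is not locally finite near $\partial O$ (it is, however, pointwise finite with bounded overlap, and your $\|A_{Q_k^*}^{-1}\|\lesssim\|A_{Q_0}^{-1}\|$ observation does make it $L^\infty_{\mathrm{loc}}$, so this particular point is repairable).

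The paper avoids these difficulties by proving the global $L^1$ estimate \eqref{claim} on $\sup_{t\in(0,1)}|A_1(\cdot)\psi_t*\vec g(\cdot)|$, treating the two cases $x\in O^\complement$ (which covers $\partial O$) and $x\in O$ and integrating the resulting pointwise bound $(M_N)_{\mathbb A}(\vec f)\mathbf 1_{O^\complement}+\sum_k l_k^{n+s+1-d/p}(l_k+|\cdot-c_k|)^{-(n+s+1-d/p)}$, and then invoking \cite[Lemma 7]{art} to pass from $\sup_{t\in(0,1)}|\psi_t*\vec g|\in L^1_{\mathrm{loc}}$ to the existence of a representative $\vec g_0\in(L^1_{\mathrm{loc}})^m$. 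This last step is essential precisely because it settles the measure-theoretic identification that your pointwise/local-boundedness argument leaves open on $\partial O$. You would need to produce a substitute for this step (or show that $\vec g^\alpha-\vec g_0$, a distribution supported on $\partial O$, must vanish) to close the gap.
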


\begin{proof}
Let $\alpha\in(0,\infty)$, $N\in{\mathbb Z}_+$,
$\psi\in\mathcal{S}$ satisfy
$\int_{{\mathbb{R}^n}}\psi(x)\,dx\neq0$ and
$\operatorname{supp}\psi\subset B(\mathbf{0},1)$,
and $\vec f\in H_W^p$.
Let $s\in(\frac{d_{p,\infty}^{\mathrm{lower}}(W)}{p}-1,\infty)
\cap(\lfloor n(\frac{1}{p}-1)\rfloor,\infty)\cap\mathbb N$. Then
there exists $d\in[\![d_{p,\infty}^{\mathrm{lower}}(W),\infty)$ such that
$s\in(\frac{d}{p}-1,\infty)$.
For any $k\in{\mathbb N}$, let $c_k$, $l_k$, $Q_k^*$, $\eta_k$, $P_k$,
and $\vec b_k$
be the same as in the proof of Theorem \ref{l-CZ}.
By \eqref{bkcon}, we conclude that
\begin{align}\label{bjto0}
\left\|\vec b\right\|^p_{H^p_W}\lesssim\int_{\{(M_N)_{\mathbb{A}}(\vec f)>\alpha\}}
\left[(M_N)_{\mathbb{A}}\left(\vec f\right)(x)\right]^p\,dx\to0
\end{align}
as $\alpha\to\infty$.
Therefore, to prove the present proposition, it suffices to show that
$\vec g\in (L^1_{\mathrm{loc}})^m$.

We claim that, for any $x\in \mathbb{R}^n$,
\begin{align}\label{claim}
\sup_{t\in(0,1)}\left|A_1(x)\psi_t*\vec g(x)\right|
\lesssim
(M_N)_{\mathbb A}\left(\vec f\right)(x)\mathbf{1}_{O^\complement}(x)
+\sum_{k\in\mathbb N}
\frac{l_k^{n+s+1-\frac dp}}{(l_k+|x-c_k|)^{n+s+1-\frac dp}},
\end{align}
where $A_{1}$ is the same as in \eqref{Atref} with $t:=1$
and the implicit positive constant depends on $\alpha$.

To prove \eqref{claim}, using \eqref{pro-O},
we conclude that, for any $k\in\mathbb{N}$,
\begin{align}\label{le>lk}
l_k<|O|^{\frac1n}\lesssim 1.
\end{align}
Indeed, from this, Corollary \ref{p01}, the definition of
$(M_N)_{\mathbb A}(\vec f)$,
\eqref{exin}, and \eqref{<alpha},
we infer that,
for any $x\in O^\complement$,
\begin{align}\label{PO}
\sup_{t\in(0,1)}\left|A_1(x)\psi_t*\vec g(x)\right|
&\le \sup_{t\in(0,1)}\left\|A_1(x)[A_t(x)]^{-1}\right\|\left|A_t(x)\psi_t*\vec f(x)\right|\\
&\quad+\sum_{k\in\mathbb N}\sup_{t\in(0,1)}
\left\|A_1(x)\left[A_{l_k}(c_k)\right]^{-1}\right\|
\left|A_{l_k}(c_k)\psi_t*\vec b_k(x)\right|\nonumber\\
&\lesssim (M_N)_{\mathbb A}\left(\vec f\right)(x)
+\sum_{k\in\mathbb N}
(1+|x-c_k|)^\frac{d}{p}
\frac{l_k^{n+s+1}}{(l_k+|x-c_k|)^{n+s+1}}\nonumber\\
&\lesssim (M_N)_{\mathbb A}\left(\vec f\right)(x)
+\sum_{k\in\mathbb N}
\frac{l_k^{n+s+1-\frac dp}}{(l_k+|x-c_k|)^{n+s+1-\frac dp}},\nonumber
\end{align}
which is exactly \eqref{claim} in the case of $x\in O^\complement$.

Next, let $x\in O$.
By (a) appearing in the proof of Theorem \ref{l-CZ},
we conclude that there exists
$k_x\in{\mathbb Z}$ such that $x\in Q_{k_x}$.
Let $N_{\mathrm{near}}^x:=\{k\in{\mathbb N}:\ Q_k^*\cap Q_{k_x}^*\neq\emptyset\}$
and $N_{\mathrm{far}}^x:=\{k\in{\mathbb N}:\ Q_k^*\cap Q_{k_x}^*=\emptyset\}$.
Using both (iii) and (v) of Lemma \ref{wd}, we find that
$\sup_{x\in O}\#N_{\mathrm{near}}^x<\infty$.
Consequently, we have
\begin{align*}
\vec g(x)
&=\vec f(x)-\sum_{k\in N_{\mathrm{near}}^x} \vec b_k(x)
-\sum_{k\in N_{\mathrm{far}}^x} \vec b_k(x)
=\vec f(x)\left[1-\sum_{k\in N_{\mathrm{near}}^x}\eta_k(x)\right]
+\sum_{k\in N_{\mathrm{near}}^x} \left(P_k\vec f\right)(x)\eta_k(x)
-\sum_{k\in N_{\mathrm{far}}^x} \vec b_k(x)\\
&=:\vec h(x)
+\sum_{k\in N_{\mathrm{near}}^x} \left(P_k\vec f\right)(x)\eta_k(x)
-\sum_{k\in N_{\mathrm{far}}^x} \vec b_k(x)
\end{align*}
and hence
\begin{align*}
&\sup_{t\in(0,1)}\left|A_1(x)\psi_t*\vec g(x)\right|\\
&\quad\le \sup_{t\in(0,1)}\left|A_1(x)\psi_t*\vec h(x)\right|
+\sum_{k\in N_{\mathrm{near}}^x}\sup_{t\in(0,1)}
\left|A_1(x)\psi_t*\left[\left(P_k\vec f\right)\eta_k\right](x)\right|
+\sum_{k\in N_{\mathrm{far}}^x}
\sup_{t\in(0,1)}\left|A_1(x)\psi_t*\vec b_k(x)\right|\\
&\quad=:I(x)+II(x)+III(x).
\end{align*}

We first estimate $I(x)$.
From \eqref{dlsl} and the definition of $N_{\mathrm{near}}^x$, we deduce that,
for any $z\in Q_{k_x}^*$, $1-\sum_{k\in N_{\mathrm{near}}^x}\eta_k(z)=0$.
By this and
$\operatorname{supp}{\psi_t}\subset B(\mathbf{0},t)$,
we conclude that,
for any $t\in(0,2^{-3}l_{k_x})\subset(0,\operatorname{dist}
(x,(Q_{k_x}^{*})^\complement))$,
$\psi_t*\vec h(x)=\vec0$.
Using this, we immediately find that
\begin{align}\label{est-I(x)}
I(x)&\le
\sup_{t\in(2^{-3}l_{k_x},1)}\left|A_1(x)
\psi_t*\vec f(x)\right|
+\sum_{k\in N_{\mathrm{near}}^x}\sup_{t\in(2^{-3}l_{k_x},1)}\left|A_1(x)
\psi_t*\left(\eta_k\vec f\right)(x)\right|\\
&=\sup_{t\in(2^{-3}l_{k_x},1)}\left|A_1(x)
\left\langle\vec f,\psi_t(x-\cdot)\right\rangle\right|
+\sum_{k\in N_{\mathrm{near}}^x}
\sup_{t\in(2^{-3}l_{k_x},1)}\left|A_1(x)
\left\langle\vec f,\zeta^{(k)}_{(t,x)}\right\rangle\right|\notag\\
&=:I_1(x)+I_2(x),\notag
\end{align}
where $\zeta^{(k)}_{(t,x)}(\cdot):=\psi_t(x-\cdot)\eta_k(\cdot)$.

Firstly, we estimate $I_1(x)$.
By $\psi\in \mathcal{S}$ and $\operatorname{supp}\psi\subset B(\mathbf{0},1)$,
we conclude that, for any $t\in(0,\infty)$,
$$\operatorname{supp}[\psi_t(x-\cdot)]\subset B(x,t)$$
and, for any $\beta\in{\mathbb Z}^n_+$,
$|\partial^\beta[\psi_t(x-\cdot)]|\lesssim t^{-n-|\beta|}$.
These, together with Lemma \ref{le<},
further imply that, for any $y\in{\mathbb{R}^n}$ and $t\in(0,\infty)$,
\begin{align}\label{bxgl}
\left|A_t(y)
\left\langle\vec f,\psi_t(x-\cdot)\right\rangle\right|
\lesssim \left(2+\frac{|y-x|}t\right)^{N+n+1}
(M_N)_{\mathbb{A}}\left(\vec f\right)(y).
\end{align}
Due to (b) appearing in the proof of Theorem \ref{l-CZ},
we are able to choose $y_x\in 10\sqrt{n}Q_{k_x}\cap O^\complement$.
Then, from \eqref{le>lk}, Corollary \ref{p01}, \eqref{bxgl},
and \eqref{<alpha}, we infer that,
for any $t\in(2^{-3}l_{k_x},1)$,
\begin{align}\label{V1}
\left|A_1(x)
\left\langle\vec f,\psi_t(x-\cdot)\right\rangle\right|
&\le\left\|A_1(x)A_t(y_x)^{-1}\right\|\left|A_t(y_x)
\left\langle\vec f,\psi_t(x-\cdot)\right\rangle\right|\\
&\lesssim\left|A_t(y_x)
\left\langle\vec f,\psi_t(x-\cdot)\right\rangle\right|
\lesssim(M_N)_{\mathbb{A}}\left(\vec f\right)(y_x)
\lesssim1.\nonumber
\end{align}

Secondly, we consider $I_2(x)$.
Using \eqref{le>lk}, Corollary \ref{p01}, and
\eqref{1005},
we conclude that, for any $k\in N_{\mathrm{near}}^x$,
\begin{align*}
\left|A_1(x)\left\langle\vec f,\zeta^{(k)}_{(t,x)}\right\rangle\right|
&\le\left\|A_1(x)\left[A_{l_k}(c_k)\right]^{-1}\right\|
\left|A_{l_k}(c_k)\left\langle\vec f,\zeta^{(k)}_{(t,x)}\right\rangle\right|
\lesssim(M_N)_{\mathbb{A}}\left(\vec f\right)(y_x)\lesssim1.\nonumber
\end{align*}
From this, \eqref{est-I(x)}, and \eqref{V1}, we deduce that
\begin{align}\label{esI}
I(x)
\lesssim1
\lesssim\sum_{k\in\mathbb N}
\frac{l_k^{n+s+1-\frac dp}}{(l_k+|x-c_k|)^{n+s+1-\frac dp}}.
\end{align}

Now, we estimate $II(x)$.
By the definition of $P_k$, \eqref{le>lk}, Corollary \ref{p01}, \eqref{hnt}, and \eqref{phit*e},
we obtain
\begin{align}\label{esII}
II(x)
&\le\sum_{k\in N^x_{\mathrm{near}}}\left\|A_1(x)\left[A_{l_k}(c_k)\right]^{-1}\right\|
\sum_{j=1}^{M}\left|A_{l_k}(c_k)\left
\langle\vec f,e^{(k)}_j\widetilde{\eta}_k\right\rangle\right|
\sup_{t\in(0,1)}\left|\psi_t*\left[e^{(k)}_j\eta_k\right](x)\right|
\\&\lesssim1\lesssim\sum_{k\in\mathbb N}
\frac{l_k^{n+s+1-\frac dp}}{(l_k+|x-c_k|)^{n+s+1-\frac dp}},\nonumber
\end{align}
where, for any $k\in\mathbb N$,
$\{e^{(k)}_j\}_{j=1}^M$ is the same as in the proof of Theorem \ref{l-CZ}
associated with $Q^*_k$.

Finally, we estimate $III(x)$.
From \eqref{le>lk}, Corollary \ref{p01}, and \eqref{exin},
we infer that
\begin{align*}
III(x)
&\le \sum_{k\in N_{\mathrm{far}}^x}
\sup_{t\in(0,1)}\left\|A_1(x)\left[A_{l_k}(c_k)\right]^{-1}\right\|
\left|A_{l_k}(c_k)\psi_t*\vec b_k(x)\right|\\
&\lesssim\sum_{k\in N^x_{\mathrm{far}}}
(1+|x-c_k|)^{\frac dp}
\frac{l_{k}^{n+s+1}}{|x-c_{k}|^{n+s+1}}
\lesssim\sum_{k\in \mathbb{N}}
\frac{l_{k}^{n+s+1-\frac dp}}{(l_{k}+|x-c_{k}|)^{n+s+1-\frac dp}}.
\end{align*}
Combining this, \eqref{PO}, \eqref{esI}, and \eqref{esII},
we find that the above claim \eqref{claim} holds.
From this, \eqref{<alpha}, and Theorem \ref{weight and reducing},
we deduce that
\begin{align*}
&\int_{\mathbb{R}^n}\sup_{t\in(0,1)}
\left|A_1(x)\psi_t*\vec g(x)\right|\,dx\\
&\quad\lesssim\int_{O^\complement}(M_N)_{\mathbb A}
\left(\vec f\right)(x)\,dx
+\sum_{k\in\mathbb N}\int_{\mathbb{R}^n}
\frac{l_k^{n+s+1-\frac dp}}{(l_k+|x-c_k|)^{n+s+1-\frac dp}}\,dx\\
&\quad\lesssim\alpha^{1-p}\int_{O^\complement}
\left[(M_N)_{\mathbb A}\left(\vec f\right)(x)\right]^p\,dx
+\sum_{k\in{\mathbb N}}l_k^n\lesssim
\alpha^{1-p}\left\|\vec f\right\|^p_{H^p_W}+|O|<\infty
\end{align*}
and hence
\begin{align}\label{L1}
\sup_{t\in(0,1)}\left|A_1\psi_t*\vec g\right|\in L^1,
\end{align}
which, together with the definition of $A_1$, further implies that
$\sup_{t\in(0,1)}|\psi_t*\vec g|\in L^1_{\mathrm{loc}}$.
From \cite[Lemma 7]{art} with $\Omega:=\mathbb{R}^n$ and
$B_x(\Omega):=\{\psi_t:\ t\in(0,1)\}$, we infer that
there exists $\vec g_0\in(L^1_{\rm{loc}})^m$ such that
$\vec g=\vec g_0$ in $[(C^\infty_{\mathrm c})']^m$.
By this and Lemma \ref{<sup}, we find that,
for almost every $x\in\mathbb{R}^n$,
$$\left|A_1(x)\vec g_{0}(x)\right|\leq\sup_{t\in(0,1)}
\left|A_1(x)\psi_t*\vec g_0(x)\right|=\sup_{t\in(0,1)}
\left|A_1(x)\psi_t*\vec g(x)\right|.$$
From this, Corollary \ref{p01}, and \eqref{L1}, we deduce that,
for any $\varphi\in\mathcal S$,
\begin{align*}
\int_{\mathbb{R}^n}\left|\vec g_{0}(x)\varphi(x)\right|\,dx&
\leq\int_{\mathbb{R}^n}\left\|A_1(\mathbf{0})\right\|
\left\|A_1(\mathbf{0})A_1^{-1}(x)\right\|
\sup_{t\in(0,1)}\left|A_1(x)\psi_t*\vec g(x)\right||\varphi(x)|\,dx\\
&\lesssim\int_{\mathbb{R}^n}
(1+|x|)^{\frac dp}\sup_{t\in(0,1)}
\left|A_1(x)\psi_t*\vec g(x)\right||\varphi(x)|\,dx<\infty\nonumber
\end{align*}
and hence $\vec g_0\in(\mathcal{S}')^m$.
Using this and the fact that $C^\infty_{\mathrm c}$ is dense in $\mathcal S$,
we conclude that $\vec g=\vec g_{0}$ in $(\mathcal S')^m$,
which completes the proof of Proposition \ref{dense}.
\end{proof}

Next, we show Theorem \ref{W-F-atom}.

\begin{proof}[Proof of Theorem \ref{W-F-atom}]
Let $\mathbb{A}:=\{A_Q\}_{Q\in\mathscr{Q}}$
be a family of reducing operators of order $p$ for $W$. For simplicity
of presentation of the proof, by Remark \ref{wsw}(i),
we are able to replace all $(p,q,s)_W$-atoms by $(p,q,s)_{\mathbb A}$-atoms.

To prove (i), let $\{\lambda_k\}_{k\in{\mathbb Z}}\in l^p$
and $\{\vec a_k\}_{k\in{\mathbb Z}}$ be a sequence of $(p,q,s)_{\mathbb A}$-atoms
supported, respectively, in $\{Q_k\}_{k\in{\mathbb Z}}$.
Let $\vec a$ be a $(p,q,s)_{\mathbb A}$-atom supported in the cube $Q$.
Let $\psi\in\mathcal{S}$
satisfy $\operatorname{supp}\psi\subset B(\mathbf{0},1)$ and
$\int_{\mathbb R^n}\psi(x)\,dx\neq0$.
By the H\"older inequality, Lemma \ref{sharp estimate},
\cite[Corollary 2.1.12]{g14c}, Lemma \ref{86},
the boundedness of $\mathcal{M}$ on $L^q$, and both (i) and (iv) of
Definition \ref{F-atom},
we conclude that
\begin{align}\label{inball}
\int_{2Q}\left[M^p_W\left(\vec a,\psi\right)(x)\right]^p\,dx
&=\int_{2Q}\sup_{t\in(0,\infty)}
\left|W^{\frac{1}{p}}(x)\psi_t*\vec a(x)\right|^p\,dx\\
&\le\int_{2Q}\left\|W^{\frac{1}{p}}(x)A_{2Q}^{-1}\right\|^p
\left\|A_{2Q}A_Q^{-1}\right\|^p
\sup_{t\in(0,\infty)}
\left|A_Q\psi_t*\vec a(x)\right|^p\,dx\nonumber\\
&\le\left[\int_{2Q}\left\|W^{\frac{1}{p}}(x)A_{2Q}^{-1}
\right\|^{\frac{pq}{q-p}}\,dx\right]^{\frac{q-p}{q}}
\left\{\int_{2Q}\left[\mathcal{M}\left(
\left|A_Q\vec a(x)\right|\right)\right]^q\,dx\right\}^{\frac pq}\nonumber\\
&\lesssim|Q|^{\frac{q-p}{q}}\left[\int_{\mathbb{R}^n}
\left|A_Q\vec a(x)\right|^q\,dx\right]^{\frac pq}
\le1.\nonumber
\end{align}
On the other hand, for any $t\in(0,\infty)$ and $x\in\mathbb{R}^n$,
let $q_{(t,x)}$ be the $s$th-degree Taylor polynomial of $y\mapsto\psi_t(x-y)$,
centered at $y=c_Q$; from this, we immediately infer that,
for any $x\in(2Q)^\complement$ and $y\in Q$,
\begin{align}\label{fhfx}
\left|\psi_t(x-y)-q_{(t,x)}(y)\right|\lesssim\frac{[l(Q)]^{s+1}}{t^{n+s+1}}.
\end{align}
Let $d\in[\![d_{p,\infty}^{\mathrm{upper}}(W),\infty)$
satisfy $s\in(n(\frac1p-1)+\frac{d}{p}-1,\infty)$.
By Definition \ref{F-atom}(i) and
$\operatorname{supp} \psi_t\subset B(\mathbf{0},t)$, we find that,
for any $x\in (2Q)^\complement$ and $t\in(0,\frac12|x-c_Q|]$,
$|\psi_t*\vec a(x)|=0$.
From this, both (iii) and (iv) of Definition \ref{F-atom}, \eqref{fhfx},
the H\"older inequality, Lemma \ref{sharp estimate},
Proposition \ref{reduceM}, and $s\in(n(\frac1p-1)+\frac{d}{p}-1,\infty)$,
we deduce that
\begin{align}\label{outball}
&\int_{(2Q)^\complement}\left[M^p_W(\vec a,\psi)(x)\right]^p\,dx
\\&\quad=\int_{(2Q)^\complement}\sup_{t\in(\frac{1}{2}|x-c_Q|,\infty)}
\left|W^{\frac{1}{p}}(x)\int_{Q}\vec a(y)\left[\psi_t(x-y)-q_{(t,x)}(y)\right]\,dy\right|^p\,dx\nonumber\\
&\quad\lesssim\int_{(2Q)^\complement}\left\|W^{\frac{1}{p}}(x)A_Q^{-1}\right\|^p
\sup_{t\in(\frac{1}{2}|x-c_Q|,\infty)}
\left(\frac{[l(Q)]^{s+1}}{t^{n+s+1}}\right)^p\left[\int_{Q}\left|A_Q\vec a(y)\right|
\,dy\right]^p\,dx\nonumber\\
&\quad\lesssim\int_{(2Q)^\complement}\left\|W^{\frac{1}{p}}(x)A_Q^{-1}\right\|^p
\left(\frac{[l(Q)]^{s+1}}{|x-c_Q|^{n+s+1}}\right)^p\,dx
\left\{|Q|^{\frac{1}{q'}}\left[\int_{Q}\left|A_Q\vec a(y)\right|^q
\,dy\right]^\frac1q\right\}^p\nonumber\\
&\quad\le\sum_{j=1}^{\infty}\int_{2^{j+1}Q\setminus2^jQ}
\left\|W^{\frac{1}{p}}(x)A_{2^{j+1}Q}^{-1}\right\|^p
\left\|A_{2^{j+1}Q}A_Q^{-1}\right\|^p
\left(\frac{[l(Q)]^{s+1+n-\frac{n}{p}}}{|x-c_Q|^{n+s+1}}\right)^p\,dx\nonumber\\
&\quad\lesssim\sum_{j=1}^{\infty}2^{jd}\left\{\frac{[l(Q)]^{s+1+n-\frac{n}
{p}}}{[2^jl(Q)]^{s+1+n-\frac{n}{p}}}\right\}^p
\fint_{2^{j+1}Q}
\left\|W^{\frac{1}{p}}(x)A_{2^{j+1}Q}^{-1}\right\|^p\,dx
\sim\sum_{j=1}^{\infty}2^{j(\frac {n+d}p-n-s-1)p}
\sim 1\nonumber.
\end{align}
Combining Theorem \ref{if and only if W}, $p\in(0,1]$, \eqref{inball}, and \eqref{outball},
we conclude that, for any $k_1,k_2\in{\mathbb Z}$,
\begin{align}\label{11}
\left\|\sum_{k=k_1}^{k_2}\lambda_k\vec a_k\right\|_{H^p_W}^p
&\le\left\|\sum_{k=k_1}^{k_2}\lambda_k M_W^p\left(\vec a_k,\psi\right)\right\|_{L^p}^p\\
&\le\left\|\sum_{k=k_1}^{k_2}\lambda_kM^p_W(\vec a_k,\psi)\mathbf{1}_{2Q_k}\right\|_{L^p}^p
+\left\|\sum_{k=k_1}^{k_2}\lambda_kM^p_W(\vec a_k,\psi)\mathbf{1}_{(2Q_k)^\complement}\right\|_{L^p}^p\nonumber\\
&\lesssim\sum_{k=k_1}^{k_2}|\lambda_k|^p\int_{2Q_k}\left[M^p_W(\vec a_k,\psi)(x)\right]^p\,dx
+\sum_{k=k_1}^{k_2}|\lambda_k|^p\int_{(2Q_k)^\complement}\left[M^p_W(\vec a_k,\psi)(x)\right]^p\,dx\nonumber\\
&\lesssim\sum_{k=k_1}^{k_2}|\lambda_k|^p
\le\sum_{k\in{\mathbb Z}}|\lambda_k|^p.\nonumber
\end{align}
This, together with Proposition \ref{jwdj}, further implies that
$\{\sum_{k=k_1}^{k_2}\lambda_k\vec a_k\}_{k_1,k_2\in\mathbb{\mathbb{Z}}}$
converges in $H^p_W$.
By this and Proposition \ref{embedding}, we find that
$\{\sum_{k=k_1}^{k_2}\lambda_k\vec a_k\}_{k_1,k_2\in\mathbb{\mathbb{Z}}}$ converges
in $(\mathcal{S}')^m$.
Using \eqref{11}, we conclude that
$\|\vec f\|_{H^p_W}\lesssim(\sum_{k\in\mathbb{Z}}|\lambda_k|^p)^{\frac1p}$.
This finishes the proof of (i).

Next, we show (ii).
We begin with the special case where
$\vec f\in (L^1_{\mathrm{loc}})^m\cap H_W^p$.
Let $d\in[\![d_{p,\infty}^{\mathrm{lower}}(W),n)$.
For any $j\in{\mathbb Z}$, using Theorem \ref{l-CZ} with $\alpha:=2^j$, we obtain
$\vec f=\vec g_j+\vec b_j$ and there exist
$O_j$, $\{Q_{j,k}\}_{k\in{\mathbb N}}$, $\{Q^*_{j,k}\}_{k\in{\mathbb N}}$, and $\{\vec g_{j,k}\}_{k\in{\mathbb N}}$
satisfying Theorem \ref{l-CZ},
where, for any $k\in\mathbb N$, $Q_{j,k}$ centers at $c_{j,k}:=(c_{j,k}^{(1)},\ldots,c_{j,k}^{(n)})$
with edge length $l_{j,k}$.
For any $j\in{\mathbb Z}$ and $k\in{\mathbb N}$, let $P_{j,k}$,
$\eta_{j,k}$, and $\widetilde\eta_{j,k}$ be
the same as, respectively, in \eqref{de-Pk}, \eqref{de-eta}, and \eqref{w-eta}
with $\alpha:=2^j$.
We first prove that
\begin{align}\label{shoulian}
\vec f=\sum^{\infty}_{j=-\infty}\left(\vec g_{j+1}-\vec g_j\right)
\end{align}
in $(\mathcal{S}')^m$.
By \eqref{bjto0} with $\alpha:=2^j$, we conclude that
$\lim_{j\to\infty}\|\vec b_j\|_{H^p_W}=0$.
This, together with Proposition \ref{embedding}, further implies that
\begin{align}\label{bto0}
\vec b_j\to\vec0\ \text{in }(\mathcal{S}')^m\text{ as }j\to \infty.
\end{align}
Now, we claim that
\begin{align}\label{claimjd}
\vec g_j\to\vec0\ \text{in }(\mathcal{S}')^m\text{ as }j\to -\infty.
\end{align}

Let $\psi\in\mathcal{S}$ satisfy
$\operatorname{supp}\psi \subset B(\mathbf{0},1)$ and
$\int_{{\mathbb{R}^n}}\psi(x)\,dx=1$.
From Lemma \ref{<sup}, Corollary \ref{p01}, and \eqref{<alpha},
we infer that, for any $j\in\mathbb Z$
and any $x\in (O_j)^{\complement}$,
\begin{align*}
\left|A_1(x)\vec g_{j}(x)\right|
&=\left|A_1(x)\vec f(x)\right|
\le \sup_{t\in(0,1]}\left|A_1(x)\psi_t*\vec f(x)\right|\\
&\le\sup_{t\in(0,1]}\left\|A_1(x)[A_t(x)]^{-1}\right\|
\left|A_t(x)\psi_t*\vec f(x)\right|
\lesssim (M_N)_{\mathbb{A}}\left(\vec f\right)(x)\lesssim2^j,
\end{align*}
where $A_t$ for any $t\in(0,\infty)$ is the same as in \eqref{Atref}.
By this and Corollary \ref{p01}, we conclude that,
for any $j\in\mathbb Z$ and $h\in\mathcal S$,
\begin{align}\label{jc4}
\int_{(O_j)^{\complement}}\left|\vec g_{j}(x)h(x)\right|\,dx&\lesssim
\int_{(O_j)^{\complement}}
\left\|[A_1(\mathbf{0})]^{-1}\right\|\left\|A_1(\mathbf{0})[A_1(x)]^{-1}\right\|
\left|A_1(x)\vec g_{j}(x)\right| |h(x)|\,dx\\
&\lesssim2^j\int_{(O_j)^{\complement}}\left(1+|x|\right)^{\frac{d}{p}}\left|h(x)\right|\,dx
\to 0\nonumber
\end{align}
as $j\to-\infty$.

On the other hand, for any $j\in\mathbb Z$, we consider $\{\vec g_{j,k}\}_{k\in{\mathbb N}}$.
For any $\kappa\in[1,\infty)$, let
$$
S^{\kappa}_1:=\left\{Q\in\mathscr Q:\ \mathbf{0}\in 3Q
\text{ and }\left|Q\right|\geq \kappa\right\},
$$
$$S^{\kappa}_2:=\left\{Q\in\mathscr Q:\ \mathbf{0}\in 3Q
\text{ and }\left|Q\right|<\kappa\right\},\ \
\mathrm{and}\ \
S_3:=\left\{Q\in\mathscr Q:\ \mathbf{0}\notin 3Q\right\}.$$
It is easy to show that $\mathscr Q=S^{\kappa}_1\cup S^{\kappa}_2\cup S_3$.

We first consider the case where
$k\in{\mathbb N}$ and $j\in{\mathbb Z}$ with
$Q_{j,k}\in S^{\kappa}_1$.
Let $\widetilde{a}<\frac98$ be the same as in the proof of Theorem \ref{l-CZ}
and, for any $x\in\mathbb R^n$,
$$\vec g_{j,k}(x):=P_{j,k}\vec f\eta_{j,k}(x)
:=\left(\sum_{|\beta|\le s}a^{j,k}_{1,\beta}x^\beta,
\ldots,\sum_{|\beta|\le s}a^{j,k}_{m,\beta}x^\beta\right)^T\eta_{j,k}(x).$$
From both (i) and (iii) of Lemma \ref{wd}, we deduce that,
for any $j\in{\mathbb Z}$, $k,i\in{\mathbb N}$ with $k\neq i$ and
$Q_{j,k}\cap Q_{j,i}\neq\emptyset$, and
$x:=(x_1,\ldots,x_n)\in \frac12Q_{j,k}$,
$$\max_{\tau\in\{1,\ldots,n\}}\left|x_\tau-c_{j,i}^{(\tau)}\right|\geq
\max_{\tau\in\{1,\ldots,n\}}\left|c_{j,k}^{(\tau)}-c_{j,i}^{(\tau)}\right|
-\max_{\tau\in\{1,\ldots,n\}}\left|c_{j,k}^{(\tau)}-x_\tau\right|
\geq\frac{l_{j,k}+l_{j,i}}{2}-\frac{l_{j,k}}{4}
\geq\frac{9}{16}l_{j,i}>\frac{\widetilde{a}}{2}l_{j,i}$$
and hence $x\notin \bigcup_{i\in{\mathbb N},\,i\neq k}\widetilde{a}Q_{j,i}$.
By the proof of the Whitney decomposition in \cite[p.\,611]{g14c},
we conclude that, for any $j\in{\mathbb Z}$ and $k,i\in{\mathbb N}$ with $k\neq i$ and
$Q_{j,k}\cap Q_{j,i}=\emptyset$, $Q_{j,k}\cap\widetilde{a}Q_{j,i}=\emptyset$.
These, together with \eqref{dlsl}, further imply that,
for any $x\in\frac12Q_{j,k}$, $\eta_{j,k}(x)=1$.
By this, we immediately obtain, for any $x\in{\mathbb{R}^n}$,
\begin{align}\label{nywmgn}
\vec g_j(x)\mathbf{1}_{\frac12Q_{j,k}}(x)=
\vec g_{j,k}(x)\mathbf{1}_{\frac12Q_{j,k}}(x)
=\left(\sum_{|\beta|\le s}a^{j,k}_{1,\beta}x^\beta,
\ldots,\sum_{|\beta|\le s}a^{j,k}_{m,\beta}x^\beta\right)^T
\mathbf{1}_{\frac12Q_{j,k}}(x).
\end{align}
From the fact that all the norms of a given
finite dimensional Banach space are equivalent,
we infer that, for any cube $Q\in\mathscr{Q}$ and any
$\{a_{l,\beta}\}_{l\in\{1,\ldots,m\},\,|\beta|\le s}$,
\begin{align}\label{equiv}
\int_{Q}\left|\left(\sum_{|\beta|\le s}a_{1,\beta}^\frac1px^\beta,
\ldots,\sum_{|\beta|\le s}a_{m,\beta}^\frac1px^\beta\right)^T\right|^p\,dx
\sim\max_{l\in\{1,\ldots,m\},\,|\beta|\le s}\left|a_{l,\beta}\right|,
\end{align}
where the positive equivalence constants depend on $Q$.
Let $Q_1$ be the cube centered at  $\mathbf{0}$ and with edge length $8$ and
$\mathcal{P}:=\{Q\in\mathscr{Q}_{2^{-1}}:\ Q\cap Q_{1}
\neq \emptyset\}.$
It is easy to prove $\#\mathcal P<\infty$ and,
for any cube $Q$ satisfying $l(Q)=1$ and $\mathbf{0}\in 6Q$,
$Q\subset Q_1$ and consequently there exists
$Q_0\in \mathcal P$ such that $Q_0\subset Q$.
By these and \eqref{equiv}, we conclude that,
for any cube $Q$ satisfying $\mathbf{0}\in 6Q$ and $|Q|\geq2\kappa$
and for any
$\{a_{i,\beta}\}_{l\in\{1,\ldots,m\},\,|\beta|\le s}\subset\mathbb C$,
\begin{align*}
&\int_{Q}\left|\left(\sum_{|\beta|\le s}a_{1,\beta}x^\beta,
\ldots,\sum_{|\beta|\le s}a_{m,\beta}x^\beta\right)^T\right|^p\,dx\\
&\quad= [l(Q)]^n\int_{\{\frac{y}{ l(Q)}:\, y\in Q\}}
\left|\left(\sum_{|\beta|\le s}a_{1,\beta} [l(Q)]^{|\beta|}x^\beta,
\ldots,\sum_{|\beta|\le s}a_{m,\beta}
\left[l(Q)\right]^{|\beta|}x^\beta\right)^T\right|^p\,dx\nonumber\\
&\quad\geq [l(Q)]^n\min_{Q^*\in\mathcal{P}}\int_{Q^*}
\left|\left(\sum_{|\beta|\le s}a_{1,\beta} [l(Q)]^{|\beta|}x^\beta,
\ldots,\sum_{|\beta|\le s}a_{m,\beta} [l(Q)]^{|\beta|}x^\beta\right)^T\right|^p\,dx\\
&\quad\sim  [l(Q)]^n\max_{l\in\{1,\ldots,m\},\,|\beta|\le s}
\left|a_{l,\beta} [l(Q)]^{|\beta|}\right|^{p}\gtrsim
[l(Q)]^n\max_{l\in\{1,\ldots,m\},\,|\beta|\le s}
\left|a_{l,\beta}\right|^{p}.\nonumber
\end{align*}
Using this, Lemma \ref{8}(i), the definition of $S^{\kappa}_1$,
Corollary \ref{p01}, and \eqref{nywmgn}, we find that, for any
$j\in{\mathbb Z}$ and $k\in{\mathbb N}$ satisfying
$Q_{j,k}\in S^{\kappa}_1$,
\begin{align*}
\int_{\frac12Q_{j}}\left|W^{\frac{1}{p}}(x)\vec g_{j,k}(x)\right|^p\,dx
&\geq\int_{ \frac12Q_{j,k}}
\frac{|\vec g_{j,k}(x)|^p}
{\|A_1(\mathbf{0})^{-1}\|^{p}\|A_1(\mathbf{0})A_{Q_{j,k}}^{-1}\|^{p}
\|A_{Q_{j,k}}W^{-\frac{1}{p}}(x)
\|^{p}}\,dx\\
&\gtrsim \left[l\left(Q_{j,k}\right)\right]^{n-d}
\max_{l\in\{1,\ldots,m\},\,|\beta|\le s}
\left|a^{j,k}_{l,\beta}\right|^p.\nonumber
\end{align*}
From this, Lemma \ref{<sup}, and Theorem \ref{if and only if W},
we deduce that,
for any
$j\in{\mathbb Z}$ and $k\in{\mathbb N}$ satisfying
$Q_{j,k}\in S^{\kappa}_1$,
\begin{align*}
\max_{l\in\{1,\ldots,m\},\,|\beta|\le s}\left|a^{j,k}_{l,\beta}\right|
&\lesssim
\left[l\left(Q_{j,k}\right)\right]^{\frac{d-n}{p}}
\left[\int_{\frac12Q_{j,k}}\left|W^{\frac{1}{p}}(x)\vec g_{j}(x)\right|^p\,dx\right]^{\frac1p}\\
&\le \left[l\left(Q_{j,k}\right)\right]^{\frac{d-n}{p}}
\left[\int_{{\mathbb{R}^n}}\sup_{t\in(0,\infty)}
\left|W^{\frac{1}{p}}(x)\psi_t*\vec g_j(x)\right|^p\,dx\right]^{\frac1p}
\le \kappa^{\frac{d-n}{p}}\left\|\vec g_j\right\|_{H^p_W}.
\end{align*}
By this, the definition of $\vec g_{j,k}$, and $Q_{j,k}\in S^{\kappa}_1$,
we conclude that,
for any $j\in{\mathbb Z}$ and $k\in{\mathbb N}$
satisfying $Q_{j,k}\in S^{\kappa}_1$,
\begin{align*}
\int_{{\mathbb{R}^n}}\left|\vec g_{j,k}(x)h(x)\right|\,dx
&\le\int_{Q_{j,k}^*}\left(\sum_{l=1}^{m}\left|\sum_{|\beta|\le s}a^{j,k}_{l,\beta}x^\beta
\right|^2\right)^{\frac{1}{2}}\left|h(x)\right|\,dx\\
&\le\max_{l\in\{1,\ldots,m\},\,|\beta|\le s}
\left|a^{j,k}_{l,\beta}\right|\int_{Q_{j,k}^*}
\left(1+|x|\right)^s\left|h(x)\right|\,dx
\lesssim \kappa^{\frac{d-n}{p}}\left\|\vec g_j\right\|_{H^p_W},\nonumber
\end{align*}
which, together with $\operatorname{supp}\vec g_{j,k}
\subset Q^*_{j,k}$ and \eqref{jrfzwsw}, further implies that, for any $j\in{\mathbb Z}$,
\begin{align}\label{jc}
\sum_{k\in{\mathbb N},\,Q_{j,k}\in S^{\kappa}_1}
\int_{{\mathbb{R}^n}}\left|\vec g_{j,k}(x)h(x)\right|\,dx
\lesssim \kappa^{\frac{d-n}{p}}\left\|\vec g_j\right\|_{H^p_W}.
\end{align}
From \eqref{bjto0}, we infer that, for any $j\in{\mathbb Z}$,
\begin{align*}
\left\|\vec b_j\right\|_{H^p_W}\lesssim\left\{\int_{\{(M_N)_{\mathbb{A}}(\vec f)>2^j\}}
\left[(M_N)_{\mathbb{A}}\left(\vec f\right)(x)\right]^p\,dx\right\}^{\frac1p}\le\left\|\vec f\right\|_{H^p_W}
\end{align*}
and hence $\|\vec g_j\|_{H^p_W}\lesssim
\|\vec b_j\|_{H^p_W}+\|\vec f\|_{H^p_W}
\lesssim\|\vec f\|_{H^p_W}$.
By this and \eqref{jc}, we conclude that,
for any $\varepsilon\in(0,\infty)$, there exists a positive constant $\kappa$ such that,
for any $j\in{\mathbb Z}$,
\begin{align}\label{jc1}
    \sum_{k\in{\mathbb N},\,Q_{j,k}\in S^{\kappa}_1}
\int_{{\mathbb{R}^n}}\left|\vec g_{j,k}(x)h(x)\right|\,dx<\varepsilon.
\end{align}

Next, we consider the case where $k\in{\mathbb N}$ and $j\in{\mathbb Z}$ such that
$Q_{j,k}\in S^{\kappa}_2$ or $Q_{j,k}\in S_3$.
From Corollary \ref{p01}, \eqref{estimate-g},
the definition of $S^{\kappa}_2$, and
\eqref{jrfzwsw}, we deduce that
\begin{align}\label{jc2}
&\sum_{k\in{\mathbb N},\, Q_{j,k}\in S^{\kappa}_2}\int_{{\mathbb{R}^n}}\left|\vec g_{j,k}(x)h(x)\right|\,dx\\
&\quad\lesssim\sum_{k\in{\mathbb N},\, Q_{j,k}\in S^{\kappa}_2}\int_{Q_{j,k}^*}\left|h(x)\right|
\left\|A_1(\mathbf{0})^{-1}\right\|\left\|A_1(\mathbf{0})A_{Q_{j,k}^*}^{-1}\right\|
\left|A_{Q_{j,k}^*}\vec g_{j,k}(x)\right|\,dx\nonumber\\
&\quad\lesssim2^j\sum_{k\in{\mathbb N},\, Q_{j,k}\in S^{\kappa}_2}\int_{Q_{j,k}^*}\left|h(x)\right|
\max\left\{1,\left[l\left(Q_{j,k}\right)\right]^{\frac{d}{p}}\right\}
\left[1+\frac{|c_{Q_{j,k}}|}{\max\{l(Q_{j,k}),1\}}\right]^{\frac{d}{p}}\,dx\nonumber\\
&\quad\lesssim2^j\sum_{k\in{\mathbb N},\,Q_{j,k}\in S^{\kappa}_2}\int_{Q_{j,k}^*}\left|h(x)\right|\,dx
\lesssim2^j\int_{{\mathbb{R}^n}}\left|h(x)\right|\,dx
\to0\nonumber
\end{align}
as $j\to-\infty$.
Using Corollary \ref{p01}, \eqref{estimate-g},
the definition of $S_3$, and
\eqref{jrfzwsw}, we find that
\begin{align}\label{jc3}
&\sum_{k\in{\mathbb N},\, Q_{j,k}\in S_3}\int_{{\mathbb{R}^n}}\left|\vec g_{j,k}(x)h(x)\right|\,dx\\
&\quad\lesssim2^j\sum_{k\in{\mathbb N},\, Q_{j,k}\in S_3}\int_{Q_{j,k}}\left|h(x)\right|
\max\left\{1,\left[l\left(Q_{j,k}\right)\right]^{\frac{d}{p}}\right\}
\left[1+\frac{|c_{Q_{j,k}}|}{\max\{l(Q_{j,k}),1\}}\right]^{\frac{d}{p}}\,dx\nonumber\\
&\quad\lesssim2^j\sum_{k\in{\mathbb N},\, Q_{j,k}\in S_3}\int_{Q_{j,k}}\left|h(x)\right|
\left(1+|x|\right)^{\frac{2d}{p}}\left[\frac{1+|c_{Q_{j,k}}|}{1+|x|}\right]
^{\frac{2d}{p}}\,dx\nonumber\\
&\quad\lesssim2^j\int_{{\mathbb{R}^n}}\left|h(x)\right|
(1+|x|)^{\frac{2d}{p}}\,dx\to0\nonumber
\end{align}
as $j\to-\infty$.
Combining \eqref{jc1}, \eqref{jc2}, \eqref{jc3}, and \eqref{jc4}, we conclude that
the above claim \eqref{claimjd} holds,
which, together with \eqref{bto0}, further implies that \eqref{shoulian} holds.

For any $j\in{\mathbb Z}$ and $k,i\in{\mathbb N}$, let $\vec c_{j,k}:=P_{j,k}\vec f$ and
\begin{align}\label{cjki}
\vec c_{j,k,i}:=P_{j+1,i}\left(\left[\vec f-\vec c_{j+1,i}\right]\eta_{j,k}\right).
\end{align}
Fix $j\in{\mathbb Z}$. We claim that
\begin{align}\label{yz3}
\vec g_{j+1}-\vec g_{j}&=\vec b_{j}-\vec b_{j+1}=\sum_{k\in{\mathbb N}}\vec b_{j,k}-\sum_{k\in{\mathbb N}}\sum_{i\in{\mathbb N}}\vec b_{j+1,i}\eta_{j,k}
+\sum_{k\in{\mathbb N}}\sum_{i\in{\mathbb N}}\vec c_{j,k,i}\eta_{j+1,i}\\
&=\sum_{k\in{\mathbb N}}\left[\left(\vec f-\vec c_{j,k}\right)\eta_{j,k}
-\sum_{i\in{\mathbb N}}\left(\vec f-\vec c_{j+1,i}\right)\eta_{j+1,i}\eta_{j,k}
+\sum_{i\in{\mathbb N}}\vec c_{j,k,i}\eta_{j+1,i}\right]
=:\sum_{k\in{\mathbb N}}\vec A_{j,k} \nonumber
\end{align}
converge in $(\mathcal{S}')^m$.
Now, we show this claim.
Using \eqref{0125}, we obtain
\begin{align}\label{claim1}
\vec b_j=\sum_{k\in{\mathbb N}}\vec b_{j,k}
\end{align}
in both $H^p_W$ and $(\mathcal{S}')^m$ and
$\vec b_{j+1}=\sum_{k\in{\mathbb N}}\vec b_{j+1,k}$ in $(\mathcal{S}')^m$.
By this and the definition of
$\{\eta_{j,k}\}_{k\in{\mathbb N}}$, we conclude that,
for any $j_1,j_2\in{\mathbb N}$ and $h\in\mathcal{S}$,
\begin{align*}
\lim_{i_1\to\infty}\lim_{i_2\to\infty}
\int_{{\mathbb{R}^n}}\left|\sum_{k=i_1}^{i_1+j_1}\sum_{i=i_2}^{i_2+j_2}\vec b_{j+1,i}(x)
\eta_{j,k}(x)h(x)\right|\,dx&\le
\lim_{i_1\to\infty}\lim_{i_2\to\infty}
\int_{{\mathbb{R}^n}}\left|\sum_{k=i_1}^{i_1+j_1}
\eta_{j,k}(x)\right|\left|\sum_{i=i_2}^{i_2+j_2}\vec b_{j+1,i}(x)h(x)\right|\,dx\nonumber\\
&\le\lim_{i_2\to\infty}
\int_{{\mathbb{R}^n}}\left|\sum_{i=i_2}^{i_2+j_2}\vec b_{j+1,i}(x)h(x)\right|\,dx=0,\nonumber
\end{align*}
which further implies that
\begin{align}\label{claim2}
\sum_{k\in{\mathbb N}}\sum_{i\in{\mathbb N}}\vec b_{j+1,i}\eta_{j,k}
\text{ converges in } (\mathcal{S}')^m.
\end{align}

Fix $i,k\in{\mathbb N}$ satisfying $\vec c_{j,k,i}\neq \vec 0$.
Let $\{e_l^{(i)}\}_{l=1}^M$ be the same as in the proof of
Theorem \ref{l-CZ} associated with $Q_{j+1,i}^*$.
From the definition of $P_{j+1,i}$, we infer that
$\vec c_{j,k,i}\neq \vec 0$ only if
$Q^{*}_{j,k}\cap Q^{*}_{j+1,i}\neq\emptyset$.
Using $O_{j+1}\subset O_j$ and Lemma \ref{wd}(ii), we find that,
if $Q_{j,k}^*\cap Q_{j+1,i}^*\neq\emptyset$,
then
\begin{align}\label{l<l}
l\left(Q_{j+1,i}^*\right)&\sim\mathop\mathrm{dist}
\left(Q_{j+1,i}^*,O_{j+1}^\complement\right)\lesssim
\mathop\mathrm{dist}\left(Q_{j,k}^*,O_{j}^\complement\right)
+l(Q_{j,k}^*)\sim l(Q_{j,k}^*),
\end{align}
which, together with \eqref{betaeta}, \eqref{betaeta2}, \eqref{w-eta},
and \eqref{qqq}, further implies that, for any $l\in\{1,\ldots,M\}$,
\begin{align}\label{bbb}
\sup_{x\in {\mathbb{R}^n}}\left|\partial^\beta \left(e_l^{(i)}\widetilde{\eta}_{j+1,i}\eta_{j,k}\right)(x)\right|
\sim\sup_{x\in {\mathbb{R}^n}}\left|\sum_{\alpha+\gamma\le\beta}\partial^{\alpha}e_l^{(i)}\partial^\gamma
\widetilde{\eta}_{j+1,i}(x)\partial^{\beta-\alpha-\gamma}\eta_{j,k}(x)\right|
\lesssim \left(l_{j+1,i}\right)^{-n-|\beta|}.
\end{align}
By this, $e_l^{(i)}\widetilde{\eta}_{j+1,i}\eta_{j,k}\in\mathcal{S}$,
$\operatorname{supp}\,(e_l^{(i)}\widetilde{\eta}_{j+1,i}\eta_{j,k})\subset
B(c_{j+1,i},\frac{\sqrt n}{2}l_{j+1,i})$, and Lemma \ref{le<},
we conclude that, for any $y\in{\mathbb{R}^n}$ and $t\in(0,\infty)$,
\begin{align}\label{gbdl}
\left|A_{l_{j+1,i}}(y)
\left\langle\vec f,e_l^{(i)}\widetilde{\eta}_{j+1,i}\eta_{j,k}\right\rangle\right|
\lesssim \left(2+\frac{|y-c_{j+1,i}|}{l_{j+1,i}}\right)^{N+n+1}
(M_N)_{\mathbb{A}}\left(\vec f\right)(y).
\end{align}

Due to (b) appearing in the proof of Theorem \ref{l-CZ}, we are able to choose
$y_{j,i}\in 10\sqrt{n}Q_{j+1,i}\cap (O_{j+1})^{\complement}$.
From this, Corollary \ref{p01}, the definition of $P_k$,
\eqref{gbdl},
\eqref{<alpha}, and \eqref{qqq},
we deduce that, for any $x\in Q^{*}_{j+1,i}$,
\begin{align}\label{sb1}
\left|A_{l_{j+1,i}}(x)P_{j+1,i}\left(\vec f\eta_{j,k}\right)(x)\right|
&\le\left\|A_{l_{j+1,i}}(x)\left[A_{l_{j+1,i}}
\left(y_{j,i}\right)\right]^{-1}\right\|
\left|A_{l_{j+1,i}}(y_{j,i})P_{j+1,i}\left(\vec f\eta_{j,k}\right)(x)\right|\\
&\lesssim\sum_{l=1}^{M}\left|A_{l_{j+1,i}}\left(y_{j,i}\right)
\left\langle\vec f,e_l^{(i)}\widetilde{\eta}_{j+1,i}\eta_{j,k}\right\rangle\right|
\left|e_l^{(i)}(x)\right|\nonumber\\
&\lesssim(M_N)_{\mathbb{A}}\left(\vec f\right)\left(y_{j,i}\right)
\sup_{l\in\{1,\ldots,M\},\,x\in Q_{j+1,i}^{*}}\left|e_l^{(i)}(x)\right|\lesssim2^j.\nonumber
\end{align}
Using the definition of $P_k$, $\operatorname{supp}\widetilde{\eta}_{j+1,i}\subset Q^*_{j+1,i}$,
Corollary \ref{p01},
\eqref{estimate of gk} with $P_k=P_{j+1,i}$ and $l_k=l_{j+1,i}$,
\eqref{bbb}, and \eqref{qqq},
we find that, for any $x\in Q^{*}_{j+1,i}$,
\begin{align}\label{sb2}
&\left|A_{l_{j+1,i}}(x)P_{j+1,i}\left(P_{j+1,i}\vec f\eta_{j,k}\right)(x)\right|\\
&\quad\le\sum_{l=1}^{M}\left|A_{l_{j+1,i}}(x)\left\langle P_{j+1,i}\vec f,
e_l^{(i)}\widetilde{\eta}_{j+1,i}\eta_{j,k}\right\rangle e_l^{(i)}(x)\right|\nonumber\\
&\quad\le\sum_{l=1}^{M}\int_{Q^*_{j+1,i}}\left\|A_{l_{j+1,i}}(x)
\left[A_{l_{j+1,i}}(y)\right]^{-1}\right\|
\left|A_{l_{j+1,i}}(y)P_{j+1,i}\vec f(y)\right|
\left|e_l^{(i)}(y)\widetilde{\eta}_{j+1,i}(y)\eta_{j,k}(y)\right|\,dy
\lesssim2^j.\nonumber
\end{align}
By \eqref{l<l}, Corollary \ref{p01}, \eqref{cjki},
\eqref{sb1}, and \eqref{sb2}, we conclude that,
for any $x\in{\mathbb{R}^n}$,
\begin{align}\label{Ajk-1}
\left|A_{l_{j,k}}(x)\vec c_{j,k,i}(x)\eta_{j+1,i}(x)\right|
&\le\left\|A_{l_{j,k}}(x)A_{l_{j+1,i}}(x)^{-1}\right\|\left|A_{l_{j+1,i}}(x)\vec c_{j,k,i}(x)\eta_{j+1,i}(x)\right|
\\&\lesssim\left|A_{l_{j+1,i}}(x)P_{j+1,i}\left(\vec f\eta_{j,k}\right)(x)\eta_{j+1,i}(x)\right|\nonumber\\
&\quad+\left|A_{l_{j+1,i}}(x)P_{j+1,i}\left(P_{j+1,i}\vec f\eta_{j,k}\right)(x)\eta_{j+1,i}(x)\right|\nonumber
\\&\lesssim2^j.\nonumber
\end{align}
Using both the definition of $l_{j,k}$ and \eqref{pro-O}, we find that
$l_{j,k}<|O|^{\frac1n}<\infty$.
From this, \eqref{Ajk-1}, and Corollary \ref{p01},
we infer that
\begin{align}\label{sbts}
\left|\vec c_{j,k,i}(x)\eta_{j+1,i}(x)\right|
&\le\left\|[A_1(\mathbf{0})]^{-1}\right\|
\left\|A_1(\mathbf{0})[A_{l_{j,k}}(x)]^{-1}\right\|
\left|A_{l_{j,k}}(x)\vec c_{j,k,i}(x)\eta_{j+1,i}(x)\right|\\
&\lesssim2^j
\left\|A_1(\mathbf{0})[A_{l_{j,k}}(x)]^{-1}\right\|
\lesssim 2^j\max\left\{1,\left(l_{j,k}\right)^{\frac dp}
\right\}\left(1+\frac{|x|}{\max\{1,l_{j,k}\}}\right)^{\frac{d}{p}}\lesssim(1+|x|)^{\frac{d}{p}},\nonumber
\end{align}
where the implicit positive constants depend on $j$.
Using \eqref{l<l} and \eqref{jrfzwsw},
we find that,
for any $i\in{\mathbb N}$, there exist
a finite number of $k\in{\mathbb N}$ such that
$Q^{*}_{j,k}\cap Q^{*}_{j+1,i}\neq\emptyset$ and hence
$\vec c_{j,k,i}\neq \vec 0$.
Moreover, from \eqref{jrfzwsw},
we deduce that, for any $x\in{\mathbb{R}^n}$, there exist
a finite number of $i\in{\mathbb N}$ such that $\eta_{j+1,i}(x)\neq0$.
These further imply that there exists $K\in{\mathbb N}$
such that, for any $x\in{\mathbb{R}^n}$,
\begin{align}\label{Kcount}
\#\left\{(k,i)\in{\mathbb N}^2:\ \vec c_{j,k,i}(x)\eta_{j+1,i}(x)\neq\vec 0\right\}
\le K.
\end{align}
Let $h\in\mathcal{S}$. Using \eqref{sbts} and \eqref{Kcount},
we conclude that there exists a positive constant $C$ such that, for any $x\in{\mathbb{R}^n}$,
\begin{align*}
\sum_{k\in{\mathbb N}}\sum_{i\in{\mathbb N}}\left|\vec c_{j,k,i}(x)
\eta_{j+1,i}(x)h(x)\right|
&\le\sum_{k\in{\mathbb N}}\sum_{i\in{\mathbb N}}\left|\vec c_{j,k,i}(x)
\eta_{j+1,i}(x)\right||h(x)|\\
&\lesssim K\left|h(x)\right|(1+|x|)^{{\frac{d}{p}}+n+1}(1+|x|)^{-n-1}
<C(1+|x|)^{-n-1}.
\end{align*}
From this and the dominated convergence theorem, it follows that
\begin{align*}
&\lim_{k_1\to\infty}\lim_{i_1\to\infty}
\int_{{\mathbb{R}^n}}\left|\sum_{k=1}^{k_1}\sum_{i=1}^{i_1}\vec c_{j,k,i}(x)
\eta_{j+1,i}(x)h(x)\right|\,dx\\
&\quad=\int_{{\mathbb{R}^n}}\left|\sum_{i\in{\mathbb N}}\sum_{k\in{\mathbb N}}\vec c_{j,k,i}(x)
\eta_{j+1,i}(x)h(x)\right|\,dx\\
&\quad=\int_{{\mathbb{R}^n}}\left|
\sum_{i\in{\mathbb N}}P_{j+1,i}\left(\left(
\vec f-\vec c_{j+1,i}\right)
\sum_{k\in{\mathbb N}}\eta_{j,k}\right)(x)\eta_{j+1,i}(x)h(x)\right|\,dx\\
&\quad=\int_{{\mathbb{R}^n}}\left|\sum_{i\in{\mathbb N}}P_{j+1,i}
\left(\vec f-P_{j+1,i}\vec f\right)
(x)\eta_{j+1,i}(x)h(x)\right|\,dx=0.
\end{align*}
Consequently, $\{\sum_{k=1}^{k_1}\sum_{i=1}^{i_1}\vec c_{j,k,i}\eta_{j+1,i}\}_{k_1,i_1\in\mathbb{N}}$
converges to $\vec 0$ in $(\mathcal{S}')^m$.
Combining this, \eqref{claim1}, and \eqref{claim2},
we find that \eqref{yz3} holds.

Fix $k\in{\mathbb N}$.
Let
\begin{align}\label{yz4}
\vec a_{j,k}:=\lambda_{j,k}^{-1}\vec A_{j,k}
\end{align}
with $\lambda_{j,k}:=c2^j|Q_{j,k}|^{\frac{1}{p}}$,
where $c$ is determined later.
Next, we aim to prove that $\vec a_{j,k}$ is a $(p,\infty,s)_{\mathbb A}$-atom.

By \eqref{l<l}, $\operatorname{supp}\eta_{j+1,i}
\subset Q^{*}_{j+1,i}$, and $\vec c_{j,k,i}\neq\vec 0$ only if
$Q^{*}_{j,k}\cap Q^{*}_{j+1,i}\neq\emptyset$,
we conclude that there exists $c_0\in(\frac54,\infty)$
such that $\operatorname{supp}\,(\sum_{i\in{\mathbb N}}\vec c_{j,k,i}\eta_{j+1,i})\subset c_0Q_{j,k}$,
which, together with $\operatorname{supp}\vec b_{j,k}\subset\operatorname{supp}\eta_{j,k}
\subset Q_{j,k}^*$, further implies that
$\operatorname{supp} \vec a_{j,k}\subset c_0Q_{j,k}$.
This shows that $\vec a_{j,k}$ satisfies Definition \ref{F-atom}(i).

From \eqref{qg},
we infer that
\begin{align}\label{jjxx}
\int_{\mathbb{R}^n}x^\gamma \left[
\left(\vec f-\vec c_{j+1,i}\right)\eta_{j,k}-P_{j+1,i}
\left(\left(\vec f-\vec c_{j+1,i}\right)\eta_{j,k}\right)\right](x)\eta_{j+1,i}(x)\,dx=\vec0
\end{align}
if $|\gamma|\le s$.
Using \eqref{3.4x}, we obtain
$\int_{\mathbb{R}^n}x^\gamma \vec b_{j,k}(x)\,dx=\vec0$ if $|\gamma|\le s$,
which, together with \eqref{jjxx}, further implies that
$\vec a_{j,k}$ satisfies Definition \ref{F-atom}(iii).

Now, we verify that $\vec a_{j,k}$ satisfies
Definition \ref{F-atom}(iv).
By \eqref{betaeta}, Lemma \ref{<sup}, Corollary \ref{p01},
the definition of
$(M_N)_{\mathbb{A}}(\vec f)$, and
\eqref{<alpha}, we find that, for any $x\in
(O^{j+1})^\complement$,
\begin{align}\label{Ajk-3}
\left|A_{l_{j,k}}(x)\vec f(x)\eta_{j,k}(x)
\right|&\le \left|A_{l_{j,k}}(x)\vec f(x)
\right|\le \sup_{t\in(0,l_{j,k})}\left|A_{l_{j,k}}(x)\psi_t*\vec f(x)\right|\\
&\lesssim \sup_{t\in(0,l_{j,k})}\left\|A_{l_{j,k}}(x)A_{t}(x)^{-1}\right\|
\left|A_{t}(x)\psi_t*\vec f(x)\right|
\le (M_N)_{\mathbb{A}}\left(\vec f\right)(x)\lesssim2^j.\nonumber
\end{align}
From \eqref{betaeta}, $\operatorname{supp}\eta_{j,k}\subset Q^{*}_{j,k}$,
and \eqref{estimate of gk}, we deduce that, for any $x\in{\mathbb{R}^n}$,
\begin{align}\label{Ajk-2}
\left|A_{l_{j,k}}(x)\vec c_{j,k}(x)\eta_{j,k}(x)
\right|\lesssim2^j.
\end{align}
Using \eqref{betaeta}, $\operatorname{supp}\eta_{j+1,k}\subset Q^{*}_{j+1,k}$,
\eqref{l<l}, Corollary \ref{p01}, and \eqref{estimate of gk},
we conclude that, for any $x\in{\mathbb{R}^n}$,
\begin{align}\label{Ajk-4}
\left|A_{l_{j,k}}(x)\vec c_{j+1,i}(x)\eta_{j+1,i}\eta_{j,k}(x)\right|
\le\left\|A_{l_{j,k}}(x)A_{l_{j+1,i}}(x)^{-1}
\right\|\left|A_{l_{j+1,i}}(x)\vec c_{j+1,i}(x)
\mathbf{1}_{Q^{*}_{j+1,k}}(x)\right|
\lesssim2^j.
\end{align}
Combining \eqref{Ajk-1}, \eqref{Ajk-3}, \eqref{Ajk-2},
\eqref{Ajk-4}, $\operatorname{supp}\eta_{j+1,i}\subset Q_{j+1,i}^*$,
and \eqref{jrfzwsw}, we find that,
for any $x\in{\mathbb{R}^n}$,
\begin{align*}
\left|A_{l_{j,k}}(x)\vec A_{j,k}(x)\right|
&\le\left|A_{l_{j,k}}(x)\vec f(x)\eta_{j,k}(x)
\mathbf{1}_{(\bigcup_{i\in{\mathbb N}}Q_{j+1,i})^{\complement}}(x)\right|
+\left|A_{l_{j,k}}(x)\vec c_{j,k}(x)\eta_{j,k}(x)\right|\\
&\quad+\sum_{i\in{\mathbb N}}\left|A_{l_{j,k}}(x)
\vec c_{j+1,i}(x)\eta_{j+1,i}(x)\eta_{j,k}(x)\right|
+\sum_{i\in{\mathbb N}}\left|A_{l_{j,k}}(x)\vec c_{j,k,i}(x)\eta_{j+1,i}(x)\right|\nonumber\\
&\lesssim2^j,\nonumber
\end{align*}
where the implicit positive constant is independent of
$j$ and $k$ and we take it as $c$.
This further implies that $\vec a_{j,k}$ satisfies Definition
\ref{F-atom}(iv) and hence is a $(p,\infty,s)_{\mathbb A}$-atom.
By \eqref{shoulian}, \eqref{yz3}, and \eqref{yz4}, we conclude that
$\vec f=\sum_{j\in{\mathbb Z},\,k\in{\mathbb N}}\lambda_{j,k}\vec a_{j,k}$
in $(\mathcal{S}')^m$.
From the definition of $\{\lambda_{j,k}\}_{j\in{\mathbb Z},k\in{\mathbb N}}$,
Lemma \ref{wd}(i),
\eqref{>alpha}, and Theorems \ref{if and only if W}
and \ref{weight and reducing}, we infer that
\begin{align}\label{jkf}
\sum_{j\in{\mathbb Z},\,k\in{\mathbb N}}|\lambda_{j,k}|^p
&\sim\sum_{j\in{\mathbb Z},\,k\in{\mathbb N}}2^{jp}|Q_{j,k}|\le\sum_{j\in{\mathbb Z}}2^{jp}
\left|\left\{x\in{\mathbb{R}^n}:\ (M_N)_{\mathbb{A}}\left(\vec f\right)(x)>2^j\right\}\right|\\
&\sim\int_{{\mathbb{R}^n}}\left[(M_N)_{\mathbb{A}}\left(\vec f\right)(x)\right]^p\,dx
\sim\left\|\vec f\right\|^p_{H^p_W}.\nonumber
\end{align}

By Proposition \ref{dense}, we conclude that there exists a sequence
$\{\vec f_i\}_{i\in{\mathbb Z}_+}\subset (L^1_{\mathrm{loc}})^m$
satisfying $\vec f_0=\vec 0$ and
$\vec f_i\to \vec f$ in $H_W^p$ as $i\to\infty$
and hence, without loss of generality, we may assume that, for any $i\in{\mathbb Z}_+$,
$\|\vec f_{i+1}-\vec f_i\|_{H^p_W}^p\le 2^{-i-1}\|\vec f\|_{H^p_W}^p$.
Then $\vec f=\sum_{i=0}^\infty(\vec f_{i+1}-\vec f_i)$ in $H_W^p$
and consequently in $(\mathcal{S}')^m$.
Applying the atomic decomposition to each $\vec f_{i+1}-\vec f_i$,
we obtain
$\vec f=\sum_{i\in{\mathbb Z}_+,\,j\in{\mathbb Z},\,k\in{\mathbb N}}\lambda_{j,k,i}\vec a_{j,k,i}
$ in $(\mathcal{S}')^m$ and
$$\sum_{i\in{\mathbb Z}_+,\,j\in{\mathbb Z},\,k\in{\mathbb N}}\left|\lambda_{j,k,i}\right|^p
\lesssim\sum_{i\in{\mathbb Z}_+}
\left\|\vec f_{i+1}-\vec f_i\right\|_{H^p_W}^p
\lesssim\left\|\vec f\right\|^p_{H^p_W}.
$$
From the fact that
$\{\lambda_{j,k,i}\}_{i\in{\mathbb Z}_+,\,j\in{\mathbb Z},\,k\in{\mathbb N}}\in l^p$
and $\{\vec a_{j,k,i}\}_{i\in{\mathbb Z}_+,\,j\in{\mathbb Z},\,k\in{\mathbb N}}$
are $(p,\infty,s)_{\mathbb A}$-atoms,
statement (i) of this theorem, and the uniqueness of the limit,
we deduce that $\vec f=\sum_{i\in{\mathbb Z}_+,\,j\in{\mathbb Z},\,k\in{\mathbb N}}\lambda_{j,k,i}\vec a^i_{j,k}
$ in $H^p_W$.
This finishes the proof of (ii) and hence Theorem \ref{W-F-atom}.
\end{proof}

\begin{remark}\label{wap}
In the proofs of Theorem \ref{W-F-atom} and Lemma \ref{dense},
when we use Corollary \ref{p01}, we need $W\in A_p$
which is a proper subset of $A_{p,\infty}$.
Specially, in Theorem \ref{W-F-atom}(i),
we assume $W\in A_p$. This is because
that, in the proof of \eqref{PO}, we used the estimate that,
for any $t\in(0,1)$ and $x\in \mathbb{R}^n$,
$\|A_1(x)[A_t(x)]^{-1}\|\lesssim 1$ and the implicit positive
constant is independent of
$t$ and $x$, which when $W\in A_{p,\infty}$ becomes
$\|A_1(x)[A_t(x)]^{-1}\|\lesssim\frac{1}{t^d}$ with
some $d\in[0,\infty)$ and we then cannot effectively eliminate
this upper bound $\frac 1 {t^d}$.
Of course, there exist some other similar difficulties in the proof of
Theorem \ref{W-F-atom}(i) if $W\in A_{p,\infty}$.
A challenging question is whether or not the assumption $W\in A_p$
in Theorem \ref{W-F-atom}(i) can be weakened to $W\in A_{p,\infty}$,
which seems to need an essentially new approach.
\end{remark}

Now, we prove that, if $p\in(1,\infty)$, the matrix-weighted Lebesgue space
$L^p_W$ and the matrix-weighted Hardy space $H^p_W$ coincide with equivalent norms.

\begin{theorem}\label{L-H}
Let $p\in(1,\infty)$ and $W\in A_p$ be a matrix weight.
Then $H^p_W=L^p_W$ with equivalent norms.
\end{theorem}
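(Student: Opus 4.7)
The plan is to establish the two inclusions $L^p_W\hookrightarrow H^p_W$ and $H^p_W\hookrightarrow L^p_W$ with comparable norms; the hypothesis $p\in(1,\infty)$ is essential because the second inclusion will rely on reflexivity of $L^p_W$, which fails for $p\in(0,1]$. As a preliminary observation, any $\vec f\in L^p_W$ already lies in $(L^1_{\mathrm{loc}})^m\subset(\mathcal S')^m$: for any cube $Q$, H\"older's inequality yields $\int_Q|\vec f(y)|\,dy\le\|\vec f\|_{L^p_W}(\int_Q\|W^{-\frac1p}(y)\|^{p'}\,dy)^{\frac1{p'}}$, and the second factor is finite since $W\in A_p$ with $p\in(1,\infty)$ forces $\|W^{-\frac1p}\|\in L^{p'}_{\mathrm{loc}}$ by Lemma \ref{8}(ii).

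For the direction $L^p_W\hookrightarrow H^p_W$, I would fix a family $\mathbb A:=\{A_Q\}_{Q\in\mathscr Q}$ of reducing operators of order $p$ for $W$; by Theorem \ref{weight and reducing} it suffices to bound $\|M_{\mathbb A}(\vec f,\psi)\|_{L^p}$ by $\|\vec f\|_{L^p_W}$ for a suitable $\psi\in\mathcal S$. A dyadic-shell decomposition of the convolution integral shows that, for any $x\in\mathbb R^n$ and $t\in(0,\infty)$, $|A_t(x)\psi_t*\vec f(x)|$ is dominated by a weighted average of the form $\sup_{Q\ni x}\fint_Q\|A_t(x)W^{-\frac1p}(z)\|\,|W^{\frac1p}(z)\vec f(z)|\,dz$, which in turn is pointwise controlled by the Christ--Goldberg matrix-weighted maximal operator whose $L^p$-boundedness for $W\in A_p$ is the content of \cite{cg01,Gold}. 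This yields $\|\vec f\|_{H^p_W}\lesssim\|\vec f\|_{L^p_W}$.

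For the reverse inclusion, let $\vec f\in H^p_W$ and choose $\psi\in\mathcal S$ with $\int_{\mathbb R^n}\psi(x)\,dx=1$ and $\operatorname{supp}\psi\subset B(\mathbf{0},1)$. By the definition of $M^p_W(\vec f,\psi)$ and Theorem \ref{weight and reducing}, $\sup_{t\in(0,\infty)}\|\psi_t*\vec f\|_{L^p_W}\le\|M^p_W(\vec f,\psi)\|_{L^p}\sim\|\vec f\|_{H^p_W}<\infty$. Since $p\in(1,\infty)$, the map $\vec f\mapsto W^{\frac1p}\vec f$ is an isometric isomorphism from $L^p_W$ onto $(L^p)^m$, so $L^p_W$ is reflexive, and under the bilinear pairing $(\vec f,\vec g)\mapsto\int_{\mathbb R^n}\vec f(x)\cdot\vec g(x)\,dx$ its dual contains $(C_c^\infty)^m$ by the preliminary remark. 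The Banach--Alaoglu theorem then produces $t_k\to0^+$ and $\vec g\in L^p_W$ with $\psi_{t_k}*\vec f\rightharpoonup\vec g$ weakly in $L^p_W$. On the other hand, since convolution with $\psi_t$ is an approximation of the identity on $\mathcal S$ as $t\to 0^+$, we have $\psi_t*\vec f\to\vec f$ in $(\mathcal S')^m$; testing the two limits against $(C_c^\infty)^m$ identifies $\vec g=\vec f$, and weak lower semicontinuity of the norm yields $\|\vec f\|_{L^p_W}\le\liminf_{k\to\infty}\|\psi_{t_k}*\vec f\|_{L^p_W}\lesssim\|\vec f\|_{H^p_W}$.

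The main technical obstacle is the pointwise dominance in the first inclusion: the scalar Hardy--Littlewood maximal function does not control convolutions in the matrix-weighted setting, so one genuinely needs a matrix-adapted maximal operator whose $L^p$-boundedness for $W\in A_p$ is furnished by the Christ--Goldberg theorem. An internal alternative, avoiding the external citation, would be to combine Theorem \ref{if and only if A} with the Fefferman--Stein-type inequality of Lemma \ref{Nazarov 2} in the reducing-operator formulation; in either route, this is the sole step in the proof where the strength $W\in A_p$ (rather than the weaker $W\in A_{p,\infty}$) is essentially used.
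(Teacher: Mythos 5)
Your two inclusions are both sound in outline, but the second one is carried out by a genuinely different method than the paper's, so let me compare and also flag two small issues.

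For $H^p_W\hookrightarrow L^p_W$ the paper shows $\sup_{t\in(0,1)}|\psi_t*\vec f|\in L^1_{\mathrm{loc}}$, invokes \cite[Lemma~7]{art} to produce a concrete $L^1_{\mathrm{loc}}$ representative $\vec f_0$ of the distribution $\vec f$, and then applies the Lebesgue-differentiation-type Lemma~\ref{<sup} to pass from $\psi_t*\vec f_0$ to $\vec f_0$ inside the $L^p_W$ norm. You instead use the uniform bound $\sup_t\|\psi_t*\vec f\|_{L^p_W}\lesssim\|\vec f\|_{H^p_W}$, reflexivity of $L^p_W$ (which here is where $p\in(1,\infty)$ is essential), Banach--Alaoglu to extract a weak limit $\vec g\in L^p_W$, identification of $\vec g$ with $\vec f$ through the dense subspace $(C^\infty_{\mathrm c})^m$ of $(L^p_W)^*$, and weak lower semicontinuity of the norm. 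This is a cleaner soft argument that avoids the auxiliary lemma from \cite{art}, at the price of being intrinsically tied to $p>1$ (which is fine here). The identification step should be spelled out a bit more carefully: the weak limit $\vec g\in L^p_W$ and the $\mathcal S'$-limit $\vec f$ agree when paired against $(C^\infty_{\mathrm c})^m$, and since $\vec f\in(\mathcal S')^m$ and $C^\infty_{\mathrm c}$ is dense in $\mathcal S$, the tempered distribution $\vec f$ is precisely the one induced by $\vec g$, so $\vec f\in L^p_W$ with $\|\vec f\|_{L^p_W}=\|\vec g\|_{L^p_W}$.

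Two caveats. First, your preliminary assertion ``$(L^1_{\mathrm{loc}})^m\subset(\mathcal S')^m$'' is false as stated ($e^{|x|^2}$ is locally integrable but not tempered); what is actually needed, and what the paper proves in \eqref{pp'} via \cite[Corollary~2.15]{bhyy1} and Lemma~\ref{sharp estimate}, is the polynomial growth of $Q\mapsto\int_Q\|W^{-1/p}\|^{p'}$, which does give $L^p_W\hookrightarrow(\mathcal S')^m$. Your H\"older estimate supplies only local finiteness, not the growth control; you should add the growth estimate coming from Lemma~\ref{sharp estimate} or Corollary~\ref{p01}. Second, the pointwise domination of $|A_t(x)\psi_t*\vec f(x)|$ by the Christ--Goldberg maximal operator is only sketched and is not immediate: the reducing operator $A_t(x)$ is an average, so you do not have $\|A_t(x)W^{-1/p}(z)\|\lesssim\|W^{1/p}(x)W^{-1/p}(z)\|$ pointwise. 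The paper sidesteps this by estimating $\sup_t|W^{1/p}(x)\psi_t*\vec f(x)|$ directly against $\mathcal M_{W,p}$ of \cite{IM19}, which matches the structure of the operator exactly. Your alternative internal route through Theorem~\ref{if and only if A} and Lemma~\ref{Nazarov 2} is the more robust way to make the reducing-operator version rigorous; I would recommend either switching to the paper's formulation or carrying out that internal route in detail.
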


\begin{proof}
Let $\psi\in\mathcal{S}$ satisfy
$\operatorname{supp}\psi\subset B(\mathbf{0},1)$
and $\int_{\mathbb R^n}\psi(x)\,dx\neq 0$.

We first show $L^p_W\subset H^p_W$.
Using the H\"older inequality, we find that,
for any $\vec f\in L^p_W$ and $\varphi\in\mathcal{S}$,
\begin{align*}
\int_{{\mathbb{R}^n}}\left|\vec f(x)\varphi(x)\right|\,dx
&\lesssim\left\|\vec f\right\|_{L^p_W}
\left[\int_{\mathbb{R}^n}\left\|W^{-\frac1p}(x)\varphi(x)\right\|^{p'}\,dx\right]^{\frac{1}{p'}},
\end{align*}
and, moreover, from \cite[Corollary 2.15]{bhyy1} and Lemma \ref{sharp estimate},
we infer that, for any $\varphi\in\mathcal S$,
\begin{align}\label{pp'}
&\int_{\mathbb{R}^n}\left\|W^{-\frac1p}(x)\varphi(x)\right\|^{p'}\,dx\\
&\quad\lesssim
\sum_{Q\in\mathscr{Q}_1}\int_Q\left\|W^{-\frac1p}(x)\right\|^{p'}
\left(1+|x|\right)^{-\frac{dp'}{p}-n-1}\,dx
\sim\sum_{Q\in\mathscr{Q}_1}
\left\|A_Q^{-1}\right\|^{p'}
\left|c_Q\right|^{-\frac{dp'}{p}-n-1}\nonumber\\
&\quad\lesssim\sum_{Q\in\mathscr{Q}_1}
\left\|\left[A_1(\mathbf{0})\right]^{-1}\right\|^{p'}
\left\|A_1(\mathbf{0})A_Q^{-1}\right\|^{p'}
\left|c_Q\right|^{-\frac{dp'}{p}-n-1}
\lesssim\sum_{Q\in\mathscr{Q}_1}\left|c_Q\right|^{-n-1}<\infty,\nonumber
\end{align}
where $\mathscr Q_1$ is as in \eqref{deQt} with $t=1$,
$\{A_Q\}_{Q\in\mathscr Q_1}$ is a family of reducing operators of order $p$ for $W$,
and $d$ is the same as in Lemma \ref{p01};
consequently, $\vec f\in (\mathcal S')^m$.
Recall that, for any $\vec f\in(\mathscr{M})^m$,
the \emph{matrix-weighted maximal
function} $\mathcal{M}_{W,p}\vec f$ is defined by setting,
for any $x\in\mathbb{R}^n$,
$\mathcal{M}_{W,p}\vec f(x):=
\sup_{t\in(0,\infty)}
\fint_{B(x,t)}|W^{\frac1p}(x)W^{-\frac1p}(y)
\vec f(y)|\,dy.$
From Theorem \ref{if and only if W}, the definition of $\psi$,
and \cite[Theorem 1.3]{IM19}, we deduce that,
for any $\vec f\in L^p_W\subset (\mathcal S')^m$,
\begin{align*}
\left\|\vec f\right\|_{H^p_W}
&\sim\left[\int_{{\mathbb{R}^n}}\sup_{t\in(0,\infty)}
\left|W^{\frac1p}(x)\psi_t*\vec f(x)\right|^p\,dx\right]^{\frac{1}{p}}\\
&\lesssim\left\{\int_{{\mathbb{R}^n}}\left[\mathcal{M}_{W,p}\left(W^{-\frac1p}\vec f\right)(x)
\right]^p\,dx\right\}^{\frac{1}{p}}\lesssim\left[\int_{{\mathbb{R}^n}}\left|W^{\frac1p}(x)\vec f(x)\right|^p\,dx\right]^{\frac{1}{p}}
=\left\|\vec f\right\|_{L^p_W}\nonumber
\end{align*}
and hence $L^p_W\subset H^p_W$.

Next, we prove $H^p_W\subset L^p_W$.
By the H\"older inequality, \cite[Corollary 2.15]{bhyy1}, and Theorem \ref{if and only if W},
we find that, for any cube $Q$ and any $\vec f\in H^p_W$,
\begin{align*}
&\int_Q\sup_{t\in(0,1)}\left|\psi_t*\vec f(x)\right|\,dx
\\&\quad\le\left[\int_Q\left\|W^{-\frac1p}(x)\right\|^{p'}\,dx\right]^{\frac{1}{p'}}
\left[\int_Q\sup_{t\in(0,\infty)}
\left|W^{\frac1p}(x)\psi_t*\vec f(x)\right|^p\,dx\right]^{\frac1p}
\lesssim\left\|A_Q^{-1}\right\|
\left\|\vec f\right\|_{H^p_W}<\infty,
\end{align*}
where $A_Q$ is a reducing operator of order $p$ for $W$,
and consequently $\sup_{t\in(0,1)}|\psi_t*\vec f|\in L^1_{\mathrm{loc}}$.
From \cite[Lemma 7]{art} with $\Omega:=\mathbb{R}^n$ and
$B_x(\Omega):=\{\psi_t:\ t\in(0,1)\}$, we infer that
there exists $\vec f_{0}\in (L^1_{\mathrm{loc}})^m$ such that
$\vec f=\vec f_{0}$ in $[(C^\infty_{\mathrm c})']^m$.
Using \eqref{pp'}, Lemma \ref{<sup}, and Theorem \ref{weight and reducing},
we conclude that, for any $\varphi\in\mathcal S$,
\begin{align*}
\int_{\mathbb{R}^n}\left|\vec f_{0}(x)\varphi(x)\right|\,dx
&\leq\left[\int_{\mathbb{R}^n}\left|W^{\frac1p}(x)
\vec f_{0}(x)\right|^p\,dx\right]^{\frac1p}
\left[\int_{\mathbb{R}^n}\left\|W^{-\frac1p}(x)\varphi(x)
\right\|^{p'}\right]^{\frac{1}{p'}}\\
&\lesssim\left[\int_{\mathbb{R}^n}\sup_{t\in(0,\infty)}\left|W^{\frac1p}(x)
\psi_t*\vec f(x)\right|^p\,dx\right]^{\frac1p}
\sim\left\|\vec f\right\|_{H^p_W}<\infty
\end{align*}
and hence $\vec f_0\in(\mathcal{S}')^m$.
This, together with the well-known fact that $C^\infty_{\mathrm c}$ is dense in $\mathcal{S}$,
further implies that $\vec f=\vec f_{0}$ in $(\mathcal S')^m$.
From this, Lemma \ref{<sup}, and Theorem \ref{if and only if W}, we deduce that,
for any $\vec f\in H^p_W$,
\begin{align*}
\left\|\vec f_0\right\|_{L^p_W}=
\left[\int_{{\mathbb{R}^n}}\left|W^{\frac1p}(x)\vec f_0(x)\right|^p\,dx\right]^{\frac{1}{p}}
\le\left[\int_{{\mathbb{R}^n}}\sup_{t\in(0,\infty)}
\left|W^{\frac1p}(x)\psi_t*\vec f_0(x)\right|^p\,dx\right]^{\frac{1}{p}}
\sim\left\|\vec f_0\right\|_{H^p_W}=\left\|\vec f\right\|_{H^p_W},
\end{align*}
and consequently $H^p_W\subset L^p_W$,
which completes the proof of Theorem \ref{L-H}.
\end{proof}

\section{Finite Atomic Characterization and
Its Applications\label{fatom}}

In this section, we establish the finite atomic
characterization of $H^p_W$ and
the boundedness of a sublinear operator from $H^p_W$
to any $\gamma$-quasi-Banach space.
We first give the definition of the finite atomic
matrix-weighted Hardy space.

\begin{definition}\label{de-fin}
Let $p\in(0,1]$, $q\in[1,\infty]$, $s\in\mathbb Z_+$, and $W$ be a matrix weight.
The \emph{finite atomic matrix-weighted Hardy space}
$H^{p,q,s}_{W,\rm{fin}}$ is defined to be
the set of all finite linear combinations of $(p,q,s)_W$-atoms
equipped with the quasi-norm
\begin{align*}
\left\|\vec f\right\|_{H^{p,q,s}_{W,\rm{fin}}}:=\inf\left\{
\left(\sum_{k=1}^{N}|\lambda_k|^p\right)^{\frac1p}:\
N\in\mathbb N,\,\vec f=\sum_{k=1}^{N}\lambda_k\vec a_k,\,
\left\{\lambda_k\right\}_{k=1}^N\subset[0,\infty)
\right\},
\end{align*}
where the infimum is taken over all finite linear combinations of
$\vec f$ in terms of $(p,q,s)_W$-atoms.
\end{definition}

Now, borrowing some ideas from the proofs of
\cite[Lemma 7.10]{cw14} and \cite[Theorem 3.1(ii)]{msv08}
(see also \cite[Theorem 6.2(ii)]{blyz}),
we establish the following
finite atomic characterization of $H^p_W$.
Here, and thereafter,
let $\mathcal C$ denote the set of all continuous functions on $\mathbb R^n$.

\begin{theorem}\label{finatom}
Let $p\in(0,1]$, $W\in A_p$,
and $s\in(\lfloor n(\frac{1}{p}-1)\rfloor,\infty)\cap\mathbb Z_+$.
Then $\|\cdot\|_{H^{p,\infty,s}_{W,\rm{fin}}}$ and
$\|\cdot\|_{H^p_W}$ are equivalent quasi-norms
on the space $H^{p,\infty,s}_{W,\rm{fin}}\cap\mathcal C$.
\end{theorem}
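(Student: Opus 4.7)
The forward inequality $\|\vec f\|_{H^p_W}\lesssim\|\vec f\|_{H^{p,\infty,s}_{W,\mathrm{fin}}}$ is immediate from Theorem \ref{W-F-atom}(i) applied to any finite $(p,\infty,s)_W$-atomic representation of $\vec f$. All the work lies in the reverse direction for $\vec f\in H^{p,\infty,s}_{W,\mathrm{fin}}\cap\mathcal{C}$. As a finite linear combination of compactly supported atoms, any such $\vec f$ has compact support in some cube $Q_0$, and continuity then forces $\vec f\in L^\infty$ uniformly continuous. The plan is to apply the atomic decomposition from the proof of Theorem \ref{W-F-atom}(ii) (built via iterated Calder\'on--Zygmund decompositions in Theorem \ref{l-CZ}) to obtain
$$\vec f=\sum_{j\in\mathbb Z}\sum_{k\in\mathbb N}\lambda_{j,k}\vec a_{j,k}$$
in $H^p_W$ and in $(\mathcal S')^m$, with $\lambda_{j,k}\sim 2^j|Q_{j,k}|^{1/p}$, $\sum|\lambda_{j,k}|^p\lesssim\|\vec f\|_{H^p_W}^p$, and each $\vec a_{j,k}$ a $(p,\infty,s)_W$-atom (thanks to Remark \ref{wsw}(i)); the atoms at level $j$ are supported in the Whitney cubes of $O_{2^j}:=\{(M_N)_{\mathbb A}(\vec f)>2^j\}$.

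Fix a large $J\in\mathbb N$ (to be chosen). Using the telescoping structure implicit in the proof of Theorem \ref{W-F-atom}(ii), I would write
$$\vec f=\vec g_J+\vec b_J,\qquad \vec b_J=\sum_{j\geq J,\,k}\lambda_{j,k}\vec a_{j,k},\qquad \vec g_J=\sum_{j<J,\,k}\lambda_{j,k}\vec a_{j,k}.$$
The key step is to show that, for $J$ large, $\vec b_J$ is a single $(p,\infty,s)_W$-atom up to the factor $C2^J|Q_0^{**}|^{1/p}$, where $Q_0^{**}\supset Q_0$ is a fixed enlargement. Compact support and continuity of $\vec f$ force $O_{2^J}\subset Q_0^{**}$ for all $J$ above a threshold (outside a neighborhood of $\mathrm{supp}\,\vec f$, $(M_N)_{\mathbb A}(\vec f)$ decays at a controlled rate via the standard Schwartz-kernel tail estimate combined with Corollary \ref{p01}). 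Hence $\mathrm{supp}\,\vec b_J\subset Q_0^{**}$. The vanishing moments of $\vec b_J$ up to order $s$ follow from \eqref{3.4x} applied to each summand $\vec b_{J,k}$, once convergence is verified.

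The heart of the argument is the uniform pointwise bound $|A_{Q_0^{**}}\vec b_J(x)|\lesssim 2^J$ on $Q_0^{**}$. Writing $\vec b_{J,k}=(\vec f-P_{J,k}\vec f)\eta_{J,k}$, I would control $|A_{Q_{J,k}^*}P_{J,k}\vec f|\lesssim 2^J$ by Theorem \ref{l-CZ}(iii), estimate $|A_{Q_{J,k}^*}\vec f(x)|\lesssim 2^J$ using continuity of $\vec f$ together with Lemma \ref{<sup} (which converts pointwise evaluation into a mollifier limit) and the non-tangential variant of Theorem \ref{if and only if W} applied at nearby Whitney-neighbors chosen from $O_{2^J}^\complement$, and then transfer through Corollary \ref{p01} from $A_{Q_{J,k}^*}$ to $A_{Q_0^{**}}$. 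Summation over the bounded-overlap Whitney family gives the desired uniform bound, so $\vec b_J/(C2^J|Q_0^{**}|^{1/p})$ is a $(p,\infty,s)_W$-atom with the correct size via Definition \ref{F-atom}(iv). The contribution of $\vec b_J$ to the finite atomic quasi-norm is then $\lesssim 2^J|Q_0^{**}|^{1/p}\lesssim\|\vec f\|_{H^p_W}$.

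The main obstacle will be producing a genuinely finite representation of the remainder $\vec g_J=\sum_{j<J,\,k}\lambda_{j,k}\vec a_{j,k}$, which \emph{a priori} has infinitely many summands in both $j$ and $k$. To cut off $j$ from below, I would apply the same bundling trick at a very negative threshold $J'$: the tail $\sum_{j<J',\,k}\lambda_{j,k}\vec a_{j,k}=\vec g_{J'}\to\vec 0$ in $(\mathcal S')^m$ as $J'\to-\infty$ (by the argument establishing \eqref{claimjd}), and a large rectangle containing $Q_0^{**}$ together with the enlarged atomic supports at level $J'$ absorbs this tail as another single atom with negligible $l^p$-cost. To cut off $k$ at each fixed $j$ in the middle range $J'\leq j<J$, only the finitely many Whitney cubes $Q_{j,k}$ intersecting an enlargement of $Q_0$ carry nonzero contributions (since $P_{j,k}\vec f=\vec 0$ whenever $Q_{j,k}^*\cap\mathrm{supp}\,\vec f=\emptyset$), and the remaining "distant" atoms can be bundled into an auxiliary atom via the same reducing-operator estimates. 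The matrix-weighted subtlety—absent in the scalar case—is that uniform $L^\infty$-type control must be threaded through the reducing operators on widely separated scales, where Corollary \ref{p01} is indispensable for transferring estimates between $A_{Q_{J,k}^*}$ and $A_{Q_0^{**}}$ without losing the factor $2^J$.
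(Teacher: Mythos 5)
Your high-level outline is in the right spirit: split $\vec f$ into a ``residual from the CZ iteration,'' a ``middle part,'' and a ``low-$j$ remainder,'' and argue that each piece is a single atom. But the specific estimates you propose would not close, and the mechanism you propose for making the pieces finite differs from what actually works.

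The central problem is your size estimate for $\vec b_J$. You claim the uniform pointwise bound $|A_{Q_0^{**}}\vec b_J(x)|\lesssim 2^J$ on $Q_0^{**}$, concluding the coefficient is $\lesssim 2^J|Q_0^{**}|^{1/p}\lesssim\|\vec f\|_{H^p_W}$. But this last inequality is false: $Q_0^{**}$ is fixed while $J$ must be taken \emph{large}, so $2^J|Q_0^{**}|^{1/p}\to\infty$. A bound $\lesssim 2^J$ is useless here. The paper's argument exploits uniform continuity in a fundamentally different way: at level $j_1$ (your $J$) all Whitney cubes are smaller than the modulus-of-continuity scale $\delta_1$, so the polynomial projections $P_{j_1,k}\vec f$ approximate $\vec f$ to precision $\varepsilon_1$ on each cube, yielding $\|\vec b_{j_1}\|_{L^\infty}\lesssim\varepsilon_1$ with $\varepsilon_1$ \emph{as small as one wishes}. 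Taking $\varepsilon_1\sim |Q_{(R)}|^{-1/p}$ makes $\vec b_{j_1}$ an honest $(p,\infty,s)_{\mathbb A}$-atom of unit coefficient. You invoke continuity but then still write $\lesssim 2^J$, which defeats the purpose.

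Your cutoffs in $j$ and $k$ also don't work as stated. For the negative-$j$ tail, you propose $J'\to-\infty$ with $\vec g_{J'}\to 0$ in $(\mathcal S')^m$. Neither the limit statement nor the ``absorb the tail as one atom'' step gives quantitative control matching $\|\vec f\|_{H^p_W}$. What the paper does instead is show via the tail/kernel estimates \eqref{wts1}--\eqref{wts2} that $(M_N)_{\mathbb A}(\vec f)\leq\widetilde C_1 R^{-n/p}$ off $B(\mathbf 0,4R)$, so that for a \emph{fixed finite} $j_0\sim\log_2(R^{-n/p})$ one has $O_j\subset B(\mathbf 0,4R)$ for all $j\geq j_0$; then $\vec g_{j_0}$ is directly a single atom by the bound $|A_{Q_{(R)}}\vec g_{j_0}|\lesssim 2^{j_0}\sim|Q_{(R)}|^{-1/p}$ together with vanishing moments inherited from $\vec f$. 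No limit to $-\infty$ is needed. For the $k$-cutoff at each $j\in[j_0,j_1)$, your claim that ``only the finitely many Whitney cubes intersecting an enlargement of $Q_0$ carry nonzero contributions'' is false: near $\partial O_j$ the Whitney cubes shrink to zero edge length, and there may be infinitely many that intersect $\operatorname{supp}\vec f$. The paper instead splits into $F_1$ (cubes with $l(Q_{j,k})<\delta_2$) and $F_2$ (cubes with $l(Q_{j,k})\geq\delta_2$); $\#F_2<\infty$ because the Whitney cubes are disjoint and contained in a bounded set, while the $F_1$-sum is bundled into one more small atom by the same uniform-continuity argument. This continuity-based bundling of the small-cube family is exactly the step your $k$-cutoff is missing.
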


\begin{proof}
From Theorem \ref{W-F-atom}(i) and
the definitions of both $H^{p,\infty,s}_{W,\rm{fin}}$ and
$H^{p}_W$, we infer that
$(H^{p,\infty,s}_{W,\rm{fin}}\cap\mathcal C)\subset H^p_W$
and, for any $\vec f\in H^{p,\infty,s}_{W,\rm{fin}}\cap\mathcal C$,
$\|\vec f\|_{H^p_W}\lesssim\|\vec f\|_{H^{p,\infty,s}_{W,\rm{fin}}}$.
Therefore, next we only need to show that,
for any $\vec f\in H^{p,\infty,s}_{W,\rm{fin}}\cap\mathcal C$,
the reverse of this inequality also holds.
By the homogeneity of both $\|\cdot\|_{H^{p,\infty,s}_{W,\rm{fin}}}$ and
$\|\cdot\|_{H^p_W}$, without loss of generality,
we may assume that $\vec f\in H^{p,\infty,s}_{W,\rm{fin}}\cap\mathcal C$
and $\|\vec f\|_{H^p_W}=1$.
Since $\vec f$ is a finite linear
combination of $(p,\infty,s)_W$-atoms, it follows that there exists
$R\in(1,\infty)$ such that
$\operatorname{supp}\vec f\subset B(\mathbf{0}, R)$.
Let $Q_{(R)}$ be the cube centered at $\mathbf{0}$ with edge length $8R$.
We first estimate $(M_N)_{\mathbb A}(\vec f)(x)$
with $x\in [B(\mathbf 0,4R)]^\complement$.
Let $d\in[\![d_{p,\infty}^{\mathrm{lower}}(W),n)$,
$N_1\in[\lfloor\frac np+\frac dp\rfloor+1,\infty)\cap\mathbb N$,
$N\in(\frac{d}{p}+N_1,\infty)\cap\mathbb N$,
$\varphi\in\mathcal S_{N}$, and
$x\in [B(\mathbf 0,4R)]^\complement$.
Assume that $\theta\in C^\infty$ satisfies $\theta=1$ on $B(\mathbf0,1)$ and
$\operatorname{supp}\theta\subset B(\mathbf{0},2)$.
Now, we estimate $\varphi_t*\vec f$ with $t\in(0,\infty)$.
If $t\geq R$, we then obtain
\begin{align}\label{nth1}
\varphi_t*\vec f(x)=\int_{\mathbb R^n}\vec f(y)\frac{1}{t^{n}}
\varphi\left(\frac{x-y}{t}\right)\theta\left(-\frac yR\right)
\,dy=\psi_R*\vec f(\mathbf{0}),
\end{align}
where, for any $z\in\mathbb R^n$,
$\psi(z):=(\frac Rt)^n\varphi(\frac{x+Rz}{t})\theta(z)$.
Using $x\in [B(\mathbf 0,4R)]^\complement$ and $N\in[\frac dp-n-1,\infty)\cap\mathbb N$, we conclude that
\begin{align*}
\|\psi\|_{\mathcal S_{N_1}}&
\lesssim\sup_{\{\alpha\in\mathbb Z^n_+:\,|\alpha|\leq N_1+1\}}
\sup_{z\in B(\mathbf 0,2)}
\left(\frac Rt\right)^n\sum_{\beta\leq\alpha}
\left|\partial^\beta\varphi\left(\frac{x+Rz}{t}\right)
\left(\frac{R}{t}\right)^{|\beta|}\partial^{\alpha-\beta}
\theta(z)\right|\\
&\lesssim\|\varphi\|_{\mathcal S_{N}}\sup_{z\in B(\mathbf 0,2)}
\left(1+\frac{|x+Rz|}{t}\right)^{-N-n-1}
\lesssim\left(1+\frac{|x|}{t}\right)^{-N-n-1}
\lesssim\left(1+\frac{|x|}{t}\right)^{-\frac dp}.
\end{align*}
This, together with \eqref{nth1}, Corollary \ref{p01},
the definition of
$(M_{1,N_1})_{\mathbb A}(\vec f)$, and
Theorem \ref{weight and reducing}, further implies that
\begin{align}\label{wts1}
\left|A_t(x)\varphi_t*\vec f(x)\right|
&\leq\inf_{y\in B(\mathbf{0},R)}
\left\|A_t(x)[A_R(y)]^{-1}\right\|
\left|A_R(y)\psi_R*\vec f(\mathbf{0})\right|\lesssim
\inf_{y\in B(\mathbf{0},R)}\left(M_{1,N_1}
\right)_{\mathbb A}\left(\vec f\right)(y)\\
&\lesssim R^{-\frac np}\left\|\left(M_{1,N_1}\right)_{\mathbb A}\left(\vec f\right)
\right\|_{L^p}\sim R^{-\frac np}
\left\|\vec f\right\|_{H^p_W}=R^{-\frac np}.\nonumber
\end{align}
Next, we consider the case $t<R$.
Let $u\in B(\mathbf 0,\frac R2)$ and
$x\in [B(\mathbf 0,4R)]^\complement$.
Then we have
\begin{align}\label{nth2}
\varphi_t*\vec f(x)=\int_{\mathbb R^n}\vec f(y)
\varphi_t(x-y)\theta\left(\frac yR\right)\,dy
=\phi_t*\vec f(u),
\end{align}
where $\phi(z):=\varphi(\frac{x-u}{t}+z)\theta(\frac uR-\frac tRz)$.
From $N\in(\frac dp+N_1,\infty)\cap \mathbb N$,
$u\in B(\mathbf 0,\frac R2)$,
and $x\in [B(\mathbf 0,4R)]^\complement$,
we deduce that
\begin{align*}
\|\phi\|_{\mathcal S_{N_1}}&
\lesssim\sup_{\alpha\in\mathbb Z^n_+,\,|\alpha|\leq N_1+1}
\sup_{z\in \mathbb R^n}(1+|z|)^{N_1+n+1}
\sum_{\beta\leq\alpha}\left|\partial^\beta\varphi\left(
\frac{x-u}{t}+z\right)\right|
\left(\frac tR\right)^{|\alpha|-|\beta|}\left|
\partial^{\alpha-\beta}\theta\left(\frac uR-\frac tRz\right)
\right|\\
&\lesssim\|\varphi\|_{S_{N}}
\sup_{z\in B(\mathbf 0,\frac{5R}{2t})}(1+|z|)^{N_1+n+1}\left(1+
\left|\frac{x-u}{t}+z\right|\right)^{-N-n-1}\lesssim
\left(1+\frac{|x|}{t}\right)^{N_1-N}
\lesssim\left(1+\frac{|x|}{t}\right)^{-\frac dp},
\end{align*}
which, together with \eqref{nth2}, Corollary \ref{p01},
$x\in [B(\mathbf 0,4R)]^\complement$,
the definition of
$(M_{1,N_1})_{\mathbb A}(\vec f)$, and
Theorem \ref{weight and reducing}, further implies that
\begin{align}\label{wts2}
\left|A_t(x)\varphi_t*\vec f(x)\right|
&\leq\inf_{y\in B(\mathbf{0},\frac R2)}\left\|A_t(x)[A_t(y)]^{-1}\right\|
\left|A_t(y)\phi_t*\vec f(y)\right|\lesssim
\inf_{y\in B(\mathbf{0},\frac R2)}\left(M_{1,N_1}
\right)_{\mathbb A}\left(\vec f\right)(y)\\
&\lesssim R^{-\frac np}\left\|\left(M_{1,N_1}\right)_{\mathbb A}\left(\vec f\right)
\right\|_{L^p}\sim R^{-\frac np}
\left\|\vec f\right\|_{H^p_W}=R^{-\frac np}.\nonumber
\end{align}
Combining \eqref{wts1}, \eqref{wts2}, and the definition of
$(M_N)_{\mathbb A}(\vec f)$, we conclude that there exists
a positive constant $\widetilde C_1$ such that,
for any $x\in [B(\mathbf 0,4R)]^\complement$,
\begin{align}\label{loul}
\left(M_N\right)_{\mathbb A}\left(\vec f\right)(x)
\le \widetilde C_1R^{-\frac np}.
\end{align}
Let all the symbols be the same as in
the proof of Theorem \ref{W-F-atom}(ii).
Let $j_0:=\lceil\log_2(\widetilde C_1R^{-\frac np})\rceil$. Then,
from \eqref{loul} and the definition of $O_j$, we infer that, for any $j\geq j_0$,
$O_j\subset B(\mathbf{0},4R)$.
By the proof of Theorem \ref{W-F-atom} and $\vec f\in(L^1_{\rm{loc}})^m\cap H^p_W$,
we conclude that
$$\vec f=\vec g_{j_0}+
\sum_{j=j_0}^{j_1-1}\sum_{k\in\mathbb N}\lambda_{j,k}\vec a_{j,k}
+\vec b_{j_1}=:\vec g_{j_0}+\vec l_{j_1}+\vec b_{j_1}
$$
both in $(\mathcal S')^m$ and almost everywhere, where $j_1\in\mathbb Z$
is determined later.

We first prove that $\vec b_{j_1}$ is
one $(p,\infty,s)_{\mathbb A}$-atom.
From the assumption that $\vec f\in \mathcal C$ has compact support
(which is guaranteed by the assumption $\vec f\in H^{p,\infty,s}_{W,\rm{fin}}$),
we deduce that, for any $\varepsilon_1\in(0,\infty)$, there exists a positive constant $\delta_1$
such that, for any $x,y\in\mathbb R^n$ with $|x-y|<\delta_1$,
$|\vec f(x)-\vec f(y)|<\varepsilon_1$.
In addition, by the fact that, for any $j\in\mathbb Z$ and
$k\in\mathbb N$, $Q_{j,k}\subset O_j$ and by \eqref{pro-O},
we find that there exists $j_1\in\mathbb Z$ such that,
for any $k\in\mathbb N$, $l(Q_{j_1,k})<|O_{j_1}|^{\frac1n}<
\frac{1}{\sqrt{n}}\delta_1$.
For any $k\in\mathbb N$, let
$\vec f_{j_1,k}:=[\vec f-\vec f(c_{j_1,k})]
\mathbf{1}_{Q^*_{j_1,k}}$;  then $|\vec f_{j_1,k}|<\varepsilon_1$.
Using the definition of $(M_N)_{\mathbb A}(\vec f)$, we obtain
$(M_N)_{\mathbb A}(\vec f_{j_1,k})<\varepsilon_1$ with $W\equiv I_m$.
From this, \eqref{hnt}, and \eqref{estimate of gk}, we infer that,
for any $x\in Q^*_{j_1,k}$,
$$\left|P_{j_1,k}\left(\vec f_{j_1,k}\right)(x)\right|
\lesssim\sup_{y\in\mathbb R^n}
\left(M_N\right)_{\mathbb A}\left(\vec f_{j_1,k}\right)(y)<\varepsilon_1,$$
which, together with $\operatorname{supp}\eta_{j_1,k}\subset Q^*_{j_1,k}$,
further implies that
$|[\vec f-P_{j_1,k}(\vec f)]\eta_{j_1,k}|\leq
|\vec f_{j_1,k}-P_{j_1,k}(\vec f_{j_1,k})|\lesssim\varepsilon_1$.
By this, the definition of $\vec b_{j_1}$, and \eqref{jrfzwsw},
we conclude that there exists
a positive constant $\widetilde C_2$ such that
\begin{align}\label{bad}
\left|\vec b_{j_1}\right|\leq\sum_{k\in\mathbb N}
\left|\left[\vec f-P_{j_1,k}\left(\vec f\right)\right]
\eta_{j_1,k}\right|
\le \widetilde C_2\varepsilon_1.
\end{align}
Let $\varepsilon_1:=\widetilde C_2^{-1}|Q_{(R)}|^{-\frac1p}$.
From $\operatorname{supp}\vec b_{j_1}\subset O_{j_1}
\subset O_{j_0}\subset B(\mathbf{0},4R)$,
we deduce that, to show that $\vec b_{j_1}$ is one $(p,\infty,s)_{\mathbb A}$-atom,
it is sufficient to prove that $\vec b_{j_1}$
has the vanishing moments up to $s$.
Using \eqref{bad}, we find that,
for any $\gamma\in\mathbb Z^n_+$,
\begin{align*}
\int_{\mathbb R^n}\left|x^\gamma\vec b_{j_1}(x)\right|\,dx&=
\int_{B(\mathbf 0,4R)}\left|x^\gamma
\sum_{k\in\mathbb N}\left[\vec f(x)-P_{j_1,k}\left(\vec f\right)(x)\right]\eta_{j_1,k}(x)\right|\,dx\\
&\lesssim\int_{B(\mathbf 0,4R)}\sum_{k\in\mathbb N}\left|
\left[\vec f(x)-P_{j_1,k}\left(\vec f\right)(x)\right]\eta_{j_1,k}(x)\right|\,dx
\lesssim\varepsilon R^n<\infty.
\end{align*}
Thus, $x^\gamma
\sum_{k\in\mathbb N}[\vec f(x)-P_{j_1,k}(\vec f)(x)]\eta_{j_1,k}(x)$ converges
absolutely in $L^1$, allowing us to interchange
the summation and the integral in the first inequality of the above formulae, which, together with \eqref{3.4x}, further implies that
$\vec b_{j_1}$ has the vanishing moments up to $s$.
This shows that $\vec b_{j_1}$ is one $(p,\infty,s)_{\mathbb A}$-atom.

Now, we focus on $\vec l_{j_1}$.
From the assumption that $\vec f\in \mathcal C$ has compact support
(which is guaranteed by the assumption $\vec f\in H^{p,\infty,s}_{W,\rm{fin}}$),
we infer that, for any $\varepsilon_2\in(0,\infty)$, there exists a positive constant $\delta_2$
such that, for any $x,y\in\mathbb R^n$ with $|x-y|<\delta_2$,
$|\vec f(x)-\vec f(y)|<\varepsilon_2$.
Let $$F_{1}:=\{(j,k)\in\mathbb Z\times\mathbb N:\ l(Q_{j,k})<\delta_2,\
j_0+1\leq j\leq j_1-1\}$$
and $$F_{2}:=\{(j,k)\in\mathbb Z\times\mathbb N:\ l(Q_{j,k})\geq\delta_2,\
j_0+1\leq j\leq j_1-1\}.$$
From \eqref{jrfzwsw} and the fact that, for any $j_0+1\leq j\leq j_1-1$,
$|O_{j}|\lesssim R^n$,
we deduce that $\#F_2<\infty$.
Applying an argument similar to that used in the estimation of $\vec b_{j_1}$,
we conclude that, for any $(j,k)\in F_1$,
$|\vec A_{j,k}|<|\lambda_{j,k}\vec a_{j,k}|<\varepsilon_2$
(see also the proof of \cite[Theorem 3.1(ii)]{msv08}).
Then, from \eqref{jrfzwsw}, we infer that
there exists a positive constant $\widetilde C_3$ such that
\begin{align}\label{breaking}
\left\|\,\left|A_{Q_{(R)}}\sum_{(j,k)\in F_1}
\vec A_{j,k}\right|\,\right\|_{L^\infty}
&\leq\left\|A_{Q_{(R)}}\right\|
\left\|\sum_{(j,k)\in F_1}
\left|\vec A_{j,k}\right|\right\|_{L^\infty}
\le \widetilde C_3\varepsilon_2.
\end{align}
Let $\varepsilon_2:=\widetilde C_3^{-1}|Q_{(R)}|^{-\frac1p}$.
By $\operatorname{supp}\vec g_{j_0}\subset[\operatorname{supp}\vec f
\cup O_{j_0}]\subset B(\mathbf{0},4R)$
and $\operatorname{supp}\vec b_{j_1}\subset B(\mathbf{0},4R)$,
we conclude that
$$\operatorname{supp}\vec l_{j_1}\subset \left[\operatorname{supp}\vec f
\cup \operatorname{supp}\vec g_{j_0}
\cup \operatorname{supp}\vec b_{j_1}\right]
\subset B(\mathbf{0},4R).$$
This, together with the fact that, for any $(j,k)\in F_2$,
$\operatorname{supp}\vec a_{j,k}\subset
Q^*_{j,k}\subset B(\mathbf 0,4R)$, further implies that,
to prove $\vec l_{j_1,\varepsilon}:=\sum_{(j,k)\in F_1}
\vec A_{j,k}$ is one $(p,\infty,s)_{\mathbb A}$-atom,
it is sufficient to show
the vanishing moments of $\vec l_{j_1,\varepsilon}$.
From \eqref{breaking} and $\operatorname{supp}\vec l_{j_1}
\subset B(\mathbf{0},4R)$, we deduce that,
for any $\gamma\in\mathbb Z^n_+$,
\begin{align*}
\int_{\mathbb R^n}\left|x^\gamma\right|
\sum_{(j,k)\in F_1}
\lambda_{j,k}\left|\vec a_{j,k}(x)\right|\,dx
\leq R^{n}\left\|\sum_{(j,k)\in F_1}
\left|\vec A_{j,k}\right|\right\|_{L^\infty}
<\infty
\end{align*}
and, moreover, applying an argument similar to that used in the proof
of the vanishing moments of $\vec b_{j_1}$,
we conclude that $\vec l_{j_1,\varepsilon}$ has the vanishing moments up to $s$.
This proves that $\vec l_{j_1,\varepsilon}$ is one $(p,\infty,s)_{\mathbb A}$-atom.

From the vanishing moments of $\vec f$,
$\vec b_{j_1}$, $\vec l_{j_1,\varepsilon}$,
and $\vec a_{j,k}$ with $(j,k)\in F_2$,
we infer that $\vec g_{j_0}$ also has the vanishing moments up to $s$.
Thus, to show $\vec g_{j_0}$ is one $(p,\infty,s)_{\mathbb A}$-atom,
it is sufficient to verify
the size condition of $\vec g_{j_0}$.
By Corollary \ref{p01}, \eqref{estimate-g}, and \eqref{jrfzwsw},
we find that, for any $x\in O_{j_0}$,
\begin{align}\label{llk1}
\left|A_{Q_{(R)}}\vec g_{j_0}(x)\right|\leq
\sum_{k\in\mathbb N}\left\|A_{Q_{(R)}}A_{Q_{j_0,k}}^{-1}\right\|
\left|A_{Q_{j,k}}\vec g_{j,k}(x)\right|
\lesssim2^{j_0}.
\end{align}
From \eqref{g0}, Lemma \ref{<sup}, Corollary \ref{p01}, and the definition of
$(M_N)_{\mathbb A}(\vec f)$,
we deduce that,
for any $x\in O_{j_0}^\complement\cap B(\mathbf{0},4R)$,
\begin{align}\label{llk2}
\left|A_{Q_{(R)}}\vec g_{j_0}(x)\right|\leq
\sup_{t\in(0,R)}\left\|A_{Q_{(R)}}A_{t}^{-1}(x)\right\|
\left|A_{t}(x)\varphi_t*\vec f(x)\right|
\lesssim\left(M_N\right)_{\mathbb A}\left(\vec f\right)(x)
\lesssim2^{j_0}.
\end{align}
By \eqref{g0}, we find that, for any
$x\in [O_{j_0}^\complement\cap B(\mathbf{0},4R)]^\complement
\subset (\operatorname{supp}\vec f)^\complement$,
$|\vec g_{j_0}(x)|=|\vec f(x)|=0$.
Using this, \eqref{llk1}, and \eqref{llk2},
we conclude that $|A_{Q_{(R)}}\vec g_{j_0}|\lesssim2^{j_0}
\sim|Q_{(R)}|^{-\frac1p}$.
This proves that $\vec g_{j_0}$ is one $(p,\infty,s)_{\mathbb A}$-atom.

Now, we obtain a finite linear
combination of $(p,\infty,s)_{\mathbb A}$-atoms of $\vec f$, that is,
$$\vec f=\vec g_{j_0}+\vec l_{j_1,\varepsilon}+
\sum_{(j,k)\in F_2}\lambda_{j,k}\vec a_{j,k}+\vec b_{j_1},$$
which, together with
Remark \ref{kfc}(i), further implies that it is also a finite linear
combination of $(p,\infty,s)_{W}$-atoms of $\vec f$.
Thus, from the definition of
$H^{p,\infty,s}_{W,\rm{fin}}$ and \eqref{jkf}, we infer that
\begin{align*}
\left\|\vec f\right\|_{H^{p,\infty,s}_{W,\rm{fin}}}
\lesssim\left(\sum_{(j,k)\in F_2}\left|\lambda_{j,k}\right|^p+3\right)^{\frac1p}
\lesssim\left(\sum_{j\in\mathbb Z}\sum_{k\in\mathbb N}
\left|\lambda_{j,k}\right|^p\right)^{\frac1p}+1
\lesssim\left\|\vec f\right\|_{H^p_W}.
\end{align*}
This finishes the proof of Theorem \ref{finatom}.
\end{proof}

As an application of Theorem \ref{finatom},
we establish the boundedness on $H^p_W$
of quasi-Banach-valued sublinear operators.
We first present the definition of
$\gamma$-quasi-Banach spaces.

\begin{definition}\label{dth}
Let $\gamma\in(0,1]$.
\begin{enumerate}
\item[{\rm(i)}]
A \emph{quasi-Banach space} $\mathcal B_\gamma$ with the
quasi-norm $\|\cdot\|_{\mathcal B_\gamma}$
is called a $\gamma$-quasi-Banach space if
$\|f+g\|_{\mathcal B_\gamma}^\gamma
\leq\|f\|_{\mathcal B_\gamma}^\gamma
+\|g\|_{\mathcal B_\gamma}^\gamma$
for any $f,g\in \mathcal B_\gamma$.
\item[{\rm(ii)}]
For any given linear space $\mathcal X$
and any given $\gamma$-quasi-Banach space $\mathcal B_\gamma$,
an operator $T$ mapping $\mathcal X$ to $\mathcal B_\gamma$
is said to be \emph{$\mathcal B_\gamma$-sublinear} if, for any $f,g\in \mathcal X$
and $\lambda,\nu\in\mathbb C$,
$$\left\|T(\lambda f+\nu g)\right\|_{\mathcal B_\gamma}
\leq\left[|\lambda|^\gamma\left\|T(f)\right\|_{\mathcal B_\gamma}^\gamma
+|\nu|^\gamma\|T(g)\|_{\mathcal B_\gamma}^\gamma\right]^\frac1\gamma
$$
and $\|T(f)-T(g)\|_{\mathcal B_\gamma}\leq\|T(f-g)\|_{\mathcal B_\gamma}$.
\end{enumerate}
\end{definition}

Next, we establish a criterion on the boundedness of sublinear operators
from $H^p_W$ to any $\gamma$-quasi-Banach space $\mathcal B_\gamma$.

\begin{theorem}\label{subl}
Let $0<p\leq\gamma\leq1$, $s\in(\lfloor n(\frac{1}{p}-1)\rfloor,
\infty)\cap\mathbb Z_+$, $W\in A_p$, and
$\mathcal B_\gamma$ be a $\gamma$-quasi-Banach space.
Then the following two assertions are equivalent.
\begin{enumerate}
\item[{\rm(i)}]
$T$ is a $\mathcal B_\gamma$-sublinear operator defined on all
the continuous $(p,\infty,s)_W$-atoms satisfying that
\begin{align}\label{supa}
\sup\left\{\left\|T(\vec a)\right\|
_{\mathcal B_{\gamma}}:\ \vec a\text{ is a continuous }
(p,\infty,s)_W\text{-atom}\right\}<\infty.
\end{align}
\item[{\rm(ii)}] $T$ has a unique bounded $\mathcal B_\gamma$-sublinear
extension from $H^p_W$ to $\mathcal B_\gamma$.
\end{enumerate}
\end{theorem}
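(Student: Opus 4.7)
The reverse direction (ii)$\Rightarrow$(i) is immediate: the estimates \eqref{inball} and \eqref{outball} in the proof of Theorem \ref{W-F-atom}(i) yield a uniform $H^p_W$-bound $\|\vec a\|_{H^p_W}\leq C_0$ valid for every $(p,\infty,s)_W$-atom, so if $\widetilde T:H^p_W\to\mathcal{B}_\gamma$ is a bounded $\mathcal{B}_\gamma$-sublinear extension of $T$, then $\|T(\vec a)\|_{\mathcal{B}_\gamma}\leq C_0\|\widetilde T\|$ for every continuous atom, which gives \eqref{supa}.

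For the main direction (i)$\Rightarrow$(ii) my plan is a three-step ladder. First, I will show that on $H^{p,\infty,s}_{W,\mathrm{fin}}\cap\mathcal{C}$ the operator $T$ satisfies $\|T(\vec f)\|_{\mathcal{B}_\gamma}\lesssim\|\vec f\|_{H^p_W}$. Fix such an $\vec f$ and apply Theorem \ref{finatom} to produce a finite decomposition $\vec f=\sum_{k=1}^N\lambda_k\vec a_k$ by $(p,\infty,s)_W$-atoms with $(\sum_k|\lambda_k|^p)^{1/p}\lesssim\|\vec f\|_{H^p_W}$. Inspection of the construction in the proof of Theorem \ref{finatom} shows that the building blocks produced there ($\vec g_{j_0}$, $\vec l_{j_1,\varepsilon}$, $\vec a_{j,k}$ with $(j,k)\in F_2$, and $\vec b_{j_1}$) are locally finite sums of terms of the form $(\vec f-P_{j,k}\vec f)\eta_{j,k}$ or $P_{j,k}\vec f\cdot\eta_{j,k}$, each of which is continuous; the only candidate discontinuity, of $\vec g_{j_0}$ across $\partial O_{j_0}$, is removed by rewriting $\vec g_{j_0}(x)=\vec f(x)+\sum_k(P_{j_0,k}\vec f(x)-\vec f(x))\eta_{j_0,k}(x)$ on $O_{j_0}$ and observing that the correction vanishes as $x\to\partial O_{j_0}$, because the Whitney cubes there shrink and the uniform continuity of $\vec f$ forces $P_{j_0,k}\vec f\to\vec f$. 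With all $\vec a_k$ continuous, the $\gamma$-sublinearity of $T$, the hypothesis \eqref{supa}, and the inequality $\sum_k|\lambda_k|^\gamma\leq(\sum_k|\lambda_k|^p)^{\gamma/p}$ (valid for $0<p\leq\gamma\leq 1$) yield
\begin{align*}
\|T(\vec f)\|_{\mathcal{B}_\gamma}^\gamma\leq\sum_{k=1}^N|\lambda_k|^\gamma\|T(\vec a_k)\|_{\mathcal{B}_\gamma}^\gamma\lesssim\Bigl(\sum_{k=1}^N|\lambda_k|^p\Bigr)^{\gamma/p}\lesssim\|\vec f\|_{H^p_W}^\gamma.
\end{align*}

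Second, I will establish the density of $H^{p,\infty,s}_{W,\mathrm{fin}}\cap\mathcal{C}$ in $H^p_W$: for $\vec f\in H^p_W$, Theorem \ref{W-F-atom}(ii) provides $\vec f=\sum_k\lambda_k\vec a_k$ convergent in $H^p_W$, truncation produces partial sums in $H^{p,\infty,s}_{W,\mathrm{fin}}$ converging to $\vec f$, and convolving each atom with a smooth mollifier of sufficiently small scale (absorbing the mild loss of support and size into the coefficient and using that mollification preserves vanishing moments) yields continuous representatives via a diagonal procedure. Third, I extend $T$ by density and completeness: if $\vec f_n\to\vec f$ with $\vec f_n\in H^{p,\infty,s}_{W,\mathrm{fin}}\cap\mathcal{C}$, then sublinearity gives $\|T(\vec f_n)-T(\vec f_m)\|_{\mathcal{B}_\gamma}\leq\|T(\vec f_n-\vec f_m)\|_{\mathcal{B}_\gamma}\lesssim\|\vec f_n-\vec f_m\|_{H^p_W}$, so $\{T(\vec f_n)\}$ is Cauchy in $\mathcal{B}_\gamma$ and admits a limit $\widetilde T(\vec f)$ that is independent of the approximating sequence, $\gamma$-sublinear, and satisfies $\|\widetilde T(\vec f)\|_{\mathcal{B}_\gamma}\lesssim\|\vec f\|_{H^p_W}$. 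Uniqueness follows from the Lipschitz continuity that sublinearity combined with boundedness imply.

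The hardest step will be the density upgrade in the matrix-weighted setting: convolving a $(p,\infty,s)_W$-atom with a smooth mollifier must still produce, up to a constant absorbable into the coefficient, a $(p,\infty,s)_W$-atom in the sense of Definition \ref{F-atom}, but this requires tracking how the integrated size condition in Definition \ref{F-atom}(ii) (an $L^p$-norm in $y$ weighted by $W^{1/p}$ integrated in $x$) transforms under convolution in the $x$-variable, which interacts nontrivially with the matrix weight; absorbing the distortion into the coefficient while keeping the $\ell^p$-sum controlled is the delicate point. A secondary technical issue is the continuity check for $\vec g_{j_0}$ at $\partial O_{j_0}$, which I expect to follow cleanly from the uniform continuity of $\vec f$ combined with the geometry of the Whitney decomposition, but must be spelled out.
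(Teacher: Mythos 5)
Your three-step ladder is the same overall strategy as the paper: restrict to $H^{p,\infty,s}_{W,\mathrm{fin}}\cap\mathcal{C}$, bound $T$ there via Theorem \ref{finatom}, then extend by density. The continuity concern you raise in Step 1 is a genuine subtlety the paper glosses over: Theorem \ref{finatom} furnishes a decomposition by $(p,\infty,s)_W$-atoms, but \eqref{supa} only controls $T$ on \emph{continuous} atoms, so one must check that the atoms coming out of the construction are continuous when $\vec f$ is. Your sketch (uniform continuity of $\vec f$ forcing $P_{j,k}\vec f-\vec f\to\vec 0$ on shrinking Whitney cubes, plus bounded overlap from Lemma \ref{wd}(iv), so that each of $\vec b_{j_1}$, $\vec g_{j_0}$, $\vec l_{j_1,\varepsilon}$, and the $\vec a_{j,k}$ with $(j,k)\in F_2$ extends continuously across $\partial O_j$) is sound, and the paper already has the needed ingredient in the form of the estimate $|P_{j_1,k}(\vec f_{j_1,k})|\lesssim\varepsilon_1$ used to show $\vec b_{j_1}$ is an atom.

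Where you diverge from the paper, and where your plan is left genuinely unfinished, is the density step. You propose to take the infinite atomic decomposition of $\vec f\in H^p_W$ from Theorem \ref{W-F-atom}(ii), truncate, and mollify each atom separately, flagging the tracking of Definition \ref{F-atom}(ii) under $x$-convolution as ``the delicate point'' without resolving it. The paper's route is simpler and dodges this entirely: take any $\vec f_0\in H^{p,\infty,s}_{W,\mathrm{fin}}$ (a finite linear combination of atoms, hence $L^\infty$ with compact support and vanishing moments), mollify the whole thing to get $\varphi_t*\vec f_0\in H^{p,\infty,s}_{W,\mathrm{fin}}\cap\mathcal{C}$, and show that for small $t$ the \emph{single} function $\vec f_0-\varphi_t*\vec f_0$ is $\varepsilon$ times a $(p,q,s)_{\mathbb{A}}$-atom supported in the fixed cube $Q_{R+1}$ for some finite $q\in(\max\{1,\frac{r_Wp}{r_W-1}\},\infty)$. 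Using $q<\infty$ is essential (mollification does not give $L^\infty$ convergence, only $L^q$ for $q<\infty$), and this is precisely why Theorem \ref{W-F-atom}(i) and Remark \ref{wsw}(ii) were set up to work for a range of $q$. Moreover, by working with the $\mathbb{A}$-atom condition (iv) rather than the $W$-atom condition (ii), the reducing operator $A_{Q_{R+1}}$ factors out as a constant matrix, so the matrix-weight/convolution interaction you worry about reduces to a scalar $L^q$ estimate. That is the idea you are missing; once you switch to $\mathbb{A}$-atoms (legitimate by Remark \ref{kfc}(i)), the convolution estimate becomes elementary, and mollifying $\vec f_0$ wholesale avoids both the diagonal argument and the accounting over infinitely many cubes.
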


\begin{proof} The proof that (ii) implies (i) is
a consequence of Theorem \ref{W-F-atom}(i).

To show that (i) implies (ii), let $\vec f\in H^{p,\infty,s}_{W,\rm{fin}}\cap\mathcal C$.
Using the definition of $H^{p,\infty,s}_{W,\rm{fin}}$ and
Theorem \ref{finatom}, we conclude that there
exist $N\in\mathbb N$, $\{\lambda_k\}_{k=1}^N\subset\mathbb C$, and
a set $\{\vec a_k\}_{k=1}^N$ of $(p,\infty,s)_W$-atoms
such that $\vec f=\sum_{k=1}^{N}\lambda_k\vec a_k$
and $(\sum_{k=1}^N|\lambda|^p)^{\frac1p}\lesssim\|\vec f\|_{H^p_W}$.
From this, the definition of $\mathcal B_\gamma$, and
\eqref{supa}, we deduce that
$$
\left\|T\left(\vec f\right)\right\|_{\mathcal B_\gamma}
\leq\left[\sum_{k=1}^N\left|\lambda_k\right|^\gamma
\left\|T(\vec a)\right\|_{\mathcal B_{\gamma}}
^\gamma\right]^{\frac1\gamma}\lesssim
\left(\sum_{k=1}^N|\lambda_k|^p\right)^{\frac1p}\lesssim\left\|\vec f\right\|_{H^p_W}.
$$
Thus, to extend the boundedness of $T$ to the whole $H^p_W$, by Definition \ref{dth}
we find that it is sufficient to show
$H^{p,\infty,s}_{W,\rm{fin}}\cap\mathcal C$
is dense in $H^p_W$. Using Theorem \ref{W-F-atom}, we can easily find
that $H^{p,\infty,s}_{W,\rm{fin}}$ is dense in $H^p_W$.
Thus, it remains to prove that $H^{p,\infty,s}_{W,\rm{fin}}\cap\mathcal C$
is dense in $H^{p,\infty,s}_{W,\rm{fin}}$.
Let $\vec f_0\in H^{p,\infty,s}_{W,\rm{fin}}$ and $\varphi\in\mathcal S$
satisfy $\operatorname{supp}\varphi\subset B(\mathbf 0,1)$
and $\int_{\mathbb R^n}\varphi(x)\,dx=1$.
Since $\vec f_0$ is a finite linear combination
of $(p,\infty,q)_W$-atoms, it follows that there exists $R\in(0,\infty)$
such that $\operatorname{supp}\vec f_0\subset B(\mathbf 0,R)$
and $\vec f_0$ has the vanishing moments up to $s$.
From this and the definition of $\varphi$, we infer that
$\operatorname{supp}\,(\varphi_t*\vec f)\subset B(\mathbf 0,R+t)$
and $\varphi_t*\vec f$ has the vanishing moments up to $s$.
For any $t\in(0,\infty)$,
let $Q_{R+t}$ denote the cube centered at $\mathbf 0$ with edge length $2(R+t)$.
Let $\mathbb A:=\{A_Q\}_{\mathrm{cube}\,Q}$ be a family of
reducing operators of order $p$ for $W$.
By the definition of atoms, we find that,
for any $(p,\infty,q)_{\mathbb A}$-atom
$\vec a_0$ supported in the cube $Q_0$,
we obtain $\|\vec a_0\|_{L^\infty}\leq
\|A_{Q_0}^{-1}\|\,\|A_{Q_0}\vec a_0\|_{L^\infty}<\infty$,
which, together with Remark \ref{kfc}(i), further implies that
$\|\vec f_0\|_{L^\infty}<\infty$.
Combining this and the definition of $\varphi$, we find that,
for any $t\in(0,\infty)$,
$$
\left\|A_{Q_{R+t}}\varphi_t*\vec f\right\|_{L^\infty}
\leq\left\|A_{Q_{R+t}}\right\|\left\|\varphi_t*\vec f\right\|_{L^\infty}
<\infty,
$$
which, together with Remark \ref{kfc}(i), further implies that
$\varphi_t*\vec f\in H^{p,\infty,s}_{W,\rm{fin}}\cap \mathcal C$.
From the well-known property of approximations of the identity
(see, for instance, \cite[Theorem 1.2.19(1)]{g14c}),
it follows that, for any $\varepsilon\in(0,\infty)$,
there exists $t_0\in(0,\infty)$ such that, for any $t\in(0,\min\{1,t_0\})$
and $q\in(\max\{1,\frac{r_Wp}{r_W-1}\},\infty]$,
$$
\left\|A_{Q_{R+1}}\left(\vec f_0-\varphi_t*\vec f_0\right)\right\|_{L^q}
\leq\left\|A_{Q_{R+1}}\right\|\left\|\vec f_0-\varphi_t*\vec f_0\right\|_{L^q}
\leq\varepsilon,
$$
and hence $\varepsilon^{-1}|Q_{R+1}|^{\frac1q-\frac1p}
(\vec f_0-\varphi_t*\vec f_0)$ is a $(p,q,s)_{\mathbb A}$-atom.
Using Theorem \ref{W-F-atom}(i) and Remark \ref{kfc}(i), we find that
$\|\vec f_0-\varphi_t*\vec f_0\|_{H^p_W}\lesssim\varepsilon$.
This proves that $H^{p,\infty,s}_{W,\rm{fin}}\cap\mathcal C$
is dense in $H^p_W$, which then shows that (i) implies (ii) and
hence completes the proof of Theorem \ref{subl}.
\end{proof}

\begin{remark}
Let all the assumptions be the same as in Theorem \ref{subl}
and, in addition, let $\mathbb A:=\{A_Q\}_{Q\in\mathscr{Q}}$
be a family of reducing operators of order $p$ for $W$.
\begin{enumerate}
\item[(i)] From Remark \ref{kfc}(i), it follows that Theorem \ref{subl} still holds if all
the continuous $(p,\infty,s)_W$-atoms are replaced by
all the continuous $(p,\infty,s)_{\mathbb A}$-atoms.

\item[(ii)] Assume that
$\|T(\vec a)\|_{\mathcal B_\gamma}<\infty$ for any
$(p,\infty,s)_W$-atom $\vec a$.
Denote by $T_0$ the restriction of $T$ to all the continuous
$(p,\infty,s)_W$-atoms. Then, by Theorem \ref{subl},
$T_0$ has an extension, denoted by $\widetilde T_0$,
such that $\widetilde T_0$ is bounded from $H^p_W$ to $\mathcal B_\gamma$.
However, $\widetilde T_0$ may not coincide with $T$ on all the
$(p,\infty,s)_W$-atoms (see, for instance, \cite[Theorem 2]{b08}).

\item[(iii)]
Let $q\in(\max\{1,\frac{r_Wp}{r_W-1}\},\infty]$.
If, similarly to \cite[Theorem 3.1(i)]{msv08},
we could establish a finite atomic characterization of $H^p_W$
on non-continuous atoms, that is,
$\|\cdot\|_{H^{p,q,s}_{W,\rm{fin}}}$ and
$\|\cdot\|_{H^p_W}$ are equivalent quasi-norms
on the finite atomic matrix-weighted Hardy space $H^{p,q,s}_{W,\rm{fin}}$,
then the extension of
$T$, where $T$ satisfies $\|T(\vec a)\|_{\mathcal B_\gamma}\lesssim1$ for any
$(p,q,s)_W$-atom $\vec a$ with the implicit positive constant independent of $\vec a$, coincides with $T$
itself on all the $(p,q,s)_W$-atoms.
However, since $\vec l_{j_1}$ in Theorem \ref{finatom} may not converge in
$L^q$, we are unable to obtain such a characterization. Therefore, a
challenging question is whether or not $\|\cdot\|_{H^{p,q,s}_{W,\rm{fin}}}$ and
$\|\cdot\|_{H^p_W}$ are equivalent quasi-norms
on   $H^{p,q,s}_{W,\rm{fin}}$.
\end{enumerate}
\end{remark}

\section{Boundedness of Calder\'on--Zygmund Operators on $H^p_W$\label{cz}}

In this subsection, we establish the boundedness
of Calder\'on--Zygmund operators on $H^p_W$.
We first present the concept of the $s$-order standard kernel
(see, for instance, \cite[Chapter III]{S93}).
In what follows, for any $\gamma=(\gamma_1,\ldots,\gamma_n)\in
{\mathbb Z}_+^n$,
any $\gamma$-order differentiable function $F(\cdot,\cdot)$
on ${\mathbb{R}^n}\times {\mathbb{R}^n}$, and any $(x,y)\in {\mathbb{R}^n}\times {\mathbb{R}^n}$, let
$$
\partial_{(1)}^{\gamma}F(x,y):=\frac{\partial^{|\gamma|}}
{\partial x_1^{\gamma_1}\cdots\partial x_n^{\gamma_n}}F(x,y)
\ \ \text{and}\ \
\partial_{(2)}^{\gamma}F(x,y):=\frac{\partial^{|\gamma|}}
{\partial y_1^{\gamma_1}\cdots\partial y_n^{\gamma_n}}F(x,y).
$$

\begin{definition}\label{def-s-k}
Let $s\in{\mathbb Z}_+$ and $\delta\in(0,1]$. A measurable function $K$
on ${\mathbb{R}^n}\times {\mathbb{R}^n}\setminus\{(x,x):\ x\in{\mathbb{R}^n}\}$
is called an \emph{$(s,\delta)$-type standard kernel} if
there exists a positive constant $C$
such that, for any $\gamma\in{\mathbb Z}_+^n$ with $|\gamma|\le s$,
the followings hold:
\begin{itemize}
\item[\rm (i)]
for any $x,y\in{\mathbb{R}^n}$ with $x\neq y$,
\begin{align}\label{size-s'}
\left|\partial_{(2)}^{\gamma}K(x,y)\right|\le
\frac{C}{|x-y|^{n+|\gamma|}};
\end{align}

\item[\rm (ii)]
\eqref{size-s'} still holds for the first variable of $K$;

\item[\rm (iii)]
for any $x,y,z\in{\mathbb{R}^n}$
with $x\neq y$ and $|x-y|\ge2|y-z|$,
\begin{align}\label{regular2-s}
\left|\partial_{(2)}^{\gamma}K(x,y)-\partial_{(2)}^{\gamma}K(x,z)\right|
\le C\frac{|y-z|^\delta}{|x-y|^{n+|\gamma|+\delta}};
\end{align}

\item[\rm (iv)]
\eqref{regular2-s} still holds for the first variable of $K$.
\end{itemize}
\end{definition}

Next, we present the definition of Calder\'on--Zygmund operators.

\begin{definition}\label{defin-C-Z-s}
Let $s\in{\mathbb Z}_+$ and $\delta\in(0,1]$.
A linear operator $T$ is called an
\emph{$(s,\delta)$-type Calder\'on--Zygmund operator} if $T$ is bounded on $L^2$ and
there exists an $(s,\delta)$-type standard kernel $K$ such that,
for any given $f\in L^2$
and for almost every $x\in{\mathbb{R}^n}$,
$T(f)(x)=\lim_{\eta\to0^+}T_\eta (f)(x),$
where, for any $\eta\in(0,\infty)$,
\begin{align*}
T_\eta (f)(x)
:=\int_{{\mathbb{R}^n}\setminus B(x,\eta)}K(x,y)f(y)\,dy.
\end{align*}
\end{definition}

\begin{remark}\label{rem-2.15}
Let $T$ be the same as in Definition \ref{defin-C-Z-s}.
From a statement in \cite[p.\,102]{Duo01}, we deduce that, for any $q\in(1,\infty)$,
$T$ is bounded on $L^q$ and, moreover, for any $f\in L^q$,
$T(f)=\lim_{\eta\to0^+}T_\eta (f)$
both almost everywhere on ${\mathbb{R}^n}$ and in $L^q$.
\end{remark}

Now, we recall the concept of the well-known
vanishing moments on $T$
(see, for instance, \cite[p.\,23]{mc1997}).

\begin{definition}\label{Def-T-s-v}
Let $s\in{\mathbb Z}_+$ and $\delta\in(0,1]$. An
$(s,\delta)$-type Calder\'on--Zygmund operator $T$
is said to have the \emph{vanishing moments up to
order $s$} if, for any function $a\in L^2$ having compact support
and satisfying that,
for any $\gamma\in{\mathbb Z}_+^n$ with $|\gamma|\le s$,
$\int_{{\mathbb{R}^n}} a(x)x^\gamma\,dx=0$, it holds that
$T^*(x^{\gamma}):=\int_{{\mathbb{R}^n}} T(a)(x)x^\gamma\,dx=0.$
\end{definition}

\begin{theorem}\label{CZ}
Let $p\in(0,1]$, $s\in{\mathbb Z}_+\cap[\lfloor n(\frac{1}{p}-1)\rfloor,\infty)$,
$\delta\in(0,1]$, and $W\in A_p$.
Let $T$ be an $(s,\delta)$-type Calder\'on--Zygmund operator.
Then there exists an operator $\widetilde T_1$ bounded from $H^p_W$
to $L^p_W$ that agrees with $T$ on all the
continuous $(p,\infty,s)_W$-atoms.
If, in addition, $T$ has the vanishing moments up to order $s$,
then there exists an operator $\widetilde T_2$ bounded from $H^p_W$
to $H^p_W$ that agrees with $T$ on all the
continuous $(p,\infty,s)_W$-atoms.
\end{theorem}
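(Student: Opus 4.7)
The plan is to invoke the sublinear boundedness criterion of Theorem~\ref{subl} twice, once with the $\gamma$-quasi-Banach space $\mathcal B_\gamma$ taken to be $L^p_W$ and once taken to be $H^p_W$, in both cases with $\gamma:=p$. The space $L^p_W$ is $p$-quasi-Banach because of the pointwise inequality $|\vec f+\vec g|^p\le|\vec f|^p+|\vec g|^p$ when $p\in(0,1]$, and $H^p_W$ is $p$-quasi-Banach because $(M^*_{1,N(W)})^p_W$ is sublinear in $\vec f$ and the $L^p$-quasi-norm with $p\le1$ is $p$-subadditive. Since $T$ is linear, it is trivially $p$-sublinear in the sense of Definition~\ref{dth}. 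Theorem~\ref{subl} therefore reduces the proof of the two assertions to the two uniform estimates
\begin{align*}
\sup_{\vec a}\,\left\|T\vec a\right\|_{L^p_W}<\infty
\quad\text{and, under the vanishing-moments hypothesis,}\quad
\sup_{\vec a}\,\left\|T\vec a\right\|_{H^p_W}<\infty,
\end{align*}
where both suprema are taken over all continuous $(p,\infty,s)_W$-atoms $\vec a$.

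To obtain the first uniform estimate, I fix such an atom $\vec a$ supported in a cube $Q$, let $\{A_R\}_{R\in\mathscr Q}$ be a family of reducing operators of order $p$ for $W$, and choose $q\in(\max\{1,r_Wp/(r_W-1)\},\infty)$, and then split $\|T\vec a\|_{L^p_W}^p$ into the contributions from $2\sqrt nQ$ and its complement. The near part is handled exactly as in \eqref{inball} of the proof of Theorem~\ref{W-F-atom}(i): H\"older's inequality with exponents $q/(q-p)$ and $q/p$, the reverse H\"older estimate of Lemma~\ref{86}, the $L^q$-boundedness of $T$ (Remark~\ref{rem-2.15}) and of $\mathcal M$, and the key algebraic identity $A_QT\vec a=T(A_Q\vec a)$ (which is valid because $T$ acts componentwise and $A_Q$ is a constant matrix) together with Definition~\ref{F-atom}(iv) give a bound by a constant. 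For the far part I exploit the vanishing moments of $\vec a$ (Definition~\ref{F-atom}(iii)) to subtract from $K(x,\cdot)$ its $s$-th degree Taylor polynomial at $c_Q$, yielding
\begin{align*}
|T\vec a(x)|\lesssim\frac{[l(Q)]^{s+1}}{|x-c_Q|^{n+s+1}}\int_Q|\vec a(y)|\,dy
\end{align*}
via the kernel regularity bound \eqref{regular2-s}; the dyadic annular summation from \eqref{outball}, combined with Remark~\ref{p012} (which ensures $d^{\mathrm{upper}}_{p,\infty}(W)=0$) and $s\ge\lfloor n(1/p-1)\rfloor$, then completes this first estimate.

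For the second estimate, I use the radial maximal function characterization from Theorem~\ref{if and only if W}, reducing matters to bounding $\int_{\mathbb R^n}[M_W^p(T\vec a,\psi)(x)]^p\,dx$ for a fixed $\psi\in\mathcal S$ with $\operatorname{supp}\psi\subset B(\mathbf0,1)$ and $\int_{\mathbb R^n}\psi(x)\,dx\ne0$. Splitting again at $2\sqrt nQ$, the near part reduces to the previous near-part bound via $\sup_t|\psi_t\ast f|\lesssim\mathcal M(f)$. For the far part, the key observation is that Definition~\ref{Def-T-s-v} applied componentwise to $\vec a$ (which is in $L^2$ with compact support and has vanishing moments) propagates those vanishing moments through $T$, so that $T\vec a$ itself has vanishing moments up to order $s$. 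I then split $\sup_{t\in(0,\infty)}$ into the regimes $t\le l(Q)$ and $t>l(Q)$: in the first regime I apply the Taylor-remainder estimate on $K(x,\cdot)$ at $c_Q$ exactly as in the bound for $|T\vec a(x)|$ above to control $\psi_t\ast T\vec a(x)$; in the second regime I subtract from $\psi_t(x-\cdot)$ its $s$-th Taylor polynomial at $c_Q$ and invoke the vanishing moments of $T\vec a$, in the spirit of the derivation of \eqref{Pk<alpha}. The resulting pointwise bound reproduces the decay $[l(Q)]^{n+s+1}/(|x-c_Q|+t)^{n+s+1}$ familiar from \eqref{exin}, and the dyadic annular summation of \eqref{outball} concludes. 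The principal obstacle I foresee is precisely this two-regime far-part estimate: since $T\vec a$ is not compactly supported, one cannot reduce to the first assertion by substitution, and both the kernel regularity of $T$ and the inherited vanishing moments of $T\vec a$ must be exploited in a coordinated way to obtain a decay rate sufficient for the annular sum to converge.
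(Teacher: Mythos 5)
Your proposal follows essentially the same overall strategy as the paper: reduce the boundedness statements to uniform estimates on continuous atoms via Theorem \ref{subl}, then establish those estimates by a near/far split centered on the atom's cube $Q$. For the first assertion (boundedness into $L^p_W$), your near-part argument (H\"older plus the reverse H\"older Lemma \ref{86} plus the $L^q$-boundedness of $T$) and your far-part argument (Taylor expansion of $K(x,\cdot)$ at $c_Q$ against the vanishing moments of $\vec a$, then dyadic annular summation) reproduce the paper's \eqref{inQ} and \eqref{outQ} almost verbatim.

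For the second assertion (boundedness into $H^p_W$), the near-part estimate again matches the paper's \eqref{inball-T}. Your far-part plan, however, departs organizationally from the paper. You propose a two-regime split in $t$: for $t\le l(Q)$ you use the kernel Taylor bound on $T\vec a(y)$ directly, and for $t>l(Q)$ you subtract from $\psi_t(x-\cdot)$ its $s$-th Taylor polynomial at $c_Q$ and invoke the inherited vanishing moments of $T\vec a=A_Q^{-1}T(A_Q\vec a)$. The paper does not split in $t$ at all: for every $t$ it subtracts the $\psi$-Taylor polynomial in $y$ about $c_Q$ (valid by the vanishing moments of $T(A_Q\vec a)$ from Definition~\ref{Def-T-s-v}), and then splits the resulting $y$-integral into the three regions $|y-c_Q|<2r_Q$, $2r_Q\le|y-c_Q|<|x-c_Q|/2$, and $|y-c_Q|\ge|x-c_Q|/2$, yielding $I_1$, $I_2$, $I_3$ in \eqref{rens}--\eqref{Eq711}, which are estimated using the $L^2$-bound \eqref{TaL2} of $T(A_Q\vec a)$ and the kernel decay for the middle and far regions. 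The compact support of $\psi$ forces the first region to contribute only when $t\gtrsim|x-c_Q|$, so no separate small-$t$ case is needed. Your approach should also work, but you would still need to perform something like this $y$-split inside your $t>l(Q)$ regime in order to control the tail of $T\vec a$ away from $Q$ (the $\psi$-Taylor remainder alone is not summable without the kernel decay of $T\vec a$). You do flag this explicitly as the principal obstacle; the paper's three-region estimate is precisely the coordinated use of kernel regularity and vanishing moments you anticipate, and it absorbs your two $t$-regimes into a single unified bound.
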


\begin{proof}
We first prove that $T$ can  extend to $\widetilde{T}_1$,
which is bounded from $H^p_W$ to $L^p_W$.
Let $\mathbb A:=\{A_Q\}_{Q\in\mathscr{Q}}$
be a family of reducing operators of order $p$ for $W$,
$\vec a$ be a $(p,\infty,s)_W$-atom supported in the cube $Q$,
and $r\in(1,r_W)$, where $r_W$ is the same as in \eqref{r_w}.
By Remark \ref{kfc}(i), we find that $\vec a$ is also
a harmless positive constant multiple of one $(p,\infty,s)_{\mathbb A}$-atom.
From Corollary \ref{p01}, the H\"older inequality, Lemma \ref{86},
the fact that $T$ is bounded on $L^{r'p}$ (which is a consequence of the assumptions),
and both (i) and (iv) of Definition \ref{F-atom},
we infer that
\begin{align}\label{inQ}
\int_{2\sqrt{n}Q}\left\|W^{\frac1p}(x)T\vec a(x)\right\|^p\,dx
&\le\int_{2\sqrt{n}Q}\left\|W^{\frac1p}(x)A_{2\sqrt{n}Q}^{-1}\right\|^p
\left\|A_{2\sqrt{n}Q}A_{Q}^{-1}\right\|^p\left|A_QT\vec a(x)\right|^p\,dx\\
&\le\left[\int_{2\sqrt{n}Q}\left\|W^{\frac1p}(x)
A_{2\sqrt{n}Q}^{-1}\right\|^{rp}\,dx\right]^{\frac1r}
\left\|A_QT\vec a\right\|_{L^{r'p}}^p\nonumber\\
&\lesssim|Q|^{\frac1r}\left\|A_Q\vec a\right\|_{L^{r'p}}^p
\lesssim 1.\nonumber
\end{align}
By Definition \ref{F-atom}(iii), we conclude that there exists
$\xi_y\in Q$ such that
\begin{align*}
&\int_{(2\sqrt{n}Q)^\complement}\left|W^{\frac1p}(x)T\vec a(x)\right|^p\,dx\\
&\quad\le\int_{(2\sqrt{n}Q)^\complement}\left\|W^{\frac1p}(x)A_Q^{-1}\right\|^p\int_Q
\left|\left[k(x,y)-\sum_{|\beta|\le s}
\frac{\partial^\beta_xk(x,c_Q)}{\beta !}(y-c_Q)^\beta\right]A_Q\vec a(y)\right|^p\,dy\,dx\nonumber\\
&\quad=\int_{(2\sqrt{n}Q)^\complement}\left\|W^{\frac1p}(x)A_Q^{-1}\right\|^p\int_Q
\left|\sum_{|\beta|=s}\frac{\partial^\beta_xk(x,c_Q)-\partial^\beta_xk(x,\xi_y)}{\beta!}
(y-c_Q)^\beta A_Q\vec a(y)\right|^p\,dy\,dx,
\end{align*}
which, together with \eqref{regular2-s}, both (i) and (iv) of Definition \ref{F-atom},
Corollary \ref{p01}, and Lemma \ref{86},
further implies that
\begin{align}\label{outQ}
&\int_{(2\sqrt{n}Q)^\complement}\left|W^{\frac1p}(x)T\vec a(x)\right|^p\,dx\\
&\quad\le\int_{(2\sqrt{n}Q)^\complement}\left\|W^{\frac1p}(x)A_Q^{-1}\right\|^p\int_Q
\left|A_Q\vec a(y)\right|^p\frac{|y-c_Q|^{s+\delta}}{|x-c_Q|^{n+s+\delta}}\,dy\,dx\nonumber\\
&\quad\lesssim\sum_{i=0}^{\infty}\int_{2^{i+1}Q2^iQ}
\left\|W^{\frac1p}(x)A_{2^{i+1}Q}^{-1}\right\|^p
\left\|A_{2^{i+1}Q}A_{Q}^{-1}\right\|^p
\frac1{(2^il)^{n+s+\delta}}\,dx
|Q|^{\frac{s+\delta}{n}}\nonumber\\
&\quad\lesssim\sum_{i=0}^{\infty}2^{-(n+s+\delta)i}\int_{2^{i+1}Q}
\left\|W^{\frac1p}(x)A_{2^{i+1}Q}^{-1}\right\|^p\,dx|Q|^{-1}\lesssim1.\nonumber
\end{align}
Using \eqref{inQ}, \eqref{outQ}, and Theorem \ref{subl},
we obtain the desired result.

Next,  further assuming that $T$ has the vanishing moments up to order $s$,
we show that $T$ can extend to $\widetilde{T}_2$,
which is bounded from $H^p_W$ to $H^p_W$.
By the assumption that $T$ is bounded on $L^2$
and both (i) and (iv) of Definition \ref{F-atom}, we conclude that
\begin{align}\label{TaL2}
\left\|T\left(A_Q\vec a\right)\right\|_{L^2}\lesssim\left\|A_Q\vec a\right\|_{L^2}
\lesssim\left|Q\right|^{\frac{1}{2}-\frac{1}{p}}.
\end{align}
Let $r_Q:=\frac{\sqrt{n}}2l(Q)$
and $\psi\in\mathcal{S}$ satisfy
$\operatorname{supp}\psi\subset B(\mathbf{0},1)$
and $\int_{\mathbb R^n}\psi(x)\,dx\neq 0$.
From the vanishing moments of $T$, we deduce that,
for any $t\in(0,\infty)$ and $x\in B(c_Q,4r_Q)^\complement$,
\begin{align}\label{rens}
&\frac{1}{t^n}\int_{{\mathbb{R}^n}}\left|\psi\left(\frac{x-y}{t}\right)
W^{\frac{1}{p}}(x)T\vec a(y)\right|\,dy\\
&\quad\le\left\|W^{\frac{1}{p}}(x)A_Q^{-1}\right\|\frac{1}{t^n}
\int_{{\mathbb{R}^n}}\left|\psi\left(\frac{x-y}{t}\right)
T(A_Q\vec a)(y)\right|\,dy.\nonumber\\
&\quad\le\left\|W^{\frac{1}{p}}(x)A_Q^{-1}\right\|\frac1{t^n}\int_{{\mathbb{R}^n}}
\left|\psi\left(\frac{x-y}{t}\right)-
\sum_{|\beta|\le s}
\frac{\partial^\beta\psi(\frac{x-c_Q}{t})}{\beta!}\left(\frac{y-c_Q}{t}\right)^\beta\right||T(A_Q\vec a)(y)|\,dy\nonumber\\
&\quad=\left\|W^{\frac{1}{p}}(x)A_Q^{-1}\right\|\frac1{t^n}\left(\int_{|y-c_Q|<2r_Q}+\int_{2r_Q\le|y-c_Q|
<\frac{|x-c_Q|}{2}}+\int_{|y-c_Q|\geq\frac{|x-c_Q|}{2}}\right)\nonumber\\\nonumber
&\quad\hspace*{12pt}\times\left|\psi\left(\frac{x-y}{t}\right)-\sum_{|\beta|\le s}
\frac{\partial^\beta\psi(\frac{x-c_Q}{t})}{\beta!}
\left(\frac{y-c_Q}{t}\right)^\beta\right||T(A_Q\vec a)(y)|\,dy\nonumber\\
&\quad=:\left\|W^{\frac{1}{p}}(x)A_Q^{-1}\right\|
\left(\mathrm{I_1}+\mathrm{I_2}+\mathrm{I_3}\right).\nonumber
\end{align}

Using \eqref{TaL2} and repeating the estimation of
\cite[(6.14), (6.15), and (6.16)]{zyyw} with $a_j$ and $\gamma$
replaced, respectively, by $A_Q\vec a$ and $s+\delta$, we obtain
\begin{align}\label{Eq79}
\mathrm{I_1}\lesssim\frac{[l(Q)]^{s+1}}{|x-c_Q|^{n+s+1}}
\left\|T\left(A_Q\vec a\right)\right\|_{L^2}|Q|^{1/2}
\lesssim\frac{[l(Q)]^{n+s+1-\frac{n}{p}}}{|x-c_Q|^{n+s+1}},
\end{align}
\begin{align}\label{Eq710}
\mathrm{I_2}
\lesssim\frac{[l(Q)]^{s+\delta}}{|x-c_Q|^{n+s+1}}\int_{2r_Q\le|y-c_Q|<\frac{|x-c_Q|}{2}}
\frac1{|y-c_Q|^{n+\delta}}\,dy\left\|T\left(A_Q\vec a\right)\right\|_{L^2}|Q|^{1/2}
\lesssim\frac{[l(Q)]^{n+s+\delta-\frac{n}{p}}}{|x-c_Q|^{n+s+\delta}},
\end{align}
and
\begin{align}\label{Eq711}
\mathrm{I_3}&\lesssim\left\|T\left(A_Q\vec a\right)\right\|_{L^2}\left|Q\right|^{1/2}
\left[\frac{[l(Q)]^{s+\delta}}{|x-c_Q|^{n+s+\delta}}
\int_{|y-c_Q|\geq\frac{|x-c_Q|}{2}}|\psi_t(x-y)|\,dy\right.\\
&\hspace*{12pt}\left.+\sum_{|\beta|\le s}[l(Q)]^{s+\delta}
\int_{|y-c_Q|\geq\frac{|x-c_Q|}{2}}\frac 1{|x-c_Q|^{n+|\beta|}}
\frac1{|y-c_Q|^{n+s+\delta-|\beta|}}\,dy\right]\nonumber\\
&\lesssim\frac{[l(Q)]^{n+s+\delta-\frac{n}{p}}}{|x-c_Q|^{n+s+\delta}}.\nonumber
\end{align}
Combining \eqref{rens}, \eqref{Eq79}, \eqref{Eq710}, and \eqref{Eq711},
we conclude that, for any $x\in [B(c_Q,4r_Q)]^\complement$,
\begin{align*}
M^p_{W}(T\vec a,\psi)(x)\lesssim\left\|W^{\frac{1}{p}}(x)A_Q^{-1}\right\|
\frac{[l(Q)]^{n+s+\delta-\frac{n}{p}}}{|x-c_Q|^{n+s+\delta}}.
\end{align*}
From this, Corollary \ref{p01}, Lemma \ref{86},
and $s\in{\mathbb Z}_+\cap[\lfloor n(\frac{1}{p}-1)\rfloor,\infty)$, we infer that
\begin{align}\label{outball-T}
&\int_{[B(c_Q,4r_Q)]^\complement}\left[M^p_{W}(T\vec a,\psi)(x)\right]^p\,dx\\
&\quad\lesssim\sum_{i=1}^{\infty}\int_{2^{i+2}Q\setminus2^{i+1}Q}
\left\|W^{\frac{1}{p}}(x)A_{2^{i+2}Q}^{-1}\right\|^p
\left\|A_{2^{i+2}Q}A_{Q}^{-1}\right\|^p
\frac{[l(Q)]^{(n+s+\delta-\frac{n}{p})p}}{|x-c_Q|^{(n+s+\delta)p}}\,dx\nonumber\\
&\quad\lesssim\sum_{i=1}^{\infty}\int_{2^{i+2}Q\setminus2^{i+1}Q}
\left\|W^{\frac{1}{p}}(x)A_{2^{i+2}Q}^{-1}\right\|^p\,dx
\frac{[l(Q)]^{(n+s+\delta-\frac{n}{p})p}}{[2^il(Q)]^{(n+s+\delta)p}}
\lesssim\sum_{i=1}^{\infty}2^{(n-np-sp-\delta p)i}\lesssim1.\nonumber
\end{align}
Let $r\in(0,\min\{r_W,\frac{1}{1-p}\})$.
Using the H\"older inequality, Corollary \ref{p01}, Lemma \ref{86},
\cite[Corollary 2.1.12]{g14c},
the facts that $T$ and $\mathcal{M}$ are bounded on
$L^{\frac{pr}{r-1}}$, and both
(i) and (iv) of Definition \ref{F-atom},
we find that
\begin{align}\label{inball-T}
&\int_{B(c_Q,4r_Q)}\left[M^p_W(T\vec a,\psi)(x)\right]^p\,dx\\
&\quad\le\int_{4\sqrt{n}Q}\left\|W^{\frac{1}{p}}(x)A_{4\sqrt{n}Q}^{-1}\right\|^p
\left\|A_{4\sqrt{n}Q}A_Q^{-1}\right\|^p\sup_{t\in(0,\infty)}
\left|A_{Q}(\psi_t*T\vec a)(x)\right|^p\,dx\nonumber\\
&\quad\le\left[\int_{4\sqrt{n}Q}\left\|W^{\frac{1}{p}}(x)
A_{4\sqrt{n}Q}^{-1}\right\|^{pr}\,dx\right]^{\frac{1}{r}}
\left[\int_{4\sqrt{n}Q}\sup_{t\in(0,\infty)}
\left(\psi_t*\left|TA_{Q}\vec a\right|\right)^{\frac{pr}{r-1}}(x)\,dx\right]^{\frac{r-1}{r}}\nonumber\\
&\quad\lesssim|Q|^{\frac{1}{r}}\left\{\int_{{\mathbb{R}^n}}
\left[\mathcal{M}\left(\left|TA_{Q}\vec a\right|\right)\right]^{\frac{pr}{r-1}}(x)
\,dx\right\}^{\frac{r-1}{r}}
\lesssim|Q|^{\frac{1}{r}}\left[\int_{{\mathbb{R}^n}}
\left|A_{Q}\vec a(x)\right|^
{\frac{pr}{r-1}}\,dx\right]^{\frac{r-1}{r}}
\lesssim 1.\nonumber
\end{align}
Combining \eqref{outball-T}, \eqref{inball-T}, and Theorem \ref{subl},
we obtain the desired conclusion.
This finishes the proof of Theorem \ref{CZ}.
\end{proof}

\begin{remark}
Let $m=1$, $p\in(0,1]$, $s\in[\lfloor n(\frac{1}{p}-1)\rfloor,\infty)\cap{\mathbb Z}_+$,
and $W\equiv1$.
Using \cite[p.\,23,\ Proposition 4]{mc1997}, we conclude that
a bounded linear operator $T:\ L^2\to L^2$ whose kernel satisfies \eqref{size-s'}
has the vanishing moments up to order $s$
if and only if $T$ extends to a bounded linear operator on $H^p_W$,
which, in some sense, further implies the necessity of the assumption that
$T$ has the vanishing moments up to order $s$ in Theorem \ref{CZ}.
\end{remark}

\bigskip

\noindent Fan Bu, Yiqun Chen, Dachun Yang (Corresponding author) and Wen Yuan

\medskip

\noindent Laboratory of Mathematics and Complex Systems (Ministry of Education of China),
School of Mathematical Sciences, Beijing Normal University, Beijing 100875, The
People's Republic of China

\smallskip

\noindent{\it E-mails:} \texttt{fanbu@mail.bnu.edu.cn} (F. Bu)

\noindent\phantom{{\it E-mails:} }\texttt{yiqunchen@mail.bnu.edu.cn} (Y. Chen)

\noindent\phantom{{\it E-mails:} }\texttt{dcyang@bnu.edu.cn} (D. Yang)

\noindent\phantom{{\it E-mails:} }\texttt{wenyuan@bnu.edu.cn} (W. Yuan)

\end{document}